\colorlet{darkishRed}{red!60!black}
\colorlet{darkishBlue}{blue!60!black}
\definecolor{pictureorange}{rgb}{1,0.647,0}
\definecolor{picturedarkred}{rgb}{0.545,0,0}
\colorlet{picturegreen}{green!60!black}
\definecolor{colourdot}{rgb:hsb}{0.85,0.8,1}
\colorlet{colourray}{picturedarkred}
\colorlet{coloura}{picturegreen}
\colorlet{colourxi}{blue}
\newcommand{\modG}{\vert G\vert_{\Theta}}
\newcommand{\minG}{\vert G\vert_{\Gamma}}
\newcommand{\pr}{\normalfont\text{pr}}
\newcommand{\crit}{\normalfont\text{crit}}
\newcommand{\torso}{\normalfont\text{torso}}
\newcommand{\CX}{\breve{\cC}_X}
\newcommand{\CY}{\breve{\cC}_Y}
\newcommand{\CZ}{\breve{\cC}_Z}
\newcommand{\CC}[1]{\breve{\cC}_{#1}}
\newcommand{\invLim}{\varprojlim}
\newcommand{\rest}{\upharpoonright}
\newcommand{\St}{S_{\normalfont\text{t}}}
\newcommand{\Tt}{\Theta_{\normalfont\text{t}}}
\newcommand{\cb}{c}
\newcommand{\rsep}[2]{({#1},{#2})}
\newcommand{\lsep}[2]{({#1},{#2})}
\newcommand{\sep}[2]{\{{#1},{#2}\}}
\newcommand{\crsep}[1]{\rsep{#1}{\breve{\cC}_{#1}}}
\newcommand{\csep}[1]{\sep{#1}{\breve{\cC}_{#1}}}
\newcommand{\trsep}[1]{\rsep{#1}{\ccK({#1})}}
\newcommand{\tlsep}[1]{\lsep{\ccK({#1})}{#1}}
\newcommand{\tsep}[1]{\sep{#1}{\ccK({#1})}}
\DeclareMathOperator{\medcup}{\mathsmaller{\bigcup}}
\newcommand{\bdot}{\boldsymbol{.}\,}
\renewcommand{\subset}{\subseteq}
\renewcommand{\supset}{\supseteq}
\def\comp{com\-pac\-ti\-fi\-ca\-tion}
\def\HD{Haus\-dorff}
\def\HDcomp{\HD\ \comp}
\def\SC{Stone-Čech}
\def\principal{principal}
\def\admissable{admissable}
\newcommand{ \N } { \mathbb{N} }
\newcommand{\closure}[1]{\overline{#1}}
\newcommand{\closureIn}[2]{\closure{#1}^{#2}}
\newcommand{\closureInExt}[2]{\normalfont\text{cl}\,{}_{#2}\,({#1})}
\def\calCommandfactory#1{%
   \expandafter\def\csname c#1\endcsname{\mathcal{#1}}}
\def\frakCommandfactory#1{%
   \expandafter\def\csname frak#1\endcsname{\mathfrak{#1}}}
\newcounter{ctr}
  \edef\X{\@Alph\c@ctr}
  \edef\Y{\@alph\c@ctr}
\renewcommand{\cC}{\mathscr{C}}
\renewcommand{\cD}{\mathscr{D}}
\renewcommand{\cP}{\mathscr{P}}
\newcommand{\ccK}{\mathscr{K}}
\newcommand{\dc}[1]{\lceil #1\rceil}
\newcommand{\uc}[1]{\lfloor #1\rfloor}
\newtheorem{theorem}{Theorem}[section]
\newtheorem*{theoremNN}{Theorem}
\newtheorem{proposition}[theorem]{Proposition}
\newtheorem{corollary}[theorem]{Corollary}
\newtheorem{lemma}[theorem]{Lemma}
\newtheorem{mainresult}{Theorem}
\newtheorem{rem}[theorem]{Remark}
\newenvironment{customthm}[1]
  {\innercustomthm}
  {\endinnercustomthm}
\theoremstyle{definition}
\newtheorem{example}[theorem]{Example}
\newtheorem{definition}[theorem]{Definition}
\theoremstyle{remark}
\newtheorem*{notation}{Notation}
\newtheorem*{ack}{Acknowledgement}
\def\lowfwd #1#2#3{{\mathop{\kern0pt #1}\limits^{\kern#2pt\raise.#3ex
\vbox to 0pt{\hbox{$\scriptscriptstyle\rightarrow$}\vss}}}}
\def\lowbkwd #1#2#3{{\mathop{\kern0pt #1}\limits^{\kern#2pt\raise.#3ex
\vbox to 0pt{\hbox{$\scriptscriptstyle\leftarrow$}\vss}}}}
\def\fwd #1#2{{\lowfwd{#1}{#2}{15}}}
\def\vS{{\hskip-1pt{\fwd S3}\hskip-1pt}}
\def\vSdash{{\mathop{\kern0pt S\lower-1pt\hbox{${}
     \scriptstyle'$}}\limits^{\kern2pt\raise.1ex
     \vbox to 0pt{\hbox{$\scriptscriptstyle\rightarrow$}\vss}}}}
\def\Sinf{S_{\aleph_0}}
\def\vSinf{\vS_{\mkern-.85\thinmuskip\aleph_0}}
\def\vSt{\vS_{\mkern-.85\thinmuskip\mathrm{t}}}
\def\vE{{\hskip-1pt{\fwd{E}{3.5}}\hskip-1pt}}
\def\vT{\lowfwd T{0.3}1}
\def\vN{\lowfwd N{1}1}
\def\vcB{\lowfwd \cB{1.5}1}
\def\vF{{\hskip-1pt{\fwd{F}{3.5}}\hskip-1pt}}
\def\sv{\lowbkwd s{0}1}
\def\vs{\lowfwd s{1.5}1}
\def\sv{\lowbkwd s{0}1}
\def\vr{\lowfwd r{1.5}2}
\def\rv{\lowbkwd r02}
\def\vSd{{\mathop{\kern0pt S\lower-1pt\hbox{${}
     \scriptstyle'$}}\limits^{\kern2pt\raise.1ex
     \vbox to 0pt{\hbox{$\scriptscriptstyle\rightarrow$}\vss}}}}
\newenvironment{onlayer}[1]
	{\begin{scope}\begin{pgfonlayer}{#1}}
	{\end{pgfonlayer}\end{scope}}
\begin{document}

\title[A tree-of-tangles theorem for infinite tangles]{A tree-of-tangles theorem for
infinite tangles}

\author{Ann-Kathrin Elm}
\author{Jan Kurkofka}
\address{Universität Hamburg, Department of Mathematics, Bundesstraße 55 (Geomatikum), 20146 Hamburg, Germany}
\email{ann-kathrin.elm@uni-hamburg.de, jan.kurkofka@uni-hamburg.de}

\keywords{infinite graph; tree of tangles; aleph 0 tangle; infinite order tangle; end; ultrafilter tangle; nested; tree set; tree decomposition; critical vertex set; distinguish; efficiently; collectionwise normal}

\subjclass[2010]{05C63, 05C83, 05C05, 05C40, 54D15}

\begin{abstract}
Carmesin has extended Robertson and Seymour's tree-of-tangles theorem to the infinite tangles of locally finite infinite graphs.
We extend it further to the infinite tangles of all infinite graphs.

Our result has a number of applications for the topology of infinite graphs, such as their end spaces and their \comp s.

\end{abstract}
\vspace*{-1.14cm}
\maketitle

\vspace*{-.75cm}

\section{Introduction}

\noindent The \emph{tree-of-tangles theorem}, one of the cornerstones of Robertson and Seymour's proof of their graph-minor theorem, says (in the terminology of~\cite[§12.5]{BibleNew}): 

\begin{theoremNN}
Every finite graph $G$ has a nested set of separations which efficiently distinguishes all the finite tangles in $G$ that can be distinguished.
\end{theoremNN}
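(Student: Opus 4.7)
The plan is to use the classical uncrossing argument based on submodularity of the separation order together with the no-triangle consistency of tangles. Since $G$ is finite there are only finitely many distinguishable pairs of tangles, so it suffices to construct, one pair at a time, a nested set $N$ containing an efficient distinguisher for every pair.

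Call a separation $s=(A,B)$ an \emph{efficient distinguisher} of a distinguishable pair $\tau,\tau'$ if $|A\cap B|$ is the minimum order of any separation that $\tau$ and $\tau'$ orient oppositely. The main tool I would prove first is the following \emph{uncrossing lemma}: given an efficient distinguisher $s$ of $\tau,\tau'$ and any other separation $t$, there is an efficient distinguisher $s^\ast$ of $\tau,\tau'$ that is nested with $t$; moreover, $s^\ast$ can be chosen to be a corner of $s$ and $t$, so that $s^\ast$ is automatically nested with every separation that is already nested with both $s$ and $t$. To prove it, start from $s=(A,B)$ and $t=(C,D)$, form the four corner separations, and invoke submodularity
\[ |A\cap B|+|C\cap D|\;\ge\;|(A\cap C)\cap (B\cup D)|+|(A\cup C)\cap (B\cap D)| \]
(and the analogous inequality for the other diagonal). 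At least one diagonal has total order $\le|A\cap B|+|C\cap D|$. By the no-triangle consistency of $\tau$ and $\tau'$ (no tangle contains three separations whose small sides cover $V(G)$), at least one corner on that diagonal is oriented differently by $\tau$ and $\tau'$; by efficiency of $s$ this corner has order $\ge|A\cap B|$, and the submodular inequality then forces it to have order exactly $|A\cap B|$. That corner is the desired $s^\ast$, and the "moreover" clause is the standard observation that corners of $s$ and $t$ are nested with any third separation $t'$ already nested with both.

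With the lemma in hand, constructing $N$ is straightforward by induction. Enumerate the distinguishable pairs as $\{\tau_i,\tau_i'\}$ for $i\le m$, choose any efficient distinguisher $s_1$ of the first pair, and at step $i$ start from an arbitrary efficient distinguisher of $\{\tau_i,\tau_i'\}$ and apply the uncrossing lemma in turn against $s_1,s_2,\ldots,s_{i-1}$; the moreover clause guarantees that nestedness already achieved with earlier $s_j$ is preserved at each uncrossing, since each new choice is a corner of the previous one and the next $s_j$. The resulting set $N=\{s_1,\ldots,s_m\}$ is nested and efficiently distinguishes every distinguishable pair.

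The main obstacle is the uncrossing lemma, specifically the "moreover" clause that ensures the induction can be carried out without destroying earlier nestedness; everything else is organisational. The finite setting makes the induction trivial to terminate, but even here one must be a little careful to verify that a corner of $s$ and $t$ really is nested with a third separation $t'$ nested with both $s$ and $t$, which is a direct but slightly fiddly case check.
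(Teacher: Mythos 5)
This statement is quoted by the paper from Robertson--Seymour and Diestel's book (Theorem~12.5.4 in~\cite{BibleNew}); the paper itself gives no proof, so your attempt has to be measured against the standard argument it cites. Your overall plan (uncross crossing separations via submodularity and tangle consistency, then add one efficient distinguisher per pair) is the right family of ideas, and your ``moreover'' clause is indeed the standard fact that a corner of two crossing separations is nested with everything nested with both. But the uncrossing lemma you rely on has a genuine gap, in two places. First, the submodularity step is inverted: from $|c_1|+|c_2|\le |A\cap B|+|C\cap D|$ and the efficiency bound $|c_1|\ge |A\cap B|$ for the distinguishing corner $c_1$ you may conclude $|c_2|\le |C\cap D|$, but not $|c_1|=|A\cap B|$. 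To force $|c_1|\le |A\cap B|$ you need the \emph{opposite} corner $c_2$ to have order at least $|C\cap D|$, and there is no source for that lower bound when $t$ is an arbitrary separation. Second, even the claim that some corner is oriented oppositely by $\tau$ and $\tau'$ presupposes that $\tau$ and $\tau'$ orient the corners at all; a corner can have order up to $|A\cap B|+|C\cap D|$, which may exceed the order of the tangles, and nothing in your argument rules this out.

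The usual repair is to take $t$ to be itself an efficient distinguisher of some pair $\rho,\rho'$, argue (via a case analysis on how the four tangles orient $s$ and $t$, which needs the order comparisons to guarantee they orient them) that the corner opposite to $c_1$ distinguishes $\rho,\rho'$, and only then invoke efficiency of $t$ to get $|c_2|\ge |C\cap D|$. But this argument in general only yields that \emph{some} corner efficiently distinguishes one of the two pairs --- possibly the pair belonging to $t$, not the pair belonging to $s$ --- so one cannot always keep the previously chosen $s_1,\dots,s_{i-1}$ fixed and modify only the new separation. This is exactly why the proofs in the literature do not run the naive one-pair-at-a-time greedy induction: Robertson--Seymour and Diestel's~\S 12.5 proceed by induction on the order $k$ of the separations (first distinguishing the tangles distinguishable at low order and refining), and the abstract ``splinter'' approach of Elbracht, Kneip and Teegen turns the weaker statement ``some corner replaces $s$ or replaces $t$'' into a nested choice by a more careful extremal/inductive selection. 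As it stands, your key lemma is unproved (and in the generality stated, with $t$ arbitrary, it is not provable), so the induction that builds $N$ does not go through.
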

\noindent This is Theorem~12.5.4 in~\cite{BibleNew}, the original article is~\cite{GMX}. 

Recently, Carmesin~\cite{carmesin2014all} has extended the tree-of-tangles theorem to the infinite tangles of infinite graphs that are locally finite.
The precise statement of Carmesin's result reads:
\begin{theoremNN}
Every infinite connected graph $G$ has a nested set of separations which efficiently distinguishes all the ends of $G$.
\end{theoremNN}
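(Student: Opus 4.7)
The plan is to proceed by transfinite recursion on pairs of distinct ends of $G$, at each stage adding a single efficient distinguisher and repairing any crossings with the existing nested set by the classical ``fish'' uncrossing lemma. First I would verify the supply of efficient distinguishers: for any two distinct ends $\omega,\omega'$ there is, by the definition of an end, a finite vertex set $X$ separating them, and by Menger's theorem the minimum such $|X|$ equals the maximum number of pairwise disjoint $\omega$--$\omega'$ rays. From such a minimum $X$ one extracts an efficient separation $(A,B)$ of order $|X|$ with $\omega$ living in $A\setminus B$ and $\omega'$ in $B\setminus A$.

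The main combinatorial tool is the submodularity of the order function, which says $|s\wedge t|+|s\vee t|\leq|s|+|t|$ for any two separations $s,t$. A short case analysis (the fish lemma) then shows that whenever $s$ efficiently distinguishes a pair of ends, at least one of the two corners of $s$ with another separation $t$ efficiently distinguishes that pair as well, and the analogous statement holds for $t$. Hence crossings can always be resolved without losing any efficient distinguishing power that was previously present. With this in hand, well-order the distinguishable pairs as $(P_\alpha)_{\alpha<\kappa}$ and build an increasing chain $N_0\subset N_1\subset\cdots$ of nested sets of efficient distinguishers with $N_\alpha$ distinguishing $P_\beta$ for every $\beta<\alpha$; at successor stages one uncrosses an efficient distinguisher of $P_\alpha$ against the members of $N_\alpha$ in turn, and at limit stages one takes unions.

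The principal obstacle will be controlling the uncrossing at a single successor stage when $N_\alpha$ is already infinite. In the finite case the iteration terminates after finitely many rounds by a potential argument on a natural-number-valued quantity, but in the infinite setting uncrossing with one $t\in N_\alpha$ can re-create crossings with other members of $N_\alpha$. To get around this I would either set up an inverse-limit construction, taking a limit of the partial uncrossings inside the lattice of separations, or work with the whole collection of admissible corners at once and extract a canonical element via Zorn's lemma applied to a suitable well-founded refinement order. A further subtlety is to verify at limit stages that the union $N_\alpha=\bigcup_{\beta<\alpha}N_\beta$ still efficiently distinguishes every previously handled pair: nestedness itself is automatic, and efficient distinguishing is preserved because no separation is ever removed, but one has to check that the separations promised for each $P_\beta$ are not merely nested with every member of $N_\alpha$ in isolation but survive as elements of the global nested set with their minimum order intact.
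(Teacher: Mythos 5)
This statement is not proved in the paper at all: it is Carmesin's theorem, quoted from \cite{carmesin2014all} (Corollary~5.17 there) and used later as a black box (Theorem~\ref{CarmesinEndsTreeSet}). So your proposal has to stand on its own, and as it stands it has a genuine gap exactly at the point you yourself flag. The transfinite ``add one efficient distinguisher, then uncross'' scheme works for finitely many tangles because each uncrossing strictly decreases a natural-number crossing count; once the already-built nested set $N_\alpha$ is infinite, a freshly chosen efficient distinguisher of $P_\alpha$ may cross infinitely many members of $N_\alpha$, and the iterated corners need not stabilise. Your two suggested remedies --- an inverse limit of partial uncrossings, or Zorn's lemma on a refinement order --- are only named, not carried out, and neither is routine: a limit of corners need not be a finite-order separation, need not remain an efficient distinguisher of $P_\alpha$, and its nestedness with all of $N_\alpha$ is not automatic. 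This is precisely the heart of the matter, and Carmesin's actual proof is a long argument of a different shape (constructing, for each $k$, canonical nested systems of separations of order $<k$ distinguishing the relevant ends and then passing to a limit over $k$), not a transfinite insertion of single distinguishers.

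There is also a second, smaller inaccuracy in the uncrossing step itself. The ``fish'' lemma obtained from submodularity gives an alternative: when $s$ efficiently distinguishes one pair and $t\in N_\alpha$ efficiently distinguishes another and they cross, submodularity guarantees that \emph{either} a corner replacing $s$ works \emph{or} a corner replacing $t$ works --- not both. In the finite proof this is handled by processing pairs in order of the minimal order of their distinguishers (so that one may always replace the newcomer); in the infinite setting there are in general infinitely many separations of each order, so this ordering trick does not by itself restore a terminating argument. Your worry about limit stages, by contrast, is unfounded: unions of nested sets are nested and no separation is ever removed, so that part is fine. The unresolved successor-stage issue, however, means the proposal is an outline of a strategy rather than a proof, and the strategy is not the one by which the cited theorem is actually known to hold.
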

\noindent Note that, in the wording of his theorem, Carmesin does not require the graph to be locally finite, and he speaks of ends where one expects infinite tangles.
This is because his result is more general than an extension of the tree-of-tangles theorem to the infinite tangles of locally finite infinite graphs.
To understand the difference, let us look at how the ends of a graph are related to its infinite tangles.

An end $\omega$ of a graph $G$ (see~\cite{BibleNew}) orients every finite-order separation $\{A,B\}$ of $G$ towards the side that contains a tail from every ray in $\omega$.
Since these orientations are, for distinct separations, consistent in a number of ways, they form an infinite tangle of $G$.
Conversely, every infinite tangle of a locally finite and connected graph $G$ is defined by an end in this way~\cite{EndsAndTangles,Ends}.
Thus, if $G$ is locally finite and connected, there is a canonical bijection between its infinite tangles and its ends.
In this way, Carmesin's result extends the tree-of-tangles theorem to the infinite tangles of locally finite graphs.

When $G$ is not locally finite, however, there can be infinite tangles that are not defined by an end. Then Carmesin's result no longer extends the tree-of-tangles theorem to the infinite tangles of~$G$.

The infinite tangles that do not come from ends of the graph are fundamentally different from ends.
They are closely related to free ultrafilters, and are called \emph{ultrafilter tangles}~\cite{EndsAndTangles}.
More explicitly, by a recent result from~\cite{EndsTanglesCrit}, 
there is a canonical bijection between the ultrafilter tangles and the \emph{ultrafilter tangle blueprints}: pairs $(X,U)$ of a critical vertex set $X$ and a free ultrafilter $U$ on $\CX$, where a finite set $X\subset V(G)$ is \emph{critical} if the collection $\CX$ of the components of $G-X$ whose neighbourhood is equal to~$X$ is infinite.
Therefore, every ultrafilter tangle $\tau=(X,U)$ has two aspects:
Its combinatorial aspect is captured by its blueprint's critical vertex set $X$, and its ultrafilter aspect is encoded by the free ultrafilter~$U$ (see Section~\ref{PropertiesOfInfOrderTangles} for details).
Since every vertex in a critical vertex set has infinite degree, it follows that locally finite connected graphs have no ultrafilter tangles, so all their infinite tangles are ends.

Ultrafilter tangles are interesting also for topological reasons.
Every locally finite connected graph can be naturally compactified by its ends to form its well known end \comp ~\cite{BibleNew} introduced by Freudenthal~\cite{freudenthal}.
But for a non-locally finite graph, adding its ends no longer suffices to compactify it.
Adding its ends \emph{plus its ultrafilter tangles}, however, (i.e.\ adding all its infinite tangles) does again compactify the graph.
This is Diestel's \emph{tangle compactification}~\cite{EndsAndTangles}.
The tangle \comp\ generalises the end \comp\ twofold.
On the one hand, it defaults to the end \comp\ when the graph is locally finite and connected.
And on the other hand, the relation between the end \comp\ of locally finite connected graphs and their \SC\ \comp\ extends to all graphs when ends are generalised to tangles~\cite{StoneCechTangles}.

As our main result, we extend Robertson and Seymour's tree-of-tangles theorem to the infinite tangles of infinite graphs (and thus, we extend Carmesin's result from ends to all infinite tangles):
\begin{mainresult}\label{TreeSetForInfTangles}
Every infinite connected graph $G$ has a nested set of finite-order separations that efficiently distinguishes all the combinatorially distinguishable infinite tangles of~$G$.
\end{mainresult}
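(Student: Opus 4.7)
The plan is to reduce the problem to Carmesin's end case while carefully handling the ultrafilter tangles. First I would identify the combinatorial equivalence classes of infinite tangles. If $\{A,B\}$ is a finite-order separation with $|A \cap B| < |X|$, then some vertex $v \in X$ lies strictly on one side; since $v$ is adjacent to every $C \in \breve{\cC}_X$ while only finitely many $C$ meet $A \cap B$, cofinitely many components of $\breve{\cC}_X$ must lie on that same side, so the orientation by $(X,U)$ is determined by $X$ alone, independently of $U$. Hence two ultrafilter tangles $(X, U)$ and $(X', U')$ are combinatorially distinguishable exactly when $X \ne X'$, and the combinatorial classes of infinite tangles are precisely the ends together with the critical vertex sets.

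Given this, the first construction step is to apply Carmesin's theorem to obtain a nested set $N_0$ of finite-order separations efficiently distinguishing all ends. The second step is to find, for each pair consisting of a critical-vertex-set class and any other class (an end or a distinct critical-vertex-set class), an efficient finite-order separation distinguishing them. Such a separation exists because the two classes are anchored in genuinely different parts of the graph: a class at $X$ lives on the infinitely many components in $\breve{\cC}_X$ adjacent to $X$, whereas any other class has its mass spread elsewhere. Standard submodularity arguments then let us pick an efficient distinguisher $s(\tau, \tau')$ for each such pair.

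The collection so obtained will typically not be nested, so the third step is an uncrossing procedure, performed by transfinite recursion over a well-ordering of the pairs. Each crossing pair of separations is replaced by a corner; submodularity of the order function ensures both that efficiency is preserved and that the replacement still distinguishes the relevant classes. The recursion terminates with a nested set that efficiently distinguishes every combinatorially distinguishable pair of infinite tangles of $G$.

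The hard part will be controlling the interaction of separations associated with critical vertex sets of unbounded order, both with one another and with the separations in $N_0$. Critical vertex sets can lie in intricate nested or crossing configurations, and an uncrossing step at one $X$ can disturb a previously arranged distinction elsewhere. Maintaining nestedness, efficiency, and pairwise distinction simultaneously across infinitely many classes is where the bulk of the technical work will reside, and this is presumably where the paper's structure theory around critical vertex sets (the sets $\breve{\cC}_X$ and their torsos) will play its decisive role.
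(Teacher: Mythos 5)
Your reduction of the combinatorial classes to ends plus critical vertex sets is fine, but the heart of your construction --- choose an efficient distinguisher for every pair involving a critical-vertex-set class and then make everything nested by a transfinite corner-uncrossing, also against Carmesin's set $N_0$ --- is exactly where the proof is missing, not merely ``technical work to be done''. In the infinite setting there is no argument that such a recursion is well defined at limit stages or terminates at all: replacing a crossing pair by a corner can create new crossings with separations treated earlier, an uncrossing against $N_0$ can destroy the efficiency of $N_0$ for the ends it was chosen to distinguish, and corner separations need not remain efficient distinguishers of $\aleph_0$-tangles without a careful order-bookkeeping scheme (in the finite case one processes pairs by increasing distinguishing order and uses extremality to get nestedness for free; Carmesin's own theorem is hard precisely because this does not extend naively to infinitely many separations). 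Moreover, the separations naturally attached to critical vertex sets can cross in the ``problem case'' configurations exhibited in Section~\ref{TheProblemCase}, and resolving those crossings by corners of separations of the form $\sep{X}{\cC}$ is not addressed; so as written the proposal does not yield a proof.

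The paper avoids uncrossing altogether and takes a different route. It first builds a canonical nested starting set $T=T(\cY,\ccK)$ from the principal collection of (subsets of) critical vertex sets together with a strongly \admissable\ function $\ccK$ (Theorems~\ref{criticalAdmissableExists} and~\ref{TheTreeSetAPC}); admissability is what resolves the problem-case crossings combinatorially, and Lemma~\ref{CaseOne} shows $T$ already distinguishes the relevant tangles efficiently whenever an optimal separator lies inside a critical vertex set. For all remaining pairs the optimal separator $Z$ lies in a part of $T$ (Lemma~\ref{GenerousIsPrincipalForPrincipals}); the paper then forms \emph{modified torsos} of the consistent orientations of $T$, in which every $\aleph_0$-tangle is represented by a proxy end, applies Carmesin's theorem inside each modified torso (to the torso, not to $G$ as you propose), and lifts the resulting tree sets back to $G$ via the corridor/proxy machinery of Sections~\ref{sec:graphfromtorso} and~\ref{sec:MainProof}. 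The lifts are automatically nested with $T$ and with each other (Lemmas~\ref{liftCommutesWithProxyOfCorridors}--\ref{liftRespectsNested}), and Propositions~\ref{MagicLemma} and~\ref{distinctCorridorsDistinctProxies} guarantee that an efficient distinguisher of the proxy ends lifts to an efficient distinguisher of the original tangles. If you want to salvage your plan, this ``reflect into torsos, apply Carmesin there, lift back'' mechanism is the missing idea that replaces the uncontrollable uncrossing step.
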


\noindent Here, two ultrafilter tangles are \emph{combinatorially distinguishable} if their critical vertex sets are distinct.
The proof will show that, conversely, the nested set of separations we find does not distinguish any two infinite tangles that are combinatorially indistinguishable.
As we will show, our result is best possible in the following sense. 
If a graph $G$ has an ultrafilter tangle~$\tau$, then no nested set of finite-order separations of $G$ efficiently distinguishes all the ultrafilter tangles of $G$ that are not combinatorially distinguishable from~$\tau$.

\subsection*{Applications}

Our work has four applications.

Elbracht, Kneip and Teegen need it in their paper~\cite{InfiniteSplinters}.
So do Bürger and the second author~\cite{StarComb4TheUndominatingStar}.

Our third application is the following structural connectivity result for infinite graphs, which generalises the way in which the cutvertices of a graph decompose it into its blocks in a tree-like fashion.
Call a graph \emph{tough} if deleting finitely many vertices from it never leaves more than finitely many components.
By the pigeonhole principle a graph is tough if and only if it has no critical vertex set.

\begin{mainresult}\label{toughCriticalTreeSet}
Every connected graph $G$ has a nested set of separations whose separators are precisely the critical vertex sets of $G$ and all whose torsos are tough.
\end{mainresult}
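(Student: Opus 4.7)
The plan is to derive Theorem~\ref{toughCriticalTreeSet} from Theorem~\ref{TreeSetForInfTangles} by reshaping the nested set produced there so that its separators are exactly the critical vertex sets of $G$. First I would invoke Theorem~\ref{TreeSetForInfTangles} to obtain a nested set $N_0$ of finite-order separations efficiently distinguishing all combinatorially distinguishable infinite tangles of $G$. Recall that every critical vertex set $X$ supports ultrafilter tangles---one for each free ultrafilter on $\CX$---and that two ultrafilter tangles are combinatorially distinguishable precisely when their critical vertex sets differ.

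Next I would reshape $N_0$ so that each separator is itself a critical vertex set. For each separation $s=\{A,B\}\in N_0$ efficiently distinguishing two ultrafilter tangles with critical vertex sets $X\ne Y$, I would show that among the efficient distinguishers there is always one with separator a critical vertex set of $G$: the natural candidate is a separation isolating a component of $\CX$ or $\CY$ belonging to the relevant ultrafilter. I would replace each $s$ by such a canonical separation and verify that the replacements remain pairwise nested via standard corner/submodularity arguments on the underlying separation system.

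Then I would adjoin the missing critical vertex sets as separators. For each critical vertex set $X$ not yet realized, the separations whose separator is $X$ and whose small side is $V(C)\cup X$ for a component $C\in\CX$ are pairwise nested; any crossing between such a separation and one already in $N$ would witness a failure of efficient distinguishing among the ultrafilter tangles at $X$ and elsewhere, so they are nested with $N$. Adjoining a suitable subset of them yields a nested set whose separators are precisely the critical vertex sets.

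Finally, for toughness of torsos: if a torso $T$ of the associated tree decomposition had a critical vertex set $Z$, then since every adjacency set of $N$ sits as a clique in $T$, $Z$ would translate into a critical vertex set of $G$ whose defining separations lie strictly inside $T$, contradicting that every critical vertex set already appears as a separator of $N$ and hence must separate adjacent torsos. The main obstacle I foresee is the second step (reshaping $N_0$), since replacing a single separation can create crossings with other separations of $N_0$; a careful simultaneous construction, plausibly a transfinite recursion on a well-ordering of the critical vertex sets, will likely be required.
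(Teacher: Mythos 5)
Your proposal locates the real work in the wrong places and leaves the two essential points unproved. First, the adjoining step is where the whole content of the theorem lies, and your justification for nestedness there is incorrect: for two incomparable critical vertex sets $X,Y$ that form a problem case (Section~\ref{TheProblemCase}, Figure~\ref{fig:problemcase}), the separations $\sep{X}{C_X(Y)}$ and $\sep{Y}{C_Y(X)}$ genuinely cross, and this has nothing to do with any ``failure of efficient distinguishing'' -- in the second example of Section~\ref{TheProblemCase} the tangles at $X$ and $Y$ are perfectly distinguishable, yet $\csep{X}$ and $\csep{Y}$ are not nested. The paper's resolution is precisely Theorem~\ref{criticalAdmissableExists}: by a recursion over a well-ordering one chooses a strongly admissable function $\ccK$ discarding at most one component $C_X(Y)$ from each $\CX$, and Theorem~\ref{TheTreeSetAPC} then yields nestedness of $T(\crit(G),\ccK)$. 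Your ``careful simultaneous construction, plausibly a transfinite recursion'' gestures at this but supplies neither the selection rule nor the verification, so the key idea is missing. Incidentally, your step~2 claim that two ultrafilter tangles with distinct critical vertex sets always admit an \emph{efficient} distinguisher whose separator is a critical vertex set is false (join two critical gadgets through a single cutvertex: the efficient distinguishers have order $1$ and their separators are not critical); this does not hurt you only because efficiency is irrelevant here -- and indeed the paper does not route through Theorem~\ref{TreeSetForInfTangles} at all, but applies Theorems~\ref{criticalAdmissableExists} and~\ref{TheTreeSetAPC} directly to $\cY=\crit(G)$.

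Second, the toughness argument has a genuine gap: you only use that every critical vertex set occurs as a separator of the nested set, and that is not sufficient. Take two infinite stars with centres $x$ and $y$ joined by an edge, and let $N$ consist of one separation with separator $\{x\}$ cutting off the $y$-star and one with separator $\{y\}$ cutting off the $x$-star; the separators of $N$ are precisely the critical vertex sets, yet the torso of the orientation pointing towards the $x$-star is again an infinite star and hence not tough. What makes the paper's torsos tough is the quantitative feature that for every critical $X$ the tree set contains $\sep{X}{K}$ for \emph{every} $K\in\ccK(X)$, i.e.\ all but at most one member of $\CX$ is split off individually: if a part $\Pi$ met infinitely many components of $G-\Xi$ for some finite $\Xi$, then by pigeonhole some critical $\Xi'\subset\Xi$ has infinitely many members of $\CC{\Xi'}$ meeting $\Pi$, and already two of the corresponding separations, forced to be oriented towards $\Pi$, contradict the consistency of the orientation; Corollary~\ref{restrictconnectedsets} then transfers the finiteness of the number of components to the torso. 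Since your construction only guarantees ``a suitable subset'' of the separations at each critical vertex set, toughness does not follow from what you have ensured.
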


(See Section~\ref{subsec:TreeSets} for definitions.)

Theorem~\ref{toughCriticalTreeSet} is interesting also from the perspective of topological infinite graph theory, in view of the following two results.
Diestel and Kühn~\cite{VTopComp} showed that a graph is compactified by its ends if and only if it is tough (i.e., if and only if it has no critical vertex sets), and in~\cite{EndsTanglesCrit} it was shown that every graph is compactified by its ends plus critical vertex sets.
So a graph is compactified by points that come in two types, ends and critical vertex sets, and the second type decomposes the graph into a nested set of separations all whose torsos are compactified by the points of the first type.

Our fourth application answers a question that arises from the work of Polat and of Sprüssel.
End spaces of graphs, in general, are not compact.
However, Polat~\cite{PolatEME1} and Sprüssel~\cite{NormalEnd} independently showed that end spaces of graphs are normal.
Polat even showed that end spaces of graphs are collectionwise normal, which is stronger than normal but weaker than compact \HD . (In a \emph{collectionwise} normal space one can at once pairwise separate any \emph{collection} of closed disjoint sets with disjoint open neighbourhoods, cf.~Definition~\ref{DefinitionCollectionwiseNormal}.)

The infinite tangle space, endowed with the subspace topology of the tangle \comp , contains the end space as a subspace.
As Diestel~\cite{EndsAndTangles} showed, the infinite tangle space is compact \HD , which implies collectionwise normality by general topology.

The ultrafilter tangle space, endowed with the subspace topology of the infinite tangle space, is not usually compact.
Since the infinite tangle space is the disjoint union of the end space and the ultrafilter tangle space, the question arises whether the ultrafilter tangle space is collectionwise normal as well.
We~answer this question in the affirmative:

\begin{mainresult}\label{CollectionWiseNormality}
The ultrafilter tangle space of a graph is collectionwise normal.
\end{mainresult}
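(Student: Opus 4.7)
The plan is to combine the tree-of-tangles decomposition of Main Theorem~\ref{TreeSetForInfTangles} with the compact Hausdorff structure of the fibers of ultrafilter tangles over their critical vertex sets.

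For each critical vertex set $X$, let $\Theta_X$ denote the set of ultrafilter tangles with critical vertex set $X$. Via $(X,U)\mapsto U$, the subspace $\Theta_X$ of the tangle compactification identifies with the space $\beta\CX\setminus\CX$ of free ultrafilters on $\CX$; this is compact Hausdorff, and hence collectionwise normal. Main Theorem~\ref{TreeSetForInfTangles} supplies a nested set $N$ of finite-order separations that efficiently distinguishes every pair of combinatorially distinguishable infinite tangles but distinguishes no combinatorially indistinguishable pair. Consequently, for any two distinct critical vertex sets $X,X'$ some $\sigma\in N$ has all of $\Theta_X$ on one side and all of $\Theta_{X'}$ on the other, and the associated clopen subset of the tangle compactification shows that each $\Theta_X$ is closed in the ultrafilter tangle space $\Theta$.

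Let $(F_i)_{i\in I}$ be a discrete family of closed sets in $\Theta$. For each critical vertex set $X$ the trace $(F_i\cap\Theta_X)_{i\in I}$ is a discrete family of closed sets in the compact $\Theta_X$, so $I_X:=\{i:F_i\cap\Theta_X\neq\emptyset\}$ is finite; compact Hausdorff normality of $\Theta_X$ then gives pairwise disjoint open neighbourhoods in $\Theta_X$ of the sets $F_i\cap\Theta_X$ for $i\in I_X$. Discreteness of $(F_i)$ makes $\bigcup_{j\notin I_X}F_j$ closed in $\Theta$, so its closure in the tangle compactification meets $\Theta$ only in $\bigcup_{j\notin I_X}F_j$ itself and is therefore disjoint from the compact set $\Theta_X$. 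Compact Hausdorff normality of the tangle compactification then supplies an open $W_X\supseteq\Theta_X$ avoiding that closure, into which the local disjoint family extends to pairwise disjoint open sets $U_i^X$ in the tangle compactification.

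To assemble the $U_i:=\Theta\cap\bigcup_X U_i^X$ into a globally pairwise disjoint open family in $\Theta$ with $F_i\subseteq U_i$, I would use the tree structure induced by $N$ on the combinatorial-indistinguishability classes of infinite tangles. For each $X$, shrinking $W_X$ by a suitable finite subfamily of clopen sides from the star at $X$'s node in that tree restricts the overlap $W_X\cap W_Y$ for $X\neq Y$ to a region disjoint from all $F_i$, and the discreteness of $(F_i)$ keeps the bookkeeping finite at every node. The principal obstacle is this gluing step: a fiber $\Theta_X$ is generally not open in $\Theta$---already in a rooted tree whose root has infinitely many children each bearing infinitely many leaves, ultrafilter tangles at the children accumulate on the ones at the root---so $\Theta$ is not the topological sum $\bigsqcup_X\Theta_X$, and collectionwise normality of the compact $\Theta_X$ does not transfer automatically. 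Coordinating the $W_X$'s across a potentially infinitely branching tree is the delicate heart of the argument.
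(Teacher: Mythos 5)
There is a genuine gap, and you have located it yourself: everything up to the construction of the per-fibre data is fine (each fibre $\Theta_X\cong\beta(\CX)\setminus\CX$ is compact \HD , only finitely many $F_i$ meet it, and disjoint open sets in $\modG$ around the traces $F_i\cap\Theta_X$ exist), but the final gluing step --- making the sets $U_i=\Theta\cap\bigcup_X U_i^X$ pairwise disjoint globally --- is exactly where the whole difficulty lies, and your sketch does not resolve it. The obstruction is not just that fibres fail to be open: a single $F_i$ may meet infinitely many fibres, fibres accumulate on one another, and two incomparable critical vertex sets $X,Y$ can form a problem case (Section~\ref{TheProblemCase}), in which the natural neighbourhoods $\cO_{\modG}(X,\cdot)$ and $\cO_{\modG}(Y,\cdot)$ are not nested and cannot be disentangled by ``clopen sides of the star at $X$'s node'' chosen fibre by fibre; some global choice breaking the symmetry between $X$ and $Y$ is needed, and your proposal never makes it. (Using Theorem~\ref{TreeSetForInfTangles} here is also much heavier than necessary, though not circular, since that theorem is proved independently.)

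The paper's proof avoids separating whole fibres altogether and instead works one tangle at a time, which is the idea your outline is missing. It fixes a \emph{strongly \admissable } function $\ccK$ for $\crit(G)$ (Theorem~\ref{criticalAdmissableExists}) --- this is precisely the device that breaks problem cases --- and, for each $\tau\in A_i$, chooses a single basic open set $O_{i,\tau}=\cO_{\modG}(X_\tau,\cC(\tau))$ with $\cC(\tau)\in U(\tau,X_\tau)$ satisfying three conditions: $\cC(\tau)\subset\ccK(X_\tau)$, $\cC(\tau)\subset O_{i,X_\tau}$ (where the $O_{i,X}$ come from collectionwise normality of $\beta(\CX)$, the analogue of your fibre step), and $O_{i,\tau}$ avoids all $A_j$ with $j\neq i$ (possible because the family is discrete, combined with the cofinality Lemma~\ref{ufTangleCofinalSeps} and Lemma~\ref{tcOpenProp}). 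Pairwise disjointness of $O_i=\bigcup_{\tau\in A_i}O_{i,\tau}$ and $O_j$ is then checked pointwise for $O_{i,\tau}$ versus $O_{j,\tau'}$ using the nestedness statement Theorem~\ref{TheTreeSetAPC}(i) for the tree set $T(\crit(G),\ccK)$: either $X_\tau=X_{\tau'}$ (handled by disjointness of $O_{i,X}$ and $O_{j,X}$), or the two separations $\tsep{X_\tau},\tsep{X_{\tau'}}$ point away from each other, or one lies below the other via $C_{X_\tau}(X_{\tau'})$, and in the last case the requirement that $O_{i,\tau}$ avoid $A_j\ni\tau'$ forces $C_{X_\tau}(X_{\tau'})\notin\cC(\tau)$ and hence disjointness. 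This local, nestedness-based argument is what replaces the global coordination across an infinitely branching tree that your proposal leaves open.
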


Our paper is organised as follows.
Background knowledge is supplied in Section~\ref{sec:preleminaries}.
In Section~\ref{sec:examples} we study examples and show that our main result is best possible.
In Section~\ref{sec:strategy} we give an overview on our overall proof strategy.
Our main technical results are stated and proved in Section~\ref{sec:PrincipalVertexSetsToTreeSets}.
In Section~\ref{sec:Applications} we provide the applications of our main technical results.
In Section~\ref{sec:graphfromtorso} we introduce an equivalence relation on a tree set given a consistent orientation of that tree set. This is the foundation for the definition of the modified torsos and proxies as well as for a `lifting' process that we need in Section~\ref{sec:MainProof}.
In Section~\ref{sec:MainProof}, finally, we introduce the modified torsos and prove our main result.
Section~\ref{sec:appendix} is our appendix.

Throughout this paper, $G=(V,E)$ is a connected graph of arbitrary cardinality.

\begin{ack}
We are grateful to Nathan Bowler, Christian Elbracht, Konstantinos Stavropoulos and Maximilian Teegen for stimulating discussions that contributed to the genesis of this paper.
\end{ack}

\section{Tools and terminology}\label{sec:preleminaries}

\noindent We use the notation of Diestel's book~\cite{BibleNew}.
For a short reminder on compactifications and inverse limits see the appendix Section~\ref{sec:appendix}.


\subsection{Ends of graphs, and inverse limits}

We write $\cX=\cX(G)$ for the collection of all finite subsets of the vertex set $V$ of $G$, partially ordered by inclusion.
An \emph{end} of $G$, as defined by Halin~\cite{halin64}, is an equivalence class of rays of $G$, where a ray is a one-way infinite path.
Here, two rays are said to be \emph{equivalent} if for every $X\in\cX$ both have a subray (also called \emph{tail}) in the same component of $G-X$. 
So in particular every end $\omega$ of $G$ chooses, for every $X\in\cX$, a unique component $C(X,\omega)$ of $G-X$ in which every ray of $\omega$ has a tail. 
In this situation, the end $\omega$ is said to \emph{live} in $C(X,\omega)$.
The set of ends of a graph $G$ is denoted by $\Omega=\Omega(G)$.
If $\cC$ is any collection of components of $G-X$ for some $X\in\cX$, we write $\Omega(X,\cC)$ for the set of ends $\omega$ of $G$ with $C(X,\omega)\in\cC$.
The sets $\Omega(X,\cC)$ form a basis for a topology on~$\Omega$.

Recall that a \emph{comb} is the union of a ray $R$ (the comb's \emph{spine}) with infinitely many disjoint finite paths, possibly trivial, that have precisely their first vertex on~$R$.
The last vertices of those paths are the \emph{teeth} of this comb.
Let us say that an end $\omega$ of $G$ is contained \emph{in the closure} of $M$, where $M$ is either  a subgraph of $G$ or a set of vertices of $G$, if for every $X\in\cX$ the component $C(X,\omega)$ meets $M$.
Equivalently, $\omega$ lies in the closure of $M$ if and only if $G$ contains a comb with all its teeth in $M$ and with its spine in $\omega$.
See~\cite{StarComb1StarsAndCombs,StarComb2TheDominatedComb,StarComb3TheUndominatedComb,StarComb4TheUndominatingStar} for more on combs.


Now we describe an inverse system giving the end space:
We note that $\cX$ is directed by inclusion, and for every $X\in\cX$ we let $\cC_X$ be the set of components of $G-X$.
Then letting $\cb_{X',X}\colon \cC_{X'}\to\cC_X$ for $X'\supseteq X$ send each component of $G-X'$ to the unique component of $G-X$ including it turns the sets $\cC_X$ into an inverse system $\{\cC_X,\cb_{X',X},\cX\}$. 
Clearly, its inverse limit consists precisely of the \emph{directions} of the graph: choice maps $f$ assigning to every $X\in\cX$ a component of $G-X$ such that $f(X')\subseteq f(X)$ whenever $X'\supseteq X$.
Every end $\omega$ of $G$ defines a unique direction~$f_\omega$ by mapping every $X\in\cX$ to~$C(X,\omega)$.
In 2010, Diestel and Kühn~\cite{Ends} showed that, conversely, every direction in fact comes from a unique end in this way:

\begin{theorem}[{\cite[Theorem 2.2]{Ends}}]\label{EndsAreDirections}
Let $G$ be any graph. Then the map $\omega\mapsto f_\omega$ is a homeomorphism between the ends of $G$ and its directions, i.e.~$\Omega\cong\invLim{}\cC_X$.
\end{theorem}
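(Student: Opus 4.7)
My plan is to prove the map $\omega\mapsto f_\omega$ is a bijection that respects the topologies on both sides, treating well-definedness, injectivity, surjectivity, and homeomorphy separately.

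Well-definedness is immediate: if $X\subseteq X'$, then $C(X',\omega)$ is a connected subset of $G-X$ containing a tail of every ray in $\omega$, hence lies in the component $C(X,\omega)$, making $f_\omega$ a direction. Injectivity is also straightforward: if $\omega_1\neq\omega_2$, then any two inequivalent representatives $R_i\in\omega_i$ are separated by some $X\in\cX$ into distinct components, whence $f_{\omega_1}(X)\neq f_{\omega_2}(X)$.

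The main task is surjectivity. Given a direction $f$, I would first observe that every $f(X)$ is infinite: otherwise the set $X':=X\cup f(X)\in\cX$ would force the nonempty component $f(X')$ to be both $\subseteq f(X)$ (by the direction property) and disjoint from $X'\supseteq f(X)$, a contradiction. Then I would build a ray $R=v_0v_1\ldots$ realizing $f$ inductively: with $X_n:=\{v_0,\ldots,v_n\}$ and assuming $v_n\in f(X_{n-1})$ by induction, the component $f(X_n)$ is a component of the connected graph $G[f(X_{n-1})]-v_n$, hence adjacent to $v_n$ in $G$, so one picks $v_{n+1}\in f(X_n)$ adjacent to $v_n$.

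The main obstacle is verifying that the constructed ray realizes $f$ at \emph{every} $X\in\cX$, not just at initial segments of $V(R)$. The naive induction can produce a ray tracking the wrong direction: in a two-ended double ladder with vertex set $\{a_i,b_i : i\in\Z\}$, edges $a_ia_{i+1}$, $b_ib_{i+1}$, and rungs $a_ib_i$, the components $G-\{a_0,\ldots,a_{-n}\}$ stay connected through the $b$-ray, so if $f$ is the positive direction but one chooses $v_i=a_{-i}$, the construction runs without obstruction yet yields a ray realizing the negative direction. To force the ray to track $f$, one must enrich the test sets so that $f$ has a chance to discriminate the ray's location; in the countable case this can be done by interleaving an enumeration of $V(G)$ into the $X_n$ and permitting short detours within $f(X_n)$ whenever a straight extension is unavailable, while for general graphs one further invokes the star-comb lemma inside $G[f(X)]$ to produce rays in arbitrarily deep components and splice them together using the nesting $f(X')\subseteq f(X)$. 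Finally, homeomorphism follows because both topologies carry a basis consisting of sets of the form ``the component at $X$ equals $C$''---namely $\Omega(X,\{C\})$ on the ends side and $\{g\in\invLim\cC_X : g(X)=C\}$ on the directions side---which the bijection $\omega\mapsto f_\omega$ matches.
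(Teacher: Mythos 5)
Your well-definedness, injectivity and homeomorphism steps are fine (the sets $\Omega(X,\{C\})$ and the cylinders $\{g:g(X)=C\}$ do form matching bases), and you have correctly isolated the one real difficulty: a greedily built ray can converge to the wrong end, as your double-ladder example shows. Be aware, though, that the paper does not prove this statement at all -- it quotes it as Theorem~2.2 of Diestel and K\"uhn -- so the benchmark is the original proof, and essentially all of its substance lies in exactly the step you leave as a gesture.

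That step, surjectivity for arbitrary (in particular uncountable) graphs, is a genuine gap as written. First, the star--comb lemma applied to an infinite set of vertices chosen inside the components $f(X)$ returns either a comb or a subdivided infinite star; in a graph with vertices of infinite degree the star outcome is perfectly possible (it corresponds to a dominating vertex), in which case your procedure produces no ray at all, and you do not say how to continue then. Second, and more fundamentally, ``producing rays in arbitrarily deep components and splicing them together along the nesting $f(X')\subseteq f(X)$'' can only be carried out along a countable chain $X_0\subseteq X_1\subseteq\cdots$ of finite sets, and what it yields is a ray $R$ with a tail in each $f(X_n)$ -- which is precisely what the naive construction already gives, and which your own example shows is not enough. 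For the end $\omega$ of $R$ one gets $C(X_n,\omega)=f(X_n)$ for the countably many sets fed into the construction, but nothing forces $C(X,\omega)=f(X)$ for the uncountably many finite $X$ lying below no $X_n$; in the countable case interleaving an enumeration of $V(G)$ makes the chain cofinal in $\cX$, which is exactly why that case is easy, whereas for uncountable $V(G)$ no countable chain is cofinal. Closing the gap needs an actual argument here, for instance the observation that if some tail of $R$ lies in every $f(X_n)$ but $C(X,\omega)\neq f(X)$, then $f(X_n)$ meets both of these components of $G-X$ and hence $X$ itself for every $n$, so some vertex of $X$ survives in all $f(X_n)$ -- and the construction (or a further case analysis around such dominating vertices) must be designed to exploit or exclude this. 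Nothing in the splicing picture supplies that, so the surjectivity proof is incomplete exactly where the theorem has its content.
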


\subsection{Separations of sets and abstract separation systems}

Separation systems are a fundamental notion in graph minor theory.
In this section, we briefly recall the definitions from~\cite{BibleNew,AbstractSepSys,RhdTreeSets} that we need, without detailed explanations: for these we refer to the citations.

A \emph{separation of a set} $V$ is an unordered pair $\{A,B\}$ such that $A\cup B=V$.
The ordered pairs $(A,B)$ and $(B,A)$ are its \emph{orientations}.
Then the \emph{oriented separations} of $V$ are the orientations of its separations.
The map that sends every oriented separation $(A,B)$ to its \emph{inverse} $(B,A)$ is an involution that reverses the partial ordering
\begin{align*}
    (A,B)\le (C,D)\;:\Leftrightarrow\;A\subset C\text{ and }B\supset D
\end{align*}
since $(A,B)\le (C,D)$ is equivalent to $(D,C)\le (B,A)$.

More generally, a \emph{separation system} is a triple $(\vS,{\le},{}^\ast)$ where $(\vS,{\le})$ is a partially ordered set and ${}^\ast\colon\vS\to\vS$ is an order-reversing involution.
We refer to the elements of $\vS$ as \emph{oriented separations}.
If an oriented separation is denoted by $\vs$, then we denote its \emph{inverse} $\vs^\ast$ as $\sv$, and vice versa.
That ${}^\ast$ is \emph{order-reversing} means $\vr\le\vs\leftrightarrow\rv\ge\sv$ for all $\vr,\vs\in\vS$.

A \emph{separation} is an unordered pair of the form $\{\!\vs,\sv\}$, and then denoted by $s$.
Its elements $\vs$ and $\sv$ are the \emph{orientations} of $s$.
The set of all separations $\{\!\vs,\sv\}\subset\vS$ is denoted by $S$.
When a separation is introduced as $s$ without specifying its elements first, we use $\vs$ and $\sv$ (arbitrarily) to refer to these elements.
Every subset $S'\subset S$ defines a separation system $\vSd:=\bigcup S'\subset\vS$ with the ordering and involution induced by~$\vS$.

Separations of sets, and their orientations, are an instance of this abstract setup if we identify $\{A,B\}$ with $\{\,(A,B)\,,(B,A)\,\}$.
Here is another example:
The set $\vE(T):=\{\,(x,y)\mid xy\in E(T)\,\}$ of all \emph{orientations} $(x,y)$ of the edges $xy=\{x,y\}$ of a tree $T$ forms a separation system with the involution $(x,y)\mapsto (y,x)$ and the natural partial ordering on $\vE(T)$ in which $(x,y)<(u,v)$ if and only if $xy\neq uv$ and the unique $\{x,y\}$--$\{u,v\}$ path in $T$ links $y$ to~$u$. 

In the context of a given separation system $(\vS,{\le},{}^\ast)$, a \emph{star (of separations)} is a subset $\sigma\subset\vS$ such that $\vr\le\sv$ for all distinct $\vr,\vs\in\sigma$; see Figure~\ref{fig:star} for an illustration.\footnote{Officially, in~\cite{AbstractSepSys} a star $\sigma$ is additionally required to consist only of oriented separations $\vs$ satisfying $\vs\neq\sv$. In this paper, however, all separations considered will satisfy this condition, which is why we will hide it for the convenience of the reader.}
If $t$ is a node of a tree $T$, then the set
\begin{align*}
    \vF_{\! t}:=\{\,(x,t)\mid xt\in E(T)\,\}
\end{align*}
is a star in $\vE(T)$.
\begin{figure}[t]
\centering
\includegraphics[scale=1]{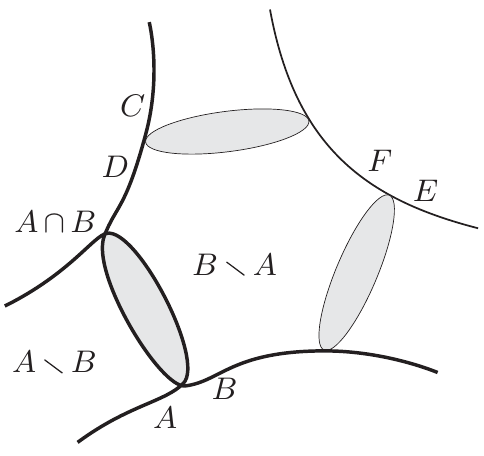}
\caption{The separations $(A,B)$, $(C,D)$, $(E,F)$ form a star~\cite{BibleNew}}
\label{fig:star}
\end{figure}

\subsection{Orientations}

An \emph{orientation} of a separation system $\vS$, or of a set $S$ of separations, is a subset $O\subset\vS$ such that $\vert\, O\cap\{\vs,\sv\}\,\vert=1$ for all~$s\in S$.
A \emph{partial orientation} of $S$ is an orientation of a subset of~$S$.
A subset $O\subset\vS$ is \emph{consistent} if there are no two distinct separations $r,s\in S$ with orientations $\vr<\vs$ and $\rv,\vs\in O$.
For example, the down-closure of any star $\vF_{\! t}$ in $\vE(T)$ is a consistent orientation.




\subsection{Nested sets of separations and tree sets}

Two separations are \emph{nested} if they have comparable orientations.
Two oriented separations $\vr,\vs$ are \emph{nested} if $r$ and $s$ are nested.
A set, either of separations or of oriented separations, is \emph{nested} if every two of its elements are nested.
For example, if $T$ is a tree, then both $E(T)$ and $\vE(T)$ are nested.

To state the definition of a tree set, we need the following definitions.
An oriented separation $\vr\in\vS$ is
\begin{enumerate}
\item \emph{degenerate} if $\vr=\rv$,
\item \emph{trivial} if there is a separation $s\in S$ such that both $\vr<\vs$ and $\vr<\sv$, and
\item \emph{small} if $\vr\le\rv$.
\end{enumerate}
In this paper, we will not have to worry about trivial separations.
The only degenerate separation of a set $V$ is $(V,V)$; its small separations are precisely the ones of the form $(A,V)$ with $A\subset V$.
All degenerate and trivial separations are small.

A separation system is
\begin{enumerate}
\item \emph{essential} if it contains neither degenerate nor trivial elements, and
\item \emph{regular} if it contains no small elements.
\end{enumerate}
If $(\vS,{\le},{}^\ast)$ is essential or regular, then we also call $\vS$ and $S$ \emph{essential} or \emph{regular}, respectively.
Regular implies essential.

A \emph{tree set} is a nested essential separation system.
If $(\vS,{\le},{}^\ast)$ is a tree set, then we also call $\vS$ and $S$ \emph{tree sets}.
If $T$ is a tree, then $\vE(T)$ is a tree set, the \emph{edge tree set} of~$T$.

In this paper, separations usually will not be small, and hence separation systems usually will be regular.
This means that when we define a candidate for a tree set and have to verify that it really is a tree set, it will suffice to verify nestedness unless stated otherwise.

A consistent orientation $O$ of a tree set $\vS$ is equal to the down-closure
\begin{align*}
    \dc{\sigma}_{\vS}:=\{\,\vr\in\vS\mid \exists\vs\in\sigma:\vr\le\vs\,\}
\end{align*}
in $\vS$ of the star $\sigma$ formed by the maximal elements of~$O$ if and only if every element of $O$ lies below some maximal element of~$O$.
We call these orientation defining stars $\sigma$ the \emph{splitting stars} of~$\vS$.
For example, the splitting stars of the edge tree set $\vE(T)$ of a finite tree are precisely the stars~$\vF_{\! t}$.
But if $T$ is a ray $v_0v_1\ldots$, then $\{\,(v_n,v_{n+1})\mid n\in\N\,\}$ is a consistent orientation of $\vE(T)$ that has no maximal element.

Gollin and Kneip~\cite{TreelikeSpaces} characterised the tree sets that are isomorphic to the edge tree set of a tree.
An \emph{isomorphism} between two separation systems is a bijection between their underlying sets that respects both their partial orderings and their involutions.
A chain $\mathcal{C}$ in a given poset is said to have \emph{order-type} $\alpha$ for an ordinal~$\alpha$ if $\mathcal{C}$ with the induced linear order is order-isomorphic to $\alpha$. The chain $\mathcal{C}$ is then said to be an $\alpha$-\emph{chain}.

\begin{theorem}[{\cite[Theorem~1]{TreelikeSpaces}}]\label{IVTheUndominatingStar;KneipTreeSets}
\label{KneipTreeSets}
A tree set is isomorphic to the edge tree set of a tree if and only if it is regular and contains no $(\omega+1)$-chain.
\end{theorem}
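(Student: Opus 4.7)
For the forward direction, suppose $\vS = \vE(T)$ for a tree $T$. Regularity is immediate from the defining partial order: $(x,y) \le (y,x)$ would require either $xy = yx$ as edges (no order relation in that case) or two distinct edges with a linking condition (which collapses), both impossible. For the absence of $(\omega+1)$-chains, suppose for contradiction that $e_0 < e_1 < \cdots < e_\omega$ with $e_n = (x_n, y_n)$ and $e_\omega = (u,v)$. For each $n < m < \omega$ the two comparisons $e_n < e_m$ and $e_m < e_\omega$ force, respectively, $x_m$ and $y_m$ to lie on the $y_n$-to-$u$ path in $T$, so the edge $e_m$ itself lies on this path. Hence infinitely many distinct edges lie on the finite $y_0$-to-$u$ path, a contradiction.

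For the backward direction, let $\vS$ be a regular tree set with no $(\omega+1)$-chain. I would construct the desired tree $T$ by taking as its node set the set of splitting stars of $\vS$ and as its edge set $S$, with each $s \in S$ joining the splitting star containing $\vs$ to the splitting star containing $\sv$. The candidate isomorphism $\vS \to \vE(T)$ sends each oriented separation $\vs$ to the orientation of the edge $s$ of $T$ pointing toward the splitting star that contains $\vs$.

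The central lemma is that every $\vs \in \vS$ lies in exactly one splitting star. For existence, define $O_{\vs} := \{\vr \in \vS : \vr \le \vs\} \cup \{\vr \in \vS : \vr < \sv\}$. Regularity implies this is a partial orientation (having both $\vr$ and $\rv$ inside would force $\vs$ or $\sv$ to be small), nestedness then implies it is a full orientation, and a direct check using essentiality confirms it is consistent. Moreover $\vs$ is maximal in $O_{\vs}$. The no $(\omega+1)$-chain hypothesis now ensures that every element of $O_{\vs}$ lies below a maximal one: any infinite ascending chain in $O_{\vs}$ must lie entirely in $\{\vr \le \vs\}$ or entirely in $\{\vr < \sv\}$ (a crossing would force a member of the chain to be trivial), and in either case the chain acquires $\vs$ or $\sv$ as an upper bound in $\vS$, producing a forbidden $(\omega+1)$-chain. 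Hence $O_{\vs} = \dc{\sigma_{\vs}}$ for the splitting star $\sigma_{\vs}$ of maximal elements of $O_{\vs}$, and $\vs \in \sigma_{\vs}$. For uniqueness, any splitting star $\sigma'$ containing $\vs$ yields a consistent orientation $\dc{\sigma'}$ in which $\vs$ is maximal; if some $\vr \in \dc{\sigma'}$ satisfied $\sv < \vr$, then $\rv < \vs$ with $\vr, \vs \in \dc{\sigma'}$ would violate consistency, so $\dc{\sigma'} \subset O_{\vs}$ and thus equality.

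Once the lemma is in hand, the remaining verifications are routine. Nestedness of $\vS$ precludes cycles in $T$, and any two splitting stars are joined by a finite path in $T$, since an infinite such path would produce an $(\omega+1)$-chain of the underlying separations. The bijection between $\vS$ and $\vE(T)$ respects involution by construction and the partial order by direct comparison of the nestedness relations on either side. The main obstacle is the splitting-star existence step, where the no $(\omega+1)$-chain hypothesis must be leveraged to defeat the `receding top' pathology illustrated by the ray's consistent orientation $\{(v_n, v_{n+1}) : n \in \N\}$, which has no maximum; ruling this out uniformly above every $\vs \in \vS$ is what makes the theorem nontrivial.
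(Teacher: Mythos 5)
There is nothing in the paper to compare your argument against: Theorem~\ref{KneipTreeSets} is quoted from Gollin and Kneip~\cite{TreelikeSpaces} and not proved here, so your proposal has to stand on its own. Judged that way, your overall route is the natural one (nodes of $T$ = splitting stars, edges = separations), your forward direction is correct, and your central lemma is handled well: the verification that $O_{\vs}$ is a consistent orientation with $\vs$ maximal, the ``no crossing'' argument showing that an ascending chain in $O_{\vs}$ stays in one of the two defining parts and hence would extend to a forbidden $(\omega+1)$-chain, and the uniqueness of the splitting star containing $\vs$ are all sound (the small/trivial case distinctions you wave at with ``regularity/essentiality'' do go through, since regular implies essential).

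The genuine gap is the connectedness of $T$, and with it the order-preservation check. Your justification --- ``any two splitting stars are joined by a finite path, since an infinite such path would produce an $(\omega+1)$-chain'' --- only addresses finiteness of a path that is presumed to exist; a path between two vertices is finite by definition, and the issue is to produce one at all. What is missing is the following argument. Given splitting stars $\sigma_1,\sigma_2$ with orientations $O_1=\dc{\sigma_1}$, $O_2=\dc{\sigma_2}$, let $D$ be the set of separations they orient differently, taken with their $O_2$-orientations. If $r\neq s$ lie in $D$ with $\vr,\vs\in O_2$ and $\rv,\sv\in O_1$, then nestedness leaves four possibilities, of which $\vr<\sv$ violates consistency of $O_1$ (since $\rv,\sv\in O_1$) and $\sv<\vr$ violates consistency of $O_2$ (since $\vs,\vr\in O_2$), while the equalities would place both orientations of one separation in one orientation; hence $D$ is a chain. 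Next, $D$ is finite: an infinite strictly increasing sequence in $D\subset O_2$ lies (by essentiality, exactly as in your crossing argument) below one and the same element of $\sigma_2$, yielding an $(\omega+1)$-chain, and an infinite strictly decreasing sequence inverts to an increasing one in $O_1$ with the same contradiction. One then walks from $\sigma_1$ to $\sigma_2$ by reorienting the elements of the finite chain $D$ one at a time, checking that each intermediate orientation is again a splitting-star orientation and that consecutive stars are joined by an edge of $T$; this same finite-chain analysis is what makes your ``direct comparison of the nestedness relations'' for order-preservation actually work, and it is a second, separate use of the chain hypothesis that your sketch does not supply. With this added (plus the trivial checks that distinct separations give distinct, non-parallel edges), the proof is complete.
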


\subsection{Separations of graphs}

A \emph{separation of a graph} $G$ is a separation $\{A,B\}$ of the set $V(G)$ (meaning $A\cup B=V(G)$) such that $G$ has no edge `jumping' the \emph{separator} $A\cap B$, meaning that $G$ contains no edge between $A\setminus B$ and $B\setminus A$.
Thus, (oriented) separations of graphs are an instance of (oriented) separations of sets.
The \emph{order} of $\{A,B\}$ is the cardinal~$\vert A\cap B\vert$.
The set of all finite-order separations of a graph $G$ is denoted by \mbox{$\Sinf=\Sinf(G)$}.
A tree set \emph{of} $G$ is a tree set of separations of $G$ with the usual partial ordering and involution.

If $(A,B)$ and $(C,D)$ are two separations of $G$, then
\begin{enumerate}
\item $(A,B)\vee (C,D):=(A\cup C,B\cap D)$ is their \emph{supremum}, and
\item $(A,B)\wedge (C,D):=(A\cap C,B\cup D)$ is their \emph{infimum}.
\end{enumerate}
Supremum and infimum satisfy De~Morgan's law: $(\vr\vee\vs)^\ast=\rv\wedge\sv$.

The following non-standard notation often will be useful as an alternative perspective on separations of graphs.
Recall that for a vertex set $X\subset V(G)$ we denote the collection of the components of $G-X$ by $\cC_X$.
If any $X\subset V(G)$ and $\cC\subset\cC_X$ are given, then these give rise to a separation of $G$ which we denote by
\begin{align*} 
    \sep{X}{\cC}:=\big\{\;V\setminus V[\cC]\;,\;X\cup V[\cC]\;\big\}
\end{align*}
where $V[\cC]=\bigcup\,\{\,V(C)\mid C\in\cC\,\}$.
Note that every separation $\{A,B\}$ of $G$ with $A,B\subset V(G)$ can be written in this way. 
For the orientations of $\sep{X}{\cC}$ we write
\begin{align*}
\rsep{X}{\cC}:=\big(\;V\setminus V[\cC]\;,\;X\cup V[\cC]\;\big)\quad\text{and}\quad\lsep{\cC}{X}:=\big(\;V[\cC]\cup X\;,\;V\setminus V[\cC]\;\big).
\end{align*}
If $C$ is a component of $G-X$ we write $\sep{X}{C}$ instead of $\sep{X}{\{C\}}$.  Similarly, we write $\lsep{C}{X}$ and $\rsep{X}{C}$ instead of $\lsep{\{C\}}{X}$ and $\rsep{X}{\{C\}}$, respectively.

\subsection{Parts and torsos}\label{subsec:TreeSets}





If $T$ is a tree set of separations of $G$ and $O$ is a consistent orientation of $T$, then the intersection $\Pi=\bigcap\,\{\,B\mid (A,B)\in O\}$ is called the \emph{part} of $O$.
And the graph that is obtained from $G[\Pi]$ by adding an edge $xy$ whenever $x\neq y\in\Pi$ lie together in the separator of some separation of $O$ is called the \emph{torso} of $O$ (or of $\Pi$ if $O$ is clear from context).
We denote the torso of $O$ by $\torso(G,O)$.

We will need the following lemma and its corollaries (the lemma is folklore and has been proved, e.g., in~\cite{carmesin2014all}; we present an alternative proof for convenience):

\begin{lemma}\label{partPathsSeparator}
If $\Pi$ is a part of a tree set of $G$, then for every $G[\Pi]$-path $P$ there is some separation of the tree set whose separator contains both endvertices of $P$.
\end{lemma}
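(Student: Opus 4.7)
The plan is to exhibit a single orientation $(A,B)\in O$ such that every internal vertex of $P$ lies in $A\setminus B$. Once this is done the endvertices of $P$ automatically belong to the separator $A\cap B$: writing $P=u\,x_1\ldots x_{n-1}\,v$, we have $u\in \Pi\subseteq B$ while $x_1\in A\setminus B$, and since no edge of $G$ jumps from $A\setminus B$ to $B\setminus A$ this forces $u\in A\cap B$; the same argument applied to the edge $x_{n-1}v$ puts $v\in A\cap B$. The separator of the unoriented separation $\{A,B\}$ then witnesses the claim.

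To produce such $(A,B)$, I would study the family $\mathcal{F}:=\{\,V(P)\cap (A\setminus B)\mid (A,B)\in O\,\}\setminus\{\emptyset\}$ of subsets of the finite set $V(P)\setminus\Pi$ of internal vertices of $P$; every internal vertex lies in at least one member of $\mathcal{F}$, because it is outside $\Pi$. Nestedness of the tree set together with consistency of $O$ gives a trichotomy for any two orientations $(A_1,B_1),(A_2,B_2)\in O$: either $(A_1,B_1)\le (A_2,B_2)$, or $(A_2,B_2)\le (A_1,B_1)$, or $(A_1,B_1)\le (B_2,A_2)$, i.e.\ $A_1\subseteq B_2$ and $A_2\subseteq B_1$. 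In the first two cases the corresponding members of $\mathcal{F}$ are nested under inclusion, while in the third they are disjoint, since $A_1\setminus B_1\subseteq A_1\subseteq B_2$ whereas $A_2\setminus B_2$ avoids $B_2$. Hence $\mathcal{F}$ is a laminar family, and by finiteness its maximal elements exist and are pairwise disjoint.

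The crucial step is to show that $\mathcal{F}$ has only one maximal element. Suppose for contradiction it has two distinct ones $F_a\ne F_b$, arising from orientations $(A_a,B_a),(A_b,B_b)\in O$; these must be in the disjoint case of the trichotomy, so $A_a\subseteq B_b$ and $A_b\subseteq B_a$. Since the internal vertices of $P$ form a connected subpath and each of them lies in some maximal member of $\mathcal{F}$, we can find consecutive internal vertices $x_i\in F_a$ and $x_{i+1}\in F_b$. From $x_i\in A_a\subseteq B_b$ together with $x_i\notin B_a\supseteq A_b$ we deduce $x_i\in B_b\setminus A_b$, while $x_{i+1}\in A_b\setminus B_b$; the edge $x_ix_{i+1}$ of $P$ therefore jumps the separator of $\{A_b,B_b\}$, contradicting that $\{A_b,B_b\}$ is a separation of $G$. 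Thus $\mathcal{F}$ has a unique maximal element, which by the covering property contains every internal vertex of $P$, and the first paragraph closes the argument. I expect this consecutive-pair step to be the main hurdle, as it is the point where the global path condition has to be translated into the abstract combinatorics of the consistent orientation.
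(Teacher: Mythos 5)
Your proposal is correct and takes essentially the same approach as the paper: there, too, one picks for each internal vertex of $P$ a witnessing separation in the consistent orientation, passes to the maximal ones, uses nestedness plus consistency to see that these pairwise point towards each other (form a star), and derives a contradiction from the connectedness of the interior of $P$ — which is exactly your laminar-family argument in slightly different clothing. One cosmetic remark: if $\mathcal{F}$ has three or more maximal members, the consecutive internal vertices you produce need not lie in the initially fixed $F_a$ and $F_b$, so you should instead take $F_a$ and $F_b$ to be the (distinct) maximal members containing that consecutive pair; since your disjointness argument applies to any two distinct maximal members, the rest goes through verbatim.
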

\begin{proof}
Let $O$ be any consistent orientation of a tree set of $G$, write $\Pi$ for its part and suppose that $P=x v_1\ldots v_n y$ is a $G[\Pi]$-path (so $n\ge 1$).
For every $k\in [n]$ pick an oriented separation $(A_k,B_k)\in O$ with $v_k\in A_k\setminus B_k$ (so that $(A_k,B_k)$ witnesses $v_k\notin\Pi$).
Let $N$ consist of the ${\le}$-maximal separations from the collection $\{\,(A_k,B_k)\mid k\in [n]\}$.
Then for every $v_k$ there is a separation $(A,B)\in N$ with $v_k\in A\setminus B$.
Our aim is to show that $N$ is a singleton, since then the separator of the sole separation in $N$ must contain both $x$ and $y$, so we would be done.
By the choice of $N$, every two oriented separations in $N$ are ${\le}$-incomparable.
As $O$ is a consistent orientation of a tree set, this means that $N$ must be a star.
Then $\vert N\vert=1$ is evident, since otherwise the sides $G[A\setminus B]$ for $(A,B)\in N$ altogether induce a disconnection of the subpath $v_1\ldots v_n$ of $P$ contradicting its connectedness.
\end{proof}

\begin{corollary}\label{endInClosureOfPartGivesRayInTorso}
    If $\Pi$ is a part of a tree set of $G$ and $\omega$ is an end of $G$ in the closure of $\Pi$ while $G[\Pi]$ coincides with the torso of $\Pi$, 
    then $\omega$ has a ray in $G[\Pi]$.
\end{corollary}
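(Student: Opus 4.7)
The plan is to lift the comb that certifies $\omega\in\overline\Pi$ to a ray inside $G[\Pi]$. By the comb characterization of closure recalled earlier, $G$ contains a comb with pairwise distinct teeth $t_1,t_2,\ldots\in\Pi$ attached by pairwise disjoint paths $Q_i$ to a spine $R\in\omega$ at feet $s_i\in V(R)$. For each $i$, concatenating $Q_i$, the spine segment $R[s_i,s_{i+1}]$, and $Q_{i+1}$ yields a $G$-path $P_i$ from $t_i$ to $t_{i+1}$.

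Enumerate the vertices of $V(P_i)\cap\Pi$ in their order along $P_i$ as $t_i=u_0^i,u_1^i,\ldots,u_{m_i}^i=t_{i+1}$. Each subpath $P_i[u_j^i,u_{j+1}^i]$ is either a single edge of $G[\Pi]$ or a $G[\Pi]$-path, so by Lemma~\ref{partPathsSeparator} its endvertices lie together in some separator of the tree set; since $G[\Pi]$ coincides with the torso, this yields an edge $u_j^iu_{j+1}^i\in E(G[\Pi])$. Let $H\subset G[\Pi]$ be the subgraph formed by all the $u_j^i$ together with these edges. I will check that $H$ is connected (its defining sequences chain at the teeth), infinite (the $t_i$ are distinct), and locally finite (each $v\in\Pi$ lies on at most two of the paths $P_i$, since the $Q_i$ are pairwise disjoint and the spine $R$ is a simple ray, so $v$ has degree at most four in $H$); then König's lemma supplies a ray $R'\subset H\subset G[\Pi]$.

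Finally, to see $R'\in\omega$, fix any finite $X\subset V(G)$. The spine has a tail in $C(X,\omega)$, and only finitely many of the disjoint paths $Q_i$ can meet $X$, so $P_i\subset C(X,\omega)$ for all but finitely many $i$; hence $V(H)\setminus C(X,\omega)$ is finite, and since $R'$ has pairwise distinct vertices its tail eventually stays in $C(X,\omega)$. The only nonroutine step is the local finiteness of $H$: everything else reduces directly to the comb characterization of closure, Lemma~\ref{partPathsSeparator}, and the torso hypothesis, so the main obstacle lies in pinning down that each $v\in\Pi$ lies on only boundedly many of the $P_i$.
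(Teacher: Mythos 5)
Your proof is correct, and its engine is the same as the paper's: the comb characterization of the closure plus Lemma~\ref{partPathsSeparator} and the hypothesis that the torso equals $G[\Pi]$, which turns each maximal $\Pi$-avoiding stretch of the comb into an edge of $G[\Pi]$. The only real divergence is in how the ray is then extracted: the paper first normalizes the comb so that it meets $\Pi$ precisely in its teeth, after which consecutive teeth are adjacent in $G[\Pi]$ and the teeth themselves form a ray equivalent to the spine, with no further work needed. You skip that normalization and instead gather all $\Pi$-vertices of the connecting paths $P_i$ into the auxiliary graph $H$, check it is infinite, connected and locally finite --- your worry there is unfounded, since $P_i\cap P_k\neq\emptyset$ forces $\lvert i-k\rvert\le 1$ (the only overlap being along $Q_{i+1}$), giving degree at most four --- then apply König's lemma and verify $R'\in\omega$ directly via the components $C(X,\omega)$. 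Both routes are sound; the paper's WLOG buys a two-line proof, while your version trades that normalization step for the König-type extraction and the explicit end-membership check.
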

\begin{proof}
    If $\omega$ lies in the closure of $\Pi$, we find a comb in $G$ with its spine $R$ in $\omega$ and all of its teeth in $\Pi$.
    Without loss of generality the comb meets $\Pi$ precisely in its teeth.
    Then, as $G[\Pi]$ coincides with the torso of $\Pi$, it has an edge between every two consecutive teeth by Lemma~\ref{partPathsSeparator}, and so contains a ray equivalent to $R$.
\end{proof}

\begin{corollary}\label{equivalentRaysOfPartToTorso}
If $\Pi$ is a part of a tree set of $G$ and two rays of $G[\Pi]$ are equivalent in $G$, then they are equivalent in the torso of $\Pi$ as well.
\end{corollary}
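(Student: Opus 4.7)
The plan is to verify ray-equivalence in the torso $H:=\torso(G,O)$ directly from the definition, where $O$ is the consistent orientation of the tree set that produces~$\Pi$. So I fix an arbitrary finite vertex set $X'\subset\Pi=V(H)$; the aim is to find tails of $R_1$ and $R_2$ lying in a common component of $H-X'$.

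Since $X'$ is also a finite subset of $V(G)$ and $R_1,R_2$ are equivalent in~$G$, there exist tails $T_i\subset R_i$ lying in the same component of $G-X'$, so $G-X'$ contains a finite path $P$ from some $u\in T_1$ to some $v\in T_2$. Next I would \emph{lift} $P$ to a walk in $H-X'$: list the vertices of $P$ that lie in $\Pi$ in the order they appear along $P$, say $u=u_0,u_1,\ldots,u_k=v$. Between consecutive $u_i,u_{i+1}$, either the two vertices are adjacent in $G[\Pi]$, or the subpath of $P$ between them is a $G[\Pi]$-path (its interior lying outside~$\Pi$). In the latter case Lemma~\ref{partPathsSeparator} supplies a separation of the tree set whose separator contains both $u_i$ and~$u_{i+1}$, so by the definition of the torso $u_iu_{i+1}$ is an edge of~$H$. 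Either way, $u_0u_1\ldots u_k$ is a walk in~$H$, and it avoids $X'$ because each $u_i$ lies on $P\subset G-X'$.

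To conclude, I would observe that each tail $T_i$ itself is a subgraph of $G[\Pi]\subset H$, so after deleting the (finite) initial segment of $T_i$ meeting~$X'$ one obtains a further tail $T_i'\subset T_i$ of $R_i$ lying in $H-X'$ and containing $u$ respectively~$v$. Concatenating $T_1'$ and $T_2'$ with the lifted walk $u_0\ldots u_k$ shows that $T_1'$ and $T_2'$ live in the same component of $H-X'$; as $X'$ was arbitrary, this yields the desired equivalence of $R_1$ and~$R_2$ in~$H$. The only place that requires a moment of care is the segmentwise decomposition of $P$ into subpaths between consecutive points of $\Pi$ so that Lemma~\ref{partPathsSeparator} is applied to genuine $G[\Pi]$-paths, but this is routine.
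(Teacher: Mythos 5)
Your proof is correct, but it takes a somewhat different route from the paper's. The paper's one-line proof uses the characterisation of ray equivalence via infinitely many pairwise disjoint paths: it inductively constructs infinitely many disjoint $G$-paths between the two rays and then applies Lemma~\ref{partPathsSeparator} to turn each of them into a path of the torso, so the disjoint connecting paths witnessing equivalence in the torso are produced all at once. You instead verify the definition of equivalence in the torso $H$ directly: for each finite $X'\subset\Pi$ you take a single connecting path $P$ in $G-X'$ between tails of the rays and lift it to a walk in $H-X'$ by listing its $\Pi$-vertices and replacing each $G[\Pi]$-path segment by a torso edge supplied by Lemma~\ref{partPathsSeparator}, the point being that this walk avoids $X'$ because all its vertices lie on $P$. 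Both arguments pivot on the same Lemma~\ref{partPathsSeparator}; yours is more elementary in that it needs neither the disjoint-paths characterisation of ray equivalence nor any disjointness bookkeeping, at the cost of arguing separator-by-separator, while the paper's is shorter on the page and yields the stronger structural witness (infinitely many disjoint torso paths) in one stroke. One minor remark: your deletion of ``the initial segment of $T_i$ meeting $X'$'' is vacuous, since the tails $T_i$ were already chosen inside a component of $G-X'$ and hence avoid $X'$ entirely; this does no harm, but the step can simply be dropped.
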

\begin{proof}
Given two rays of $G[\Pi]$ that are equivalent in $G$, we inductively construct infinitely many pairwise vertex-disjoint paths in $G$ between them, and then employ Lemma~\ref{partPathsSeparator} to turn these into paths of the torso.
\end{proof}

The next corollary has already been known to Carmesin~\cite{carmesin2014all}:

\begin{corollary}\label{restrictconnectedsets}
The intersection of a connected set of vertices of $G$ with a part of a tree set of separations of $G$ induces a connected subgraph of the part's torso.\qed
\end{corollary}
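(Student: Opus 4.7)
The plan is to reduce the statement to Lemma~\ref{partPathsSeparator} by walking along a witness path in $G$ and projecting it onto $\Pi$.

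Let $W$ be a connected set of vertices of $G$ and let $\Pi$ be a part of a tree set of separations of~$G$. To show that $W\cap\Pi$ induces a connected subgraph of $\torso(G,\Pi)$, I fix two vertices $x,y\in W\cap\Pi$. Since $W$ is connected, $G[W]$ contains an $x$--$y$ path $P$. List the vertices of $P$ that lie in $\Pi$ in the order in which they appear on $P$ as
\[
    x=u_0,u_1,\ldots,u_k=y.
\]
Each $u_i$ lies on $P\subseteq G[W]$, so every $u_i$ belongs to $W\cap\Pi$. It therefore suffices to prove that $u_iu_{i+1}$ is an edge of the torso for every $i<k$.

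Fix $i<k$ and consider the subpath $P_i$ of $P$ between $u_i$ and $u_{i+1}$. By the choice of the enumeration, every internal vertex of $P_i$ (if any) lies outside $\Pi$. If $P_i$ has no internal vertex, then $u_iu_{i+1}$ is an edge of $G$ with both endvertices in $\Pi$, hence an edge of $G[\Pi]\subseteq\torso(G,\Pi)$. Otherwise $P_i$ is a $G[\Pi]$-path, and Lemma~\ref{partPathsSeparator} yields a separation of the tree set whose separator contains both $u_i$ and $u_{i+1}$; by the definition of the torso this means that $u_iu_{i+1}$ is again an edge of $\torso(G,\Pi)$.

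Concatenating the resulting torso edges $u_0u_1,u_1u_2,\ldots,u_{k-1}u_k$ gives a walk in $\torso(G,\Pi)$ from $x$ to $y$ whose vertices all lie in $W\cap\Pi$, proving the claim. There is no real obstacle here: the only subtlety is to remember to split into the two cases depending on whether $P_i$ has internal vertices, since Lemma~\ref{partPathsSeparator} is stated for paths with at least one internal vertex; both cases are immediate.
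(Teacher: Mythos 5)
Your proof is correct and is exactly the argument the paper intends: the corollary is stated with no written proof as an immediate consequence of Lemma~\ref{partPathsSeparator}, obtained by replacing each maximal subpath of a connecting path that leaves the part by a torso edge between its endvertices. Your explicit handling of the trivial case (consecutive vertices of $\Pi$ joined by an edge of $G[\Pi]$) versus the $G[\Pi]$-path case is the right way to fill in the details.
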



\subsection{Infinite tangles}
\label{DefInfOrderTangles}
\label{PropertiesOfInfOrderTangles}


The \emph{interior} of a star $\{\,(A_i,B_i)\mid i\in I\,\}\subset\vSinf$ is the intersection $\bigcap_{i\in I}B_i$.
An \emph{$\aleph_0$-tangle} (of $G$), or \emph{infinite tangle}, is a consistent orientation of $\Sinf$ that contains no finite star of finite interior as a subset.
We write $\Theta=\Theta(G)$ for the set of all $\aleph_0$-tangles of~$G$.
This particular definition is due to Diestel who showed in his paper~\cite{EndsAndTangles} that it is equivalent to the original definition by Robertson and Seymour~\cite{GMX}.
Infinite tangles are resilient in the following sense:
\begin{lemma}[{\cite[Lemma~1.10]{EndsAndTangles}}]\label{FiniteEditDistance}
Let $\tau$ be an $\aleph_0$-tangle of $G$ and $(A,B)\in\tau$.
Let $(A',B')$ be a separation of $G$ with $A\triangle A'$ and $B\triangle B'$ finite.
Then $(A',B')\in\tau$.
\end{lemma}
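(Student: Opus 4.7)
My plan is to argue by contradiction, exploiting the defining property of an $\aleph_0$-tangle that it contains no finite star of finite interior as a subset. First put $X:=A\cap B$ and $X':=A'\cap B'$. Since $X'\subseteq X\cup(A\triangle A')\cup(B\triangle B')$ is finite, the separation $(A',B')$ lies in $\Sinf$. Assume, for contradiction, $(A',B')\notin\tau$, so that $(B',A')\in\tau$.

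The key construction is the infimum $(\bar A,\bar B):=(A,B)\wedge(A',B')=(A\cap A',B\cup B')$. A routine check using $A\cup B=A'\cup B'=V$ shows that $(\bar A,\bar B)$ really is a separation of $G$, with separator $\bar A\cap\bar B=(A\cap A'\cap B)\cup(A\cap A'\cap B')\subseteq X\cup X'$, so $(\bar A,\bar B)\in\Sinf$. Since $(\bar A,\bar B)\le(A,B)\in\tau$, consistency of $\tau$ forces $(\bar A,\bar B)\in\tau$: if $\{\bar A,\bar B\}\ne\{A,B\}$ as separations, the assumption $(\bar B,\bar A)\in\tau$ would immediately contradict consistency with $(A,B)\in\tau$; and if they coincide, then either $(\bar A,\bar B)=(A,B)$ trivially, or $(\bar A,\bar B)=(B,A)$, the latter forcing $A=V$ with $B$ finite, so that $\{(A,B)\}\subseteq\tau$ would already be a forbidden singleton star of finite interior.

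Next I would consider the pair $\sigma:=\{(\bar A,\bar B),(B',A')\}$. Because $(\bar A,\bar B)\le(A',B')=(B',A')^\ast$, $\sigma$ is a star, and by construction $\sigma\subseteq\tau$. Its interior is $\bar B\cap A'=(B\cap A')\cup X'$, and $B\cap A'\subseteq(A\cap B)\cup(A'\setminus A)\subseteq X\cup(A\triangle A')$ is finite. Hence $\sigma$ is a finite star of finite interior contained in $\tau$, contradicting the $\aleph_0$-tangle axiom, and forcing $(A',B')\in\tau$.

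I expect the main obstacle to be the intermediate step that $(\bar A,\bar B)\in\tau$, specifically the degenerate subcase where $(\bar A,\bar B)$ might coincide with $(B,A)$ rather than with $(A,B)$; once that is dispatched, the concluding star argument is a clean instance of the tangle axiom.
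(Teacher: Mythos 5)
Your proof is correct. Note that the paper itself does not prove this lemma at all -- it is quoted as Lemma~1.10 of Diestel's paper \emph{Ends and tangles} -- so there is no in-paper argument to compare against; what you have written is a sound self-contained proof of the cited fact, and it follows the natural route: the corner separation $(\bar A,\bar B)=(A,B)\wedge(A',B')$ has finite order (its separator lies in $(A\cap B)\cup(A'\cap B')$), consistency forces $(\bar A,\bar B)\in\tau$, and then $\{(\bar A,\bar B),(B',A')\}\subseteq\tau$ would be a finite star whose interior $\bar B\cap A'=(B\cap A')\cup(A'\cap B')$ is finite because $B\cap A'\subseteq (A\cap B)\cup(A'\setminus A)$, contradicting the tangle axiom. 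Your treatment of the degenerate subcase $(\bar A,\bar B)=(B,A)$ (forcing $A=V$ and $B$ finite, so that $\{(A,B)\}$ is already a forbidden singleton star) is also fine. The only coincidence you do not mention explicitly is $(\bar A,\bar B)=(B',A')$, which would make $\sigma$ a singleton; but your interior bound is unaffected by this (it forces $V$ itself to be finite, in which case no $\aleph_0$-tangle exists), so nothing breaks. In short: correct, and essentially the standard argument for this resilience statement.
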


In the remainder of this section, we give a summary of the results in Diestel's paper~\cite{EndsAndTangles} on infinite tangles.
If $\omega$ is an end of $G$, then letting
\begin{align*}
\tau_\omega:=\big\{\,\rsep{X}{\cC}\in\vSinf\;\big\vert\;C(X,\omega)\in\cC\,\big\}
\end{align*}
defines an injection $\Omega\hookrightarrow\Theta$, $\omega\mapsto\tau_\omega$. The $\aleph_0$-tangles of the form $\tau_\omega$ are called \emph{end tangles}.
By abuse of notation we write $\Omega$ for the collection of all end tangles of $G$, so we have $\Omega\subseteq\Theta$.

In order to understand the $\aleph_0$-tangles that are not ends, Diestel studied an inverse limit description of~$\Theta$.
In this paper, partition classes are required to be non-empty as usual, with the exception that whenever we speak of a \emph{bipartition} we allow for at most one empty class.
Now if $\tau$ is an $\aleph_0$-tangle, then for every $X\in\cX$ it chooses one \emph{big side} from each bipartition $\{\cC,\cC'\}$ of $\cC_X$, namely the $\cD\in\{\cC,\cC'\}$ with $\rsep{X}{\cD}\in\tau$.
Since it chooses theses sides consistently, it induces an ultrafilter $U(\tau,X)$ on $\cC_X$, one for every $X\in\cX$, which is given by
\begin{align*}
U(\tau,X)=\{\,\cC\subseteq\cC_X\mid \rsep{X}{\cC}\in\tau\,\},
\end{align*}
and these ultrafilters are compatible in that they form a limit of the inverse system $\{\,\beta(\cC_X)\,,\,\beta(\cb_{X',X})\,,\,\cX\,\}$.
Here, each set $\cC_X$ is endowed with the discrete topology and $\beta(\cC_X)$ denotes its \SC\ \comp .
Every bonding map $\beta(\cb_{X',X})$ is the unique continuous extension of $\cb_{X',X}$ that is provided by the \SC\ property.
More explicitly, the map $\beta(\cb_{X',X})$ sends each ultrafilter $U'\in\beta (\cC_{X'})$ to its \emph{restriction}
\begin{align*}
U'\rest X=\{\,\cC\subseteq\cC_X\mid\exists\,\cC'\in U'\colon\cC\supseteq\cC'\rest X\,\}\in\beta (\cC_X)
\end{align*}
where $\cC'\rest X=\cb_{X',X}[\cC']$.
As one of his main results, Diestel showed that the map
\begin{align*}
\tau\mapsto (\,U(\tau,X)\mid X\in\cX\,)
\end{align*}
defines a bijection between the tangle space $\Theta$ and the inverse limit $\invLim\beta(\cC_X)$.
Moreover, he showed that the ends of $G$ are precisely those $\aleph_0$-tangles whose induced ultrafilters are all principal.

For every $\aleph_0$-tangle $\tau$ we write $\cX_\tau$ for the collection of all $X\in\cX$ for which the induced ultrafilter $U(\tau,X)$ is free.
Equivalently, $\cX_\tau$ is the collection of those $X\in\cX$ for which the star $\{\,\lsep{C}{X}\mid C\in \cC_X\,\}$ is included in $\tau$.
The set $\cX_\tau$ is empty if and only if $\tau$ is an end tangle. 
An $\aleph_0$-tangle $\tau$ with $\cX_\tau$ non-empty is called an \emph{ultrafilter tangle}, and we write $\Upsilon$ for the collection of all ultrafilter tangles, i.e.~$\Upsilon=\Theta\setminus\Omega$.
\begin{theorem}[{\cite[Theorem~3.5]{EndsAndTangles}}]\label{UFtangleDeterminedByFreeUF}
For every ultrafilter tangle $\tau$ and each $X\in\cX_\tau$ the free ultrafilter $U(\tau,X)$ determines $\tau$ in that
\begin{align*}
    \tau=\big\{\,(A,B)\in\vSinf{}\;\big\vert\;\exists\,\cC\in U(\tau,X):V[\cC]\subset B\setminus A\,\big\}.
\end{align*}
\end{theorem}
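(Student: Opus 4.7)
Write $\tau'$ for the right-hand side. My plan is to prove $\tau = \tau'$ by showing (a) $\tau'$ contains at most one orientation of each separation of $\Sinf$, and (b) $\tau \subseteq \tau'$. Since $\tau$ is a full orientation of $\Sinf$, (a) and (b) together force $\tau = \tau'$. Fact (a) is the easy step: if both $(A,B)$ and $(B,A)$ lay in $\tau'$ with witnesses $\cC_1, \cC_2 \in U(\tau, X)$ such that $V[\cC_1] \subset B \setminus A$ and $V[\cC_2] \subset A \setminus B$, then $\cC_1 \cap \cC_2$ would lie in $U(\tau, X)$ by the filter axioms but would be empty (any member would be a component of $G-X$ with vertex set in $(B \setminus A) \cap (A \setminus B) = \emptyset$), contradicting that $U(\tau, X)$ is an ultrafilter.

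For (b), take $(A, B) \in \tau$ and refine $X$ to the finite set $X' := X \cup (A \cap B)$. Because $A \cap B \subset X'$, every component of $G - X'$ avoids $A \cap B$ and is therefore, by connectedness and the separation condition on $(A,B)$, contained entirely in $A \setminus B$ or entirely in $B \setminus A$; this partitions $\cC_{X'}$ into $\cC_{X'}^A$ and $\cC_{X'}^B$, exactly one of which lies in $U(\tau, X')$. I would rule out $\cC_{X'}^A$: if $\cC_{X'}^A \in U(\tau, X')$ then $\rsep{X'}{\cC_{X'}^A} \in \tau$, but a direct computation shows that the two sides of $\rsep{X'}{\cC_{X'}^A}$ differ from those of $(B, A)$ only on subsets of the finite set $X'$, so Lemma~\ref{FiniteEditDistance} forces $(B, A) \in \tau$, contradicting $(A, B) \in \tau$. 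Thus $\cC_{X'}^B \in U(\tau, X')$, and compatibility of the induced ultrafilters along the bonding maps yields $\cC_0 := \cb_{X',X}[\cC_{X'}^B] \in U(\tau, X)$.

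A short connectedness argument then shows that a component $C \in \cC_0$ satisfies $V(C) \subset B \setminus A$ unless it meets $A \cap B$: the only way $C$ could contain vertices in both $A \setminus B$ and $B \setminus A$ is via an intermediate vertex in $A \cap B$. Hence $\cC := \{\, C \in \cC_0 : V(C) \subset B \setminus A\,\}$ differs from $\cC_0$ by at most $|A \cap B|$ components. Freeness of $U(\tau, X)$, which forces every cofinite subset of $\cC_X$ into $U(\tau, X)$, now gives $\cC \in U(\tau, X)$ with $V[\cC] \subset B \setminus A$, certifying $(A, B) \in \tau'$.

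The main obstacle will be step (b): translating the abstract membership $(A, B) \in \tau$ into the rigid combinatorial shape demanded by $\tau'$, namely a union of vertex sets of components of $G - X$ fitting neatly into $B \setminus A$. The central idea is the refinement $X \mapsto X' := X \cup (A \cap B)$, which makes the two sides of $(A, B)$ coincide, up to the finite error absorbed by Lemma~\ref{FiniteEditDistance}, with the sides of separations of the form $\rsep{X'}{\cdot}$; and freeness of $U(\tau, X)$, which lets us discard the finitely many components of $G - X$ that, in the coarser decomposition through $X$ rather than $X'$, straddle $A \cap B$.
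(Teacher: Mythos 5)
This theorem is not proved in the paper at all: it is imported verbatim from Diestel's paper (cited as \cite[Theorem~3.5]{EndsAndTangles}), so there is no in-paper proof to compare against. Judged on its own, your argument is correct and uses exactly the facts the paper quotes: the reduction to ``$\tau\subseteq\tau'$ plus $\tau'$ contains at most one orientation of each separation'' is sound since $\tau$ orients all of $\Sinf$; step (a) is a clean ultrafilter-intersection argument; and in step (b) the refinement $X'=X\cup(A\cap B)$ works as claimed, because $V\setminus V[\cC_{X'}^A]=X\cup B$ and $X'\cup V[\cC_{X'}^A]=X\cup A$, so the symmetric differences with the sides of $(B,A)$ lie in the finite set $X$ and Lemma~\ref{FiniteEditDistance} applies; the passage from $\cC_{X'}^B\in U(\tau,X')$ to $\cb_{X',X}[\cC_{X'}^B]\in U(\tau,X)$ is precisely the restriction-map compatibility stated in Section~\ref{PropertiesOfInfOrderTangles}; and discarding the at most $|A\cap B|$ components of $G-X$ meeting $A\cap B$ is legitimate since the free ultrafilter $U(\tau,X)$ contains all cofinite subsets of $\cC_X$ and is closed under finite intersections. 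In short, a correct, self-contained proof of the cited result, in the same spirit as the standard argument in~\cite{EndsAndTangles}.
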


For every ultrafilter tangle $\tau$ the set $\cX_\tau\subset\cX$ has a least element $X_\tau$ of which it is the up-closure $\cX_\tau=\uc{X_\tau}_{\cX}:=\{\,X\in\cX\mid X\supset X_\tau\,\}$.
These elements have been characterised combinatorially in~\cite{EndsTanglesCrit} as follows.
Given $X\subset V(G)$ and a subset $Y\subset X$ we write $\cC_X(Y)$ for the set $\{\,C\in\cC_X\mid N(C)=Y\,\}$ of components of $G-X$ that have their neighbourhood precisely equal to $Y$.
In the special case of $X=Y$ we abbreviate $\cC_X(X)$ to $\CX$.
A finite vertex set $X\in\cX$ is \emph{critical} if $\CX$ is infinite.
The collection of all the critical vertex sets of a graph $G$ is denoted by $\crit(G)$.
A vertex set $X\subset V(G)$ is of the form $X_\tau$ for an ultrafilter tangle $\tau$ of~$G$ if and only if $X$ is critical.
But more is true:
The critical vertex sets allow us to describe the ultrafilter tangles explicitly, like the ends allow us to describe the end tangles explicitly.
An \emph{ultrafilter tangle blueprint} is an ordered pair $(X,U)$ of a critical vertex set $X$ and a free ultrafilter $U$ on~$\CX$.

\begin{theorem}[{\cite[Theorem~4.10]{EndsTanglesCrit}}]
Let $G$ be any graph. Then the map
\begin{align*}
    \tau\mapsto (\,X_{\tau}\,,\,U(\tau,X_\tau)\cap 2^{\CC{X_\tau}})
\end{align*}
is a bijection between the ultrafilter tangles and the ultrafilter tangle blueprints.
In particular, $\rsep{X_\tau}{\CC{X_\tau}}\in\tau$ for every ultrafilter tangle $\tau$ of~$G$.
\end{theorem}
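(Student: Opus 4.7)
The plan is to prove the ``in particular'' statement first, since it is what makes the map well-defined, and then verify injectivity and surjectivity separately.

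I would begin with the ``in particular'' statement $\rsep{X_\tau}{\CC{X_\tau}}\in\tau$. Suppose for a contradiction that $\lsep{\CC{X_\tau}}{X_\tau}\in\tau$; equivalently, $\cC_{X_\tau}\setminus\CC{X_\tau}\in U(\tau,X_\tau)$. Every $C\in\cC_{X_\tau}\setminus\CC{X_\tau}$ satisfies $N(C)\subsetneq X_\tau$, so $C\in\cC_v:=\{D\in\cC_{X_\tau}\mid v\notin N(D)\}$ for some $v\in X_\tau$. As $X_\tau$ is finite, the ultrafilter property yields some $v\in X_\tau$ with $\cC_v\in U(\tau,X_\tau)$. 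Set $X':=X_\tau\setminus\{v\}$. I will argue that $U(\tau,X')$ is free, contradicting the minimality of $X_\tau$ in $\cX_\tau$. If $U(\tau,X')$ were principal at some $C^*\in\cC_{X'}$, then $\cb_{X_\tau,X'}^{-1}[\{C^*\}]\in U(\tau,X_\tau)$. There are two cases. If $v\in C^*$, then $\cb_{X_\tau,X'}^{-1}[\{C^*\}]=\cC_{X_\tau}\setminus\cC_v$, contradicting $\cC_v\in U(\tau,X_\tau)$. If $v\notin C^*$, then $C^*$ is itself a component of $G-X_\tau$ with $C^*\in\cC_v$, and $\cb_{X_\tau,X'}^{-1}[\{C^*\}]=\{C^*\}$, contradicting that $U(\tau,X_\tau)$ is free. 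The hard part of the whole proof lives here, in the case analysis relating $U(\tau,X_\tau)$ to $U(\tau,X')$ via the bonding map of the inverse system.

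Given the ``in particular'' statement, well-definedness is immediate: $X_\tau$ is critical by the theorem cited just above, and since $\CC{X_\tau}\in U(\tau,X_\tau)$, the trace $U(\tau,X_\tau)\cap 2^{\CC{X_\tau}}$ is a free ultrafilter on $\CC{X_\tau}$. For injectivity, if two ultrafilter tangles $\tau_1,\tau_2$ both map to $(X,U)$, then $X_{\tau_1}=X_{\tau_2}=X$ and the two ultrafilters $U(\tau_i,X)$ agree on subsets of $\CC X$. Because $\CC X\in U(\tau_i,X)$ for $i=1,2$, any $\cD\subseteq\cC_X$ lies in $U(\tau_i,X)$ if and only if $\cD\cap\CC X\in U$, so $U(\tau_1,X)=U(\tau_2,X)$; Theorem~\ref{UFtangleDeterminedByFreeUF} then gives $\tau_1=\tau_2$.

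For surjectivity, given a blueprint $(X,U)$, I extend $U$ to the free ultrafilter $U^*:=\{\cD\subseteq\cC_X\mid \cD\cap\CC X\in U\}$ on $\cC_X$, which contains $\CC X$. Using that inverse limits of compact Hausdorff spaces along surjective bonding maps project surjectively onto each coordinate (the maps $\beta(\cb_{Y,X})$ are surjective because each $\cb_{Y,X}$ is), I lift $U^*$ to an element of $\invLim\beta(\cC_Y)$; by Diestel's bijection this yields an $\aleph_0$-tangle $\tau$ with $U(\tau,X)=U^*$. Since $U^*$ is free, $X\in\cX_\tau$, so $\tau$ is an ultrafilter tangle. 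It remains to check that $X$ is the \emph{minimum} of $\cX_\tau$. For any $X''\subsetneq X$, every $C\in\CC X$ is adjacent to all vertices in $X\setminus X''$, so all components in $\CC X$ merge, under $\cb_{X,X''}$, into a single component $C^{**}\in\cC_{X''}$. Hence $\cb_{X,X''}^{-1}[\{C^{**}\}]\supseteq\CC X\in U^*$, so $\{C^{**}\}\in U(\tau,X'')$, i.e.\ $U(\tau,X'')$ is principal and $X''\notin\cX_\tau$. Thus $X_\tau=X$, and by construction $(X_\tau,U(\tau,X_\tau)\cap 2^{\CC X})=(X,U)$, completing surjectivity.
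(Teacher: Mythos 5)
This statement is quoted in the paper from~\cite{EndsTanglesCrit} without proof, so there is no in-paper argument to compare against; judged on its own, most of your proposal is correct. Your deletion argument for the ``in particular'' part (choosing $v\in X_\tau$ with $\cC_v\in U(\tau,X_\tau)$, passing to $X'=X_\tau\setminus\{v\}$, and analysing the bonding map in the two cases $v\in C^*$ and $v\notin C^*$) is sound and correctly plays off the minimality of $X_\tau$ against the freeness of $U(\tau,X_\tau)$; well-definedness, injectivity via Theorem~\ref{UFtangleDeterminedByFreeUF}, and the verification $X_\tau=X$ at the end of surjectivity are also fine.

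There is, however, a genuine gap in the lifting step of surjectivity. The bonding maps $\cb_{Y,X}$ are \emph{not} surjective in general: a component $C\in\cC_X$ with $C\subseteq Y$ (a finite component of $G-X$ swallowed by the finite set $Y$) contains no component of $G-Y$ and so lies outside the image. Consequently the projections $\pr_X\colon\invLim\beta(\cC_Y)\to\beta(\cC_X)$ need not be surjective either --- for instance, the principal ultrafilter at such a $C$ has empty fibre over any $Y\supseteq X\cup V(C)$ and hence no preimage in the inverse limit --- so the general principle you invoke does not apply as stated. The step is repairable, but only by using the specific shape of $U^*$: since $U$ is free and only finitely many members of $\CX$ can be contained in a given finite $Y$, every $\cC'\in U^*$ contains some $C\in\CX$ with $C\not\subseteq Y$, and such a $C$ contains a component of $G-Y$; hence the sets $\cb_{Y,X}^{-1}[\cC']$ with $\cC'\in U^*$ form a filter base on $\cC_Y$, any ultrafilter extending it lies in the fibre $(\beta\cb_{Y,X})^{-1}(U^*)$, and these non-empty compact fibres form an inverse system over the cofinal directed set $\{\,Y\in\cX\mid Y\supseteq X\,\}$ whose limit is non-empty; a point of that limit gives the desired $\tau$ with $U(\tau,X)=U^*$. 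Alternatively, you can bypass the lifting altogether by defining $\tau:=\{\,(A,B)\in\vSinf\mid\exists\,\cC\in U: V[\cC]\subseteq B\setminus A\,\}$, i.e.\ the formula of Theorem~\ref{UFtangleDeterminedByFreeUF}, and checking directly that this is a consistent orientation of $\Sinf$ containing no finite star of finite interior; freeness of $U$ enters in exactly the same place, namely in guaranteeing that cofinitely many members of $\CX$ avoid any given finite separator.
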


\noindent We will resume the following notation from~\cite{EndsTanglesCrit} for critical vertex sets.
For every $X\in\cX$ and all critical $Y$ that are not entirely contained in $X$ we write $C_X(Y)$ for the unique component of $G-X$ meeting $Y$ (equivalently: including $\bigcup\cC_{X\cup Y}(Y)$).

\begin{lemma}[{\cite[Lemma~4.8]{EndsTanglesCrit}}]\label{UFtanglePrincipalGenerator}
For every ultrafilter tangle $\tau$ and each $X\in\cX\setminus\cX_\tau$ we do have $X_\tau\subset X\cup C_X(X_\tau)$ and the ultrafilter $U(\tau,X)$ is generated by $\{C_X(X_\tau)\}$.
\end{lemma}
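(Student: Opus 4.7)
The plan is to prove the two assertions in sequence, first the inclusion $X_\tau \subseteq X \cup C_X(X_\tau)$ and then the characterisation of $U(\tau,X)$. For the inclusion, since $X \not\supseteq X_\tau$ the set $X_\tau \setminus X$ is non-empty, and the task is to show that it lies inside a single component of $G - X$. Criticality of $X_\tau$ provides infinitely many components $C \in \CC{X_\tau}$ each with $N(C) = X_\tau$; as these components are pairwise disjoint and $X$ is finite, infinitely many of them avoid $X$ entirely. Picking any such $C$, the subgraph induced by $V(C) \cup X_\tau$ is connected because every vertex of $X_\tau$ has a neighbour in $C$, and therefore $V(C) \cup (X_\tau \setminus X)$ induces a connected subgraph of $G - X$. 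This forces $X_\tau \setminus X$ into a single component of $G - X$, which by definition is $C_X(X_\tau)$.

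For the second assertion, since $X \notin \cX_\tau$ the ultrafilter $U(\tau,X)$ is principal; let $D \in \cC_X$ be its generator, so $\rsep{X}{D} \in \tau$. I aim to show $D = C_X(X_\tau)$ by contradiction. Suppose $D \neq C_X(X_\tau)$ and set $Y := X \cup X_\tau$, which lies in $\cX_\tau$. By the first part, $X_\tau \setminus X$ sits inside $V(C_X(X_\tau))$ and so is disjoint from $V(D)$; therefore $D$ remains a component of $G - Y$, because removing $Y \setminus X = X_\tau \setminus X$ from $G - X$ does not touch $D$. Since $Y \in \cX_\tau$, the star $\{\lsep{C}{Y} \mid C \in \cC_Y\}$ is contained in $\tau$, which gives $\lsep{D}{Y} \in \tau$. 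On the other hand, $\lsep{D}{X} \le \lsep{D}{Y}$ because $X \subseteq Y$, and this inequality is strict since $Y \setminus X = X_\tau \setminus X$ is non-empty and disjoint from $V(D)$. Consistency of $\tau$ now forbids the inverse $\rsep{X}{D}$ from lying in $\tau$, contradicting the choice of $D$.

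I expect the only delicate points to be confirming the strict inequality $\lsep{D}{X} < \lsep{D}{Y}$ and verifying that $D$ really is a component of $G - Y$ (rather than merely a connected subgraph thereof); both hinge on the fact, secured in the first step, that $X_\tau \setminus X$ is concentrated in $C_X(X_\tau)$. Everything else is routine manipulation of the explicit description $U(\tau,X) = \{\,\cC \subseteq \cC_X \mid \rsep{X}{\cC} \in \tau\,\}$ together with consistency of~$\tau$.
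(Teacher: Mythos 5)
Your argument is correct, but note that this lemma is not proved in the paper at all: it is imported verbatim from \cite[Lemma~4.8]{EndsTanglesCrit}, so there is no in-paper proof to compare against. Your proof is self-contained and sound. For the first part, choosing a component $C\in\CC{X_\tau}$ avoiding the finite set $X$ and observing that $V(C)\cup(X_\tau\setminus X)$ induces a connected subgraph of $G-X$ does force $X_\tau\setminus X$ into a single component, which settles both the uniqueness implicit in the notation $C_X(X_\tau)$ and the inclusion $X_\tau\subset X\cup C_X(X_\tau)$. For the second part, the two points you flag as delicate indeed go through: since $D\neq C_X(X_\tau)$ is disjoint from $Y=X\cup X_\tau$, it is a component of $G-Y$ (a component of $G-X$ contained in $G-Y$ is maximal there), so $\lsep{D}{Y}\in\tau$ because $Y\in\cX_\tau$ (using that $\cX_\tau$ is the up-closure of $X_\tau$ and the star characterisation of freeness); and $\lsep{D}{X}<\lsep{D}{Y}$ are orientations of two genuinely distinct finite-order separations, so having $\rsep{X}{D}$ and $\lsep{D}{Y}$ both in $\tau$ violates consistency exactly as the paper defines it. A pleasant feature of your route is that it needs only the definition of $U(\tau,X)$, the up-closure fact for $\cX_\tau$ and consistency, and in particular does not invoke Theorem~\ref{UFtangleDeterminedByFreeUF}.
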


Next, we describe Diestel's tangle \comp .
For this, we recall the notion of the 1-complex of a graph~$G$.
In the \emph{1-complex} of $G$ which we denote also by $G$, every edge $e=xy$ is a homeomorphic copy $[x,y]:=\{x\}\sqcup\mathring{e}\sqcup\{y\}$ of $[0,1]$ with $\mathring{e}$ corresponding to $(0,1)$. 
The point set of $G$ is $V\sqcup\bigsqcup_{e\in E}\mathring{e}$.
Points in $\mathring{e}$ are called \emph{inner edge points}, and they inherit their basic open neighbourhoods from $(0,1)$. 
For each subcollection $F\subseteq E$ we write $\mathring{F}$ for the set $\bigsqcup_{e\in F}\mathring{e}$ of inner edge points of edges in $F$.
The basic open neighbourhoods of a vertex $v$ of $G$ are given by unions $\bigcup_{e\in E(v)}[v,i_e)$ of half open intervals with each $i_e$ some inner edge point of $e$ where $E(v)$ denotes the set of edges of $G$ at $v$.

To obtain the tangle \comp\ $\modG$ of a graph $G$ we extend the 1-complex of $G$ to a topological space $G\sqcup\Theta=G\sqcup\invLim{}\beta (\cC_X)$ by declaring as open in addition to the open sets of $G$, for all $X\in\cX$ and all $\cC\subseteq\cC_X$, the sets
\begin{align*}
\cO_{\modG}(X,\cC):=\medcup\cC\cup\mathring{E}(X,\medcup\cC)\cup\big\{\,(\,U_Y\colon Y\in\cX\,)\in\invLim{}\beta(\cC_X)\;\big\vert\; \cC\in U_X\,\big\}
\end{align*}
and taking the topology this generates.
\begin{theorem}[{\cite[Theorem 1]{EndsAndTangles}}]
\label{DiestelsTangleCompWorks}
Let $G$ be any graph.
\begin{enumerate}
\item $\modG$ is a \comp\ of $G$ with totally disconnected remainder.
\item If $G$ is locally finite and connected, then $\modG$ coincides with the Freudenthal \comp\ of $G$.
\end{enumerate}
\end{theorem}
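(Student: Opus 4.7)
The plan is to exploit the identification $\Theta\cong\invLim\beta(\cC_X)$ recorded just before the theorem, which hands us compactness, Hausdorffness and total disconnectedness of $\Theta$ for free as properties of an inverse limit of Stone spaces; the work is then to show that $\Theta$ fits onto the 1-complex $G$ correctly.

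For part~(1) I would verify the four defining properties of a \comp\ with totally disconnected remainder. For \emph{density of $G$}: every basic open $\cO_{\modG}(X,\cC)$ at a tangle $\tau$ contains, for any non-empty $\cC\in U(\tau,X)$, the non-empty open subgraph $\bigcup\cC\cup\mathring E(X,\bigcup\cC)$ of $G$. For the \emph{Hausdorff} property: two points of $G$ are already separated inside $G$; a point $v\in G$ and a tangle $\tau$ are separated by choosing $X\in\cX$ large enough to confine $v$ to a $G$-neighbourhood disjoint from the side $\bigcup\cC$ chosen by $U(\tau,X)$; and two distinct tangles $\tau\ne\tau'$ must disagree at some $X\in\cX$, yielding complementary basic opens $\cO_{\modG}(X,\cC)$ and $\cO_{\modG}(X,\cC_X\setminus\cC)$. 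For \emph{compactness}: given an arbitrary open cover of $\modG$, use compactness of $\invLim\beta(\cC_X)$ to cover $\Theta$ by finitely many basic opens $\cO_{\modG}(X_i,\cC_i)$ for $i=1,\dots,n$, set $X:=X_1\cup\dots\cup X_n$, and observe that via the bonding maps the preimages $\cC_i^*:=\cb_{X,X_i}^{-1}(\cC_i)\subseteq\cC_X$ have the property that a tangle $\tau$ is covered by $\cO_{\modG}(X_i,\cC_i)$ iff $\cC_i^*\in U(\tau,X)$; hence, after possibly refining $X$ to include $X_\tau$ for the finitely many ultrafilter tangles still relevant so that Lemma~\ref{UFtanglePrincipalGenerator} forces $U(\tau,X)$ to become principal, the components of $G-X$ outside $\bigcup_i\cC_i^*$ host no tangle at all, and together with the finitely many vertices of $X$ can be covered by finitely more sets of the original cover. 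Finally, for \emph{totally disconnected remainder}: the subspace topology on $\Theta\subset\modG$ matches the inverse-limit topology on $\invLim\beta(\cC_X)$, which is an inverse limit of Stone spaces.

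For part~(2), assume $G$ is locally finite and connected. Any critical $X\in\cX$ would force each of its vertices to be incident with edges into each of the infinitely many components in $\CX$, giving a vertex of infinite degree; so $\crit(G)=\emptyset$, hence $\Upsilon=\emptyset$ and $\Theta=\Omega$. Moreover each $\cC_X$ is finite, so $\beta(\cC_X)=\cC_X$ with the discrete topology; under the homeomorphism $\omega\mapsto f_\omega$ of Theorem~\ref{EndsAreDirections}, the basic opens $\cO_{\modG}(X,\cC)$ restrict on $\Omega$ to the sets $\Omega(X,\cC)$ introduced in Section~\ref{sec:preleminaries}, and together with the 1-complex topology on $G$ they generate precisely the Freudenthal topology described in~\cite{BibleNew}.

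The main obstacle lies in the compactness step, specifically in controlling the `dead' components of $G-X$ that host no tangle but may still contain infinitely much of the 1-complex. One has to show that after possibly refining $X$ only a compact portion of $G$ is left outside the chosen basic opens, and this is exactly where ultrafilter tangles and vertices of infinite degree interact in a subtle way: the naive argument familiar from the locally finite Freudenthal setting does not apply, and one has to use both the ultrafilter-theoretic restriction formulae and the principal-generator lemma to absorb the awkward components into an enlarged witness vertex set.
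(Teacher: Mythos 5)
The paper does not prove this statement at all---it is quoted from Diestel~\cite[Theorem~1]{EndsAndTangles}---so your attempt can only be measured against the cited argument, and as written it has two genuine problems. First, the Hausdorff step is both unnecessary and false. The appendix of this paper defines a \comp\ without any Hausdorff requirement, and Theorem~\ref{DiestelsTangleCompWorks} claims none; in fact $\modG$ is in general \emph{not} Hausdorff. Your recipe ``choose $X\in\cX$ large enough to confine $v$ to a $G$-neighbourhood disjoint from the side chosen by $U(\tau,X)$'' fails whenever $v$ has infinitely many neighbours towards $\tau$: take $G$ to be a ray $R$ plus a vertex $v$ joined to all of $R$, and $\tau=\tau_\omega$ for the unique end $\omega$. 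For every finite $Y$ either $v\in\medcup\cD$ or the $v$--$C(Y,\omega)$ edges lie in $\mathring{E}(Y,\medcup\cD)$ for each $\cD\in U(\tau,Y)$, while every neighbourhood of $v$ in the $1$-complex contains initial segments of these edges; so $v$ and $\tau_\omega$ cannot be separated. A correct proof of (1) only needs density, compactness, and that the subspace topology on $\Theta$ is the inverse-limit topology (your argument for the latter two-thirds is fine).

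Second, and more seriously, the compactness step---which you yourself flag as the main obstacle---is not actually carried out. After covering $\Theta$ by $\cO_{\modG}(X_1,\cC_1),\dots,\cO_{\modG}(X_n,\cC_n)$ and setting $X=X_1\cup\dots\cup X_n$, what one must show is: (a) every \emph{infinite} component of $G-X$ lies in some $\medcup\cC_i^\ast$ (because any infinite component carries an $\aleph_0$-tangle, e.g.\ via the non-empty inverse limit of the restricted system $\{\,D\in\cC_Y: D\subset C\,\}$, and that tangle's coverage forces $C$ into the corresponding big side); and (b) only \emph{finitely many} components of $G-X$ are left uncovered, because a free ultrafilter on an infinite family of uncovered components induces a point of $\invLim\beta(\cC_Y)$, i.e.\ a tangle, which must be covered by some $\cO_{\modG}(X_i,\cC_i)$, forcing some of those components into $\medcup\cC_i^\ast$ after all---a contradiction. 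Only then is the uncovered part a finite subgraph together with $X$, hence compact in the $1$-complex and coverable by finitely many further members of the cover. Your proposed fix---``refining $X$ to include $X_\tau$ for the finitely many ultrafilter tangles still relevant'' and invoking Lemma~\ref{UFtanglePrincipalGenerator}---does not identify any finite set of relevant ultrafilter tangles (there may be $2^{2^{\aleph_0}}$ of them) and does not address either (a) or (b), so the leftover region could, as far as your argument shows, still be a non-compact part of $G$. Part (2) is fine in outline (no critical vertex sets, each $\cC_X$ finite, Theorem~\ref{EndsAreDirections}), though the identification of the two topologies is asserted rather than checked.
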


\begin{theorem}[{\cite{StoneCechTangles}}]
The tangle \comp\ of any graph $G$ is obtained from its \SC\ \comp\ $\beta G$ by first declaring $G$ to be open\footnote{When $G$ is locally compact, it is automatically open in $\beta G$, and so this step is redundant for locally finite graphs.} in $\beta G$ and then collapsing each connected component of the \SC\ remainder to a single point.
\end{theorem}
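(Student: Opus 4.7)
The plan is to invoke the universal property of the \SC\ \comp\ to produce a continuous surjection $\pi \colon \beta G \to \modG$ extending the identity on $G$, and then to identify the fibres of $\pi$ with the connected components of the \SC\ remainder. Since $\modG$ is compact \HD\ by Theorem~\ref{DiestelsTangleCompWorks} and contains $G$ as a dense Tychonoff subspace, such a $\pi$ exists and is unique; it is surjective because $\pi(\beta G)$ is compact, hence closed in $\modG$, and contains the dense set $G$. Declaring $G$ open in $\beta G$ ensures that $\pi^{-1}(G) = G$; this step is nontrivial precisely when $G$ is not locally compact, that is, when the graph is not locally finite. It then suffices to show that the restriction of $\pi$ to the remainder has exactly the connected components of $\beta G \setminus G$ as its fibres.

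Next I would analyse the fibres $\pi^{-1}(\tau)$ for $\tau \in \Theta$ using the inverse-limit description $\Theta \cong \invLim\beta(\cC_X)$ from Section~\ref{PropertiesOfInfOrderTangles}. For each $X \in \cX$ and $\cC \subseteq \cC_X$, the basic open neighbourhood $\cO_{\modG}(X, \cC)$ pulls back along $\pi$ to an open set of $\beta G$ whose trace on the remainder is in fact clopen there, because its complement in the remainder is the analogous pullback of $\cO_{\modG}(X, \cC_X \setminus \cC)$. Two distinct tangles $\tau \neq \tau'$ are distinguished by some bipartition $\{\cC, \cC'\}$ of some $\cC_X$, so their preimages are separated by such a clopen set and lie in different connected components of the remainder. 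Conversely, each fibre $\pi^{-1}(\tau)$ is the intersection over $X \in \cX$ and $\cC \in U(\tau, X)$ of these clopen sets, and I would express it as an inverse limit of the \SC\ remainders of the open connected subgraphs $\bigcup \cC$; these remainders are connected (standard for locally connected Tychonoff spaces) and inverse limits of compact connected \HD\ spaces are connected, giving connectedness of $\pi^{-1}(\tau)$.

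Combining these observations, the fibres of $\pi$ on the remainder are exactly the connected components of $\beta G \setminus G$, so $\pi$ descends to a continuous bijection from the quotient described in the theorem onto $\modG$, which is then a homeomorphism because any continuous bijection from a compact space to a \HD\ space is. I expect the main obstacle to be the connectedness of the fibres $\pi^{-1}(\tau)$: this is where the $\aleph_0$-tangle axioms, and in particular the exclusion of finite stars of finite interior (which encodes the fact that $\tau$ assigns big sides in a genuinely consistent global way), need to be used to rule out disconnecting the remainder along an infinite antichain of finite-order separations.
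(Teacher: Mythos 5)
This theorem is quoted from~\cite{StoneCechTangles}; the present paper contains no proof of it, so your argument can only be judged on its own terms, and there it has two genuine gaps. The first is your repeated reliance on Hausdorffness of $\modG$. Theorem~\ref{DiestelsTangleCompWorks} asserts only that $\modG$ is a \comp\ with totally disconnected remainder (the appendix definition of a \comp\ does not include \HD ness), and in fact $\modG$ fails to be \HD\ in precisely the cases this theorem adds something beyond the locally finite/Freudenthal case: if a vertex $v$ dominates an end, or $v\in X_\tau$ for an ultrafilter tangle $\tau$, then every basic neighbourhood $\cO_{\modG}(X,\cC)$ of the corresponding tangle contains inner points of edges at $v$ arbitrarily close to $v$ (or even $v$ itself when $v\notin X$), while every basic neighbourhood of $v$ is a star of half-edges over \emph{all} edges at $v$; so the two points have no disjoint neighbourhoods. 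Hence you cannot invoke the \SC\ property to obtain (let alone uniquely) a continuous $\pi\colon\beta G\to\modG$, ``compact, hence closed'' does not yield surjectivity, and the closing step ``a continuous bijection from a compact space onto a \HD\ space is a homeomorphism'' is unavailable; moreover, the space obtained from $\beta G$ by declaring $G$ open is itself not compact when $G$ is not locally compact (its now-closed remainder would otherwise be compact, contradicting that $G$ is not open in $\beta G$), so even compactness of the quotient is not free. Relatedly, declaring $G$ open modifies the \emph{source} topology used to form the quotient; it does not constrain the fibres of a map defined on $\beta G$, so the claim $\pi^{-1}(G)=G$ (equivalently, that no point of $\beta G\setminus G$ is sent to a vertex of infinite degree) requires a genuine argument rather than a remark.

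The second gap is the connectedness of the fibres, which you rightly single out as the heart of the matter but propose to settle with a false lemma: the \SC\ remainder of a connected, locally connected Tychonoff space need not be connected ($\beta\R\setminus\R$ has two components; indeed the double ray is a graph whose remainder \emph{must} have two components for the theorem to be true), and here the subgraphs $\bigcup\cC$ are typically not even connected, being disjoint unions of infinitely many components, so their remainders are badly disconnected and the inverse-limit-of-continua argument collapses. What is actually needed is a lemma to the effect that every clopen partition of the remainder is induced by a finite-order separation of $G$ — this is where the $1$-complex structure of $G$ enters — after which consistency and the exclusion of finite stars of finite interior forbid splitting the set of remainder points inducing a fixed $\aleph_0$-tangle, and the same machinery is what lets one match the quotient topology with that of $\modG$. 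Your overall skeleton (a map from $\beta G$, fibres equal to components of the remainder, then a quotient identification) is in the right spirit, but as it stands both the construction of the map and the identification of its fibres rest on steps that fail for exactly the graphs the theorem is about.
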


\subsection{Trees-of-tangles}

A separation $\{A,B\}$ of $G$ and its orientations \emph{distinguish} two infinite tangles $\tau_1$ and $\tau_2$ of $G$ if $\tau_1$ and $\tau_2$ orient $\{A,B\}$ differently, i.e., if $(A,B)\in\tau_1$ and $(B,A)\in\tau_2$ or vice versa.
The separation $\{A,B\}$ distinguishes $\tau_1$ and $\tau_2$ \emph{efficiently} if it has minimal order $\vert A\cap B\vert$ among all the separations of $G$ that distinguish $\tau_1$ and $\tau_2$.
The result by Carmesin that implies the tree-of-tangles theorem for the infinite tangles of locally finite infinite graphs states:

\begin{theorem}[{\cite[Corollary~5.17]{carmesin2014all}}]\label{CarmesinEndsTreeSet}
Every connected graph $G$ has a tree set of finite-order separations of $G$ that efficiently distinguishes all the ends of $G$.
\end{theorem}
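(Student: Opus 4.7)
The plan is to build the nested set level by level in the order of separation sizes, exploiting the submodularity of the order function of finite-order separations in the standard way. For ends $\omega,\omega'$ let $k(\omega,\omega')$ be the minimum order of a finite-order separation of $G$ distinguishing $\omega$ and $\omega'$ (setting $k(\omega,\omega')=\infty$ if none exists), and call $\{\omega,\omega'\}$ \emph{$n$-distinguishable} if $k(\omega,\omega')\le n$. The goal is to construct inductively a chain $T_0\subseteq T_1\subseteq T_2\subseteq\cdots$ such that each $T_n$ is a nested set of finite-order separations of order at most $n$ that efficiently distinguishes every $n$-distinguishable pair of ends. Then $T:=\bigcup_{n\in\N}T_n$ is nested, as a directed union of nested sets, and efficiently distinguishes all distinguishable pairs of ends.

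For the inductive step from $T_{n-1}$ to $T_n$, the first move is to invoke Zorn's lemma on the poset (ordered by inclusion) of nested sets of separations of order at most $n$ that extend $T_{n-1}$, and pick a maximal element $T_n$. The main claim is that this $T_n$ already efficiently distinguishes every pair of ends with $k(\omega,\omega')=n$. Suppose, towards a contradiction, that some such pair $\omega,\omega'$ is not distinguished by $T_n$, and pick any efficient separation $s$ of order $n$ distinguishing them. By maximality, $s$ must cross some $t\in T_n$. The classical \emph{corner} argument now applies: if $t$ has order strictly less than $n$, then $t$ does not distinguish $\omega$ from $\omega'$ by efficiency, so both ends live on the same side of $t$, and submodularity of the order function yields a corner of $s$ and $t$ lying on that side which still distinguishes $\omega$ from $\omega'$, has order at most $n$, and is nested with $t$. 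If instead $|t|=n$, a short case analysis based on how $t$ orients $\omega$ and $\omega'$ (again using submodularity) produces a suitable corner.

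Iterating this corner construction ought to yield a separation that is nested with every element of $T_n$ and still distinguishes $\omega$ from $\omega'$, contradicting the maximality of $T_n$. Making this iteration converge is the main obstacle: after one corner push the new separation may still cross other members of $T_n$, and in the infinite setting there is no cheap counting argument bounding the number of crossings. The plan to deal with this is to well-order the crossings of $s$ with $T_n$ and define a transfinite sequence of corner pushes, using a compactness/limit argument at each limit ordinal -- for instance via the inverse-limit description of ends from Theorem~\ref{EndsAreDirections}, which lets a coherent system of corners define a genuine separation of finite order -- and showing that a suitable ordinal-valued complexity strictly decreases. Verifying that this transfinite construction respects the ordering and involution of the separation system, never produces a small or degenerate element, and preserves efficiency at every step, is where the bulk of the technical work will lie.
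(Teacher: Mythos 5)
You should first be aware that the paper does not prove this statement at all: it is Carmesin's theorem, imported verbatim as a black box (Corollary~5.17 of \cite{carmesin2014all}) and applied later to the modified torsos. So the relevant comparison is with Carmesin's original proof, which takes a genuinely different route from yours: instead of a Zorn-maximal nested set plus uncrossing, he builds, level by level, canonical tree-decompositions of bounded adhesion from carefully chosen \emph{extremal} efficient separations whose nestedness is proved directly, and the bulk of his (long) paper is devoted to controlling limits of such separations. Your level-by-level skeleton ($T_{n-1}\subseteq T_n$, union at the end) does match the coarse structure of his argument, but the engine driving the inductive step is entirely different.

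The gap in your proposal is exactly the point you flag yourself, and it is not a technicality that can be deferred. Your inductive step needs the claim that a Zorn-maximal nested extension $T_n$ efficiently distinguishes every pair with $k(\omega,\omega')=n$, and the intended contradiction requires exhibiting a single separation of order at most $n$ that distinguishes the pair \emph{and} is nested with every element of $T_n$. One corner push is fine (submodularity, and a corner of $s$ and $t$ is nested with everything both $s$ and $t$ are nested with, so the crossing set never grows), but in an infinite graph the crossing set can be infinite, and at limit stages of your transfinite scheme nothing guarantees a limit object that is still a separation of order at most $n$: suprema of $\omega$-chains of separations of order at most $n$ can have infinite separators, and the inverse-limit description of ends (Theorem~\ref{EndsAreDirections}) only produces a direction, not a finite-order separation, so it cannot serve as the limit device. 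You also have not defined the ordinal-valued complexity that is supposed to decrease, and there is no obvious candidate, since the set of crossed elements need not be well-founded under inclusion. This convergence-of-uncrossing problem is precisely the core difficulty that Carmesin resolves with dedicated limit lemmas and extremal ("leftmost") choices of efficient separations that are nested from the start, and that later work such as \cite{InfiniteSplinters} resolves with different machinery; without an argument of that calibre, your proposal is a plausible programme rather than a proof.
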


\noindent Here, we view the ends as end tangles.

Very recently, Carmesin, Hamann and Miraftab showed a canonical version of this theorem; see their paper~\cite{CanonicalTreeOfTangles}.
We should also mention the work by Dunwoody and Krön~\cite{VertexCuts} and the work by Elbracht, Kneip and Teegen~\cite{InfiniteSplinters}.

\subsection{Combinatorial indistinguishability and tame separations}

We say that two infinite tangles $\tau_1$ and $\tau_2$ are \emph{combinatorially distinguishable} if at least one of them is an end tangle or they are both ultrafilter tangles but such that their critical vertex sets $X_{\tau_1}$ and $X_{\tau_2}$ are distinct.
Thus, when $\tau_1$ and $\tau_2$ are combinatorially indistinguishable, they are ultrafilter tangles with $X_{\tau_1}=X_{\tau_2}$.
Then we also call them \emph{equivalent} for short and write $\tau_1\sim\tau_2$.
There exist separations of $G$ that do not distinguish any two equivalent tangles:


\begin{definition}
A finite-order separation $\sep{X}{\cC}$ of $G$ and its orientations are \emph{tame} if for no $Y\subset X$ both $\cC_X(Y)\cap\cC$ and $\cC_X(Y)\cap (\cC_X\setminus\cC)$ are infinite.
We write $\St=\St(G)$ for the set of all tame finite-order separations of $G$.
\end{definition}

\begin{lemma}[{\cite[§5]{EndsTanglesCrit}}]
If $\tau_1$ and $\tau_2$ are two equivalent infinite tangles of a graph~$G$, then $\tau_1\cap\vSt=\tau_2\cap\vSt$.
\end{lemma}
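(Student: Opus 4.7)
My plan is to pick any tame finite-order separation $\sep{X_s}{\cC}$ of $G$ and verify that the common critical vertex set $X := X_{\tau_1} = X_{\tau_2}$ forces the membership $\cC \in U(\tau_i, X_s)$ to be independent of $i$. This will follow from a case distinction on whether $X \subseteq X_s$.

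In the easy case $X \not\subseteq X_s$, we have $X_s \notin \cX_{\tau_i}$ for $i = 1, 2$, so Lemma~\ref{UFtanglePrincipalGenerator} yields that $U(\tau_i, X_s)$ is the principal ultrafilter generated by $\{C_{X_s}(X)\}$. Since this generator depends only on $X$, the two tangles agree on $\sep{X_s}{\cC}$.

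In the main case $X \subseteq X_s$, both ultrafilters $U(\tau_i, X_s)$ are free, and the key intermediate claim is that $\cC_{X_s}(X) \in U(\tau_i, X_s)$ for both $i$. To see this, let $\mathcal{E} \subseteq \CX$ be the finite collection of $\CX$-components that meet $X_s \setminus X$. Every $C \in \CX \setminus \mathcal{E}$ has $N_G(C) = X$ and is disjoint from $X_s$, so $C$ survives unchanged as a component of $G - X_s$ with neighbourhood $X$; viewed inside $\cC_{X_s}$ this gives $\CX \setminus \mathcal{E} \subseteq \cC_{X_s}(X)$ and $\cb_{X_s, X}^{-1}[\CX \setminus \mathcal{E}] = \CX \setminus \mathcal{E}$. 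By the ultrafilter tangle blueprint theorem $\CX \in U(\tau_i, X)$, and since $U(\tau_i, X) \cap 2^{\CX}$ is a free ultrafilter on the infinite set $\CX$, the cofinite subset $\CX \setminus \mathcal{E}$ lies in $U(\tau_i, X)$. Pulling back along the bonding map then gives $\CX \setminus \mathcal{E} \in U(\tau_i, X_s)$, and upward closure delivers $\cC_{X_s}(X) \in U(\tau_i, X_s)$.

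With this claim in hand, tameness finishes the argument: $\cC \cap \cC_{X_s}(X)$ is either finite or cofinite in $\cC_{X_s}(X)$ by definition of tameness. The restriction of $U(\tau_i, X_s)$ to its element $\cC_{X_s}(X)$ is itself a free ultrafilter, so it contains $\cC \cap \cC_{X_s}(X)$ precisely when that set is cofinite in $\cC_{X_s}(X)$. Therefore $\cC \in U(\tau_i, X_s)$ iff $\cC \cap \cC_{X_s}(X)$ is cofinite in $\cC_{X_s}(X)$, a criterion manifestly independent of $i$. I expect the main obstacle to be the intermediate claim $\cC_{X_s}(X) \in U(\tau_i, X_s)$, since it requires carefully tracking how removing the extra vertices $X_s \setminus X$ interacts with the bonding map; everything afterwards is routine ultrafilter bookkeeping.
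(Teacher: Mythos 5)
Your proof is correct. There is nothing in the paper to compare it against: the lemma is imported from~\cite[§5]{EndsTanglesCrit} without proof, so your argument is judged on its own, and it is a sound, self-contained derivation from the results the paper does state. Both cases check out. When $X:=X_{\tau_1}=X_{\tau_2}\not\subseteq X_s$, Lemma~\ref{UFtanglePrincipalGenerator} indeed makes each $U(\tau_i,X_s)$ principal with generator $\{C_{X_s}(X)\}$, which depends only on $X$ (tameness is not even needed here). When $X\subseteq X_s$, your intermediate claim $\cC_{X_s}(X)\in U(\tau_i,X_s)$ holds as argued: every $C\in\CX\setminus\mathcal{E}$ avoids $X_s$ and is a component of both $G-X$ and $G-X_s$ with neighbourhood exactly $X$, the bonding-map preimage of $\CX\setminus\mathcal{E}$ is $\CX\setminus\mathcal{E}$ itself, and since the ultrafilters $U(\tau_i,\cdot)$ form a point of the inverse limit, the cofinite set $\CX\setminus\mathcal{E}\in U(\tau_i,X)$ (blueprint theorem plus freeness) transfers up to $U(\tau_i,X_s)$; upward closure then gives $\cC_{X_s}(X)\in U(\tau_i,X_s)$. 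The concluding step correctly combines tameness applied with $Y=X$ and the freeness of $U(\tau_i,X_s)$ to get the $i$-independent criterion that $\cC\cap\cC_{X_s}(X)$ be cofinite in $\cC_{X_s}(X)$. A small simplification you might like: the transfer step needs no inverse-limit bookkeeping, since $\rsep{X}{\CX\setminus\mathcal{E}}$ and $\rsep{X_s}{\CX\setminus\mathcal{E}}$ differ on each side only by the finite set $X_s\setminus X$, so Lemma~\ref{FiniteEditDistance} puts the latter into $\tau_i$ directly.
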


\noindent The tree set that we construct in the proof of Theorem~\ref{TreeSetForInfTangles} will consist of tame separations.

Identifying all equivalent $\aleph_0$-tangles yields the quotient $\Theta/{\sim}=\Omega\sqcup\crit(G)$ which is yet again a tangle space.
An $\aleph_0$-tangle \emph{of $\St$} is a consistent orientation of $\St$ that contains no finite star of finite interior as a subset.
We write $\Tt=\Tt(G)$ for the set of all $\aleph_0$-tangles of~$\St(G)$.

\begin{theorem}[{\cite[Theorem~5.10]{EndsTanglesCrit}}]
Let $G$ be any graph.
The $\aleph_0$-tangles of $\St$ are precisely the ends and critical vertex sets of $G$, i.e.~$\Tt(G)=\Omega(G)\sqcup\crit(G)$.
\end{theorem}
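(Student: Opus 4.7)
I would prove the theorem by constructing a bijection $\Omega \sqcup \crit(G) \to \Tt$ via $\omega \mapsto \tau_\omega \cap \vSt$ and, for each critical vertex set $X$, $X \mapsto \tau \cap \vSt$ for any ultrafilter tangle $\tau$ with $X_\tau = X$ (well-defined by the preceding lemma). Both images lie in $\Tt$ because consistency and the no-finite-star-of-finite-interior axiom are inherited by restriction. Injectivity rests on the observation that every singleton separation $\sep{X}{\{C\}}$ is tame (its side $\{C\}$ meets each class $\cC_X(Y')$ in at most one element): distinct ends are separated by such singletons at any $X$ where their live-components differ; distinct critical $X \neq X'$ are separated at $\sep{Z}{\{C_Z(X')\}}$ for some $Z \supseteq X$ with $Z \not\supseteq X'$ via Lemma~\ref{UFtanglePrincipalGenerator}; an end is separated from a critical vertex set at any singleton at $X = X_\tau$, since the associated ultrafilter is principal for the end and free for the tangle.

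For surjectivity, given $\tau_\St \in \Tt$, set $U_X := \{\cC \in \mathcal{A}_X : \rsep{X}{\cC} \in \tau_\St\}$ for each $X$, where $\mathcal{A}_X$ denotes the algebra of tame subsets of $\cC_X$. Applying the no-finite-star axiom to the stars $\{\rsep{X}{\cC_1\sqcup\cC_2}, \lsep{\cC_1}{X}, \lsep{\cC_2}{X}\}$, which have interior contained in $X$ and hence finite, shows that $U_X$ is additive and so a genuine ultrafilter on $\mathcal{A}_X$. Since $\cC_X = \bigsqcup_{Y'\subseteq X} \cC_X(Y')$ is a finite partition into tame classes, there is a unique $Y_X\subseteq X$ with $\cC_X(Y_X) \in U_X$. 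The proof then splits according to whether or not some $U_X$ contains a singleton.

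In the principal case (Case A: every $U_X$ contains a singleton $\{C_X\}$), I would prove by induction on $|X'\setminus X|$ that $(C_X)_X$ forms a direction. The one-vertex step $X' = X\cup\{v\}$ splits further according to whether $v$ lies in $D^* := \cb_{X',X}(C_{X'})$. If $D^* \neq C_X$ and $v \notin V(D^*)$, then $\lsep{\{D^*\}}{X} \leq \rsep{X'}{\{C_{X'}\}}$ yields a consistency violation, since $\rsep{X}{\{D^*\}}$ and $\rsep{X'}{\{C_{X'}\}}$ would both lie in $\tau_\St$; if $v \in V(D^*)$, then $D^*$ survives as a component of $G - X'$ and $\rsep{X'}{\{D^*\}} \leq \rsep{X}{\{D^*\}}$ forces $\rsep{X}{\{D^*\}} \in \tau_\St$ by upward closure, contradicting the uniqueness of $C_X$. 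Theorem~\ref{EndsAreDirections} then yields an end $\omega$ with $C_X = C(X, \omega)$ for every $X$, and since $U_X$ reduces on tame sides to ``$C_X \in \cC$'' this gives $\tau_\St = \tau_\omega \cap \vSt$. In the non-principal case (Case B: some $U_{X_0}$ contains no singleton), additivity rules out finite sets from $U_{X_0}$, so $\cC_{X_0}(Y_{X_0})$ must be infinite; since each of its members is also a component of $G - Y_{X_0}$ with neighborhood $Y_{X_0}$, the set $Y := Y_{X_0}$ is critical. One then establishes the stability statement ``$Y_X = Y$ for all $X \supseteq Y$, and $\{C_X(Y)\} \in U_X$ for all $X \not\supseteq Y$'', after which matching against Theorem~\ref{UFtangleDeterminedByFreeUF} and Lemma~\ref{UFtanglePrincipalGenerator} gives $\tau_\St = \tau \cap \vSt$ for any ultrafilter tangle $\tau$ with $X_\tau = Y$.

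I expect the main obstacle to be the stability step in Case B, because tame separations do not pull back cleanly under $X_0 \subseteq Z$: the cofinite subset $\cC' \subseteq \cC_{X_0}(Y)$ whose members survive as components of $G - Z$ embeds into $\cC_Z(Y)$, but the complement $\cC_Z(Y) \setminus \cC'$ can be infinite, because new components with neighborhood $Y$ may be produced by the removal of $Z \setminus X_0$, so $\sep{Z}{\cC'}$ need not itself be tame. Navigating this requires passing through the always-tame class separations $\sep{Z}{\cC_Z(Y')}$ and assembling stars out of the finite partition of $\cC_Z$ into such classes to force $\rsep{Z}{\cC_Z(Y)} \in \tau_\St$, thus propagating the ultrafilter information from $X_0$ up to $Z$.
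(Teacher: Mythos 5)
This theorem is not proved in the paper at all — it is quoted from its source \cite{EndsTanglesCrit} — so there is no in-paper argument to compare yours with; I can only assess your proposal on its own terms. Your architecture is the natural one and large parts are fine: restrictions of end tangles and of ultrafilter tangles to $\vSt$ are indeed $\aleph_0$-tangles of $\St$, singleton separations are tame and separate the various restrictions (for two critical sets you need a harmless ``after swapping $X$ and $X'$'' to guarantee a $Z\supseteq X$ with $Z\not\supseteq X'$), and your extraction of an ultrafilter $U_X$ on the algebra of tame subsets of $\cC_X$ via the star with interior $X$ is correct. But two steps do not go through as written.

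First, the one-vertex step of your Case~A is garbled. With $X'=X\cup\{v\}$ and $D^\ast$ the component of $G-X$ including $C_{X'}$: the component $D^\ast$ survives as a component of $G-X'$ precisely when $v\notin D^\ast$, not when $v\in D^\ast$; the inequality $\lsep{D^\ast}{X}\le\rsep{X'}{C_{X'}}$ is false in both subcases, since $C_{X'}\subseteq D^\ast$ prevents $X\cup V(D^\ast)\subseteq V\setminus V(C_{X'})$; and consistent orientations are closed \emph{downwards} (if $\vr\le\vs\in O$ then $\vr\in O$), so there is no ``upward closure'' to invoke. The correct pairing of cases and mechanisms is the opposite of yours: if $v\in D^\ast$, then $\rsep{X}{D^\ast}\le\rsep{X'}{C_{X'}}$ and consistency forces $\{D^\ast\}\in U_X$, contradicting $\{C_X\}\in U_X$; if $v\notin D^\ast$, then $C_{X'}=D^\ast$ and there is \emph{no} consistency violation at all — $\rsep{X'}{D^\ast}$ and $\lsep{D^\ast}{X}$ form a two-element star — so you must instead observe that this star has finite interior $X'$ and contradicts the tangle axiom. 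This is repairable, but as stated the step is wrong.

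Second, and more seriously, Case~B is a plan rather than a proof. The ``stability statement'' is exactly where the substance of the theorem lies, you do not prove it, and you yourself flag the propagation of the ultrafilter information from $X_0$ to larger sets as an unresolved obstacle. Moreover, the statement is insufficient even if proved: for finite $X\supseteq Y$ you need not only $\cC_X(Y)\in U_X$ but also that $U_X$ contains no finite subset of $\cC_X(Y)$ (equivalently, no singleton). Knowing only $Y_X=Y$ is compatible with $U_X$ being principal at some $C\in\cC_X(Y)$, and such a $U_X$ orients the tame separation $\sep{X}{\cC_X(Y)\setminus\{C\}}$ oppositely to $\tau\cap\vSt$ for an ultrafilter tangle $\tau$ with $X_\tau=Y$, so the intended matching via Theorem~\ref{UFtangleDeterminedByFreeUF} and Lemma~\ref{UFtanglePrincipalGenerator} would fail. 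This freeness can be extracted (for instance by playing a singleton $\{C\}\in U_X$ off against the fact that $U_{X_0}$ contains no singleton, using a two-element star over $X\cup X_0$ of finite interior), but that argument, like the propagation itself, is nowhere in your sketch; until both are carried out, surjectivity onto the critical vertex sets — the heart of the theorem — remains unproved.
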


\noindent As a consequence, every graph is compactified by its ends and critical vertex sets in the tangle-type \comp\ $\modG/{\sim}=G\sqcup\Omega(G)\sqcup\crit(G)=G\sqcup\Tt(G)$.

Since the tree set that we construct in the proof of Theorem~\ref{TreeSetForInfTangles} will efficiently distinguish all the combinatorially distinguishable $\aleph_0$-tangles of $G$ and consist of tame separations, it will efficiently distinguish all the \mbox{$\aleph_0$-tangles} of~$\St$ (i.e.\ all the ends and critical vertex sets of~$G$).



\section{Example section}\label{sec:examples}

\noindent The aim of this section is twofold.
First, we verify that our main result, Theorem~\ref{TreeSetForInfTangles}, is indeed best possible as claimed in the introduction.
More precisely, in Subsection~\ref{UltrafiltersAndTreeSets} we show that tree sets of finite-order separations cannot distinguish all the ultrafilter tangles from the same equivalence class at once---for any~$G$.

Second, we study the candidate for a starting tree set that is formed by the separations $\csep{X}$ with $X$ critical in $G$ (recall that these are precisely the separations which naturally accompany the ultrafilter tangles).
More precisely, in Subsection~\ref{TheProblemCase} we will see two example graphs showing that it is necessary to modify the tree set candidate:
For the first example graph, the separations $\csep{X}$ form a tree set but do not distinguish any two ultrafilter tangles at all.
For the second example graph, the separations $\csep{X}$ are not even nested.

\subsection{Ultrafilters and tree sets}\label{UltrafiltersAndTreeSets}

In this subsection we show that, as soon as a graph $G$ has some ultrafilter tangle $\tau$, it already cannot admit a tree set of finite-order separations that distinguishes all the ultrafilter tangles that are equivalent to~$\tau$.
As our first step, we translate the problem from graphs to bipartitions of sets.

For this, we need to make some things formal first.
Suppose that $K$ is a non-empty set. We let $\vcB(K):=2^K$. Thus, every subset of $K$ is an oriented `separation'.
The partial ordering ${\le}$ of $\vcB(K)$ will be ${\supset}$, the involution ${}^\ast$ on $\vcB(K)$ will be complementation in the set $K$.
If desired, we can think of a separation $Z\subset K$ as the oriented bipartition $(Z^\ast,Z)$ of $K$, and then $\cB(K)$ is the set of bipartitions of~$K$.
Note that two separations $Z_1,Z_2\in\vcB(K)$ are nested if $Z_1\subset Z_2$ or $Z_1\supset Z_2$ or $Z_1\cup Z_2=K$ or $Z_1\cap Z_2=\emptyset$.
A \emph{tree set of bipartitions} of $K$ is a tree set contained in $\vcB(K)$ with the induced partial ordering and involution.
Note that $\emptyset$ is the sole small separation in $\vcB(K)$ for $Z\subset K\setminus Z$ implies $Z=\emptyset$.
Since an ultrafilter on $K$ happens to be an orientation of $\cB(K)$, a tree set $\vT$ of bipartitions of $K$ distinguishes two distinct ultrafilters $U\neq U'$ on $K$ if there is some $Z\in \vT$ with $Z\in U$ and $Z^\ast\in U'$.
We are almost ready for the translation, we only need one more lemma:

\begin{lemma}\label{ufTangleCofinalSeps}
Let $\tau$ be any ultrafilter tangle of $G$ with blueprint $(X,U)$ and let any separation $\rsep{Y}{\cD}\in\tau$ be given.
Write $\cC$ for the set of those components in $\CX$ that avoid~$Y$.
\begin{enumerate}
    \item If $Y$ includes $X$, then $\rsep{Y}{\cD}\le\rsep{X}{\cD\cap\CX}\in\tau$.
    \item Otherwise $\rsep{Y}{\cD}\le\rsep{X}{\cC}\in\tau$.
\end{enumerate}
In particular, the set $\big\{\,\rsep{X}{\cC}\;\big\vert\;\cC\in U\,\big\}$ is cofinal in $\tau$.
\end{lemma}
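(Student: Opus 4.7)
The plan is to prove cases (i) and (ii) separately, each time verifying the ordering inequality and showing that the right-hand separation belongs to $\tau$; the cofinal statement then drops out, since any $(A,B) \in \tau$ can be written as $\rsep{Y}{\cD}$ with $Y = A \cap B$ the separator and $\cD$ the set of components of $G-Y$ in $B\setminus A$, after which (i) or (ii) applies.

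For case (ii), where $Y \not\supseteq X$, the argument will lean on Lemma~\ref{UFtanglePrincipalGenerator}: since $Y \notin \cX_\tau$, the ultrafilter $U(\tau,Y)$ is principal, generated by $\{C_Y(X)\}$, and $X \subseteq Y \cup V(C_Y(X))$. From $\cD \in U(\tau,Y)$ we therefore get $C_Y(X) \in \cD$. I would then argue that every $C \in \cC$ satisfies $V(C) \subseteq V(C_Y(X))$: because $N_G(C) = X$ and $X \setminus Y$ is nonempty, the sets $V(C)$ and $X \setminus Y$ lie in the same component of $G - Y$, which must be $C_Y(X)$. This gives $V[\cC] \subseteq V[\cD]$, and combined with $X \subseteq Y \cup V[\cD]$ yields $\rsep{Y}{\cD} \le \rsep{X}{\cC}$. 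For membership, note that only components of $\CX$ meeting $Y\setminus X$ can fail to avoid $Y$, and there are at most $|Y\setminus X|$ of these, so $\cC$ is cofinite in $\CX$ and hence belongs to the free ultrafilter $U$.

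For case (i), where $Y \supseteq X$, the first task is to make sense of $\cD \cap \CX$. My plan is to observe that whenever $C \in \CX$ avoids $Y$, it is already a component of $G-Y$ itself, because $N_G(C) = X \subseteq Y$ means no vertex outside $V(C)\cup Y$ is adjacent to $V(C)$ in $G-Y$. Thus $\cC_0 := \{C \in \CX : V(C) \cap Y = \emptyset\}$ embeds naturally into $\cC_Y$, and under this embedding $\cD \cap \CX$ coincides with $\cD \cap \cC_0$. The inequality $\rsep{Y}{\cD} \le \rsep{X}{\cD \cap \CX}$ is then immediate from $V[\cD \cap \CX] \subseteq V[\cD]$ and $X \subseteq Y$. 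For the membership claim, the pushforward description $U(\tau,X) = U(\tau,Y) \rest X$ gives $\cb_{Y,X}[\cD] \in U(\tau,X)$, so $\cb_{Y,X}[\cD] \cap \CX \in U$ (using $\CX \in U(\tau,X)$); since $\cb_{Y,X}^{-1}(C) = \{C\}$ for every $C \in \cC_0$, further intersecting with the cofinite set $\cC_0 \in U$ recovers exactly $\cD \cap \CX$, which therefore lies in $U$.

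The main obstacle will be case (i): the notation $\cD \cap \CX$ straddles two different sets ($\cC_Y$ and $\cC_X$), and justifying the intersection requires the bridging observation that on the cofinite subset $\cC_0$ of $\CX$ each $G-X$-component is simultaneously a $G-Y$-component, so that $\cb_{Y,X}$ restricts to the identity on $\cC_0$. Once this identification is in place, the rest is routine ultrafilter bookkeeping: cofinite sets lie in free ultrafilters, pushforwards preserve ultrafilter membership, and finite intersections stay inside the ultrafilter.
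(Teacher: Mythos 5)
Your proposal is correct and follows essentially the same route as the paper's proof: cofiniteness of $\cC$ in $\CX$ gives $\cC\in U$, case (i) is handled by intersecting the pushforward $\cb_{Y,X}[\cD]\in U(\tau,X)$ with the cofinite set of $Y$-avoiding components of $\CX$, and case (ii) uses Lemma~\ref{UFtanglePrincipalGenerator} to get $C_Y(X)\in\cD$ and the fact that every $C\in\cC$ sends edges to $X\setminus Y\subset C_Y(X)$. The only (harmless) difference is presentational: you make the identification of $\cC$ with components of $G-Y$ explicit and verify the inequality in (ii) directly rather than via the intermediate separation $\rsep{Y}{C_Y(X)}$.
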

\begin{proof}
Since $Y$ is finite, $\cC$ is a cofinite subset of $\CX$, giving $\cC\in U$.

(i)
The intersection $\cD\cap\CX$ can be written in a more complicated way as \mbox{$(\cD\rest X)\cap\cC$} where $\cC\in U$ as noted above and
\begin{align*}
    \cD\rest X=\{\,C\in\cC_X\mid\exists\,D\in\cD:D\supset C\,\}\in U.
\end{align*}
Hence $\rsep{X}{\cD\cap\CX}\in\tau$.
It is straightforward to check $\rsep{Y}{\cD}\le\rsep{X}{\cD\cap\CX}$.

(ii)
From $\cC\in U$ we get $\rsep{X}{\cC}\in \tau$.
Lemma~\ref{UFtanglePrincipalGenerator} deduces from $\rsep{Y}{\cD}\in\tau$ that $C_Y(X)\in\cD$. 
Finally, we calculate $\rsep{Y}{\cD}\le\rsep{Y}{C_Y(X)}\le\rsep{X}{\cC}$ where for the second inequality we use that every component in $\cC$ sends an edge to the non-empty $X\setminus Y\subset C_Y(X)$ to deduce $\bigcup\cC\subset C_Y(X)$.
\end{proof}

Now we are ready for the translation:

\begin{lemma}\label{treeSetUFtanglesToTreeSetUFonN}
Let $X$ be a critical vertex set of $G$. Then every tree set of finite-order separations of $G$ that distinguishes all the ultrafilter tangles $\tau$ of $G$ with $X_\tau=X$ does induce a tree set of bipartitions of $\CX$ that distinguishes all the free ultrafilters on $\CX$.
\end{lemma}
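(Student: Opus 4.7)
The plan is to map each $s \in T$ to a bipartition of $\CX$ using the ultrafilter tangle blueprint bijection, which identifies ultrafilter tangles $\tau$ of $G$ with $X_\tau = X$ and free ultrafilters on~$\CX$. For each $s \in T$ I fix an orientation $\vs = \rsep{Y_s}{\cD_s}$ and distinguish two cases. In case~(a), $Y_s \not\supseteq X$, and Lemma~\ref{ufTangleCofinalSeps}(ii) implies that every ultrafilter tangle $\tau$ with $X_\tau = X$ orients $s$ the same way (namely $\vs \in \tau$ iff $C_{Y_s}(X) \in \cD_s$); thus $s$ distinguishes no two such tangles and I assign it the trivial bipartition $\{\emptyset, \CX\}$. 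In case~(b), $Y_s \supseteq X$, and I set $Z_s := \cD_s \cap \CX$ in the sense of Lemma~\ref{ufTangleCofinalSeps}(i), i.e., the set of $C \in \CX$ with $V(C) \cap Y_s = \emptyset$ whose containing component of $G - Y_s$ lies in $\cD_s$, assigning the bipartition $\{Z_s, \CX \setminus Z_s\}$.

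Distinguishing is then immediate from Lemma~\ref{ufTangleCofinalSeps}(i): given distinct free ultrafilters $U_1, U_2$ on $\CX$, the blueprint bijection yields tangles $\tau_1 \neq \tau_2$ with $X_{\tau_i} = X$ and $U_i = U(\tau_i, X) \cap 2^{\CX}$. By hypothesis some $s \in T$ distinguishes $\tau_1$ from $\tau_2$; since case-(a) separations cannot, this $s$ must lie in case~(b), and Lemma~\ref{ufTangleCofinalSeps}(i) then gives $Z_s \in U_i \Leftrightarrow \vs \in \tau_i$, so $\{Z_s, \CX \setminus Z_s\}$ separates $U_1$ from $U_2$.

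For nestedness, take $s_1, s_2 \in T$ nested with both $Y_{s_i} \supseteq X$ and (after reorienting if needed) $\vs_1 \le \vs_2$. Unpacking this inequality yields $V[\cD_{s_2}] \subseteq V[\cD_{s_1}]$, and together with $V[\cD_{s_1}] \cap Y_{s_1} = \emptyset$ one checks that any $C \in Z_{s_2}$ has $V(C) \subseteq V[\cD_{s_1}]$ disjoint from $Y_{s_1}$; by connectedness $C$ sits inside a single component of $G - Y_{s_1}$, which must then belong to $\cD_{s_1}$, so $C \in Z_{s_1}$ and hence $Z_{s_2} \subseteq Z_{s_1}$. In the remaining configuration $\vs_1 \le \sv_2$, the analogous argument with $\sv_2$ in place of $\vs_2$ yields $\cD_{s_2}^\ast \cap \CX \subseteq Z_{s_1}$, which translates to $\CX \setminus Z_{s_2} \subseteq Z_{s_1}$ up to the finite set of boundary components $\{C \in \CX : V(C) \cap Y_{s_2} \neq \emptyset\}$.

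I expect this finite-boundary correction to be the main obstacle. The literal nestedness condition $Z_{s_1} \cup Z_{s_2} = \CX$ in the case $\vs_1 \le \sv_2$ fails only by finitely many boundary components, and since free ultrafilters are insensitive to finite symmetric differences the distinguishing step survives any finite modification of each $Z_s$. The remaining technical task is to reassign each boundary component of each $s$ in a globally consistent manner so that exact nestedness holds; this can be achieved, for instance, by fixing a reference ultrafilter tangle $\tau_0$ with $X_{\tau_0} = X$ (which exists because $X$ is critical) and assigning each boundary component to the $\tau_0$-side of its bipartition.
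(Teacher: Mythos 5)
Your construction follows the paper's route: the paper likewise sends $\rsep{Y}{\cD}$ to $\cD\cap\CX$, discards (without loss of generality) all separations that distinguish no two tangles $\tau$ with $X_\tau=X$ (which forces $X\subset Y$, your case (a)), obtains the distinguishing property from Lemma~\ref{ufTangleCofinalSeps} via the blueprint bijection, and repairs exactly the boundary-component defect you identify by choosing one orientation of each separation from a consistent orientation $O$ of $\vT$ and then adding complements in $\vcB(\CX)$. Your reference-tangle recipe is precisely this device with $O=\tau_0\cap\vT$.

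The one step you assert rather than prove — that reassigning boundary components to the $\tau_0$-side restores \emph{exact} nestedness — is where the consistency of $\tau_0$ must enter, and this is the heart of the paper's nestedness argument, so leaving it as ``this can be achieved'' is a genuine gap: your own analysis shows that before the reassignment nestedness really can fail when the two fixed orientations point towards each other, so something must rule out the analogous failure afterwards. The missing argument is short. After the reassignment, the bipartition attached to $s$ is $\{\,\CX\setminus\cF_s\,,\,\cF_s\,\}$, where $\rsep{Y_s}{\cE_s}$ denotes the orientation of $s$ lying in $\tau_0$ and $\cF_s:=(\cC_{Y_s}\setminus\cE_s)\cap\CX$. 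The computation from your first case shows that $\rsep{Y_1}{\cE_1}\le\rsep{Y_2}{\cE_2}$ forces $\cF_1\subset\cF_2$, and, applied to $\rsep{Y_1}{\cE_1}\le\lsep{\cE_2}{Y_2}=\rsep{Y_2}{\cC_{Y_2}\setminus\cE_2}$, it forces $\cF_1\subset\cE_2\cap\CX$ and hence $\cF_1\cap\cF_2=\emptyset$. The only configuration in which one would need the covering condition $\cF_1\cup\cF_2=\CX$ (the condition that can genuinely fail on boundary components) is $\lsep{\cE_1}{Y_1}\le\rsep{Y_2}{\cE_2}$, and this cannot occur: both $\rsep{Y_1}{\cE_1}$ and $\rsep{Y_2}{\cE_2}$ lie in $\tau_0$, contradicting its consistency (equality would make $\tau_0$ contain both orientations of one separation). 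Two minor further points: do not keep the case-(a) separations as the bipartition $\{\emptyset,\CX\}$ — that element is small, and trivial as soon as the system has another member, so the result would not be essential and hence not a tree set; simply omit them, as they distinguish nothing. Also, your pre-fix case analysis omits the configuration in which the fixed orientations point away from each other; this is harmless, since your first argument applied to the inverse of the first orientation yields $Z_{s_1}\cap Z_{s_2}=\emptyset$ exactly.
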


\begin{proof}
Let $\vT$ be a tree set of finite order separations of $G$ that distinguishes all the ultrafilter tangles of $G$ with $X_\tau=X$.
Without loss of generality every separation $\rsep{Y}{\cD}\in\vT$ distinguishes some two such ultrafilter tangles, and so $X\subset Y$ follows for all $\rsep{Y}{\cD}\in\vT$.

The candidate for a tree set of bipartitions of $\CX$ is $\{\,\bar{\cD}\mid \rsep{Y}{\cD}\in\vT\,\}$ where $\bar{\cD}=\cD\cap\CX$.
But when $\rsep{Y}{\cD'}$ is the inverse of $\rsep{Y}{\cD}$ it can happen that $\bar{\cD}'$ is not the inverse of $\bar{\cD}$ in $\vcB(\CX)$.
For example, this happens when a finite component $C\in\CX$ is contained in $Y$, for then both $\bar{\cD}'$ and $\bar{\cD}$ are missing $C$.

We overcome this obstacle as follows.
First, we choose any consistent orientation $O$ of $\vT$ (such an orientation exists, e.g., by~\cite[Lemma~4.1]{AbstractSepSys} which essentially applies Zorn's lemma to achieve this).
Then, we define $N_O:=\{\,\bar{\cD}\mid \lsep{\cD}{Y}\in O\,\}$.
Finally, we claim that $\vN:=N_O\cup N_O^\ast$ is a tree set of bipartitions of $\CX$ that distinguishes all the free ultrafilters on $\CX$.

To verify that $\vN$ is a tree set we show that $\vN$ is nested.
For this, consider any two separations $\lsep{\cD_1}{Y_1},\lsep{\cD_2}{Y_2}\in O$.
Then, say, either $\lsep{\cD_1}{Y_1}\le\lsep{\cD_2}{Y_2}$ implies $\bar{\cD}_1\subset\bar{\cD}_2$ or $\lsep{\cD_1}{Y_1}\le\rsep{Y_2}{\cD_2}$ implies $\cD_1\subset(\bar{\cD}_2)^\ast$.
So $\vN$ is a tree set.

Now let $U\neq U'$ be any distinct two free ultrafilters on $\CX$.
Then there is a separation $\rsep{\cD}{Y}\in O$ that distinguishes the ultrafilter tangles $\tau_U$ and $\tau_{U'}$ corresponding to $(X,U)$ and $(X,U')$, say with $\lsep{\cD}{Y}\in\tau_U$ and $\rsep{Y}{\cD}\in\tau_{U'}$.
By Lemma~\ref{ufTangleCofinalSeps} we have $\bar{\cD}\in U'$.
Similarly $\overline{\cD_Y\setminus\cD}\in U$, which then via the inclusion $\overline{\cD_Y\setminus\cD}\subset (\bar{\cD})^\ast$ implies $(\bar{\cD})^\ast\in U$.
\end{proof}

As a consequence of this lemma, it suffices to show

\begin{theorem}\label{BipsUFsNoTreeSetDists}
If $K$ is an infinite set, then no tree set of bipartitions of $K$ distinguishes all the free ultrafilters on $K$.
\end{theorem}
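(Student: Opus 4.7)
My plan is to first reduce to countable $K$ and then use compactness of the space of consistent orientations together with a diagonalisation to construct two distinct free ultrafilters inducing the same orientation of $\vT$.

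Given any countably infinite $A\subseteq K$, the collection of pairs $(Z\cap A,Z^c\cap A)$ for $Z\in\vT$, after discarding trivial elements (those with $Z\cap A\in\{\emptyset,A\}$), forms a tree set of bipartitions of $A$, since nestedness is preserved under restriction. Any two distinct free ultrafilters $V_1\neq V_2$ on $A$ that induce the same orientation of this restricted tree set lift via $U_i:=\{S\subseteq K:S\cap A\in V_i\}$ to two distinct free ultrafilters on $K$ not distinguished by $\vT$. So we may assume $K=\mathbb{N}$. Write $O(n):=\{Z\in\vT:n\in Z\}$ and consider the equivalence $n\sim m\iff O(n)=O(m)$. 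If some class is infinite the theorem follows at once: any two distinct free ultrafilters on that class, lifted as above, induce the same orientation of~$\vT$.

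Otherwise every class is finite. The space of consistent orientations of $\vT$ is closed in the compact product $2^{\vT}$ (both ``is an orientation'' and ``is consistent'' are closed conditions) and hence compact, so the countable set $\{O(n):n\in\mathbb{N}\}$ has a cluster point $O^*$. The assumption of finite classes forces $O^*\notin\{O(n):n\in\mathbb{N}\}$, so for every finite $F\subseteq\vT$ the set $A_F:=\{n\in\mathbb{N}:O(n)\rest F=O^*\rest F\}$ is infinite. Since $A_F\cap A_{F'}=A_{F\cup F'}$, the sets $A_F$ form a filter base; let $\mathcal{B}$ denote the filter it generates. Every ultrafilter extending $\mathcal{B}$ is free (each $A_F$ is infinite) and induces the orientation $O^*$, so it suffices to show that $\mathcal{B}$ is not itself an ultrafilter.

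When $\vT$ is countable this follows by direct diagonalisation: enumerate the countably many generators $A_F$ and recursively build $S\subseteq\mathbb{N}$ by choosing, at each stage $F$, two fresh elements of $A_F$ and placing one into $S$ and the other into $\mathbb{N}\setminus S$; the resulting $S$ meets every $A_F$ on both sides, so neither $S$ nor $\mathbb{N}\setminus S$ lies in $\mathcal{B}$. The hard case will be $|\vT|>\aleph_0$, where uncountably many distinct $A_F$'s can appear and the naive diagonalisation fails; I would attempt to overcome this by using the nested structure of $\vT$ to extract a countable cofinal subfamily of $\{A_F\}$ modulo finite differences, thereby reducing back to the countable case.
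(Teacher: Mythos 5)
Your route is genuinely different from the paper's (which finds an $\omega$-chain or an infinite splitting star via Lemma~\ref{StarCombForTreeSets} and then exhibits one explicit orientation shared by many free ultrafilters), but as submitted it has a genuine gap, and you name it yourself: the case $\vert\vT\vert>\aleph_0$. This case really occurs even for countable $K$ (e.g.\ the Dedekind-cut chain $\{\,\{q\in\Q: q<r\}\mid r\in\R\,\}$ is an uncountable nested family of bipartitions of $\Q$), and it is the crux of the whole argument. Note that $\mathcal{B}$ is nothing other than the filter generated by the orientation $O^*$ itself (your generator $A_{\{Z\}}$ equals $Z$ or $Z^c$ according to how $O^*$ orients it), so what you must prove is precisely that this particular orientation of a nested family, together with the cofinite filter, does not generate an ultrafilter. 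Your proposed patch --- ``extract a countable cofinal subfamily of $\{A_F\}$ modulo finite differences'' --- is exactly a restatement of that claim, not an argument for it; a priori one has to rule out that $O^*$ behaves like a tower-generated (simple $P$-point-style) base, and nothing in the compactness construction of $O^*$ gives you control over this. There is also a smaller slip: ``finite classes force $O^*\notin\{O(n)\}$'' is unjustified (a cluster point of the set $\{O(n)\}$ may equal some $O(n_0)$, e.g.\ when the $O(n)$ form a convergent sequence), so your freeness claim for ultrafilters extending $\mathcal{B}$ is not established as written; this is fixable by adjoining the cofinite filter or removing the finite class of such an $n_0$.

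The good news is that the missing step is true and can be proved with one extra idea, so your strategy can be completed. The complements of the generators, $\{A_{\{Z\}}^c : Z\in\vT\}$, form a laminar family: two sets of a consistent orientation of a regular nested system are never disjoint, hence are comparable or have union $K$, so their complements are comparable or disjoint. For each $k\in K$ the members of this laminar family containing $k$ form a chain under inclusion, and a $\subseteq$-chain of subsets of a countable set always has a countable cofinal subchain (a strictly increasing well-ordered chain of length $\omega_1$ would yield uncountably many pairwise disjoint non-empty difference sets). Taking such a countable cofinal subchain for each of the countably many $k$ shows that the ideal generated by the laminar family is generated by countably many of its members; dually, $\mathcal{B}$ together with the Fr\'echet filter is countably generated, and your own diagonalisation then shows it is not an ultrafilter, yielding two distinct free ultrafilters inducing $O^*$. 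With that addition (and the freeness repair above) your proof works and is arguably more elementary than the paper's, since it avoids Theorem~\ref{KneipTreeSets} entirely; but as it stands, the decisive step is asserted rather than proved.
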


in order to obtain our desired result:

\begin{corollary}
If $\tau$ is an ultrafilter tangle of $G$, then no tree set of finite-order separations of $G$ distinguishes all the ultrafilter tangles that are equivalent to $\tau$.\qed
\end{corollary}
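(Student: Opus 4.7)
The plan is to derive the corollary as an immediate contradiction obtained by chaining Lemma~\ref{treeSetUFtanglesToTreeSetUFonN} with Theorem~\ref{BipsUFsNoTreeSetDists}. I would argue by contradiction: suppose some tree set $\vT$ of finite-order separations of $G$ distinguishes all ultrafilter tangles equivalent to~$\tau$. Writing $X:=X_\tau$, recall that $\tau_1\sim\tau_2$ means that both are ultrafilter tangles with $X_{\tau_1}=X_{\tau_2}$; hence the set of ultrafilter tangles equivalent to $\tau$ is exactly $\{\,\tau'\in\Upsilon\mid X_{\tau'}=X\,\}$. So $\vT$ is a tree set of finite-order separations of $G$ that distinguishes all ultrafilter tangles $\tau'$ of $G$ with $X_{\tau'}=X$, which is precisely the hypothesis of Lemma~\ref{treeSetUFtanglesToTreeSetUFonN}.

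Next, I would apply that lemma to obtain a tree set $\vN$ of bipartitions of $\CX$ that distinguishes all free ultrafilters on~$\CX$. Since $\tau$ is an ultrafilter tangle, its blueprint's first coordinate $X=X_\tau$ is critical, so by definition of critical vertex set the set $\CX$ is infinite. Then Theorem~\ref{BipsUFsNoTreeSetDists}, applied to the infinite set $K=\CX$, asserts that no tree set of bipartitions of $\CX$ can distinguish all the free ultrafilters on~$\CX$. This contradicts the existence of $\vN$, finishing the argument.

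The only thing one needs to verify is that the two ingredients fit together cleanly, and both do: the equivalence classes of $\sim$ restricted to $\Upsilon$ are parametrised exactly by critical vertex sets (the first coordinate of the blueprint), so the hypothesis of Lemma~\ref{treeSetUFtanglesToTreeSetUFonN} matches the wording of the corollary, and criticality of $X_\tau$ guarantees infinitude of $\CX$ so that Theorem~\ref{BipsUFsNoTreeSetDists} applies. The real mathematical content—constructing, for any tree set of bipartitions of an infinite set, two free ultrafilters that it fails to separate—has been absorbed into Theorem~\ref{BipsUFsNoTreeSetDists}, and the translation from graph separations to bipartitions of $\CX$ has been absorbed into Lemma~\ref{treeSetUFtanglesToTreeSetUFonN}, so the proof of the corollary itself is essentially a one-line combination of the two.
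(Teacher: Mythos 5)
Your proposal is correct and is exactly the paper's argument: the corollary is obtained by combining Lemma~\ref{treeSetUFtanglesToTreeSetUFonN} (translating a distinguishing tree set of $G$ into a tree set of bipartitions of $\CC{X_\tau}$) with Theorem~\ref{BipsUFsNoTreeSetDists} applied to the infinite set $\CC{X_\tau}$. Nothing further is needed.
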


Theorem~\ref{BipsUFsNoTreeSetDists} above has been proved independently from us by Bowler~\cite{NathanDistinguishUltrafilters} in 2014 who did not publish his findings.
The proof presented below is ours.
For the proof we need the following lemma which is a tree set version of the fact that every connected infinite graph contains either a ray or a vertex of infinite degree, \cite[Proposition~8.2.1]{BibleNew}:

\begin{lemma}\label{StarCombForTreeSets}
Every regular infinite tree set contains either an $\omega$-chain or an infinite splitting star.
\end{lemma}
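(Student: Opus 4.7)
My plan is to reduce the lemma to the classical star--comb fact that every infinite connected graph contains either a ray or a vertex of infinite degree (\cite[Proposition~8.2.1]{BibleNew}), applied here to a tree, by first transferring $\vT$ to the edge tree set of an actual tree via the Gollin--Kneip characterisation (Theorem~\ref{KneipTreeSets}). I would assume that $\vT$ contains no $\omega$-chain, as otherwise there is nothing to prove. Then $\vT$ contains no $(\omega+1)$-chain either, since any such chain would include an $\omega$-chain, and together with regularity Theorem~\ref{KneipTreeSets} supplies an isomorphism of separation systems $\varphi\colon\vT\to\vE(T)$ for some tree $T$. Since $\vT$ is infinite, so is $E(T)$, and hence $T$ is an infinite connected graph.

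Applying \cite[Proposition~8.2.1]{BibleNew} to $T$ produces either a ray $v_0 v_1 v_2\dots$ in $T$ or a vertex $t\in V(T)$ of infinite degree. A ray would yield the chain $(v_0,v_1)<(v_1,v_2)<(v_2,v_3)<\cdots$ in $\vE(T)$, which under $\varphi^{-1}$ becomes an $\omega$-chain in $\vT$---contradicting our standing assumption. So $T$ has a vertex $t$ of infinite degree, and then the star $\vF_{\! t}\subset\vE(T)$ is infinite. To finish, I would verify that $\vF_{\! t}$ is actually a \emph{splitting} star of $\vE(T)$: the set $O_t$ of oriented edges of $T$ pointing towards $t$---i.e., those $(u,v)\in\vE(T)$ for which the unique $uv$--$t$ path in $T$ starts at $v$---is a consistent orientation of $\vE(T)$ whose maximal elements are precisely those of $\vF_{\! t}$. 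Pulling $\vF_{\! t}$ back through $\varphi$ then delivers the desired infinite splitting star of $\vT$.

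Theorem~\ref{KneipTreeSets} does the heavy lifting, and no real obstacle remains. The only point needing some care is the last verification: the excerpt records that the stars $\vF_{\! t}$ are splitting stars of $\vE(T)$ only for finite trees~$T$, but the same direct check goes through for infinite trees. Consistency of $O_t$ follows from the tree property, since a pair $(a,b),(c,d)\in O_t$ with $(b,a)<(c,d)$ would produce two distinct $a$--$t$ paths in $T$; and every element $(u,v)\in O_t$ with $v\neq t$ satisfies $(u,v)<(v,w)$ in $\vE(T)$ for the neighbour $w$ of $v$ on the $uv$--$t$ path, so the maximal elements of $O_t$ are exactly those of $\vF_{\! t}$.
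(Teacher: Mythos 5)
Your proof is correct and follows essentially the same route as the paper: assume there is no $\omega$-chain, invoke Theorem~\ref{KneipTreeSets} to realise the tree set as the edge tree set of a (necessarily rayless) tree, and conclude from the ray-or-infinite-degree dichotomy for infinite connected graphs that some star $\vF_{\! t}$ is infinite. The only difference is that you spell out the (folklore) verification that $\vF_{\! t}$ is a splitting star of $\vE(T)$ for infinite trees and that this transfers along the isomorphism, which the paper leaves implicit.
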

\begin{proof}
If a tree set contains no $\omega$-chain, then it is isomorphic to the edge tree set of a rayless tree by Theorem~\ref{KneipTreeSets}.
This tree, then, must have an infinite degree vertex if the tree set is infinite.
\end{proof}

If $U$ is an ultrafilter on a set $K$ and $\cK$ is a partition of $K$, then we write $U\bdot \cK$ for the \emph{induced} ultrafilter on $\cK$ given by $\{\,\cA\subset\cK\mid\bigcup\cA\in U\,\}$.
Notably, if $U$ is principal, then so is $U\bdot\cK$.
Conversely, every ultrafilter $\cU$ on $\cK$ gives a filter
\begin{align*}
    \lfloor\,\{\,\medcup\cA\mid\cA\in\cU\,\}\,\rfloor_K:=\{\,A\subset K\mid\exists\,\cA\in\cU:A\supset\medcup\cA\,\}
\end{align*}
on $K$, and every ultrafilter $U$ on $K$ that extends this filter induces $\cU$ in that $\cU=U\bdot \cK$.
Phrased differently, the map $U\mapsto U\bdot\cK$ is a surjection from the set of ultrafilters on $K$ onto the set of ultrafilters on $\cK$.
Notably, free ultrafilters on $\cK$ are induced only by free ultrafilters on $K$.

\begin{proof}[Proof of Theorem~\ref{BipsUFsNoTreeSetDists}]
Let any infinite set $K$ be given and assume for a contradiction that $\vT$ is a tree set of bipartitions of $K$ that distinguishes all the free ultrafilters on $K$.
If $\vT$ is finite, then there are only finitely many orientations of $\vT$.
But there are infinitely many free ultrafilters on $K$, so a finite tree set cannot possibly distinguish all of them.
Therefore, $\vT$ must be infinite.
Since the empty set does not distinguish any two ultrafilters on $K$ we may assume without loss of generality that $\vT$ is regular.
Then by Lemma~\ref{StarCombForTreeSets} we know that $\vT$ contains either an $\omega$-chain or an infinite splitting star.

Suppose first that $\vT$ contains an $\omega$-chain; that is to say that we find a sequence $(Z_n)_{n<\omega}$ in $\vT$ with $Z_n\supsetneq Z_{n+1}$ for all $n$.
As $\vT$ is a tree-set, $K\setminus Z_0$ is non-empty.
Put $Z_\omega:=\bigcap_{n<\omega}Z_n$.
Then $Z_\omega$ is nested with every separation in $\vT$.
More precisely, every separation in $T$ has an orientation $Z$ such that either $Z\supseteq Z_n$ for some $n<\omega$ or $Z_\omega\supseteq Z$.
We turn the transfinite sequence $(Z_\alpha)_{\alpha\le\omega}$ into a partition of $K$, as follows.
For every $n<\omega$ set $K_n=Z_n\setminus Z_{n+1}$; and put $K_\omega:=(K\setminus Z_0)\cup Z_\omega$.
Then $\cK:=\{\,K_\alpha\mid\alpha\le\omega\}$ is an infinite partition of $K$.
Let $\cU$ be any free ultrafilter on~$\cK$, and pick some free ultrafilter $U$ on $K$ with $\cU=U\bdot\cK$.
The free ultrafilter $\cU$ contains all cofinite subsets $\{\,K_m\mid n\le m<\omega\}\subset\cK$ with $n<\omega$, and so $U$ contains all $Z_n\setminus Z_\omega$ with $n<\omega$.
Recall that every separation in $T$ has an orientation $Z$ such that either $Z\supseteq Z_n$ for some $n<\omega$ or $Z_\omega\supseteq Z$.
Hence for every separation $\{Z^\ast,Z\}\in T$ we have that either $Z\supseteq Z_n$ with $Z_n\setminus Z_\omega\in U$ implies $Z\in U$, or $Z_\omega\supseteq Z$ with $Z_0\setminus Z_\omega\in U$ implies $Z^\ast\in U$.
Therefore, if $\cU'$ is any free ultrafilter on $\cK$ other than $\cU$, and $U'$ is a free ultrafilter on $K$ inducing $\cU'$, then $U'$ orients every separation in $T$ the same way as $U$.
But then $\vT$ does not distinguish $U$ and $U'$ from each other, a contradiction.

Finally suppose that $T$ contains an infinite splitting star $\sigma=\{\,K_i\mid i\in I\}$.
If $\cK:=\{\,K_i^\ast\mid i\in I\}$ is not yet a partition of $K$, then we add the non-empty interior $\bigcap_{i\in I}K_i$ of $\sigma$ to $\cK$ to turn $\cK$ into one.
Let $\cU$ be any free ultrafilter on $\cK$, and pick some free ultrafilter $U$ on $K$ inducing $\cU$.
The free ultrafilter $\cU$ contains all collections $\cK-K_i^\ast$, and hence $U$ contains all $K_i$.
Now every separation in $T$ has an orientation $Z$ with $Z\supseteq K_i$ for some $i\in I$ as $\sigma$ is splitting, and then $K_i\in U$ implies $Z\in U$.
Therefore, if $\cU'$ is any free ultrafilter on $\cK$ other than $\cU$, and $U'$ is a free ultrafilter on $K$ inducing $\cU'$, then $U'$ orients every separation in $T$ the same way as $\cU$.
But then $\vT$ does not distinguish $U$ and $U'$, a contradiction.
\end{proof}

We remark that the proof above even shows the following stronger version of Theorem~\ref{BipsUFsNoTreeSetDists}: \emph{If $K$ is an infinite set, then for every tree set of bipartitions of $K$ there is a collection of at least $2^{2^{\aleph_0}}=2^\frakc$ many free ultrafilters on $K$ all of which induce the same orientation of the tree set.}\footnote{By improving Lemma~\ref{StarCombForTreeSets} it might be possible to replace $2^{2^{\aleph_0}}$ with $2^{2^{\vert K\vert}}$.}
So if $G$ has precisely one critical vertex set $X$ with $\CX$ countable, then for every tree set of finite order separations of $G$ there is a collection $\cO$ of ultrafilter tangles of $G$ such that all ultrafilter tangles in $\cO$ induce the same orientation of the tree set and the cardinal $\vert\cO\vert$ is equal to the total number $2^\frakc$ of ultrafilter tangles of $G$.

\subsection{The problem case}\label{TheProblemCase}

This subsection is dedicated to examples of graphs whose critical vertex sets give a very bad starting set
\begin{align*}
\big\{\,\csep{X}\;\big\vert\;X\in\crit(G)\,\big\}.
\end{align*}
In both cases, all the critical vertex sets interact with each other in a particular way, made precise as follows.
Let us say that two critical vertex sets $X$ and $Y$ of~$G$ \emph{form a problem case} if $X$ and $Y$ are incomparable as sets and additionally both $C_X(Y)\in\CX$ and $C_Y(X)\in\CY$ hold.


\begin{example}
Let $G$ be the subtree of the infinitely branching tree $T_{\aleph_0}$ that consists of the first three levels (for an arbitrarily chosen root).
Note that the critical vertex sets of any tree are precisely the singletons formed by its infinite degree nodes.
Then the collection
\begin{align*}
\big\{\,\crsep{X}\;\big\vert\; X\in\crit(G)\,\big\}
\end{align*}
is an infinite star of small separations.
As every $\aleph_0$-tangle contains all the small separations $(A,V)$ with $A$ finite (because these can be written as $\rsep{A}{\cC_A}$ and $\cC_A\in U(\tau,A)$ for every $\aleph_0$-tangle $\tau$), it follows that every ultrafilter tangle contains this star as a subset, and so no two ultrafilter tangles are distinguished by this star's underlying tree set.
Notably, every two distinct critical vertex sets of $G$ form a problem case.\qed
%
%
\end{example}

\begin{figure}[ht]
	\centering
	\includegraphics[scale=1]{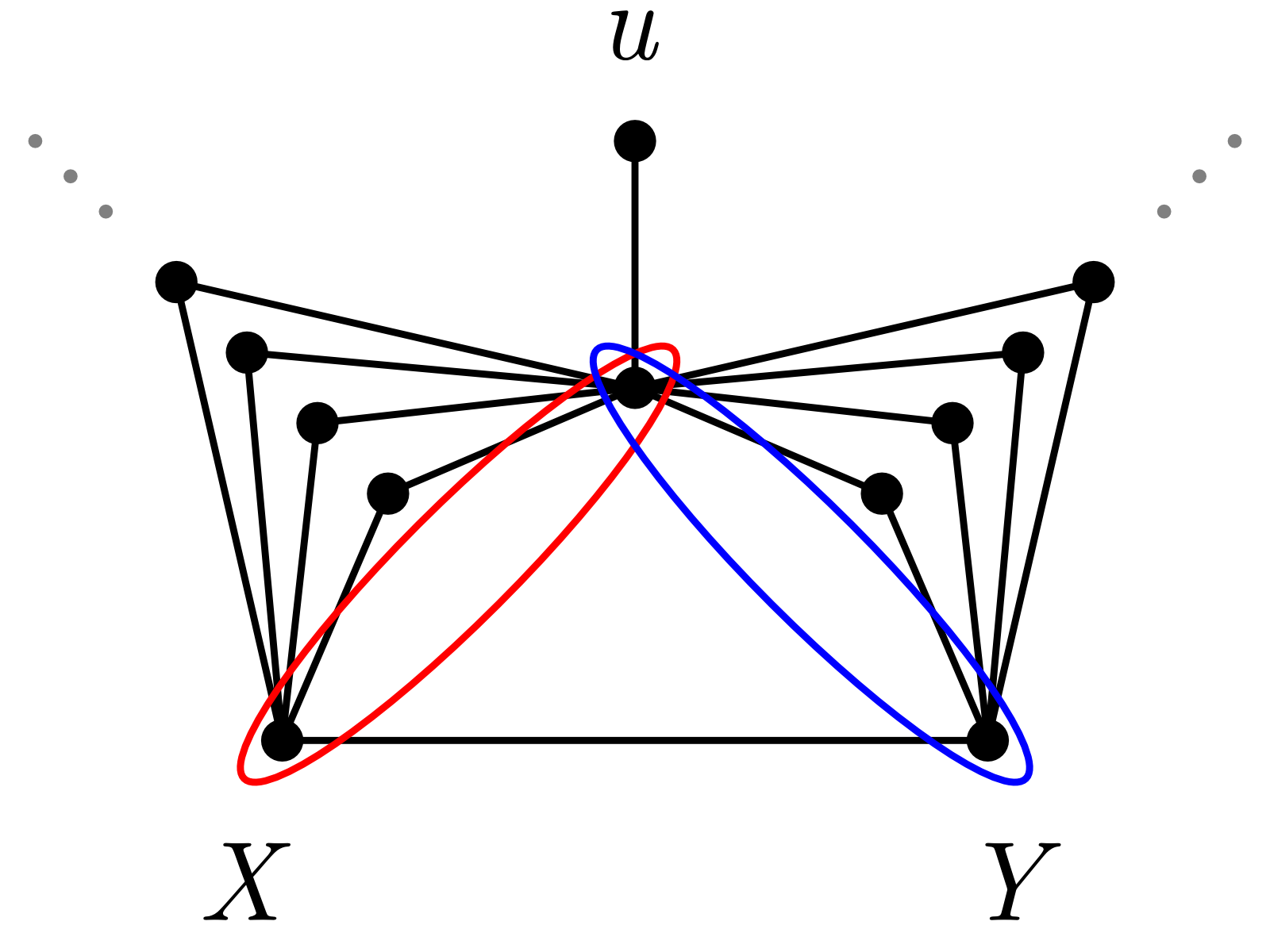}
	\caption{This graph's critical vertex sets do not give nested separations.}
	\label{fig:incomparable}
\end{figure}

\begin{example}
If $G$ is the graph shown in Figure~\ref{fig:incomparable}, then the collection
\begin{align*}
\big\{\,\csep{X}\;\big\vert\;X\in\crit(G)\,\big\}
\end{align*}
is not even nested.
Indeed, $X$ and $Y$ are the only two critical vertex sets of $G$.
Write $V'$ for $V-u$.
Then $\csep{X}=\{X+u,V'\}$ and $\csep{Y}=\{Y+u,V'\}$.
Now these two separations cannot be nested:
as $X$ and $Y$ are incomparable as sets, we have neither $(X+u,V')\le (Y+u,V')$ nor $(V',X+u)\le (V',Y+u)$.
But $(X+u,V')\le (V',Y+u)$ and $(V',X+u)\le (Y+u,V')$ are impossible as well since $X+u$ and $Y+u$ are both incomparable with $V'$ as sets.
As in the previous example we note that $X$ and $Y$ form a problem case.\qed
\end{example}

\section{The overall proof strategy}\label{sec:strategy}

\noindent Our overall strategy to achieve our main result, Theorem~\ref{TreeSetForInfTangles}, roughly goes as follows.
Let $G$ be any infinite connected graph.
Recall that every ultrafilter tangle $\tau=(X,U)$ of $G$ naturally comes with a finite-order separation $\rsep{X}{\CX}\in\tau$.
As our first step, we carefully extend and refine the set of these separations into a starting tree set $T$ that already distinguishes all the inequivalent ultrafilter tangles of $G$, but does not necessarily do so efficiently.

Next, we modify the torsos of $T$ so that every $\aleph_0$-tangle of $G$ is represented in every modified torso by some end of that modified torso.
We then show the following assertion (also see Figure~\ref{fig:simpleReflection}):
\emph{Let $\tau_1$ and $\tau_2$ be any two inequivalent $\aleph_0$-tangles of $G$ which are not efficiently distinguished by the starting tree set $T$.
For every separator $Z$ efficiently separating the $\tau_i$ in $G$ there is a modified torso $H$ of $T$ in which the ends $\eta_i$ representing the tangles $\tau_i$ are efficiently separated by $Z$.}
Now we apply Carmesin's theorem as a black box in all the modified torsos $H$ of~$T$.
That is, for every modified torso $H$ of $T$ we obtain a tree set $T_H$ of finite-order separations of $H$ that efficiently distinguishes all the ends of $H$.
Finally, we lift all of Carmesin's tree sets compatibly with each other and with $T$ to obtain a tree set $T'$ of finite-order separations of $G$ that extends $T$.
In the end, every separation in $T_H$ which efficiently distinguishes two ends $\eta_i$ in $H$, with the $\eta_i$ as in the assertion above, gets lifted to a separation in $T'$ that efficiently distinguishes the $\tau_i$ in $G$.

\begin{figure}
    \centering
    \begin{tikzpicture}[decoration=snake,thick,dot/.style={colourdot,circle,fill,inner sep=0.04cm,minimum size=0.1cm},labelgiven/.style={label distance=0cm,inner sep=0.08cm}]
\pgfdecorationsegmentamplitude=1.5pt
\pgfdecorationsegmentlength=0.2cm
\newcommand{\Zwidth}{0.4cm}
\newcommand{\Hwidth}{3cm}
\newcommand{\graphheight}{1cm}
\newcommand{\outerxshift}{2.5cm}
\newcommand{\outeryshift}{0.3cm}
\newcommand{\bendindex}{.3*\outerxshift}
\newcommand{\outeryradius}{2.5cm}
\newcommand{\labelheight}{0.2cm}
\newcommand{\Zangle}{40}
\newcommand{\Gangle}{70}
\draw 
	(0,-\graphheight)
	.. controls +(\bendindex,0) and +(-\bendindex,0)..
	++(\outerxshift,-\outeryshift)
	arc [start angle=-90, end angle=90,x radius=\outeryradius+\outeryshift, y radius=\graphheight+\outeryshift]
	node [pos=0.7,above] {$G$}
	.. controls +(-\bendindex,0) and +(\bendindex,0)..
	++(-\outerxshift,-\outeryshift)
	.. controls +(-\bendindex,0) and +(\bendindex,0) ..
	++(-\outerxshift,\outeryshift)
	arc [start angle=90, end angle=270,x radius=\outeryradius + \outeryshift,y radius=\graphheight+\outeryshift]
	.. controls +(\bendindex,0) and +(-\bendindex,0) ..
	++(\outerxshift,\outeryshift);
\draw[pictureorange] 
	(0,0) ellipse [x radius=\Hwidth,y radius=\graphheight];
\begin{onlayer}{filllayer}
\fill [pictureorange!10]
	(0,0) ellipse [x radius=\Hwidth,y radius=\graphheight];
\end{onlayer}
\draw[red] 
(0,0) ellipse [x radius=\Zwidth,y radius=\graphheight];
\begin{onlayer}{filllayer}
\fill [red!10]
(0,0) ellipse [x radius=\Zwidth,y radius=\graphheight];
\end{onlayer}

\draw[colourray,decorate,->]
	(0,0.8*\graphheight) ++(0cm,0cm) -- (-0.4*\Hwidth,0.6*\graphheight) node [pos=0.9,label=below:$\eta_1$] {};
\draw[colourray,decorate,->]
	(\Zwidth+0.28cm,0cm) -- node [pos=0.4,label=below:$\eta_2$] {} (\Hwidth-0.5cm,0cm);
\path
	(\Zwidth,0cm) node [red,anchor=east,labelgiven] {$Z$}
	(canvas polar cs:angle=45,x radius=\Hwidth, y radius=\graphheight) node [pictureorange,labelgiven,anchor=north east] {$H$};
\path
    ( \Hwidth+1cm,0cm) node [anchor=mid] {$\tau_2$}
    (-\Hwidth-1cm,0cm) node [anchor=mid] {$\tau_1$};
\end{tikzpicture}
    \caption{The separator $Z$ of a separation efficiently distinguishing two inequivalent tangles $\tau_1$ and $\tau_2$; and a modified torso $H$ with ends $\eta_1$ and $\eta_2$ representing the two tangles.}
    \label{fig:simpleReflection}
\end{figure}
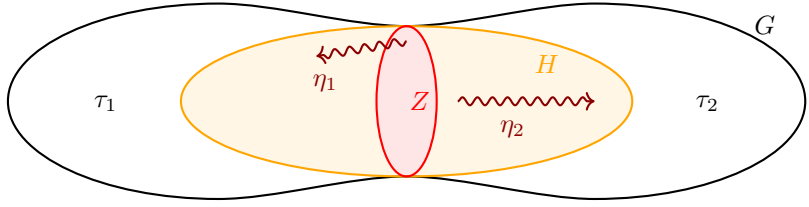

Phrased differently, we reflect the problem of efficiently distinguishing two inequivalent $\aleph_0$-tangles down to the modified torsos of $T$.
There, the problem reduces to efficiently distinguishing two proxy ends, a problem that has already been solved by Carmesin.
Finally, we lift the solutions for the modified torsos of $T$ up to the original graph $G$ to solve the original problem.

\section{From principal collections of separators to tree sets}\label{sec:PrincipalVertexSetsToTreeSets}

\noindent In this section, we show how the separations $\csep{X}$ can be slightly modified to give rise to a tree set that comes with quite a list of useful properties.
Even though our initial intention is to consider these separations for critical vertex sets $X$ of $G$, we can prove a much stronger result by more generally considering what we call \principal\ collections of vertex sets:

\begin{definition}\label{DefinitionPrincipalCollection}
Given a collection $\cY$ of vertex sets of $G$ we say that a vertex set $X$ of $G$ is $\cY$-\emph{\principal } if $X$ meets for every $Y\in\cY$ at most one component of $G-Y$.
And we say that $\cY$ is \emph{\principal } if all its elements are $\cY$-\principal .
\end{definition}

\begin{notation}
If $X\subset V(G)$ meets precisely one component of $G-Y$ for some $Y\subset V(G)$, then we denote this component by $C_Y(X)$.
\end{notation}

\begin{definition}
A set $X\in\cX$ is \emph{\principal } if it is $\cX$-\principal .
\end{definition}

\begin{example}
An $X\in\cX$ is \principal , e.g., if it induces a clique $G[X]$ or is included in a critical vertex set of $G$.
\end{example}

Since \principal\ vertex sets behave like cliques it is possible to alter the graph $G$ so that all \principal\ vertex sets actually induce cliques while the finite-order separations stay the same:

\begin{lemma}\label{principalCliques}
Suppose that $\cY$ is a collection of \principal\ vertex sets of $G$ and let $G_{\cY}$ be obtained from $G$ by turning each $G[X]$ with $X\in\cY$ into a clique.
Then the finite-order separations of $G$ are precisely the finite-order separations of $G_{\cY}$.
In particular, $\Theta(G)=\Theta(G_{\cY})$.
\end{lemma}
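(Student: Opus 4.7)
The plan is to reduce the whole statement to the equality $\Sinf(G)=\Sinf(G_{\cY})$ of finite-order separation systems, from which $\Theta(G)=\Theta(G_{\cY})$ is automatic: the vertex sets agree, so the order of a separation is the same in both graphs, and an $\aleph_0$-tangle is defined purely in terms of $\Sinf$ (consistent orientation without finite stars of finite interior). Since $G\subseteq G_{\cY}$ as graphs on the same vertex set, one inclusion is immediate: if $\{A,B\}$ is a separation of $G_{\cY}$, then a fortiori no edge of the subgraph $G$ joins $A\setminus B$ to $B\setminus A$, so $\{A,B\}$ is a separation of $G$, and its order is unchanged.

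The substantive direction is $\Sinf(G)\subseteq\Sinf(G_{\cY})$. Fix a finite-order separation $\{A,B\}$ of $G$ and consider an edge $uv$ of $G_{\cY}$ that is not in $G$; such an edge is one of the edges we added inside a clique $G_{\cY}[X]$, so $u,v\in X$ for some $X\in\cY$. Hence it is enough to prove the following: for every $X\in\cY$, $X$ does not contain vertices from both $A\setminus B$ and $B\setminus A$.

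This is the step where principality does its job. Put $Z:=A\cap B\in\cX$. Since $\{A,B\}$ is a separation of $G$, no edge of $G$ jumps $Z$, so each component of $G-Z$ lies entirely in $A\setminus B$ or entirely in $B\setminus A$. By hypothesis $X$ is principal, i.e.\ $\cX$-principal, so $X$ meets at most one component of $G-Z$; if $X$ had a vertex in $A\setminus B$ and a vertex in $B\setminus A$, it would meet two such components, a contradiction. Hence $X\setminus Z$ lies in one of $A\setminus B$ or $B\setminus A$, as required.

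I do not expect a genuine obstacle here; the only thing to be careful about is that the hypothesis used is exactly the right one. The separator $A\cap B$ produced by an arbitrary finite-order separation of $G$ need not belong to $\cY$, so it is crucial that each $X\in\cY$ be $\cX$-principal rather than merely $\cY$-principal. With that choice of quantifier the argument goes through verbatim, the two separation systems coincide, and the tangle spaces agree.
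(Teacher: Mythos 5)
Your proof is correct and takes essentially the same route as the paper: the key step in both is that each principal $X\in\cY$ meets at most one component of $G-(A\cap B)$, so the added clique edges cannot jump the separator, while the converse inclusion is immediate from $E(G)\subseteq E(G_{\cY})$. Your extra remarks (that orders are preserved and that $\Theta$ depends only on $\Sinf$, and that $\cX$-principality is the hypothesis actually used) just make explicit what the paper leaves implicit.
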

\begin{proof}
If $\{A,B\}$ is a finite-order separation of $G$, then each principal $X\in\cY$ meets at most one component of $G-(A\cap B)$.
Therefore, no $X$ adds an $(A\setminus B)$--$(B\setminus A)$ edge in $G_{\cY}$, so $\{A,B\}$ is also a finite-order separation of $G_{\cY}$.
The converse holds due to $E(G_{\cY})\supset E(G)$.
\end{proof}

We will use this lemma in Section~\ref{sec:MainProof} to assume without loss of generality that, for a certain tree set, the torsos coincide with the parts.
Our next definition extends `forming a problem case' from critical vertex sets to arbitrary vertex sets:

\begin{definition}
Two vertex sets $X$ and $Y$ of $G$ with $\{X,Y\}$ \principal\ are said to \emph{form a problem case} if $X$ and $Y$ are incomparable as sets and additionally $C_X(Y)\in\CX$ and $C_Y(X)\in\CY$ hold.
\end{definition}

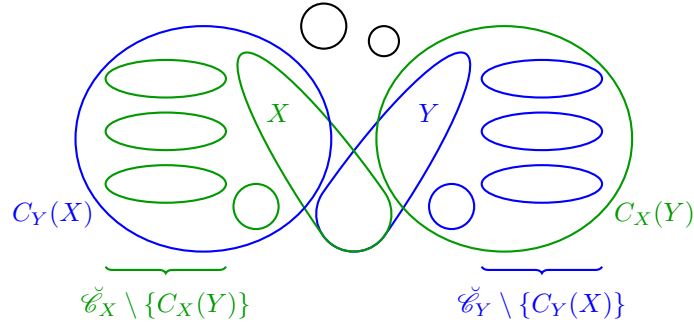
\begin{figure}[ht]
    \centering
    	\begin{tikzpicture}[thick]
	\draw [rotate=55,blue] (0cm,-0.5cm) arc [x radius=2.6cm,y radius=0.5cm,start angle=-90, end angle=90] arc [radius=0.5cm,start angle=90, end angle=270];
	\draw (1cm,1.3cm) node [anchor=mid,blue] {$Y$};
	\draw [rotate=125,picturegreen] (0cm,-0.5cm) arc [x radius=2.6cm,y radius=0.5cm,start angle=-90, end angle=90] arc [radius=0.5cm,start angle=90, end angle=270];
	\draw [picturegreen] (2cm,1cm) ellipse [x radius=1.7cm,y radius=1.5cm];
	\draw (4cm,0cm) node [picturegreen] {$C_X(Y)$};
	\foreach \i in {1,2,3}
	\draw [blue] (2.5cm,-0.3cm+0.7*\i cm) ellipse [x radius=0.8cm, y radius=0.25cm];
	\draw [blue](1.3cm,0.1cm) ellipse [radius=0.3cm];
	\draw [decorate, decoration={brace,mirror},blue](1.7cm,-0.7cm) -- node [label=below:$\breve{\mathscr{C}}_Y\setminus \{C_Y(X)\}$] {} ++(1.6cm,0cm);
	
	\draw [blue] (-2cm,1cm) ellipse [x radius=1.7cm,y radius=1.5cm];
	\foreach \i in {1,2,3}
	\draw [picturegreen] (-2.5cm,-0.3cm+0.7*\i cm) ellipse [x radius=0.8cm, y radius=0.25cm];
	\draw [picturegreen](-1.3cm,0.1cm) ellipse [radius=0.3cm];
	\draw [decorate, decoration={brace},picturegreen](-1.7cm,-0.7cm) -- node [label=below:$\breve{\mathscr{C}}_X\setminus \{C_X(Y)\}$] {} ++(-1.6cm,0cm);
	\draw (-4cm,0cm) node [blue] {$C_Y(X)$};
	\draw (-1cm,1.3cm) node [picturegreen,anchor=mid] {$X$};
	\draw (0.4cm, 2.3cm) ellipse [radius=0.2cm];
	\draw (-0.4cm, 2.5cm) ellipse [radius=0.3cm];
	\end{tikzpicture}
    \caption{Two incomparable sets $X$ and $Y$ such that $\{X,Y\}$ is principal. Note that every component of $G-X$ which is neither $C_X(Y)$ nor contained in $C_Y(X)$ has its neighbourhood in $X\cap Y$ and is thus also a component of $G-Y$ (black circles).
    Also, not every component of $G-Y$ which is contained in $C_X(Y)$ has to be contained in $\CY$, as is depicted by the blue circle on the right.
    If $C_Y(X)\notin \cD \subset \CY$, then $\cD$ is a subset of $\CY\setminus \{C_Y(X)\}$ and thus $\rsep{X}{C_X(Y)}\leq \rsep{Y}{\cD}$.}
    \label{fig:problemcase}
\end{figure}

The following lemma will keep proofs short:

\begin{lemma}\label{SeparationsAreSimple}
If $\sep{X}{\cC}$ and $\sep{Y}{\cD}$ are separations of $G$ satisfying $X\cup V[\cC]\supset Y\cup V[\cD]$ and that each component in $\cD$ avoids $X$, then $\rsep{X}{\cC}\le\rsep{Y}{\cD}$.
\end{lemma}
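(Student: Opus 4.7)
The plan is a direct unpacking of the definitions. By the definition of $\rsep{X}{\cC}$ and $\rsep{Y}{\cD}$ together with the partial ordering on oriented separations, the inequality $\rsep{X}{\cC}\le\rsep{Y}{\cD}$ is equivalent to the two inclusions
\begin{align*}
    V\setminus V[\cC]\;\subseteq\; V\setminus V[\cD]
    \qquad\text{and}\qquad
    X\cup V[\cC]\;\supseteq\; Y\cup V[\cD].
\end{align*}
The second inclusion is one of the two assumptions of the lemma, so nothing has to be done there.

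The first inclusion is equivalent to $V[\cD]\subseteq V[\cC]$, and this is where the second hypothesis of the lemma is used. I would pick an arbitrary vertex $v\in V[\cD]$, say $v\in V(D)$ for some $D\in\cD$, and argue that $v\in V[\cC]$ in two short steps. First, by the hypothesis $X\cup V[\cC]\supseteq Y\cup V[\cD]$ applied to $v$, I get $v\in X\cup V[\cC]$. Second, since $D$ avoids $X$ by assumption and $v\in V(D)$, I conclude $v\notin X$, and hence $v\in V[\cC]$, as required.

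There is no real obstacle here; the only thing worth flagging is to be careful that $V[\cD]\subseteq V[\cC]$ is the right translation of $V\setminus V[\cC]\subseteq V\setminus V[\cD]$ (this is a straightforward contrapositive inside the ambient ground set $V=V(G)$), and that the hypothesis ``each component in $\cD$ avoids $X$'' is genuinely needed precisely to upgrade the containment $v\in X\cup V[\cC]$ (which alone would not suffice) to $v\in V[\cC]$. The proof is therefore essentially one short paragraph of bookkeeping.
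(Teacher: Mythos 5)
Your proposal is correct and follows the same route as the paper: the paper's proof also observes that the second inclusion is exactly the hypothesis and reduces the first to $V[\cD]\subseteq V[\cC]$, which it declares evident from the assumptions. Your write-up merely spells out that last step (using that each $D\in\cD$ avoids $X$ to upgrade $v\in X\cup V[\cC]$ to $v\in V[\cC]$), which is exactly the intended argument.
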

\begin{proof}
It remains to show $V\setminus V[\cC]\subset V\setminus V[\cD]$ which is tantamount to $V[\cD]\subset V[\cC]$, which in turn is evident from the assumptions.
\end{proof}

In the previous section, we have seen that for two distinct critical (in particular principal) vertex sets $X\neq Y$ their separations $\csep{X}$ and $\csep{Y}$ need not be nested.
This may happen, for example, if $X$ and $Y$ form a problem case.
The following two lemmas show that actually this may happen only if $X$ and $Y$ form a problem case.

\begin{lemma}\label{CriticalSetsComparable}
Let $X\subsetneq Y$ be two vertex sets of $G$ such that $\{X,Y\}$ is \principal .
Then all of the components in $\CY$ are properly contained in the component $C_X(Y)$.
Notably, $C_X(Y)\in\CX$ if $\CY$ is non-empty.
Moreover, if we are given subsets $\cC\subset\CX$ and $\cD\subset\CY$, then
\begin{align*}
    \vs\,\le\rsep{X}{C_X(Y)}\le\crsep{Y}\le\rsep{Y}{\cD}\text{ where }\left\{\begin{array}{cl}
         \vs\,=\rsep{X}{\cC}  & \text{if }C_X(Y)\in\cC\\
         \vs\,=\lsep{\cC}{X} & \text{otherwise}
    \end{array}\right.
\end{align*}
so in particular $\sep{X}{\cC}$ and $\sep{Y}{\cD}$ are nested with each other.
If additionally the inclusion $\cC\subset\cC_X$ is proper, i.e., if $\rsep{X}{\cC}$ is not small, then $\rsep{X}{\cC}\not\le\lsep{\cD}{Y}$.
\end{lemma}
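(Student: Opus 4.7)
The plan is to first extract the geometric content — namely, that under the assumptions $Y$ lives inside a single component $C_X(Y) \in \CX$ together with $Y\setminus X$, so every element of $\CY$ must lie properly inside $C_X(Y)$ — and then reduce all inequalities to routine set inclusions via Lemma~\ref{SeparationsAreSimple}.

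For the first containment, I would take any $D\in\CY$ and use that $N(D)=Y\supsetneq X$ to pick a vertex $y\in Y\setminus X$ adjacent to $D$. Since $y$ and $D$ both lie in $V\setminus X$ and are joined by an edge avoiding $X$, they belong to the same component of $G-X$; but $y\in Y$ and $\{X,Y\}$ is principal, so that component must be $C_X(Y)$. Hence $D\subseteq C_X(Y)$, and properness comes from $y\in C_X(Y)\setminus D$. The ``notably'' remark then falls out: if some $D\in\CY$ exists, every $x\in X$ is a neighbour of $D\subseteq C_X(Y)$, so $N(C_X(Y))\supseteq X$, and since $C_X(Y)$ is a component of $G-X$, equality holds and $C_X(Y)\in\CX$.

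For the chain of inequalities, Lemma~\ref{SeparationsAreSimple} reduces each step to checking a ``second-side'' inclusion and that the involved components avoid the smaller separator. The middle inequality $\rsep{X}{C_X(Y)}\le \crsep{Y}$ holds because $V[\CY]\subseteq C_X(Y)$ by the first paragraph and $Y\setminus X\subseteq C_X(Y)$ by principality, so $X\cup C_X(Y)\supseteq Y\cup V[\CY]$, while components in $\CY$ trivially avoid $X\subseteq Y$. The rightmost inequality $\crsep{Y}\le\rsep{Y}{\cD}$ is immediate from $\cD\subseteq\CY$. For the leftmost inequality I would split on whether $C_X(Y)\in\cC$: in that case, $\rsep{X}{\cC}\le\rsep{X}{C_X(Y)}$ is obvious from $C_X(Y)\in\cC$; otherwise, writing out $\lsep{\cC}{X}\le\rsep{X}{C_X(Y)}$ amounts to $V[\cC]\cap C_X(Y)=\emptyset$, which follows because $\cC\subseteq\cC_X$ consists of components of $G-X$ distinct from $C_X(Y)$. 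Nestedness then follows automatically, since having either orientation of $\sep{X}{\cC}$ below $\rsep{Y}{\cD}$ gives comparable orientations.

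The main obstacle is the final non-inequality. Suppose for contradiction $\rsep{X}{\cC}\le\lsep{\cD}{Y}$; unwinding this requires in particular $V\subseteq X\cup V[\cC]\cup V[\cD]$. Using the hypothesis $\cC\subsetneq\cC_X$, I pick $C'\in\cC_X\setminus\cC$ and show it escapes $X\cup V[\cC]\cup V[\cD]$. Certainly $C'\cap(X\cup V[\cC])=\emptyset$, and I need $C'\not\subseteq V[\cD]$. Here I split cases: if $C'\ne C_X(Y)$, then $C'\cap C_X(Y)=\emptyset$ and $V[\cD]\subseteq C_X(Y)$ give $C'\cap V[\cD]=\emptyset$; if $C'=C_X(Y)$, then the vertices of $Y\setminus X$ sit in $C_X(Y)$ but not in $V[\cD]\subseteq V\setminus Y$, so $V[\cD]\subsetneq C_X(Y)=C'$. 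Either way $C'\not\subseteq V[\cD]$, contradicting the required inclusion and finishing the proof.
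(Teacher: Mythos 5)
Your proof is correct, and for the containment of the components of $\CY$ in $C_X(Y)$, the ``notably'' claim, and the chain of inequalities it follows essentially the paper's route: the central step $\rsep{X}{C_X(Y)}\le\crsep{Y}$ is reduced to Lemma~\ref{SeparationsAreSimple} in both, after observing that $Y\setminus X$ and $V[\CY]$ lie in $C_X(Y)$. The only genuine divergence is the final non-inequality. The paper deduces $\rsep{X}{\cC}\not\le\lsep{\cD}{Y}$ abstractly from the chain it has just established: in the case $C_X(Y)\in\cC$ the assumed inequality combines with $\rsep{X}{\cC}\le\rsep{Y}{\cD}$ to make $\rsep{X}{\cC}$ small, forcing $\cC=\cC_X$; in the other case it combines with $\lsep{\cD}{Y}\le\rsep{X}{\cC}$ to force $\rsep{X}{\cC}=\lsep{\cD}{Y}$ and hence $X=Y$. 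You instead unwind the inequality set-theoretically to $V\subset X\cup V[\cC]\cup V[\cD]$, pick $C'\in\cC_X\setminus\cC$, and exhibit a vertex of $C'$ outside that union (a vertex of $Y\setminus X$ if $C'=C_X(Y)$, any vertex of $C'$ otherwise, using $V[\cD]\subset C_X(Y)$). Both arguments are sound; yours is more elementary and self-contained, while the paper's is shorter because it recycles the inequalities already proved together with the notion of smallness. A small bonus of your write-up is that it spells out why $C_X(Y)\in\CX$ when $\CY\neq\emptyset$, which the paper leaves implicit.
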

\begin{proof}
Since every component $C\in\CY$ has neighbourhood precisely equal to $Y$, it follows from $X\subsetneq Y$ that $\bigcup\,(\CY\rest X)\subsetneq C_X(Y)$.
Hence Lemma~\ref{SeparationsAreSimple} yields $\rsep{X}{C_X(Y)}\le\crsep{Y}$.

It remains to show $\rsep{X}{\cC}\not\le\lsep{\cD}{Y}$ given that $\cC\subsetneq\cC_X$.
Assume for a contradiction that the inequality $\rsep{X}{\cC}\le\lsep{\cD}{Y}$ holds.
If $C_X(Y)$ is contained in~$\cC$, then $\vs=\rsep{X}{\cC}\le\rsep{Y}{\cD}$ is equivalent to $\lsep{\cD}{Y}\le\lsep{\cC}{X}$ which yields $\rsep{X}{\cC}\le\lsep{\cD}{Y}\le\lsep{\cC}{X}$.
Hence $\rsep{X}{\cC}$ is small which implies $\cC=\cC_X$, a~contradiction.
Otherwise $C_X(Y)$ is not contained in~$\cC$, and then $\vs=\lsep{\cC}{X}\le\rsep{Y}{\cD}$ is equivalent to $\lsep{\cD}{Y}\le\lsep{X}{\cC}$ which yields $\rsep{X}{\cC}\le\lsep{\cD}{Y}\le\lsep{X}{\cC}$.
In particular, $\rsep{X}{\cC}=\lsep{\cD}{Y}$ implies $X=Y$, contradicting the fact that~$X\subsetneq Y$.
\end{proof}

Our next lemma is also illustrated in Figure~\ref{fig:problemcase}.

\begin{lemma}\label{CriticalSetsIncomparableNoProblem}
Let $X$ and $Y$ be two incomparable vertex sets of $G$ such that $\{X,Y\}$ is \principal .
If we are given subsets $\cC\subset\CX$ and $\cD\subset\CY$ with $C_Y(X)\notin\cD$, then
\begin{align*}
    \vs\,\le\rsep{X}{C_X(Y)}\le\rsep{Y}{\cD}\text{ where }\left\{\begin{array}{cl}
         \vs\,=\rsep{X}{\cC} & \text{if }C_X(Y)\in\cC\\
         \vs\,=\lsep{\cC}{X} & \text{otherwise}
    \end{array}\right.
\end{align*}
so in particular $\sep{X}{\cC}$ and $\sep{Y}{\cD}$ are nested with each other and we have $\rsep{X}{\cC}\not\le\lsep{\cD}{Y}$.
\end{lemma}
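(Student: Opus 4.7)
The plan is to follow the same template as the previous lemma: first produce the middle inequality $\rsep{X}{C_X(Y)}\le \rsep{Y}{\cD}$ via Lemma~\ref{SeparationsAreSimple}, then fit $\vs$ underneath $\rsep{X}{C_X(Y)}$ in both cases, and finally rule out $\rsep{X}{\cC}\le\lsep{\cD}{Y}$ by chasing a vertex in $X\setminus Y$.

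For the middle inequality I would verify the two hypotheses of Lemma~\ref{SeparationsAreSimple}. The easier one is that each $D\in\cD$ avoids $X$: by principality of $\{X,Y\}$ the set $X$ meets at most one component of $G-Y$, namely $C_Y(X)$, and since $C_Y(X)\notin\cD$, every $D\in\cD$ is a component of $G-Y$ different from $C_Y(X)$ and therefore disjoint from $X$. For the containment $Y\cup V[\cD]\subset X\cup V[C_X(Y)]$, incomparability gives a vertex $y\in Y\setminus X$, which by principality of $\{X,Y\}$ lies in $C_X(Y)$. This already yields $Y\subset X\cup V[C_X(Y)]$. For $V[\cD]$: each $D\in\cD$ has $N(D)=Y$, so it is adjacent to this $y\in V[C_X(Y)]$; since $D$ is connected and avoids $X$, it lies together with $y$ in the same component of $G-X$, that is, $D\subset V[C_X(Y)]$.

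The inequality $\vs\le\rsep{X}{C_X(Y)}$ then splits on whether $C_X(Y)\in\cC$. If it is, then $V[C_X(Y)]\subset V[\cC]$ is immediate and $\rsep{X}{\cC}\le\rsep{X}{C_X(Y)}$ follows directly from the definitions. If it is not, then $V[\cC]$ and $V[C_X(Y)]$ lie in distinct components of $G-X$, so they are disjoint and both disjoint from $X$; plugging this into the definitions of $\lsep{\cC}{X}$ and $\rsep{X}{C_X(Y)}$ gives the desired inequality. Chaining with the middle inequality yields $\vs\le\rsep{Y}{\cD}$, hence nestedness of $\sep{X}{\cC}$ and $\sep{Y}{\cD}$.

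For the last assertion I would argue by contradiction: suppose $\rsep{X}{\cC}\le\lsep{\cD}{Y}$, so $V\setminus V[\cC]\subset V[\cD]\cup Y$. Incomparability provides some $x\in X\setminus Y$, and $X$ is disjoint from $V[\cC]$, so $x\in V[\cD]\cup Y$ forces $x\in V[\cD]$. Thus $x\in D$ for some $D\in\cD$; but $x\in X$, so $D$ is the unique component of $G-Y$ meeting $X$, i.e.\ $D=C_Y(X)$, contradicting $C_Y(X)\notin\cD$. The only genuine subtlety in the whole argument is noticing that principality of $\{X,Y\}$ together with $N(D)=Y$ forces each $D\in\cD$ into $C_X(Y)$ via a vertex of $Y\setminus X$; after that the proof is bookkeeping parallel to Lemma~\ref{CriticalSetsComparable}.
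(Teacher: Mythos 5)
Your proposal is correct and follows essentially the same route as the paper: the key step is exactly the paper's, namely using principality to see that every $D\in\cD$ avoids $X$ and attaches to a vertex $y\in Y\setminus X\subset C_X(Y)$, so that Lemma~\ref{SeparationsAreSimple} yields $\rsep{X}{C_X(Y)}\le\rsep{Y}{\cD}$. The remaining case split for $\vs\le\rsep{X}{C_X(Y)}$ and the vertex-chasing refutation of $\rsep{X}{\cC}\le\lsep{\cD}{Y}$ are just careful write-ups of what the paper dismisses as evident, and they are accurate.
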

\begin{proof}
The assumption $C_Y(X)\notin\cD$ ensures that every component in $\cD$ avoids $X$.
Let $y$ be any vertex in $Y\setminus X$.
As every component in $\cD$ avoids $X$ and sends an edge to $y\in Y\setminus X$, we deduce that $(Y\setminus X)\cup\bigcup\cD\subset C_X(Y)$.
Hence Lemma~\ref{SeparationsAreSimple} yields $\rsep{X}{C_X(Y)}\le\rsep{Y}{\cD}$.
From this, the rest is evident.
\end{proof}

We are now ready to prove our main technical result, Theorem~\ref{TheTreeSetAPC}.
To allow for more flexibility in its applications, we have extracted the following definition and second main technical result from Theorem~\ref{TheTreeSetAPC}:

\begin{definition}
Suppose that $\cY$ is a \principal\ collection of vertex sets of $G$.
A~function that assigns to every $X\in\cY$ a subset $\ccK(X)\subset\CX$ is called \emph{\admissable } for $\cY$ if for every two $X,Y\in\cY$ that are incomparable as sets we have either $C_X(Y)\notin\ccK(X)$ or $C_Y(X)\notin\ccK(Y)$.
If additionally $\vert\CX\setminus\ccK(X)\vert\le 1$ for all $X\in\cY$, then $\ccK$ is \emph{strongly} \admissable\ for $\cY$.
\end{definition}

\begin{theorem}\label{criticalAdmissableExists}
For every \principal\ collection of vertex sets of a connected graph there is a strongly \admissable\ function.
\end{theorem}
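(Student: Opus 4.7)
The plan is to apply Zorn's lemma to the poset of pairs $(\cY',\ccK')$ where $\cY'\subset\cY$ and $\ccK'$ is a strongly admissable function on $\cY'$, ordered by extension; chains have upper bounds via union, so a maximal element exists. Fix one and call it $(\cY_0,\ccK_0)$. I will show $\cY_0=\cY$ by extending $\ccK_0$ to any $X\in\cY\setminus\cY_0$, contradicting maximality. The engine of the extension is the following structural lemma: \emph{if $X,Y_1,Y_2\in\cY$ are such that both $\{X,Y_1\}$ and $\{X,Y_2\}$ are problem cases and $C_X(Y_1)\ne C_X(Y_2)$, then $\{Y_1,Y_2\}$ is also a problem case and satisfies $C_{Y_1}(X)=C_{Y_1}(Y_2)$ and $C_{Y_2}(X)=C_{Y_2}(Y_1)$.}

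To prove the lemma, first observe $Y_1\cap Y_2\subset X$: any $v\in (Y_1\cap Y_2)\setminus X$ would lie in $(Y_1\setminus X)\cap(Y_2\setminus X)\subset C_X(Y_1)\cap C_X(Y_2)=\emptyset$. This makes $Y_1,Y_2$ incomparable, since $Y_1\subset Y_2$ would yield $Y_1=Y_1\cap Y_2\subset X$, conflicting with the incomparability inherent in $\{X,Y_1\}$ being a problem case. For $C_{Y_1}(X)=C_{Y_1}(Y_2)$, pick any $x\in X\setminus Y_1$; since $C_X(Y_2)\in\CX$, $x$ has a neighbour $p\in C_X(Y_2)$. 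The inclusion $Y_1\setminus X\subset C_X(Y_1)$ and $C_X(Y_1)\cap C_X(Y_2)=\emptyset$ give $p\notin Y_1$, so the edge $xp$ lies in $G-Y_1$. Since $C_X(Y_2)$ is connected in $G-X$ and disjoint from $Y_1$, a path inside $C_X(Y_2)$ from $p$ to any $y_2\in Y_2\setminus X$ avoids $Y_1$ entirely, and concatenating yields an $x$--$y_2$ path in $G-Y_1$, showing $C_{Y_1}(X)=C_{Y_1}(Y_2)$. The symmetric equality at $Y_2$ follows by swapping roles, and then $C_{Y_1}(Y_2)=C_{Y_1}(X)\in\CC{Y_1}$ (with its symmetric analogue at $Y_2$) makes $\{Y_1,Y_2\}$ a problem case.

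For the extension, let $\cY^-(X):=\{\,Y\in\cY_0 : \{X,Y\}\text{ is a problem case with }C_Y(X)\in\ccK_0(Y)\,\}$ collect those $Y$ in $\cY_0$ whose pair with $X$ is not yet resolved at $Y$. If $\cY^-(X)=\emptyset$, set $\ccK(X):=\CX$. Otherwise, the structural lemma together with strong admissability of $\ccK_0$ forces the values $C_X(Y)$ for $Y\in\cY^-(X)$ to coincide: were $Y_1,Y_2\in\cY^-(X)$ to satisfy $C_X(Y_1)\ne C_X(Y_2)$, the lemma would produce a problem case $\{Y_1,Y_2\}$ with $C_{Y_1}(X)=C_{Y_1}(Y_2)$, and strong admissability of $\ccK_0$ on $\cY_0\supset\{Y_1,Y_2\}$ would force $C_{Y_i}(Y_j)\notin\ccK_0(Y_i)$ for some $i$, hence $C_{Y_i}(X)\notin\ccK_0(Y_i)$, contradicting $Y_i\in\cY^-(X)$. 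Writing $C_0$ for this common value, set $\ccK(X):=\CX\setminus\{C_0\}$. Strong admissability of the extension is then straightforward: every problem-case neighbour $Y\in\cY_0$ of $X$ is handled on the $X$-side if $Y\in\cY^-(X)$ (as $C_X(Y)=C_0\notin\ccK(X)$) and on the $Y$-side otherwise. The main obstacle is identifying and proving the structural lemma; once it is in hand, the Zorn argument is essentially bookkeeping.
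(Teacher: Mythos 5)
Your proposal is correct. The structural lemma you isolate --- if $\{X,Y_1\}$ and $\{X,Y_2\}$ are problem cases with $C_X(Y_1)\neq C_X(Y_2)$, then $\{Y_1,Y_2\}$ is a problem case with $C_{Y_1}(Y_2)=C_{Y_1}(X)$ and $C_{Y_2}(Y_1)=C_{Y_2}(X)$ --- is exactly the combinatorial core of the paper's proof, and you prove it by the same argument (pick $x\in X\setminus Y_1$, use that $x$ has a neighbour in $C_X(Y_2)$ because $C_X(Y_2)\in\CX$, and that $C_X(Y_2)$ avoids $Y_1$). The only genuine difference is the bookkeeping around this lemma: the paper fixes a well-ordering of the vertex sets involved in problem cases, lets $\ccK(X)$ miss $C_X(Y)$ for the \emph{first} problem-case partner $Y$ of $X$, and verifies admissability using minimality in that well-order, whereas you run Zorn's lemma on partial strongly admissable functions and use the lemma to extend a maximal one by a single element. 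Both wrappers work; the paper's gives an explicit one-shot definition of $\ccK$, while yours localises the verification to one new element at a time at the cost of the (routine) chain-union check. One small point you leave implicit (as does the paper): admissability quantifies over all incomparable pairs, not only problem-case pairs, but for an incomparable pair that is not a problem case the condition holds automatically, since then $C_X(Y)\notin\CX\supset\ccK(X)$ or $C_Y(X)\notin\CC{Y}\supset\ccK(Y)$.
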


\begin{proof}
Let $\cY$ be a \principal\ collection of vertex sets of a connected graph $G$.
We write $\cP$ for the collection of those \principal\ vertex sets in $\cY$ that form a problem case with some other \principal\ vertex set in $\cY$.
Let us fix any well-ordering of $\cP$ and view $\cP$ as well-ordered set from now on.

For each $X\in\cP$ we put $K(X):=C_X(Y)$ for the first $Y\in\cP$ which forms a problem case with $X$.
Let us put $\ccK(X):=\CX\setminus\{K(X)\}$ for every $X\in\cP$, and $\ccK(X):=\CX$ for all other vertex sets $X\in\cY$.
We claim that $\ccK$ is strongly \admissable\ for $\cY$.

For this, let $X\neq Y$ be any two distinct vertex sets in $\cY$ that form a problem case.
We show that at least one of $K(X)=C_X(Y)$ and $K(Y)=C_Y(X)$ holds.
Let $Z\in\cP$ be the first vertex set that forms a problem case with one of $X$ and $Y$.
Without loss of generality we may assume that $Z$ forms a problem case with $X$, so we have $K(X)=C_X(Z)$ by the minimal choice of $Z$.
Since we are done if $Y$ and $Z$ meet the same component of $G-X$, we may assume that $C_X(Y)\neq C_X(Z)$.
This means that the three sets $X,Y,Z$ are pairwise incomparable.
Our plan is to show that $Y$ forms a problem case with $Z$, and that this gives $K(Y)=C_Y(Z)=C_Y(X)$ as desired.

We already know that $Y$ and $Z$ are incomparable.
Next, let us verify that $C_Y(Z)\in\CY$.
For this, pick any vertex $x\in X\setminus Y$.
As $X$ and $Z$ form a problem case we have $C_X(Z)\in\CX$, so the vertex $x$ sends some edge $e$ to the component $C_X(Z)$.
Now $x$ is not in $Y$ and the component $C_X(Z)$ avoids $Y$ as $Y$ and $Z$ live in distinct components of $G-X$ by assumption, so $C_X(Z)+e$ is a connected subgraph of $G-Y$ that meets both $X$ and $Z$, yielding $C_Y(Z)=C_Y(X)$.
Since $Y$ and $X$ form a problem case, giving $C_Y(X)\in\CY$, we get $C_Y(Z)\in\CY$ as required.
By symmetry we have $C_Z(Y)\in\CZ$, so $Y$ and $Z$ form a problem case as desired and $K(Y)=C_Y(Z)$ follows from the minimal choice of $Z$. 
To see that $K(Y)=C_Y(X)$ holds, recall that we proved $C_Y(Z)=C_Y(X)$ three sentences earlier.
\end{proof}

Finally, we go for Theorem~\ref{TheTreeSetAPC}, which considers tree sets of the following form:

\begin{notation}
Given a \principal\ collection $\cY$ of vertex sets of $G$ and an \admissable\ function $\ccK$ for $\cY$ we write
\begin{align*}
    T(\cY,\ccK):=\big\{\,\tsep{X}\,,\,\sep{X}{K}\;\big\vert\;X\in\cY\text{ and }K\in\ccK(X)\,\big\}.
\end{align*}
For every vertex set $X\in\cY$ we write $\sigma_X^{\ccK}$ for the star that consists of the separation $\trsep{X}$ and all the separations $\lsep{K}{X}$ with $K\in\ccK(X)$.
Notably, each star $\sigma_X^{\ccK}$ has interior $X$.
\end{notation}

\begin{figure}
    \centering
    \begin{tikzpicture}[thick, decoration=brace,
    drawfill/.style={draw=#1,fill=#1!20},
    local colour/.style={color=#1, drawfill/.default=#1},
    local colour=black,
    sepname/.store in=\sepname,
    ellipse length/.store in=\ellipselength,
    corner angle/.store in=\cornerangle,
    dot/.style={inner sep=2pt, circle, fill},
    dots/.pic={\foreach \i in {-1,0,1} \draw (0cm, \i*0.1 cm) node [shape=circle, fill,inner sep=0.3pt,anchor=center] {};}]
\newcommand{\cornerlength}{0.4cm}
\newcommand{\cornerangle}{60}
\newcommand{\rotationangle}{30}
\newcommand{\separatorheight}{0.8cm}

\draw (-2cm,\separatorheight) ..controls +(0:1.5cm) and +(180+\rotationangle:0.5cm) .. ($(1cm,1cm)+(90+\rotationangle:\separatorheight)$);
\draw (-2cm,-\separatorheight) .. controls +(0:1.5cm) and +(180-\rotationangle:0.5cm) .. ($(1cm,-1cm)+(-90-\rotationangle:\separatorheight)$);
\draw ($(1cm,1cm)+(-90+\rotationangle:\separatorheight)$) .. controls +(180+\rotationangle:0.3cm) and +(180-\rotationangle:0.3cm).. ($(1cm,-1cm) + (90-\rotationangle:\separatorheight)$);

\begin{scope}[shift={(1cm,1.0cm)}, rotate=\rotationangle,local colour=pictureorange, sepname={X},ellipse length=1.0cm]
\draw [black](0cm,-\separatorheight)
    ..controls +(0:0.5cm) and +(180:0.5cm) ..
    (0.5cm+\ellipselength,-\separatorheight-0.1cm)
    .. controls +(0:0.5cm)  and +(\cornerangle-180:\cornerlength) .. 
    ($(0.5cm+2*\ellipselength,0cm)+ (0.2cm,-0.5cm)$) 
    .. controls +(\cornerangle:\cornerlength) and +(-\cornerangle:\cornerlength) ..
    ($(0.5cm+2*\ellipselength,0cm)+ (0.2cm,+0.5cm)$) 
    .. controls +(180-\cornerangle:\cornerlength) and +(0:0.5cm) .. 
    (0.5cm+\ellipselength, \separatorheight+0.1cm)
    .. controls +(180:0.5cm) and +(0:0.5cm)..
    (0cm,\separatorheight);
\path [drawfill] (0,0) ellipse [x radius=0.25 cm, y radius=\separatorheight];
\draw [->] (0cm,0cm) -- (-0.6cm, 0cm);
\draw  (0cm,\separatorheight+0.2cm) node [anchor=south,inner sep=0pt] {$\sepname$};
\foreach \i in {0.6,0.2,-0.6}
\draw (0.5cm+\ellipselength, \i cm)  ellipse [x radius=\ellipselength, y radius=0.15 cm];
\draw (0.5cm+\ellipselength,-0.2cm) pic [transform shape] {dots};
\draw [decorate] (0.5cm+2*\ellipselength,-\separatorheight-0.15cm) -- node[pos=0.6, anchor=north west,inner sep=1pt] {$\ccK(\sepname)$} ++(-2*\ellipselength,0cm);
\end{scope}

\begin{scope}[shift={(1cm,-1.0cm)}, rotate=-\rotationangle,local colour=picturegreen, sepname={Y},ellipse length=1.0cm]
\draw [black](0cm,-\separatorheight)
    ..controls +(0:0.5cm) and +(180:0.5cm) ..
    (0.5cm+\ellipselength,-\separatorheight-0.1cm)
    .. controls +(0:0.5cm)  and +(\cornerangle-180:\cornerlength) .. 
    ($(0.5cm+2*\ellipselength,0cm)+ (0.2cm,-0.5cm)$) 
    .. controls +(\cornerangle:\cornerlength) and +(-\cornerangle:\cornerlength) ..
    ($(0.5cm+2*\ellipselength,0cm)+ (0.2cm,+0.5cm)$) 
    .. controls +(180-\cornerangle:\cornerlength) and +(0:0.5cm) .. 
    (0.5cm+\ellipselength, \separatorheight+0.1cm)
    .. controls +(180:0.5cm) and +(0:0.5cm)..
    (0cm,\separatorheight);
\path [drawfill] (0,0) ellipse [x radius=0.25 cm, y radius=\separatorheight];
\draw [->] (0cm,0cm) -- (-0.6cm, 0cm);
\draw  (0cm,-\separatorheight-0.2cm) node [anchor=north,inner sep=0pt] {$\sepname$};
\foreach \i in {0.6,0.2,-0.6}
\draw (0.5cm+\ellipselength, \i cm)  ellipse [x radius=\ellipselength, y radius=0.15 cm];
\draw (0.5cm+\ellipselength,-0.2cm) pic [transform shape] {dots};
\draw [decorate] (0.5cm+2*\ellipselength,-\separatorheight-0.15cm) -- node[pos=0.35, anchor=north east,inner sep=2pt] {$\ccK(\sepname)$} ++(-2*\ellipselength,0cm);
\end{scope}

\begin{scope}[xscale=-1,shift={(2cm,0cm)},local colour=picturedarkred, sepname={Z},ellipse length=1.5cm, corner angle=55]
\draw [black](0cm,-\separatorheight)
    ..controls +(0:0.5cm) and +(180:0.5cm) ..
    (0.5cm+1.2*\ellipselength,-\separatorheight-0.1cm)
    .. controls +(0:0.5cm)  and +(\cornerangle-180:\cornerlength) .. 
    ($(0.5cm+2*\ellipselength,0cm)+ (0.3cm,-0.5cm)$)
    .. controls +(\cornerangle:\cornerlength) and +(-\cornerangle:\cornerlength) ..
    ($(0.5cm+2*\ellipselength,0cm)+ (0.3cm,+0.5cm)$)
    .. controls +(180-\cornerangle:\cornerlength) and +(0:0.5cm) .. 
    (0.5cm+1.2*\ellipselength, \separatorheight+0.1cm)
    .. controls +(180:0.5cm) and +(0:0.5cm)..
    (0cm,\separatorheight);
\path [drawfill] (0,0) ellipse [x radius=0.25 cm, y radius=\separatorheight];
\draw [->] (0cm,0cm) -- (-0.6cm, 0cm);
\draw  (0cm,-\separatorheight-0.2cm) node [anchor=north, inner sep=0pt] {$\sepname$};
\draw (0.5cm+\ellipselength,0.65 cm) ellipse [x radius=1.5 cm, y radius=0.1 cm];
\draw (0.5cm+\ellipselength,0.35cm) pic {dots};
\draw (0.5cm,-0.75cm) [rounded corners=10pt] rectangle (0.5cm+2*\ellipselength,0.15 cm);
\draw [decorate] (0.5cm+2*\ellipselength,\separatorheight+0.15cm) -- node[pos=0.55, anchor=south, inner sep=3pt] {$\ccK(\sepname)$} ++(-2*\ellipselength,0cm);
\end{scope}

\begin{scope}[xscale=-1, shift={(2cm,0cm)},local colour=blue, sepname={W},ellipse length=0.8cm]
\path [drawfill] (1.5cm,-0.3cm) ellipse [x radius=0.15 cm, y radius=0.45cm];
\path (1.5cm,-\separatorheight-0.05cm) node [anchor=north] {$\sepname$};
\draw [->] (1.5cm,-0.3cm) -- ++(-0.5cm,0cm);
\foreach \i in {0,-0.6}
\draw (2.5cm,\i cm) ellipse [x radius=\ellipselength, y radius=0.1cm];
\draw (2.5cm,-0.3cm) pic {dots};
\draw [decorate] (1.7cm,-\separatorheight-0.15cm) -- node [midway, anchor=north] {$\ccK(\sepname)$} ++(2*\ellipselength,0cm);
\end{scope}

\end{tikzpicture}
    \caption{A principal set $\mathcal{Y}=\{W,X,Y,Z\}$ of pairwise disjoint sets and the separations of the form $\tlsep{X'}$ for $X'\in \mathcal{Y}$ where $\ccK$ is some \admissable\ function for $\mathcal{Y}$. Note that in accordance with part (i) of Theorem~\ref{TheTreeSetAPC} the depicted separations form a partial consistent orientation.}
    \label{fig:consistenorientation2}
\end{figure}

\begin{theorem}\label{TheTreeSetAPC}
Let $G$ be any connected graph, let $\cY$ be a \principal\ collection of vertex sets of $G$ and let $\ccK$ be an \admissable\ function for $\cY$.
Abbreviate $T(\cY,\ccK)=T$ and $\sigma_X^\ccK=\sigma_X$.
Then the following assertions hold:

\begin{enumerate}[topsep=.5em]\itemsep.5em
    \item For every distinct two $X,Y\in\cY$, after possibly swapping $X$ and $Y$, either
    \begin{align*}
        &\tlsep{X}\le\trsep{Y}\\
        \text{or }&\trsep{X}\le\rsep{X}{C_X(Y)}\le\trsep{Y}.
    \end{align*}
    The collection of all separations $\tlsep{X}$ with $\ccK(X)\subsetneq\cC_X$ forms  a consistent partial orientation of $T$.
    \item The collection $T$ of separations is nested.\\
    It is a regular tree set if $\emptyset\subsetneq\ccK(X)\subsetneq\cC_X$ holds for all $X\in\cY$.
    \item Every star $\sigma_X$ with $X\in\cY$ is a splitting star of $\vT$.
\end{enumerate}
Moreover, if all the vertex sets in $\cY$ are finite, then we may add:
\begin{enumerate}[resume,topsep=.5em]\itemsep.5em
    \item If $\tau$ is an ultrafilter tangle of $G$ with $X_\tau\in\cY$ and $\ccK(X_\tau)\in U(\tau,X_\tau)$, then $\tau$ induces via $\tau\mapsto\tau\cap \vT$ on $T$ the consistent orientation which is given by the infinite splitting star $\sigma_{X_\tau}$ in that $\tau\cap \vT=\dc{\sigma_{X_\tau}}$.
    \item If $\crit(G)\subset\cY$ and $\CX\setminus\ccK(X)$ is finite for all $X\in\crit(G)$, then $T$ distinguishes every two inequivalent ultrafilter tangles $\tau_1$ and $\tau_2$ of $G$ via separations in $\sigma_{X_{\tau_1}}$ and $\sigma_{X_{\tau_2}}$, and it distinguishes every end from every ultrafilter tangle $\tau$ via a separation in $\sigma_{X_\tau}$.
\end{enumerate}
\end{theorem}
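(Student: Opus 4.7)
\emph{Plan.} The plan is to derive everything from Lemmas~\ref{CriticalSetsComparable} and~\ref{CriticalSetsIncomparableNoProblem}. For (i), fix distinct $X, Y \in \cY$. If $X \subsetneq Y$, I would apply Lemma~\ref{CriticalSetsComparable} with $\cC = \ccK(X)$ and $\cD = \ccK(Y)$; its conclusion $\vs \le \rsep{X}{C_X(Y)} \le \trsep{Y}$, with $\vs = \trsep{X}$ if $C_X(Y) \in \ccK(X)$ and $\vs = \tlsep{X}$ otherwise, yields option (b) or (a) respectively. If $X, Y$ are incomparable, admissability of $\ccK$ provides (after possibly swapping) $C_Y(X) \notin \ccK(Y)$, so Lemma~\ref{CriticalSetsIncomparableNoProblem} delivers the same conclusion. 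The partial-orientation consistency then reduces to ruling out $\trsep{X} \le \tlsep{Y}$ for distinct $X, Y$ with proper $\ccK(X), \ccK(Y)$, which is exactly the ``$\rsep{X}{\cC} \not\le \lsep{\cD}{Y}$'' clause of both lemmas. Nestedness in (ii) across distinct $X, Y$ is the same argument applied with arbitrary $\cC \subset \CX$ and $\cD \subset \CY$ (in particular with singletons, covering the $\sep{X}{K}$ parts of $T$); separations attached to the same $X$ are nested directly because distinct $K, K' \in \ccK(X)$ are disjoint in $G - X$. Regularity under $\emptyset \subsetneq \ccK(X) \subsetneq \cC_X$ follows by ruling out smallness of each of $\trsep{X}, \tlsep{X}, \rsep{X}{K}, \lsep{K}{X}$, which is immediate from the strict inclusions by exhibiting a vertex in the opposite side.

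For (iii), $\sigma_X$ is seen to be a star with interior $X$ directly from the pairwise disjointness of the components in $\ccK(X)$. To show $\dc{\sigma_X}$ is an orientation of $T$, I would apply (i) to $X$ and each $Y \ne X$ and then split on whether $C_X(Y) \in \ccK(X)$: in the former case $\lsep{C_X(Y)}{X}$ lies in $\sigma_X$ itself and witnesses $\tlsep{Y} \in \dc{\sigma_X}$; in the latter case, $C_X(Y)$ is disjoint from every $K \in \ccK(X)$, whence $\lsep{C_X(Y)}{X} \le \trsep{X}$, still witnessing $\tlsep{Y} \in \dc{\sigma_X}$. The separations $\sep{Y}{L}$ are handled analogously by taking $\cD = \{L\}$. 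Consistency of $\dc{\sigma_X}$ then follows automatically from the down-closure structure in the tree set. For (iv), $\ccK(X_\tau) \in U(\tau, X_\tau)$ places $\trsep{X_\tau} \in \tau$ while $X_\tau \in \cX_\tau$ places every $\lsep{K}{X_\tau}$ with $K \in \ccK(X_\tau)$ in $\tau$; hence $\sigma_{X_\tau} \subset \tau$, and combining with (iii) and consistency of $\tau$ forces $\tau \cap \vT = \dc{\sigma_{X_\tau}}$.

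For (v), cofiniteness of $\ccK(X)$ in $\CX$ together with freeness of $U(\tau, X_\tau)$ restricted to $\CX$ yields $\ccK(X_\tau) \in U(\tau, X_\tau)$ for every ultrafilter tangle $\tau$, so (iv) applies. Applying (i) to $(X_{\tau_1}, X_{\tau_2})$ for inequivalent ultrafilter tangles, in every sub-case one of $\tsep{X_{\tau_1}}, \tsep{X_{\tau_2}}$ receives opposite orientations from $\dc{\sigma_{X_{\tau_1}}}$ and $\dc{\sigma_{X_{\tau_2}}}$, distinguishing $\tau_1$ and $\tau_2$ via a separation from $\sigma_{X_{\tau_1}} \cup \sigma_{X_{\tau_2}}$. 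For an end $\omega$ versus an ultrafilter tangle $\tau$, $\omega$ orients $\sigma_{X_\tau}$ according to its unique component $C(X_\tau, \omega) \in \cC_{X_\tau}$, which can equal at most one $K \in \ccK(X_\tau)$, while $\tau$ orients every $\sep{X_\tau}{K}$ away from $K$; hence some separation of $\sigma_{X_\tau}$ witnesses the disagreement. The hard part is (iii): verifying that $\dc{\sigma_X}$ picks out exactly one orientation of each foreign separation requires combining the case split of (i) with the sub-analysis of whether $C_X(Y) \in \ccK(X)$, which is exactly where admissability and the principal structure of $\cY$ come together.
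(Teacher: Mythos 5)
Your route is the paper's own (everything reduced to Lemmas~\ref{CriticalSetsComparable} and~\ref{CriticalSetsIncomparableNoProblem}; (i), (ii), (iv), (v) are handled essentially as in the paper), but your argument for (iii) — which you rightly call the hard part — contains a genuine error. You claim that for every $Y\in\cY\setminus\{X\}$ the orientation of $\tsep{Y}$ lying in $\dc{\sigma_X}$ is always $\tlsep{Y}$, witnessed by $\tlsep{Y}\le\lsep{C_X(Y)}{X}$, with $\lsep{C_X(Y)}{X}$ either in $\sigma_X$ or below $\trsep{X}$ according to whether $C_X(Y)\in\ccK(X)$. This does not survive the ``possibly swapping'' in (i). First, if $Y\subsetneq X$ then $C_X(Y)$ is not even defined, so your dichotomy is silent; there Lemma~\ref{CriticalSetsComparable} (roles exchanged) gives $\vs\le\rsep{Y}{C_Y(X)}\le\trsep{X}$ with $\vs=\trsep{Y}$ whenever $C_Y(X)\in\ccK(Y)$, so the orientation in $\dc{\sigma_X}$ can be $\trsep{Y}$, not $\tlsep{Y}$. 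Second, and more seriously, suppose $X$ and $Y$ are incomparable with $C_Y(X)\in\ccK(Y)$, so that admissability forces $C_X(Y)\notin\ccK(X)$ — exactly your ``latter case''. If $\ccK(X)\neq\emptyset$ (the case of interest, e.g.\ under the hypothesis of (ii)), pick $K\in\ccK(X)$: then $K\neq C_X(Y)$, so $K$ avoids $Y$, and since $N(K)=X$ it sends an edge into $X\setminus Y\subset C_Y(X)$, whence $K\subset C_Y(X)\subset V[\ccK(Y)]$. Thus the left side $Y\cup V[\ccK(Y)]$ of $\tlsep{Y}$ contains $K$, which is disjoint from $C_X(Y)\cup X$, so $\tlsep{Y}\le\lsep{C_X(Y)}{X}$ is false; in fact $\tlsep{Y}\notin\dc{\sigma_X}$ here, and the correct orientation is $\trsep{Y}$, via $\trsep{Y}\le\rsep{Y}{C_Y(X)}\le\trsep{X}$ (Lemma~\ref{CriticalSetsIncomparableNoProblem} with the roles exchanged).

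The repair is the paper's case analysis: by (i), for each $Y\neq X$ one of $\tlsep{Y}\le\trsep{X}$, or $\trsep{X}\le\rsep{X}{C_X(Y)}\le\trsep{Y}$ (in which branch $C_X(Y)\in\ccK(X)$, so $\tlsep{Y}\le\lsep{C_X(Y)}{X}\in\sigma_X$), or $\trsep{Y}\le\rsep{Y}{C_Y(X)}\le\trsep{X}$ holds — so \emph{some} orientation of $\tsep{Y}$, not always $\tlsep{Y}$, lies below an element of $\sigma_X$. Moreover, instead of re-running the lemmas with singletons $\cD=\{L\}$ for the separations $\sep{Y}{L}$, it is cleaner to note, as the paper does, that once one element of $(\sigma_Y)^\ast$ lies below an element of $\sigma_X$, the star property of $\sigma_Y$ places an orientation of every separation with separator $Y$ below that same element. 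With (iii) repaired this way, your (iv) and (v) go through as sketched and coincide with the paper's proof.
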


\begin{proof}
(i) We start with the inequalities.
If $X$ and $Y$ are comparable with $X\subsetneq Y$, say, then we are done by Lemma~\ref{CriticalSetsComparable}.
Otherwise $X$ and $Y$ are incomparable, and then we are done by Lemma~\ref{CriticalSetsIncomparableNoProblem} since $\ccK$ is \admissable .
The two lemmas also prevent $\trsep{X}\le\tlsep{Y}$ for all distinct two elements $X,Y\in\cY$ with $\ccK(X)\subsetneq\cC_X$ and $\ccK(Y)\subsetneq\cC_Y$, ensuring that the partial orientation of~$T$ formed by the separations $\tlsep{X}$ with $\ccK(X)\subsetneq\cC_X$ is consistent.

(ii) That $T$ is nested follows from (i). For the `moreover' part note that requiring $\emptyset\subsetneq\ccK(X)\subsetneq\cC_X$ ensures that $\tsep{X}$ has no small orientation.

(iii)
It suffices to show that every separation in $T$ with separator $Y\neq X$ has an orientation that lies below some element of $\sigma_X$.
So consider any $Y\in\cY$ other than $X$.
Since $\sigma_Y$ is a star, it suffices to show that some separation in $(\sigma_Y)^\ast$ lies below some element of $\sigma_X$.
By (i) it suffices to consider the following cases.
If $\tlsep{Y}\le\trsep{X}$ we are done.
Otherwise either 
\begin{align*}
    &\trsep{X}\le\rsep{X}{C_X(Y)}\le\trsep{Y}\\
    \text{or }&\trsep{Y}\le\rsep{Y}{C_Y(X)}\le\trsep{X}.
\end{align*}
In the first case we are fine since $\tlsep{Y}\le\lsep{C_X(Y)}{X}\in\sigma_X$.
And in the second case we are done by the second inequality.

(iv)
Let $\tau$ be any ultrafilter tangle of $G$ with $X_\tau\in\cY$ and write $X=X_\tau$.
First, we show that $\sigma_X$ is included in $O:=\tau\cap \vT$.
The assumption $\ccK(X)\in U(\tau,X)$ means $\trsep{X}\in O$.
Moreover, we have $\lsep{K}{X}\in\tau$ for every $K\in\ccK(X_\tau)$ as $U(\tau,X)$ is a free ultrafilter.
Thus $\sigma_X\subset O$, and so $\dc{\sigma_X}\subset O$ by consistency.
Conversely, $O\subset\dc{\sigma_X}$ since $\sigma_X\subset O$ is a splitting star of $\vT$ by~(iii).

(v)
If $\tau_1$ and $\tau_2$ are two ultrafilter tangles of $G$ with $X_{\tau_1}\neq X_{\tau_2}$, then the induced orientations $\tau_i\cap\vT$ come from distinct splitting stars $\sigma_{X_{\tau_i}}$ of $\vT$ by~(iii).
Now if $\omega$ is an end of $G$ and $\tau$ is an ultrafilter tangle, then $\omega$ avoids the star $\sigma_{X_\tau}$ since it has finite interior (cf.~\cite[Corollary~1.7]{EndsAndTangles}) while $\tau$ contains it by~(iii).
\end{proof}

We close this section by showing that in general it is not possible to find an \admissable\ function $\ccK$ for which $\vT(\crit(G),\ccK)$ is a tree set that is even isomorphic to the edge tree set of a tree.

\begin{figure}[ht]
	\centering
	\includegraphics[scale=.6]{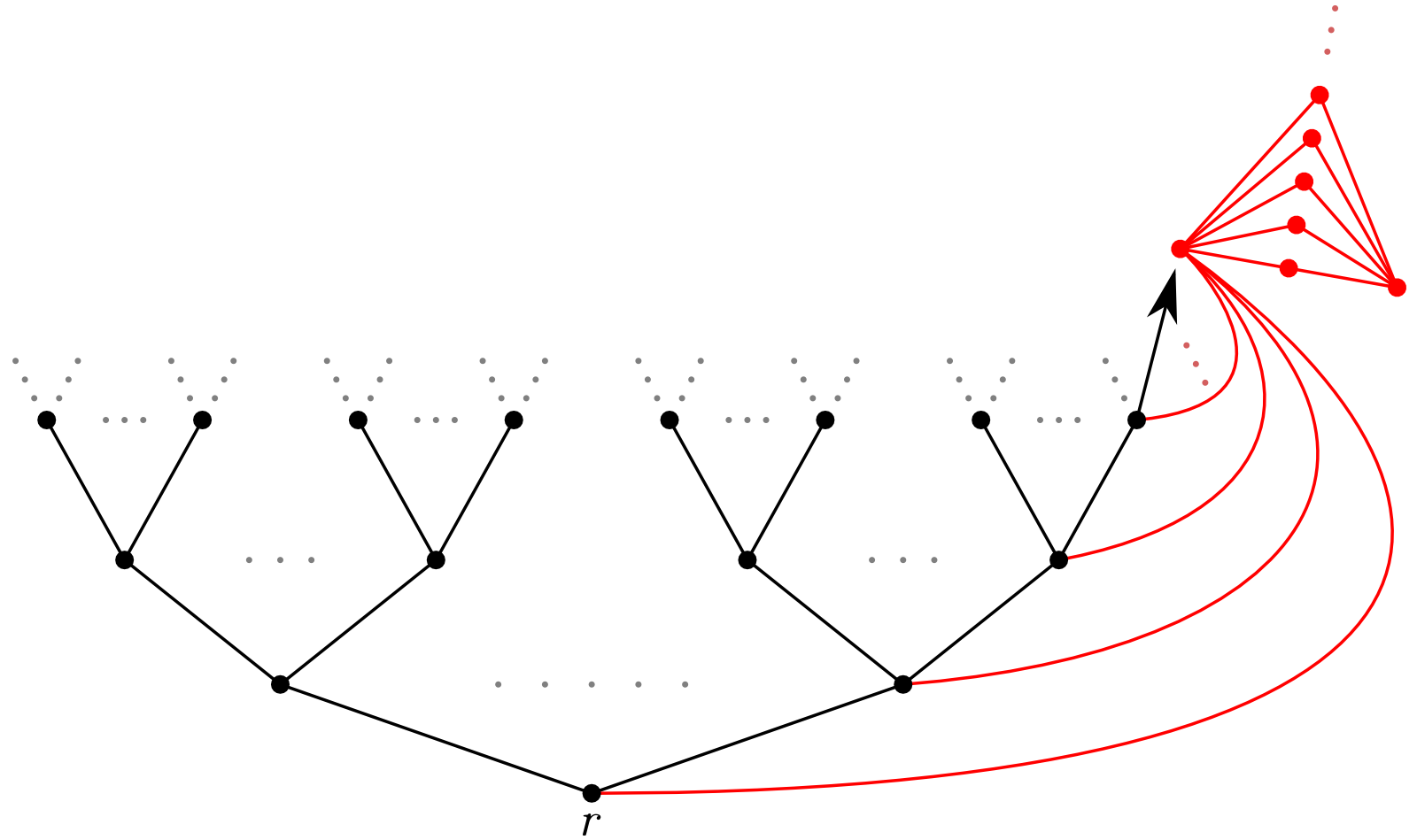}
	\caption{A $T_{\aleph_0}$ (black) with $2^{\aleph_0}$ many copies of $K_{2,\aleph_0}$ as `tops' (visualised in red for the right-most ray).}
	\label{fig:binTree}
\end{figure}

\begin{example}
If $G$ is the graph shown in Figure~\ref{fig:binTree}, then there is no function assigning to each critical vertex set $X$ of $G$ a cofinite subset $\ccK(X)\subset\CX$ such that
\begin{align*}
N:=\big\{\,\tsep{X}\;\big\vert\;X\in\crit(G)\,\big\}
\end{align*}
gives rise to a tree set $\vN$ that is isomorphic to the edge tree set of a tree (so in particular it cannot be induced by an $\Sinf$-tree or tree-decomposition of $G$).
First, however, we describe $G$ more precisely.
The graph $G$ is obtained from the $\aleph_0$-regular tree $T=T_{\aleph_0}$ by fixing any root $r$ and then proceeding as follows.
For every ray $R\subset T$ starting at the root $r$ we add a new copy of $K_{2,\aleph_0}$ with 2-class $\{x_R,y_R\}$, say, and join $x_R$ to every vertex of the ray $R$.
Readers familiar with the `binary tree with tops' will note that $G$ extends a `$T_{\aleph_0}$ with tops'.

Let us check that there really is no suitable function $X\mapsto\ccK(X)$ as claimed.
Assume for a contradiction that there is.
Then $\vN$ is a tree set that, by Theorem~\ref{KneipTreeSets}, has no $(\omega+1)$-chains.
Hence to yield a contradiction, it suffices to find an $(\omega+1)$-chain.
If $t$ is a node of $T\subset G$, then its down-closure $\lceil t\rceil$ in $T$ is a critical vertex set of $G$, and the components in $\CC{\lceil t\rceil}$ are of the following form.
If $t'$ is an upward neighbour of $t$ in $T$, then the vertex set of the component of $G-\lceil t\rceil$ containing $t'$ is given by the union of $\lfloor t'\rfloor\subset T$ with all the copies of $K_{2,\aleph_0}$ whose corresponding ray has a tail in $\lfloor t'\rfloor$.
This gives a bijection between the upward neighbours of $t$ in $T$ and the components in $\CC{\lceil t\rceil}$.
Next, we claim that there is a ray $R^\ast=t_0t_1t_2\ldots\subset T$ starting at the root $r$ such that for all $n>0$ the node $t_n$ corresponds to a component in $\ccK(\lceil t_{n-1}\rceil)$ for its predecessor $t_{n-1}$.
Indeed, since $\ccK(\lceil t\rceil)\subset\CC{\dc{t}}$ is infinite for all $t\in T$, such a ray can be constructed inductively.
But then we get a strictly ascending sequence
\begin{align*}
\trsep{\dc{t_0}}<\trsep{\dc{t_1}}<\trsep{\dc{t_2}}<\cdots
\end{align*}
i.e.\ we get an $\omega$-chain in $\vN$.
And this $\omega$-chain extends to an $(\omega+1)$-chain as the separation $\trsep{Z}$ with $Z=\{x_{R^\ast},y_{R^\ast}\}$ that comes from the $K_{2,\aleph_0}$ for $R^\ast$ is greater than all separations $\trsep{\dc{t_n}}$.\qed
\end{example}

\section{Applications}\label{sec:Applications}

\noindent This section is dedicated to the applications of our work mentioned in the introduction.
All of the four applications are, in fact, applications of Theorems~\ref{criticalAdmissableExists} and~\ref{TheTreeSetAPC}.
Elbracht, Kneip and Teegen~\cite{InfiniteSplinters} use the following corollary of our two theorems:

\begin{corollary}
Suppose that $\cY$ is a {\principal } \footnote{cf.~Definition~\ref{DefinitionPrincipalCollection}} collection of vertex sets of $G$.
Then there is a function $\ccK$ assigning to each $X\in\cY$ a subset $\ccK(X)\subset\CX$ (the set $\CX$ consists of the components of $G-X$ whose neighbourhoods are precisely equal to~$X$) that misses at most one component from $\CX$, such that the collection
\begin{align*}
    \big\{\,\{\,V\setminus K\,,\,X\cup K\,\}\;\big\vert\;X\in\cY\text{ and }K\in\ccK(X)\,\big\}
\end{align*}
is nested.\qed\
\end{corollary}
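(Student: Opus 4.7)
The plan is to combine the two main technical results of the section in a direct way. First I would invoke Theorem~\ref{criticalAdmissableExists} on the given principal collection $\cY$ to obtain a strongly admissible function $\ccK$. By the very definition of strongly admissible, this gives us $|\CX\setminus\ccK(X)|\le 1$ for every $X\in\cY$, which is exactly the `misses at most one component' requirement demanded by the corollary.

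Next I would feed the pair $(\cY,\ccK)$ into Theorem~\ref{TheTreeSetAPC}. Part~(ii) of that theorem asserts, with no extra hypotheses beyond principality of $\cY$ and admissibility of $\ccK$, that the full collection
\[
T(\cY,\ccK)=\big\{\,\tsep{X}\,,\,\sep{X}{K}\;\big\vert\;X\in\cY\text{ and }K\in\ccK(X)\,\big\}
\]
is nested. The collection whose nestedness the corollary asserts, namely $\{\,\sep{X}{K}\mid X\in\cY,\,K\in\ccK(X)\,\}$, is a subcollection of $T(\cY,\ccK)$, so it inherits nestedness automatically.

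So there is really no obstacle: the corollary is just the `easy half' of Theorem~\ref{TheTreeSetAPC}(ii) combined with the existence statement of Theorem~\ref{criticalAdmissableExists}, restricted to the subfamily of singleton-indexed separations $\sep{X}{K}$ and ignoring the bundled separations $\tsep{X}$. The only small point to verify explicitly, which is immediate from the definitions in the paper, is that $\sep{X}{K}=\{V\setminus K,\,X\cup K\}$ (applying the convention $\sep{X}{K}=\sep{X}{\{K\}}$ and $V[\{K\}]=V(K)$, identified with $K$), matching the formulation of the corollary verbatim.
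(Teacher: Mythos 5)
Your proposal is correct and is exactly the argument the paper intends: the corollary is stated with a \qed precisely because it follows by applying Theorem~\ref{criticalAdmissableExists} (using that $G$ is connected by the paper's standing convention) to get a strongly \admissable\ $\ccK$ with $\vert\CX\setminus\ccK(X)\vert\le 1$, and then taking the subcollection $\{\,\sep{X}{K}\mid X\in\cY,\ K\in\ccK(X)\,\}$ of the nested collection $T(\cY,\ccK)$ from Theorem~\ref{TheTreeSetAPC}(ii). No gaps; your identification of $\sep{X}{K}$ with $\{\,V\setminus K,\,X\cup K\,\}$ is also the right (and only) bookkeeping point.
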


Bürger and the second author~\cite{StarComb4TheUndominatingStar} use Theorems~\ref{criticalAdmissableExists} and~\ref{TheTreeSetAPC} directly.
In the remainder of this section, we present applications three and four: a structural connectivity result for infinite graphs, and the collectionwise normality of ultrafilter tangle spaces.

\subsection{A structural connectivity result for infinite graphs}

We have already explained this application in detail in our introduction, now we prove it:

\begin{customthm}{\ref{toughCriticalTreeSet}}
Every connected graph $G$ has a tree set whose separators are precisely the critical vertex sets of $G$ and all whose torsos are tough.
\end{customthm}
\begin{proof}
By Theorems~\ref{criticalAdmissableExists} and~\ref{TheTreeSetAPC} it suffices to show that for $\cY:=\crit(G)$ and a stronlgy \admissable\ function $\ccK$ the torsos of the tree set $T(\cY,\ccK)$ are tough.
For this, let $O$ be any consistent orientation of $T(\cY,\ccK)$, let $\Pi$ be its part and $H$ its torso.
In order to show that $H$ is tough, let $\Xi$ be a finite subset of~$V(H)$.
Let $\cC\subset\cC_\Xi$ consist of those components of $G-\Xi$ that meet $\Pi$.
Then $\cC$ must be finite: otherwise $\Xi$ contains a critical vertex set $\Xi'$ of $G$ with $\cC':=\CC{\Xi'}\cap\cC$ infinite; and then $\rsep{\Xi'}{C}\in O$ for all $C\in\cC'\cap\ccK(\Xi')$ as these $C$ meet $\Pi$, contradicting the consistency of $O$.
Thus $G-\Xi$ has only finitely many components meeting $\Pi$.
By Corollary~\ref{restrictconnectedsets} each of these components induces a component of $H-\Xi$, and so $H-\Xi$ has only finitely many components.
\end{proof}

\subsection{Collectionwise normality of the ultrafilter tangle space}
For this subsection, we recall the following definitions from general topology (cf.~\cite{EngelkingBook}):

\begin{definition}[Normality and collectionwise normality]\label{DefinitionCollectionwiseNormal}
Let $X$ be a topological space in which all singletons are closed.

The space $X$ is said to be \emph{normal} if for every two disjoint closed subsets $A_1$ and $A_2$ of $X$ there are disjoint open subsets $O_1$ and $O_2$ of $X$ with $A_1\subset O_1$ and $A_2\subset O_2$.

A collection $\{\,A_i\mid i\in I\,\}$ of subsets $A_i\subset X$ is \emph{discrete} if every point $x\in X$ has an open neighbourhood that meets at most one of the~$A_i$.

The space $X$ is said to be \emph{collectionwise normal} if for every discrete collection $\{\,A_i\mid i\in I\,\}$ of pairwise disjoint closed subsets $A_i\subset X$ there exists a collection $\{\,O_i\mid i\in I\,\}$ of pairwise disjoint open subsets $O_i\subset X$ with $A_i\subset O_i$ for all $i\in I$.
\end{definition}

The following implications are true for every topological space (the first implication is~\cite[Theorems~5.1.1 and 5.1.18]{EngelkingBook} whereas the second is clear):
\begin{align*}
    \text{compact \HD\ }\Rightarrow\text{ collectionwise normal }\Rightarrow\text{ normal.}
\end{align*}

In general, the end space $\Omega(G)$ of a graph is not compact, e.g., if $G$ is a union of infinitely many rays sharing precisely their first vertex.
Polat~\cite{PolatEME1} and Sprüssel~\cite{NormalEnd} independently showed that the end space of every graph $G$ is normal, and Polat even showed that the end space is collectionwise normal (this is Lemma~4.14 in~\cite{PolatEME1}, see~\cite{ApproximatingNormalTrees} for a modern proof):

\begin{theoremNN}
Every graph $G$ has a collectionwise normal end space $\Omega(G)$.
\end{theoremNN}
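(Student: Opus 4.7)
The plan is to exploit a tree-like structure on the end space. First observe that $\Omega(G)$ is zero-dimensional and Hausdorff: each basic open set $\Omega(X,\cC)$ has complement $\Omega(X,\cC_X\setminus\cC)$, which is again basic open, so $\Omega(G)$ has a clopen basis.

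For graphs admitting a normal spanning tree $T$, the strategy is clean. The ends of $G$ correspond to the normal rays of $T$, and the natural cone-topology on tree branches induces an ultrametric structure on $\Omega(G)$. Ultrametric spaces are paracompact and hence collectionwise normal. Concretely, given a discrete collection $\{A_i\mid i\in I\}$ of pairwise disjoint closed sets, for each end $\omega$ one picks a clopen \emph{cone} $V_\omega$ of $T$ (the set of ends whose normal ray begins with a given finite initial segment) small enough to meet at most one $A_i$; such cones are always either nested or disjoint, so $O_i := \bigcup_{\omega\in A_i} V_\omega$ yields pairwise disjoint open sets with $A_i\subset O_i$. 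Indeed, if some $\eta$ lay in $O_i\cap O_j$ with $i\neq j$, then $\eta\in V_\omega\cap V_{\omega'}$ for some $\omega\in A_i, \omega'\in A_j$; nestedness of cones forces one to contain the other, say $V_{\omega'}\subset V_\omega$, whence $\omega'\in V_\omega$ would make $V_\omega$ meet both $A_i$ and $A_j$, a contradiction.

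For graphs without a normal spanning tree, which is the general case, the plan is to invoke the approximating-normal-tree machinery of~\cite{ApproximatingNormalTrees}, or alternatively to combine Carmesin's theorem (Theorem~\ref{CarmesinEndsTreeSet}) with a Zorn-type enlargement to a nested family of finite-order separations whose associated clopen sets form a neighbourhood base at every end. Once such a tree-like substitute is in hand, the argument proceeds as in the normal-spanning-tree case. The main obstacle, then, is the construction of this substitute: ensuring the refinement is sufficiently fine to give a clopen base at each end while preserving both nestedness and finite order is the technical heart of the proof. A secondary subtlety is that, for a nested family of separations (as opposed to actual tree cones), two clopen sets $\Omega_{\vr}$ and $\Omega_{\vs}$ arising from incomparable-but-nested orientations $\vr\le\sv$ need not be disjoint, so the refined nested family must be chosen so that selected clopen neighbourhoods admit the comparable-or-disjoint dichotomy---this again points back to approximating-normal-trees as the natural vehicle.
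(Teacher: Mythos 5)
This statement is not proved in the paper at all: it is Polat's theorem, quoted with a reference to~\cite{PolatEME1} (Lemma~4.14 there) and to~\cite{ApproximatingNormalTrees} for a modern proof. The theorem the paper actually proves in that subsection is the different statement about the ultrafilter tangle space (Theorem~\ref{CollectionWiseNormality}), which is handled by a quite different argument (the tree set $T(\crit(G),\ccK)$ together with collectionwise normality of the Stone--Čech compactifications $\beta(\CX)$). So there is no in-paper proof to match your attempt against; what can be judged is whether your attempt is itself a proof.

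It is not, because the general case is missing. Your normal-spanning-tree case is essentially sound: when $G$ has a normal spanning tree, $\Omega(G)$ carries the cone topology of the tree and is ultrametrizable, hence paracompact and collectionwise normal, and your nested-or-disjoint cone argument for the disjointness of the sets $O_i$ is the standard one. But the entire difficulty of Polat's theorem lies with graphs that have \emph{no} normal spanning tree, and for these you only gesture at two options: citing~\cite{ApproximatingNormalTrees} (which is citing the very result to be proved, not proving it) or performing an unspecified ``Zorn-type enlargement'' of the tree set from Theorem~\ref{CarmesinEndsTreeSet}. The latter is not a proof sketch but a list of obstacles you yourself identify and do not overcome: the clopen sets induced by a nested set of finite-order separations need not form a neighbourhood base at every end, and they do not satisfy the comparable-or-disjoint dichotomy on which your cone argument rests. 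Without a concrete construction of the ``tree-like substitute'' and a verification that the disjointness argument survives the passage from tree cones to nested separations, the proposal proves only a special case of the statement.
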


The $\aleph_0$-tangle space $\Theta(G)$, with the subspace topology imposed by Diestel's tangle \comp , is compact \HD\ and contains the end space as a subspace.
Since every compact \HD\ space is collectionwise normal, the \mbox{$\aleph_0$-tangle} space is collectionwise normal as well:

\begin{theoremNN}
Every graph $G$ has a collectionwise normal $\aleph_0$-tangle space~$\Theta(G)$.
\end{theoremNN}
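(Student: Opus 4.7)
The plan is simply to recognise $\Theta(G)$ as a closed subspace of Diestel's tangle \comp\ and then invoke the implication ``compact \HD\ $\Rightarrow$ collectionwise normal'' that was recorded in the paragraph after Definition~\ref{DefinitionCollectionwiseNormal}.

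More precisely: by Theorem~\ref{DiestelsTangleCompWorks}(i), the tangle \comp\ $\modG$ is compact \HD\ and contains $G$ as a dense open subspace. Hence its remainder $\Theta(G) = \modG \setminus G$ is closed in $\modG$. As a closed subspace of a compact \HD\ space, $\Theta(G)$ itself carries a compact \HD\ subspace topology, which is exactly the subspace topology it inherits from the tangle \comp .

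Now I would apply the implication ``compact \HD\ $\Rightarrow$ collectionwise normal'', based on \cite[Theorems~5.1.1 and 5.1.18]{EngelkingBook}, to the space $\Theta(G)$, concluding that $\Theta(G)$ is collectionwise normal.

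There is no real obstacle here: the statement is essentially a direct consequence of Diestel's result that the $\aleph_0$-tangle space is compact \HD, combined with a standard fact from general topology. The genuine difficulty on collectionwise normality lies instead in Main Result~\ref{CollectionWiseNormality}, whose subject $\Upsilon(G)$ is not compact in general and really does require the tree-of-tangles machinery developed in the present paper to separate discrete collections of closed sets.
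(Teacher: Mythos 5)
Your overall route is the paper's route (compact \HD\ $\Rightarrow$ collectionwise normal, applied to $\Theta(G)$), but the way you establish that $\Theta(G)$ is compact \HD\ has a genuine gap. Theorem~\ref{DiestelsTangleCompWorks}(i) does \emph{not} assert that $\modG$ is \HD : in this paper a \comp\ is only required to be compact (see the appendix, where \HD\ \comp s are singled out separately), and in fact $\modG$ fails to be \HD\ in exactly the situations this theorem is about. Already for the star $K_{1,\aleph_0}$ with centre $v$, every ultrafilter tangle $\tau$ has a neighbourhood base in $\modG$ consisting of sets $\cO_{\modG}(\{v\},\cC)$ with $\cC\in U(\tau,\{v\})$ (by Lemmas~\ref{ufTangleCofinalSeps} and~\ref{tcOpenProp}); each such set contains inner points of the $v$--$\medcup\cC$ edges arbitrarily close to $v$, while every open neighbourhood of $v$ contains initial segments of all edges at $v$, so $v$ and $\tau$ cannot be separated. (Similarly a dominating vertex cannot be separated from the end it dominates.) Hence you cannot obtain the Hausdorffness of $\Theta(G)$ by viewing it as a closed subspace of $\modG$. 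The compactness half of your argument does survive, since $G$ is open in $\modG$ by construction, so $\Theta(G)$ is closed, and closed subspaces of compact spaces are compact without any separation axiom.

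What the paper actually relies on is Diestel's result that $\Theta(G)$ \emph{with the subspace topology of} $\modG$ is (homeomorphic to) the inverse limit $\invLim\beta(\cC_X)$ of the compact \HD\ spaces $\beta(\cC_X)$, and inverse limits of compact \HD\ spaces are compact \HD\ (cf.\ the appendix); this identification is also used explicitly in the proof of Theorem~\ref{CollectionWiseNormality}. Alternatively, Hausdorffness of the subspace $\Theta(G)$ can be checked directly: for $\tau_1\neq\tau_2$ choose $X\in\cX$ and $\cC\subseteq\cC_X$ with $\cC\in U(\tau_1,X)$ and $\cC_X\setminus\cC\in U(\tau_2,X)$; then $\cO_{\modG}(X,\cC)$ and $\cO_{\modG}(X,\cC_X\setminus\cC)$ are disjoint open sets containing $\tau_1$ and $\tau_2$ respectively. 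With the compact Hausdorffness of $\Theta(G)$ justified in one of these ways, your concluding appeal to \cite[Theorems~5.1.1 and 5.1.18]{EngelkingBook} coincides with the paper's argument.
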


\noindent This result, however, does not imply that the end space is collectionwise normal, for usually the end space is not closed in the $\aleph_0$-tangle space.

As the $\aleph_0$-tangle space is the disjoint union $\Theta(G)=\Omega(G)\sqcup\Upsilon(G)$ of the end space $\Omega(G)$ and the ultrafilter tangle space $\Upsilon(G)$, the question arises whether the ultrafilter tangle space is collectionwise normal as well.
Like the end space, the ultrafilter tangle space usually is not closed in the $\aleph_0$-tangle space, so the ultrafilter tangle space does not obviously inherit the collectionwise normality from the \mbox{$\aleph_0$-tangle} space.

In this section, we show that the ultrafilter tangle space is collectionwise normal, Theorem~\ref{CollectionWiseNormality}. 
We remark that our proof also shows that the critical vertex set space (with the subspace topology from the \comp\ $\minG=G\sqcup\crit(G)\sqcup\Omega(G)$ introduced in~\cite{EndsTanglesCrit}) is collectionwise normal as well.


\begin{customthm}{\ref{CollectionWiseNormality}}
Let $G$ be any connected graph.
Then for every discrete collection $\{\,A_i\mid i\in I\,\}$ of pairwise disjoint closed subsets $A_i\subset\Upsilon(G)$ there exists a collection $\{\,O_i\mid i\in I\,\}$ of pairwise disjoint open subsets $O_i\subset\modG$ such that $A_i\subset O_i$ for all $i\in I$.
In particular, the ultrafilter tangle space of $G$ is collectionwise normal.
    
    
    
\end{customthm}


Our proof of Theorem~\ref{CollectionWiseNormality} will employ the following short lemma:

\begin{lemma}\label{tcOpenProp}
For every two finite-order separations $\rsep{X}{\cC}\le\rsep{Y}{\cD}$ of $G$ we have $\cO_{\modG}(X,\cC)\supset\cO_{\modG}(Y,\cD)$.
\end{lemma}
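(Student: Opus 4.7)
The plan is to unpack $\cO_{\modG}$ into its three constituent pieces — the vertex set, the inner edge points, and the tangles — and verify the inclusion $\cO_{\modG}(Y,\cD)\subset\cO_{\modG}(X,\cC)$ on each piece separately using the explicit reformulation of the inequality $\rsep{X}{\cC}\le\rsep{Y}{\cD}$. Writing the orientations out, the hypothesis reads
\[
V\setminus V[\cC]\subset V\setminus V[\cD] \quad\text{and}\quad X\cup V[\cC]\supset Y\cup V[\cD],
\]
so in particular $V[\cD]\subset V[\cC]$ and $Y\setminus X\subset V[\cC]$.

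First I would dispatch the vertex part: $V[\cD]\subset V[\cC]$ is immediate. For the inner edge points, consider an edge $e=yv$ with $y\in Y$ and $v\in V[\cD]$, so $v\in V[\cC]$. If $y\in X$ then $e$ is an edge from $X$ to $V[\cC]$ and its inner edge points lie in $\mathring{E}(X,V[\cC])$. Otherwise $y\in Y\setminus X$, and then $y\in V[\cC]$ by the second containment above; since $y$ is adjacent to $v\in V[\cC]$ they lie in the same component of $G-X$, so $e$ is an edge inside $G[V[\cC]\cup X]$ incident to $V[\cC]$, and again its inner edge points belong to $\mathring{E}(X,V[\cC])$.

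For the tangle part, suppose $\tau\in\invLim\beta(\cC_Z)$ has $\cD\in U(\tau,Y)$, i.e. $\rsep{Y}{\cD}\in\tau$. I would then apply the consistency of $\tau$ to $\rsep{X}{\cC}\le\rsep{Y}{\cD}$: if $\lsep{\cC}{X}\in\tau$ instead of $\rsep{X}{\cC}$, then the pair $\lsep{\cC}{X},\rsep{Y}{\cD}\in\tau$ with $\rsep{X}{\cC}\le\rsep{Y}{\cD}$ (equivalently $\lsep{\cD}{Y}\le\lsep{\cC}{X}$) would violate consistency whenever $\{X,\cC\}\neq\{Y,\cD\}$, and would directly contradict $\tau$ being an orientation when the two separations coincide. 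Hence $\rsep{X}{\cC}\in\tau$, which means $\cC\in U(\tau,X)$, putting $\tau$ in $\cO_{\modG}(X,\cC)$.

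I don't anticipate a real obstacle here; the only subtle point is the case $y\in Y\setminus X$ in the edge-point check, where one must invoke the second half of the ordering hypothesis to place $y$ inside $V[\cC]$ rather than on the wrong side of the separation.
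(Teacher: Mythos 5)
Your proof is correct and follows essentially the same route as the paper, which likewise splits $\cO_{\modG}$ into its graph part (handled there with a single ``clearly'') and its tangle part (handled via the consistency of $\aleph_0$-tangles), so you have simply spelled out the details the paper leaves implicit. One cosmetic slip: when $y\in Y\setminus X$ the edge $yv$ has both ends in the same component $C\in\cC$, so its inner points lie in $\medcup\cC$ rather than in $\mathring{E}(X,\medcup\cC)$ --- they are still in $\cO_{\modG}(X,\cC)$, so the argument stands.
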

\begin{proof}
Clearly, $G\cap\cO_{\modG}(X,\cC)\supset G\cap\cO_{\modG}(Y,\cD)$.
And from the consistency of $\aleph_0$-tangles we deduce $\Theta\cap\cO_{\modG}(X,\cC)\supset\Theta\cap\cO_{\modG}(Y,\cD)$.
\end{proof}

\begin{proof}[Proof of Theorem~\ref{CollectionWiseNormality}]
For this, let $\{\,A_i\mid i\in I\,\}$ by any discrete collection of closed subsets $A_i\subset\Upsilon(G)$.
We are going to find a suitable collection $\{\,O_i\mid i\in I\,\}$.
To get started, we view the $\aleph_0$-tangle space as inverse limit $\Theta=\invLim{}(\,\beta (\cC_X)\mid X\in\cX\,)$ where each $\cC_X$ is endowed with the discrete topology.
Since~$\Theta$ is compact and all $\beta (\cC_X)$ are \HD , it follows from general topology that all of the (continuous) projections $\pr_Y\colon\Theta=\invLim{}\beta (\cC_X)\to\beta(\cC_Y)$ are open.
Now consider any critical vertex set $X$ of $G$.
The \SC\ remainder $(\CX)^\ast=\beta(\CX)\setminus\CX$ is a closed subspace of $\beta (\CX)=\closureInExt{\CX}{\beta(\cC_X)}\subset\beta (\cC_X)$.
(This follows from general topology, but it can also be seen more directly by considering the standard basis for the \SC\ \comp\ of discrete spaces.)
And for every $U\in (\CX)^\ast$ the preimage $\pr_X^{-1}(U)$ is a singleton that consists precisely of the ultrafilter tangle of which $(X,U)$ is the blueprint.
Therefore, for every $i\in I$ the set 
\begin{align*}
    A_{i,X}:=\pr_X(A_i)\cap (\CX)^\ast=\pr_X\big(\;\closureIn{A_i}{\Theta}\,\big)\cap (\CX)^\ast
\end{align*}
is closed in $\beta(\CX)$.
Moreover, $\{\,A_{i,X}\mid i\in I\,\}$ is a discrete collection of pairwise disjoint closed subsets of~$\beta (\CX)$.
Now the \SC\ \comp\ $\beta (\CX)$ is collectionwise normal since it is compact \HD , and so we find a collection $\{\,O_{i,X}\mid i\in I\,\}$ of pairwise disjoint open subsets $O_{i,X}\subset \beta(\CX)$ satisfying the inclusion $A_{i,X}\subset O_{i,X}$ for all $i\in I$.

Next, we use Theorem~\ref{criticalAdmissableExists} to find a strongly \admissable\ function $\ccK$ for the \principal\ collection $\crit(G)$ with $\vert\,\CX\setminus\ccK(X)\,\vert=1$ for all $X\in\crit(G)$.
For every index $i\in I$ and every ultrafilter tangle $\tau\in A_i$ we choose a component collection $\cC(\tau)\in U(\tau,X_\tau)$ such that
\begin{itemize}
    \item $\cC(\tau)\subset\ccK(X_\tau)$;
    \item $\cC(\tau)\subset O_{i,X_\tau}$;
    \item $O_{i,\tau}:=\cO_{\modG}(X_\tau,\cC(\tau))$ avoids all $A_j$ with $j\neq i$.
\end{itemize}
We find $\cC(\tau)$ as follows.
First, we recall that $\ccK(X_\tau)$ is contained in the free ultrafilter $U(\tau,X_\tau)$.
Second, we note that $O_{i,X_\tau}\cap\CC{X_\tau}$ is contained in $U(\tau,X_\tau)$ as well, for $O_{i,X_\tau}$ is an open neighbourhood of $U=\pr_{X_\tau}(\tau)\in A_{i,X_\tau}$ in $\beta (\CC{X_\tau})$ and $U$ is contained in $U(\tau,X_\tau)$ as a subset.
Therefore, if we find a component collection $\cC\subset\CC{X_\tau}$ with $\cC\in U(\tau,X_\tau)$ such that $\cO_{\modG}(X_\tau,\cC)$ avoids all $A_j$ with $j\neq i$, then $\cC(\tau):=\ccK(X_\tau)\cap O_{i,X_\tau}\cap\cC$ will satisfy all three requirements (for the third requirement we apply Lemma~\ref{tcOpenProp} to $\rsep{X_\tau}{\cC}\le\rsep{X_\tau}{\cC(\tau)}$).
To find a suitable component collection~$\cC$, we proceed as follows.
The union of all sets $A_j$ with $j\in I$ and $j\neq i$ is closed in $\Upsilon(G)$ since $\{\,A_i\mid i\in I\,\}$ is a discrete collection of closed sets.
Hence there exists an open neighbourhood $\cO_{\modG}(Y,\cD)$ of $\tau$ in $\modG$ which avoids this union.
Applying Lemma~\ref{ufTangleCofinalSeps} to $\rsep{Y}{\cD}\in\tau$ then yields a component collection $\cC\subset\CC{X_\tau}$ satisfying $\rsep{Y}{\cD}\le\rsep{X_\tau}{\cC}\in\tau$.
In~particular, $\cO_{\modG}(X_\tau,\cC)\subset\cO_{\modG}(Y,\cD)$ (Lemma~\ref{tcOpenProp} again) avoids all $A_j$ with $j\neq i$.

Letting $O_i:=\bigcup\,\{\,O_{i,\tau}\mid \tau\in A_i\,\}$ for every $i\in I$, we claim that the collection $\{\,O_i\mid i\in I\,\}$ is as desired.
For this, it suffices to show that for all indices $i\neq j$ and ultrafilter tangles $\tau\in A_i$ and $\tau'\in A_j$ the open neighbourhoods $O_{i,\tau}$ and $O_{j,\tau'}$ are disjoint.
By Theorem~\ref{TheTreeSetAPC}~(i) and by symmetry, only the following three cases can possibly occur.

In the first case we have $X_\tau=X_{\tau'}$ and write $X=X_\tau$.
Then $O_{i,X}$ and $O_{j,X}$ are disjoint, ensuring that $\cC(\tau)$ and $\cC(\tau')$ are disjoint.
(If we had not involved the open sets $O_{i,X}$ and $O_{j,X}$, then the component collections $\cC(\tau)$ and $\cC(\tau')$ might possibly have a non-empty finite intersection.)
In particular, $O_{i,\tau}$~and~$O_{j,\tau'}$ are disjoint as well.

In the second case we have $X_\tau\neq X_{\tau'}$ and $\tlsep{X_\tau}\le\trsep{X_{\tau'}}$, which implies that $O_{i,\tau}$ and~$O_{j,\tau'}$ are disjoint.

In the third case we have $X_\tau\neq X_{\tau'}$ and
\begin{align*}
    \trsep{X_\tau}\le\rsep{X_\tau}{C}\le\trsep{X_{\tau'}}
\end{align*}
where $C$ is the component $C_{X_\tau}(X_{\tau'})$.
Since $O_{i,\tau}$ avoids $A_j\ni\tau'$ we deduce that the component $C$ is not contained in $\cC(\tau)$. 
Hence $\lsep{\cC(\tau)}{X_\tau}\le\rsep{X_{\tau'}}{\cC(\tau')}$ which implies that $O_{i,\tau}$ and $O_{j,\tau'}$ are disjoint.
%
\end{proof}

\section{Consistent orientation and lifting from torsos}\label{sec:graphfromtorso}

\noindent For this section, fix a graph $G$, a regular tree set $N$ of finite-order separations of~$G$, and a consistent orientation $O$ of $N$. Also define $\Pi=\bigcap_{(C,D)\in O}D$.

This section deals with the problem of translating separations of $\torso(G,O)$ to separations of $G$, as described in Section~\ref{sec:strategy}.
More precisely, given a separation $(A,B)$ of $\torso(G,O)$, we want to find an \emph{extension} of it in $G$, a separation $(U,W)$ of $G$ towards which all elements of $O$ point such that $U\cap W\subseteq \Pi$ and $(U\cap \Pi,W\cap \Pi)=(A,B)$. 
Note that every extension $(U,W)$ of $(A,B)$ satisfies $U\cap W=A\cap B$.
In general, extensions are not unique.
However, the information contained in $O$ already puts strong restrictions on the structure of extensions.

On the one hand, if $x$ and $y$ are vertices of $G$ and $(C,D)$ is a separation in $O$ with $\{x,y\}\subseteq C$ then every extension $(U,W)$ of a separation of $\torso(G,O)$ has to satisfy $(C,D)\leq (U,W)$ or $(C,D)\leq (W,U)$ and thus $\{x,y\}\subseteq U$ or $\{x,y\}\subseteq W$.
So here we have a relation on $\bigcup_{(C,D)\in O}C$ and related vertices cannot be separated by extensions of separations of $\torso(G,O)$. 

On the other hand, if $(C,D)$ and $(C',D')$ are separations in $O$ such that $O$ also contains some $(C'',D'')$ with $(C,D)\leq (C'',D'')$ and $(C',D')\leq (C'',D'')$, then $(C,D)$ and $(C',D')$ cannot lie on different sides of $(U,W)$ because $(C'',D'')$ points towards every extension $(U,W)$ of $(A,B)$.
So here we have a relation on~$O$ and no extension of a separation of $\torso(G,O)$ can separate two related separations in $O$. 

It turns out that the two relations describe two points of view on the same idea:
In this paper we define~$\sim$ as a relation on the set of separations of $O$, as that fits better in our framework of tree sets.
But it is possible just as well to work with the relation on vertices, as is done e.g.\ in \cite{carmesin2014all}, and several lemmas in this section are inspired by similar lemmas in that paper.
Indeed, we will associate with every equivalence class $\gamma$ of $\sim$ a set of vertices $A_{\gamma}$, thereby associating an equivalence class of $\sim$ of separations with an equivalence class of vertices, and we will work with both $\gamma$ and $A_{\gamma}$.

\begin{lemma}\label{corridorsexist}
Define a relation $\sim$ on $O$ where $(C,D)\sim (C',D')$ if and only if there is a separation in $O$ above both $(C,D)$ and $(C',D')$.
Then $\sim$ is an equivalence relation.
\end{lemma}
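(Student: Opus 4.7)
Reflexivity of $\sim$ is immediate, since $(C,D)\le(C,D)$ witnesses $(C,D)\sim(C,D)$, and symmetry is built into the definition. Transitivity is the real work, and my plan is to reduce it to the nestedness of the two witnesses and then use essentiality of $N$ (a tree set has no trivial elements) together with consistency of $O$ to dispose of the awkward cases.

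So suppose $(C,D)\sim(C',D')$ via $(E,F)\in O$ and $(C',D')\sim(C'',D'')$ via $(E',F')\in O$. Since $N$ is nested, one of the four orientation comparabilities between $(E,F)$ and $(E',F')$ holds. In the two ``parallel'' cases $(E,F)\le(E',F')$ or $(E',F')\le(E,F)$, whichever witness is the larger one directly serves as an upper bound in $O$ of both $(C,D)$ and $(C'',D'')$, and we are done.

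The two ``antiparallel'' cases require more care. If $(E,F)\le(F',E')$, combining with $(C',D')\le(E,F)$ and $(C',D')\le(E',F')$ places $(C',D')$ below both orientations of $\{E',F'\}$. Essentiality of the tree set $N$ then forces $(C',D')\in\{(E',F'),(F',E')\}$, and $O$ being an orientation excludes $(C',D')=(F',E')$ (else it would contain both orientations of $\{E',F'\}$). Hence $(C',D')=(E',F')\le(E,F)$, which reduces us to the parallel case handled above.

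The remaining case $(F',E')\le(E,F)$ I expect to rule out using consistency of $O$: the relation is equivalent to $(F,E)\le(E',F')$, which with $\vr:=(F,E)$ and $\vs:=(E',F')$ reads $\vr\le\vs$ while $\rv=(E,F)\in O$ and $\vs\in O$. For distinct underlying separations and strict inequality this directly contradicts consistency of $O$; the degenerate sub-cases (equality in $\le$, or the two separations coinciding) either collapse into the parallel cases or would force $O$ to contain both orientations of one and the same separation. I expect the main subtlety of the proof to be the bookkeeping in these equality sub-cases, verifying that essentiality and consistency really close them all off.
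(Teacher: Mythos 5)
Your proof is correct and takes essentially the same route as the paper: both argue by cases on how the two witnesses in $O$ are nested, use consistency of $O$ to exclude the ``pointing away'' configuration $(F',E')\le (E,F)$, and finish with the larger witness once the two are comparable. The only divergence is the case $(E,F)\le (F',E')$, which the paper rules out directly by observing that $(C',D')\le (E,F)\le (F',E')\le (D',C')$ would make $(C',D')$ small, contradicting regularity, whereas you use essentiality (no trivial or degenerate elements) together with the orientation property to force $(C',D')=(E',F')$ and so reduce to the comparable case --- both variants are sound.
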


\begin{proof}
    By definition the relation is reflexive and symmetric.
    In order to show transitivity, assume that $(C,D)$ and $(C',D')$ are related, as witnessed by $(U,W)\in O$, and that $(C',D')$ and $(C'',D'')$ are related, as witnessed by $(U',W')\in O$.
    As $O$ is a consistent orientation, we have $(U,W)\leq (U',W')$ or $(W',U')\leq (W,U)$ or $(U,W)\leq (W',U')$.
    But $(U,W)\leq (W',U')$ implies $(C',D')\leq (U,W)\leq (W',U')\leq (D',C')$ and thus that $(C',D')\leq (D',C')$ which contradicts the fact that $N$ is regular.
    So either $(U,W)\leq (U',W')$ or $(U',W')\leq (U,W)$ and in both cases the bigger one of these separations shows that $(C,D)$ and $(C'',D'')$ are related.
\end{proof}

\begin{definition}\label{def:corridor}
    An equivalence class of the relation from Lemma~\ref{corridorsexist} is a \emph{corridor} of $O$.
    For a corridor $\gamma$ let $A_{\gamma}$ be the union of all sets $C$ where $(C,D)\in \gamma$.
\end{definition}

\begin{rem}\label{supremumofcorridor}
Let $\gamma$ be a corridor and $(A,B)$ the supremum of all elements of $\gamma$.
Then $A=A_{\gamma}$ and $A\cap B=A_{\gamma}\cap \Pi$.
\end{rem}

\begin{rem}
Lemma~\ref{corridorsexist} also holds in abstract separation systems with the same proof. In particular, corridors are well-defined for abstract separation systems.
\end{rem}

\begin{lemma}\label{corridorcomparable}
    If $(C,D)$ and $(C',D')$ are elements of $O$ and $C\setminus D'$ is non-empty then $(C,D)$ and $(C',D')$ are comparable.
\end{lemma}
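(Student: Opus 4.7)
The plan is to exploit nestedness of $N$ to produce a short case analysis and then eliminate the bad cases using the two hypotheses $C\setminus D'\neq\emptyset$ and the consistency of $O$. First I would dispose of the trivial cases $(C,D)=(C',D')$ and $\{C,D\}=\{C',D'\}$: in the former they are trivially comparable, while the latter forces both orientations of a single separation to lie in $O$, contradicting the definition of orientation (no orientation contains a separation together with its inverse). So from here on I assume the four oriented separations $(C,D),(D,C),(C',D'),(D',C')$ are pairwise distinct.

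Next, since $N$ is nested, $(C,D)$ and $(C',D')$ must admit comparable orientations, and so exactly one of the following four relations holds:
\begin{align*}
\text{(a) } (C,D)\le(C',D'),\quad \text{(b) } (C',D')\le(C,D),\quad \text{(c) } (C,D)\le(D',C'),\quad \text{(d) } (D',C')\le(C,D).
\end{align*}
Outcomes (a) and (b) are precisely what the lemma demands, so I only need to rule out (c) and (d).

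For (c), the inequality $(C,D)\le(D',C')$ translates to $C\subseteq D'$, which contradicts the hypothesis $C\setminus D'\neq\emptyset$ immediately. For (d), I invoke consistency of $O$: set $\vec r:=(D',C')$ and $\vec s:=(C,D)$, so that $\vec r<\vec s$ strictly (since the underlying separations are distinct by the reduction above). Then $\cev r=(C',D')\in O$ and $\vec s=(C,D)\in O$, violating consistency. Hence cases (c) and (d) are impossible, leaving (a) or (b), which is the desired comparability.

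The only place that requires a bit of care is the preliminary reduction to the case where the four ordered pairs are distinct — that is, ensuring the trivial identifications really do not sneak past the case analysis — but no substantial obstacle is expected, as each identification either makes the conclusion immediate or contradicts $O$ being an orientation.
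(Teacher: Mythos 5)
Your proof is correct and follows essentially the same route as the paper: nestedness gives comparable orientations, the hypothesis $C\setminus D'\neq\emptyset$ rules out $(C,D)\le(D',C')$, and consistency of $O$ rules out $(D',C')\le(C,D)$, which is exactly the paper's argument ("separations in $O$ either point towards each other or are comparable") with the case analysis spelled out. The extra reduction to distinct oriented separations is a harmless elaboration of details the paper leaves implicit.
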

\begin{proof}
    Because $O$ is consistent and nested, any two separations in $O$ either point towards each other or are comparable. Let $w$ be a vertex contained in $C\setminus D'$. Then $w$ witnesses that $(C,D)\nleq (D',C')$, hence $(C,D)$ and $(C',D')$ do not point towards each other.
\end{proof}

\begin{lemma}\label{separatefiniteset}
    Let $\gamma$ be a corridor of $O$ and $U$ a finite subset of $A_{\gamma}$. Then there is a separation $(C,D)$ in~$\gamma$ such that $C$ contains $U$ and $C\setminus D$ contains $U\setminus \Pi$.
\end{lemma}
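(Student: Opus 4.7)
The plan is to reduce the statement to a finiteness argument: identify a finite subset $\mathcal{F}\subseteq\gamma$ whose common upper bound in $\gamma$ will automatically be the desired $(C,D)$. Two ingredients feed into $\mathcal{F}$. First, since $A_\gamma=\bigcup\{C' : (C',D')\in\gamma\}$ and $U\subseteq A_\gamma$, for each $u\in U$ I pick some $(C_u,D_u)\in\gamma$ with $u\in C_u$. Second, for each $u\in U\setminus\Pi$, since $u\notin\Pi=\bigcap_{(C',D')\in O}D'$, I pick some $(C'_u,D'_u)\in O$ with $u\notin D'_u$, equivalently $u\in C'_u\setminus D'_u$. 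I will then take $\mathcal{F}:=\{(C_u,D_u):u\in U\}\cup\{(C'_u,D'_u):u\in U\setminus\Pi\}$.

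The crucial verification is that every $(C'_u,D'_u)$ with $u\in U\setminus\Pi$ lies in $\gamma$. For this I apply Lemma~\ref{corridorcomparable} to the pair $(C_u,D_u)$ and $(C'_u,D'_u)$: the vertex $u$ lies in $C_u\setminus D'_u$, so this set is non-empty and the two separations are comparable. In either order of comparability the larger one lies in $O$ above both, witnessing that $(C_u,D_u)\sim(C'_u,D'_u)$; hence $(C'_u,D'_u)\in\gamma$.

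Next, I claim any finite subset of a corridor $\gamma$ has a common upper bound in $\gamma$. This I prove by induction on the size: given an upper bound $(C,D)\in\gamma$ of $n$ elements and an $(n+1)^{\text{st}}$ element $(C',D')\in\gamma$, the defining property of $\gamma$ (Lemma~\ref{corridorsexist}) yields some $(C'',D'')\in O$ above both $(C,D)$ and $(C',D')$; since $(C'',D'')$ is then related to $(C,D)\in\gamma$, it belongs to $\gamma$ itself, and clearly is an upper bound of all $n+1$ elements. Applied to the finite set $\mathcal{F}$, this produces $(C,D)\in\gamma$ with $(C_u,D_u)\le(C,D)$ for all $u\in U$ and $(C'_u,D'_u)\le(C,D)$ for all $u\in U\setminus\Pi$.

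Unpacking, the first family of inequalities gives $u\in C_u\subseteq C$ for every $u\in U$, so $U\subseteq C$. The second family gives $D\subseteq D'_u$, whence $u\notin D'_u\supseteq D$ for every $u\in U\setminus\Pi$, so $U\setminus\Pi\subseteq C\setminus D$. This is exactly what the lemma asks for. The only real obstacle in the proof is spotting that $U\setminus\Pi$ forces the use of extra auxiliary separations $(C'_u,D'_u)$ and verifying via Lemma~\ref{corridorcomparable} that these land in the same corridor $\gamma$; once this is done, the rest is a routine upper-bound induction inside an equivalence class.
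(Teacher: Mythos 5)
Your proof is correct and follows essentially the same route as the paper's: for each vertex outside $\Pi$ you take a witnessing separation of $O$, use Lemma~\ref{corridorcomparable} to place it in $\gamma$, and then take a common upper bound of finitely many members of $\gamma$ (you merely spell out the upper-bound induction that the paper leaves implicit, and keep both families of separations instead of merging them).
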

\begin{proof}
    First we consider the special case that $U$ contains only one vertex $v\notin \Pi$. As $v$ is a vertex of $A_{\gamma}$ there is a separation $(C,D)$ in $\gamma$ such that $v\in C$. Furthermore because $v$ is not contained in $\Pi$ there is a separation $(C',D')$ in $O$ such that $v$ is contained in $C'\setminus D'$. By Lemma~\ref{corridorcomparable} the separations $(C,D)$ and $(C',D')$ are comparable and thus contained in the same corridor, so $(C',D')$ is contained in $\gamma$.
    
    Now consider an arbitrary finite subset $U$ of $A_{\gamma}$. For every vertex $v$ of $U$ there is a separation $(C_v,D_v)$ in $\gamma$ such that $v\in C_v$. We just showed that if $v$ is not contained in $\Pi$ then $(C_v,D_v)$ can be chosen such that $D_v$ does not contain $v$. As $\gamma$ is a corridor and $U$ is finite, there is a separation $(C,D)$ in $\gamma$ which is bigger than or equal to all separations $(C_v,D_v)$. In particular $v\in C_v\subseteq C$ for all $v\in U$ and $v\in C_v\setminus D_v\subseteq C\setminus D$ for all $v\in U\setminus \Pi$.
\end{proof}

\begin{lemma}\label{equivalenceofvertices}
    The sets $A_{\gamma}\setminus \Pi$ partition $V(G)\setminus \Pi$.
\end{lemma}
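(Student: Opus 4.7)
The plan is to prove coverage and disjointness separately, using Lemma~\ref{corridorcomparable} as the main tool, and to note that the classes are non-empty because $N$ is regular. Throughout I will use the fact, built into the definition of $\Pi$, that a vertex $v$ lies in $V(G)\setminus \Pi$ exactly when there exists some $(C,D)\in O$ with $v\notin D$, equivalently $v\in C\setminus D$.

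For coverage, take any $v\in V(G)\setminus \Pi$. By the above, pick $(C,D)\in O$ with $v\in C\setminus D$, and let $\gamma$ be the corridor of $(C,D)$. Then $v\in C\subseteq A_{\gamma}$, and since $\Pi\subseteq D$, also $v\notin \Pi$; so $v\in A_{\gamma}\setminus \Pi$. For non-emptiness of each class, note that the same witness $(C,D)\in\gamma$, which exists because $\gamma$ is non-empty, provides a vertex in $C\setminus D$: regularity of $N$ prevents $(C,D)\le (D,C)$ and hence forces $C\setminus D\neq\emptyset$, and any vertex of $C\setminus D$ lies in $A_{\gamma}\setminus \Pi$ by the same argument.

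For disjointness, suppose $v\in(A_{\gamma}\setminus \Pi)\cap(A_{\gamma'}\setminus \Pi)$. Choose $(C,D)\in\gamma$ with $v\in C$ and $(C',D')\in\gamma'$ with $v\in C'$. Since $v\notin\Pi$, there is some $(C'',D'')\in O$ with $v\in C''\setminus D''$. Then $v\in C\setminus D''$ is non-empty, so Lemma~\ref{corridorcomparable} gives that $(C,D)$ and $(C'',D'')$ are comparable; in either case one of them is an upper bound for the pair, witnessing $(C,D)\sim(C'',D'')$ and hence $(C'',D'')\in\gamma$. The same argument with $(C',D')$ in place of $(C,D)$ shows $(C'',D'')\in\gamma'$, and therefore $\gamma=\gamma'$.

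There is no real obstacle here beyond correctly applying Lemma~\ref{corridorcomparable}: the subtle point is that one must pick a third separation $(C'',D'')$ that actually separates $v$ from~$\Pi$ (rather than reusing $(C,D)$ or $(C',D')$, which may have $v\in D$ or $v\in D'$), and this is guaranteed precisely by the hypothesis $v\notin \Pi$.
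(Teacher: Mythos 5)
Your proof is correct and takes essentially the same route as the paper: coverage follows directly from the definition of $\Pi$, and disjointness is forced by Lemma~\ref{corridorcomparable} pushing the relevant separations into one corridor. The only cosmetic difference is that where the paper invokes Lemma~\ref{separatefiniteset} to obtain $(C,D)\in\gamma$ and $(C',D')\in\gamma'$ with $v\in C\setminus D$ and $v\in C'\setminus D'$, you inline the single-vertex case of that lemma by comparing each of $(C,D)$ and $(C',D')$ with a third witness $(C'',D'')\in O$ for $v\notin\Pi$ --- the same underlying argument.
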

\begin{proof}
    By definition of $\Pi$ every vertex $v\in V(G)\setminus \Pi$ is contained in $A_{\gamma}$ for some corridor $\gamma$, so the sets $A_{\gamma}\setminus \Pi$ cover $V(G)\setminus \Pi$. To prove their disjointness, assume that some vertex is contained in $A_{\gamma}$ and $A_{\gamma'}$ for two corridors $\gamma$ and $\gamma'$. By Lemma~\ref{separatefiniteset} there are separations $(C,D)$ in $\gamma$ and $(C',D')$ in $\gamma'$ respectively such that both $C\setminus D$ and $C'\setminus D'$ contain $v$. Thus by Lemma~\ref{corridorcomparable} the separations $(C,D)$ and $(C',D')$ are contained in the same corridor and hence $\gamma=\gamma'$.
\end{proof}
\begin{corollary}\label{twodefsofcorridorequivalent}
A separation $(C,D)$ of $N$ is contained in a given corridor $\gamma$ if and only if $C\setminus D\subseteq A_{\gamma}$.\qed
\end{corollary}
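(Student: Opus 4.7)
The forward direction is immediate: if $(C,D)\in\gamma$, then $C\subseteq A_\gamma$ by the very definition of $A_\gamma$, so in particular $C\setminus D\subseteq A_\gamma$.

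For the converse, I would assume $C\setminus D\subseteq A_\gamma$ and aim to exhibit a common upper bound in $O$ of $(C,D)$ and some element of $\gamma$. The first step is to pick a witness vertex $v\in C\setminus D$; such a vertex exists because $N$ is regular and $(C,D)\in N$ is therefore not small. Since $v\in A_\gamma$ and $v\notin \Pi$ (it lies in $C\setminus D$ while $(C,D)\in O$), Lemma~\ref{separatefiniteset} applied to the singleton $\{v\}$ yields a separation $(C',D')\in\gamma$ with $v\in C'\setminus D'$.

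Now both $(C,D)$ and $(C',D')$ lie in $O$, and $v$ witnesses simultaneously $v\in C\setminus D'$ and $v\in C'\setminus D$. Two applications of Lemma~\ref{corridorcomparable} then show that $(C,D)$ and $(C',D')$ are comparable (and cannot point away from one another). Whichever of the two inequalities $(C,D)\le (C',D')$ or $(C',D')\le (C,D)$ holds, the larger separation lies in $O$ and sits above both, so by the definition of $\sim$ we obtain $(C,D)\sim (C',D')$. Hence $(C,D)\in\gamma$, as required.

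I do not anticipate any real obstacle; the forward direction is essentially a tautology, and the backward direction is a direct combination of the two preceding lemmas (Lemmas~\ref{corridorcomparable} and~\ref{separatefiniteset}) together with regularity of $N$ to supply the initial witness vertex.
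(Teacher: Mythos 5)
Your argument is correct and is essentially the paper's: the paper treats the corollary as immediate from Lemma~\ref{equivalenceofvertices} (the partition of $V(G)\setminus\Pi$ into the sets $A_{\gamma}\setminus\Pi$), and your combination of Lemma~\ref{separatefiniteset} with Lemma~\ref{corridorcomparable} is exactly the proof of that lemma unrolled by one step, so you could shorten it by citing Lemma~\ref{equivalenceofvertices} directly (also, a single application of Lemma~\ref{corridorcomparable} already gives comparability). One small point worth making explicit: you tacitly take $(C,D)\in O$ -- when you deduce $v\notin\Pi$ and when you invoke Lemma~\ref{corridorcomparable} -- and this is indeed the intended (and necessary) reading, since corridors are subsets of $O$ and the ``if'' direction genuinely fails for orientations outside $O$: for instance, the inverse $(C,D)$ of a separation $(D,C)\in O$ can perfectly well satisfy $C\setminus D\subseteq A_{\gamma}$ for some corridor $\gamma$ without lying in any corridor.
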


\begin{lemma}\label{connectedsetrelated}
    Let $U$ be a connected set of vertices avoiding $\Pi$. Then there is a corridor $\gamma$ with $U\subseteq A_{\gamma}$.
\end{lemma}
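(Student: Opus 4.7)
The plan is to reduce to the edge case and then invoke Lemma~\ref{corridorcomparable}. By Lemma~\ref{equivalenceofvertices}, every vertex of $U$ lies in a unique $A_\gamma\setminus\Pi$; this partitions $U$ into the pieces $U\cap A_\gamma$. Since $U$ is connected, if more than one piece is non-empty then there is an edge $uv$ of $G[U]$ joining two distinct pieces. So it suffices to show: \emph{if $u,v\notin\Pi$ are adjacent in $G$ with $u\in A_\gamma$ and $v\in A_{\gamma'}$, then $\gamma=\gamma'$.}

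First I would produce the two witnessing separations. Applying the singleton case of Lemma~\ref{separatefiniteset} (which was already used in the proof of Lemma~\ref{equivalenceofvertices}), pick $(C,D)\in\gamma$ with $u\in C\setminus D$ and $(C',D')\in\gamma'$ with $v\in C'\setminus D'$.

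The key step is to use that $\{C,D\}$ is a separation \emph{of the graph} $G$: it admits no edge with one endvertex in $C\setminus D$ and the other in $D\setminus C$. Since $uv\in E(G)$ and $u\in C\setminus D$, this forces $v\in C$. Combined with $v\in C'\setminus D'$ we obtain $v\in C\setminus D'$, so $C\setminus D'\neq\emptyset$. Lemma~\ref{corridorcomparable} now yields that $(C,D)$ and $(C',D')$ are comparable.

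Finally I would conclude that two comparable elements of $O$ lie in the same corridor: the larger of $(C,D),(C',D')$ is an element of $O$ above both, so by Definition~\ref{def:corridor} and Lemma~\ref{corridorsexist} it witnesses $(C,D)\sim(C',D')$, giving $\gamma=\gamma'$. No step looks like a serious obstacle; the only place one must be careful is to exploit that we are dealing with separations of the graph $G$ (not merely of the set $V(G)$), since it is precisely the no-jumping-edge property that propagates $u\in C\setminus D$ to $v\in C$ across the edge $uv$.
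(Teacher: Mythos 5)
Your proof is correct and follows essentially the same route as the paper: reduce via Lemma~\ref{equivalenceofvertices} to a pair of adjacent vertices outside $\Pi$, and use that a separation of the graph admits no edge jumping its separator to push the second endvertex into $C$. The only difference is cosmetic: the paper concludes directly from $v\in C\subseteq A_\gamma$ that both endvertices lie in $A_\gamma$, whereas you take a short (correct but unnecessary) detour through a second witnessing separation and Lemma~\ref{corridorcomparable} to identify the two corridors.
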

\begin{proof}
    By Lemma~\ref{equivalenceofvertices} it is sufficient to show the statement for $U$ with exactly two elements. Let $v$ and $w$ be two neighbours not in $\Pi$, and let $(C,D)$ be a separation in $O$ such that $v\in C\setminus D$. Because $w$ is a neighbour of $v$ and $(C,D)$ is a separation, $w$ is contained in $C$ and thus for the corridor $\gamma$ containing $(C,D)$ we have that $A_{\gamma}$ contains both $v$ and $w$.
\end{proof}

\begin{corollary}\label{separatefiniteconnectedset}
    Let $F$ be a finite connected set of vertices not meeting $\Pi$. Then there is a separation $(C,D)\in O$ such that $F\subseteq C\setminus D$.
\end{corollary}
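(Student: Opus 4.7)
The proof is essentially a direct combination of the two immediately preceding results. First I would invoke Lemma~\ref{connectedsetrelated}: since $F$ is a connected set of vertices avoiding $\Pi$, there exists a corridor $\gamma$ of $O$ such that $F\subseteq A_{\gamma}$.

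Next, since $F$ is finite and $F\subseteq A_{\gamma}$, I would apply Lemma~\ref{separatefiniteset} with $U:=F$. This produces a separation $(C,D)\in\gamma$ such that $C\supseteq F$ and $C\setminus D\supseteq F\setminus\Pi$. But by hypothesis $F\cap\Pi=\emptyset$, so $F\setminus\Pi=F$, which gives $F\subseteq C\setminus D$. Finally, $(C,D)\in\gamma\subseteq O$, so this is the desired separation.

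There is no real obstacle here; the corollary is an immediate combination of Lemma~\ref{connectedsetrelated} (which locates the corridor) and Lemma~\ref{separatefiniteset} (which handles the finite subset inside the corridor), with the hypothesis $F\cap\Pi=\emptyset$ ensuring that the conclusion $C\setminus D\supseteq F\setminus\Pi$ upgrades to $C\setminus D\supseteq F$.
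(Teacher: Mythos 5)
Your proof is correct and is exactly the paper's argument, which reads simply ``By Lemma~\ref{connectedsetrelated} we may apply Lemma~\ref{separatefiniteset}''; you have merely spelled out the details, including the observation that $F\cap\Pi=\emptyset$ upgrades $F\setminus\Pi\subseteq C\setminus D$ to $F\subseteq C\setminus D$.
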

\begin{proof}
    By Lemma~\ref{connectedsetrelated} we may apply Lemma~\ref{separatefiniteset}.
\end{proof}

\begin{lemma}\label{supremumhascliqueseparator}
	Let $\gamma$ be a corridor and assume that all separators of separations in $N$ are cliques. Then $A_{\gamma}\cap \Pi$ is a clique, too.
\end{lemma}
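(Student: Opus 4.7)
The plan is to reduce the claim to a single application of Lemma~\ref{separatefiniteset}. Pick any two vertices $x,y\in A_\gamma\cap\Pi$; the goal is to show $xy\in E(G)$. Since $\{x,y\}$ is a finite subset of $A_\gamma$, Lemma~\ref{separatefiniteset} gives a separation $(C,D)\in\gamma$ with $\{x,y\}\subseteq C$ (the second condition of the lemma is vacuous because $\{x,y\}\setminus\Pi=\emptyset$).

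Now I will exploit that $x,y\in\Pi$: by the definition of $\Pi=\bigcap_{(C',D')\in O}D'$, both vertices also lie in $D$. Hence $\{x,y\}\subseteq C\cap D$, i.e.\ $x$ and $y$ both lie in the separator of the separation $\{C,D\}\in N$. By the hypothesis that all separators of separations in $N$ are cliques, $x$ and $y$ are adjacent in $G$, which is what we needed.

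There is no real obstacle here; the substantive work was already done in Lemma~\ref{separatefiniteset}, which lets us find a single separation in $\gamma$ that ``absorbs'' any finite set of witnesses for membership in $A_\gamma$. The only thing to be careful about is that the corridor $\gamma$ does not usually have a maximum element in $N$ itself (Remark~\ref{supremumofcorridor} constructs only the formal supremum as a separation of $V(G)$), so one cannot simply quote the clique hypothesis on a single ``top'' separation of $\gamma$; instead, we work with the concrete $(C,D)\in\gamma$ supplied by Lemma~\ref{separatefiniteset} for each pair $\{x,y\}$.
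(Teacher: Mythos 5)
Your proposal is correct and follows essentially the same route as the paper's own proof: apply Lemma~\ref{separatefiniteset} to the finite set $\{x,y\}\subset A_\gamma$ to obtain $(C,D)\in\gamma$ with $x,y\in C$, note that $\Pi\subset D$ forces $x,y\in C\cap D$, and invoke the clique hypothesis on this separator. Your closing remark about the corridor's supremum not belonging to $N$ is a sensible (though optional) clarification of why one argues through a concrete member of $\gamma$ rather than a ``top'' separation.
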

\begin{proof}
	Let $v$ and $w$ be two distinct vertices of $A_{\gamma}\cap \Pi$. Then by Lemma~\ref{separatefiniteset} there is a separation $(C,D)\in \gamma$ such that $C$ contains both $v$ and $w$. Because $v$ and $w$ are contained in $\Pi$ which in turn is a subset of $D$, both $v$ and $w$ are contained in $C\cap D$. Because $C\cap D$ is a clique by assumption, $v$ is a neighbour of $w$.
\end{proof}

\section{Extending the tree set of the principal vertex sets}\label{sec:MainProof}

\noindent In this section we prove our main result, Theorem~\ref{TreeSetForInfTangles}.
To obtain a starting tree set $T$ as described in our overall proof strategy in Section~\ref{sec:strategy}, we apply our technical main result Theorem~\ref{TheTreeSetAPC} (combined with Theorem~\ref{criticalAdmissableExists}) to a carefully chosen collection $\cY$ of \principal\ vertex sets of $G$.
For choosing $\cY$ we need the following definition:

\begin{definition}
A separation $\sep{X}{\cC}$ is \emph{generous} if both $\cC$ and the complement $\cC_X\setminus\cC$ contain components whose neighbourhoods are precisely equal to $X$, i.e.\ if $\CX$ meets both $\cC$ and $\cC_X\setminus\cC$.
A set $X$ of vertices of $G$ is \emph{generous} if it is the separator of some generous separation, i.e.\ if $\vert\CX\vert\ge 2$.
\end{definition}

Now we are ready to set up our starting tree set $T$ and more, as follows.
\textbf{Throughout this section we fix the following notation.}
We let $\cY$ be the collection of all generous subsets of the critical vertex sets of $G$, in formula:
\begin{align*}
    \cY=\big\{\,X\in\cX\;\big\vert\; X\text{ is generous and }\exists\, Y\in\crit(G):X\subset Y\,\big\}.
\end{align*}
Notably, $\crit(G)\subset\cY$.
We assume, without loss of generality by Lemma~\ref{principalCliques}, that each $X\in\cY$ induces a clique $G[X]$.
Using Theorems~\ref{criticalAdmissableExists} and~\ref{TheTreeSetAPC} we obtain a strongly \admissable\ function $\ccK$ for $\cY$ that deviates from all $\CX$ with $X\in\cY$ by precisely one component, in formula $\vert\CX\setminus\ccK(X)\vert=1$ for all $X\in\cY$.
This way we ensure that $T:=T(\cY,\ccK)$ is a regular tree set of generous finite-order separations of $G$.
For $X\in\cY$ we abbreviate $\sigma_X=\sigma_X^{\ccK}$.
Moreover, $O$ always denotes a consistent orientation of $T$, and then $\Pi\subset V(G)$ denotes the part of $O$.
At some point in this section the concept of a `modified torso' of $O$ will be defined. From that point onward, $H$ will always denote the modified torso of $O$.
Whenever we speak of $\Pi$ or $H$ we tacitly assume that they stem from some $O$.
This completes the list of fixed notation for this section.

Next, we consider two inequivalent $\aleph_0$-tangles $\tau_1$ and $\tau_2$ of $G$, we pick a finite-order separation $\{A_1,A_2\}$ of $G$ that efficiently distinguishes $\tau_1$ and $\tau_2$, and we write $Z=A_1\cap A_2$ for its separator.
If $Z$ is included entirely in a critical vertex set of $G$, then $T$ efficiently distinguishes $\tau_1$ and $\tau_2$:

\begin{lemma}\label{RelevantImpliesGenerous}\label{CaseOne}
Let $\sep{Z}{\cD}$ efficiently distinguish two $\aleph_0$-tangles $\tau_1$ and $\tau_2$ of $G$.
Then $\sep{Z}{\cD}$ is generous.
If~additionally $\tau_1$ and $\tau_2$ are inequivalent and $Z$ is included in some critical vertex set of $G$, then $T$ efficiently distinguishes $\tau_1$ and $\tau_2$.
\end{lemma}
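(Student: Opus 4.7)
The plan is to prove both parts by contradiction with efficiency, using Lemma~\ref{FiniteEditDistance} as the main tool.

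For Part~1, I would suppose that $\sep{Z}{\cD}$ is not generous; by symmetry assume $\CC{Z}\cap\cD=\emptyset$, so every $C\in\cD$ has $N(C)\subsetneq Z$. Since $Z$ is finite, the partition $\cD=\bigsqcup_{Y\subsetneq Z}\cD_Y$ with $\cD_Y:=\cD\cap\cC_Z(Y)$ is finite, and since $\cD\in U(\tau_1,Z)$ the ultrafilter picks one index, say $Y^\ast\subsetneq Z$, with $\cD_{Y^\ast}\in U(\tau_1,Z)$. Each $C\in\cD_{Y^\ast}$ satisfies $N(C)=Y^\ast$, so $V[\cD_{Y^\ast}]$ has all its neighbours in $Y^\ast$; hence $\sep{Y^\ast}{\cD_{Y^\ast}}$ is a genuine separation of $G$ of order $|Y^\ast|<|Z|$. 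The separations $\sep{Z}{\cD_{Y^\ast}}$ and $\sep{Y^\ast}{\cD_{Y^\ast}}$ differ only in the finite set $Z\setminus Y^\ast$, so Lemma~\ref{FiniteEditDistance} yields $\rsep{Y^\ast}{\cD_{Y^\ast}}\in\tau_1$, while consistency of $\tau_2$ applied to $\lsep{\cD_{Y^\ast}}{Y^\ast}\le\lsep{\cD}{Z}\in\tau_2$ yields $\lsep{\cD_{Y^\ast}}{Y^\ast}\in\tau_2$. Thus $\sep{Y^\ast}{\cD_{Y^\ast}}$ distinguishes $\tau_1,\tau_2$ with strictly smaller order, contradicting efficiency.

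For Part~2, Part~1 gives that $Z$ is generous, and combining this with the hypothesis $Z\subseteq Y\in\crit(G)$ places $Z$ into $\cY$. Hence $T$ contains $\tsep{Z}$ and every $\sep{Z}{K}$ with $K\in\ccK(Z)$, all of order $|Z|$; I will show one of them distinguishes $\tau_1,\tau_2$. A preliminary observation arising from the Part~1 argument: whenever $U(\tau_i,Z)$ is principal on some $C^\ast\in\cC_Z$, efficiency of $\sep{Z}{\cD}$ forces $C^\ast\in\CC{Z}$, for otherwise $\sep{N(C^\ast)}{C^\ast}$ would be a smaller distinguishing separation by the same finite-edit-distance trick. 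Since $\tau_1\not\sim\tau_2$, at most one of them can have $X_{\tau_i}=Z$. If, say, $X_{\tau_1}=Z$, then Theorem~\ref{TheTreeSetAPC}(iv) gives $\sigma_Z\subseteq\tau_1$, while $\tau_2$ has principal $U(\tau_2,Z)$ generated by some $C^\ast_2\in\CC{Z}$; then $\sep{Z}{C^\ast_2}\in T$ distinguishes if $C^\ast_2\in\ccK(Z)$, and otherwise the strong-admissibility bound $|\CC{Z}\setminus\ccK(Z)|\le 1$ forces $C^\ast_2$ to be the unique excluded component, so $\tsep{Z}$ distinguishes. If neither tangle has $X_{\tau_i}=Z$, then both $U(\tau_i,Z)$ are principal on distinct $C^\ast_1,C^\ast_2\in\CC{Z}$, and a short subcase analysis on whether the $C^\ast_i$ lie in $\ccK(Z)$ finds the distinguishing separation among $\tsep{Z}$ and the $\sep{Z}{C^\ast_i}$; the subcase that both $C^\ast_i$ avoid $\ccK(Z)$ is ruled out by $|\CC{Z}\setminus\ccK(Z)|\le 1$ and $C^\ast_1\neq C^\ast_2$.

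The main obstacle is the Part~1 partition step, together with the verification that $\sep{Y^\ast}{\cD_{Y^\ast}}$ really is a well-defined separation of $G$; once this is established, the preliminary observation of Part~2 falls out immediately from the same argument, and the remainder of Part~2 is a routine case analysis driven by Theorem~\ref{TheTreeSetAPC}(iv) and the structural properties of $\ccK$.
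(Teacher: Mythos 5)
Your Part~1 is correct. Partitioning the allegedly $\CC{Z}$-free side $\cD$ into the finitely many classes $\cD\cap\cC_Z(Y)$ with $Y\subsetneq Z$, letting the ultrafilter $U(\tau_1,Z)$ select a class $\cD_{Y^\ast}$, and then trading $Z$ for $Y^\ast$ via Lemma~\ref{FiniteEditDistance} on the $\tau_1$-side and consistency on the $\tau_2$-side does produce a distinguishing separation $\sep{Y^\ast}{\cD_{Y^\ast}}$ of order $\vert Y^\ast\vert<\vert Z\vert$, contradicting efficiency; each $C\in\cD_{Y^\ast}$ really is a component of $G-Y^\ast$, so the separation is well defined. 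This is a legitimate variant of the paper's argument, which instead analyses each tangle separately (principal generator of $U(\tau_i,Z)$ must have neighbourhood exactly $Z$; free $U(\tau_i,Z)$ forces $X_{\tau_i}=Z$) and obtains generosity as a byproduct — a set-up that is then reused verbatim in the second half.

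The gap is in Part~2: your case split tacitly assumes that $X_{\tau_i}\neq Z$ implies $U(\tau_i,Z)$ is principal, and this is not a dichotomy. If $\tau_i$ is an ultrafilter tangle with $X_{\tau_i}\subsetneq Z$, then $Z\in\cX_{\tau_i}$, so $U(\tau_i,Z)$ is free although $X_{\tau_i}\neq Z$; your preliminary observation concerns only the principal case and says nothing here, and for such a $\tau_i$ freeness alone already gives $\lsep{K}{Z}\in\tau_i$ for every $K\in\ccK(Z)$, so your candidate separations with separator $Z$ are not forced to distinguish anything without further input. This case has to be excluded by appealing to efficiency once more, which is exactly what the paper's proof does: if $U(\tau_i,Z)$ is free, then $X_i:=X_{\tau_i}\subset Z$ and $\cC_Z(X_i)\in U(\tau_i,Z)$ (this itself needs an argument, e.g.\ via Theorem~\ref{UFtangleDeterminedByFreeUF}, since $\cC_Z(X_i)$ is a cofinite subset of $\CC{X_i}$ but need not be cofinite in $\cC_Z$); hence, writing $\cD_i\in\{\cD,\cC_Z\setminus\cD\}$ for the collection with $\rsep{Z}{\cD_i}\in\tau_i$ and setting $\cD_i':=\cD_i\cap\cC_Z(X_i)$, Lemma~\ref{FiniteEditDistance} gives $\rsep{X_i}{\cD_i'}\in\tau_i$ while $\lsep{\cD_i'}{X_i}\le\rsep{Z}{\cD_j}\in\tau_j$ gives $\lsep{\cD_i'}{X_i}\in\tau_j$ by consistency, so $\sep{X_i}{\cD_i'}$ distinguishes $\tau_1$ and $\tau_2$ and efficiency forces $X_i=Z$. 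Only with this in hand does your dichotomy (one tangle possibly with $X_{\tau_i}=Z$, all others principal with generator in $\CC{Z}$) become valid, after which your case analysis goes through. A minor further omission: invoking Theorem~\ref{TheTreeSetAPC}(iv) requires checking $\ccK(Z)\in U(\tau_1,Z)$, which does hold because $\CC{Z}\in U(\tau_1,Z)$, the set $\ccK(Z)$ misses only one member of $\CC{Z}$, and $U(\tau_1,Z)$ is free — but it should be said.
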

\begin{proof}
Let $\{\cD_1,\cD_2\}:=\{\cD,\cC_Y\setminus\cD\}$ such that $\rsep{Z}{\cD_i}\in\tau_i$ for both $i=1,2$.
Our proof starts with a more general analysis of the situation, as follows.
Consider any $i\in \{1,2\}$ and put $j=3-i$. 

If $\tau_i$ lives in a component $C$ of $G-Z$ in that $\rsep{Z}{C}\in\tau_i$, then by the consistency of $\tau_j$ we deduce from $\lsep{C}{Z}\le\rsep{Z}{\cD_j}\in\tau_j$ that $\lsep{C}{Z}\in\tau_j$, so $\sep{Z}{C}$ distinguishes $\tau_1$ and $\tau_2$.
But then so does $\sep{N(C)}{C}$ by Lemma~\ref{FiniteEditDistance}, and hence $N(C)=Z$ follows by the efficiency of $Z$.

Otherwise $\tau_i$ is an ultrafilter tangle and $X_i:=X_{\tau_i}$ is contained in $Z$.
Then, as $U(\tau_i,Z)$ is a free ultrafilter, we have $\rsep{Z}{\cD_i'}\in\tau_i$ for $\cD_i':=\cD_i\cap\cC_Z(X_i)$.
Hence $\rsep{X_i}{\cD_i'}\in\tau_i$ by Lemma~\ref{FiniteEditDistance}.
And $\lsep{\cD_i'}{X_i}\le\rsep{Z}{\cD_j}\in\tau_j$ implies $\lsep{\cD_i'}{X_i}\in\tau_j$ by the consistency of $\tau_j$.
Therefore, $\sep{X_i}{\cD_i'}$ distinguishes $\tau_1$ and $\tau_2$, so $X_i=Z$ follows by the efficiency of $Z$.

From the two cases above we deduce that $\sep{Z}{\cD}$ is generous.
It remains to show that if additionally $\tau_1$ and $\tau_2$ are inequivalent and $Z$ is contained in a critical vertex set of $G$, then $T$ efficiently distinguishes $\tau_1$ and~$\tau_2$.
First, we have $Z\in\cY$ as $Z$ is generous.
Next, we note that not both $\tau_1$ and $\tau_2$ can be ultrafilter tangles with $X_1,X_2\subset Z$ for otherwise $X_1=Z=X_2$ follows from our considerations above, contradicting that $\tau_1$ and $\tau_2$ are inequivalent.
So at least one of $\tau_1$ and $\tau_2$ lives in a component $C$ of $G-Z$, say $\rsep{Z}{C}\in\tau_1$, and then $C\in\CZ$ follows from our considerations above.
If $C\in\ccK(Z)$ then $\sep{Z}{C}\in T$ efficiently distinguishes $\tau_1$ and $\tau_2$.
Otherwise $\{C\}=\CZ\setminus\ccK(Z)$, and we claim that $\tsep{Z}\in T$ efficiently distinguishes $\tau_1$ and $\tau_2$.
On the one hand, $\tlsep{Z}\le\rsep{Z}{C}\in\tau_1$ implies $\tlsep{Z}\in\tau_1$ by the consistency of $\tau_1$.
On the other hand, $\tau_2$ either lives in a component in $\ccK(Z)$ or $\tau_2$ is an ultrafilter tangle with $X_2=Z$, and in both cases we deduce $\trsep{Z}\in\tau_2$.
\end{proof}

Therefore, we may assume that $Z$ is not contained entirely in any critical vertex set of $G$.
Then $Z$ is contained in a part of $T$, as follows.

\begin{lemma}\label{GenerousIsPrincipalForPrincipals}
Let $Z\in\cX$ be generous.
If $X$ is a \principal\ vertex set of $G$ that does not contain $Z$ entirely, then there is a unique component of $G-X$ that $Z$ meets.
\end{lemma}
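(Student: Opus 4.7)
My plan is to argue by contradiction: assume that $Z$ meets two distinct components $D_1, D_2$ of $G-X$, and then use the generosity of $Z$ together with the principality of $X$ to build a connected subgraph bridging $D_1$ and $D_2$ in $G-X$.

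The first step is to extract a component $C \in \CZ$ that avoids $X$. Since $X$ is principal, it meets at most one component of $G-Z$. But $Z$ is generous, so $|\CZ| \geq 2$, and hence at least one component $C \in \CZ$ is disjoint from $X$. By definition of $\CZ$ this $C$ has neighbourhood precisely $Z$; in particular $C$ has a neighbour at every vertex of $Z$.

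The second step is to locate $C$ in a single component of $G - X$. Since $C$ is connected and avoids $X$, there is a unique component $D$ of $G-X$ with $C \subseteq D$. Now pick any vertex $v_i \in Z \cap D_i$ for $i = 1, 2$ (these exist by the assumption that $Z$ meets both $D_i$, and they lie in $V(G)\setminus X$ since the $D_i$ are components of $G-X$). By the previous paragraph there is an edge from $v_i$ to some $c_i \in C \subseteq D$. Neither endvertex of this edge lies in $X$, so $v_i$ and $c_i$ are in the same component of $G - X$, forcing $D = D_i$ for both $i$. This gives $D_1 = D_2$, the desired contradiction. Since $Z \not\subseteq X$ guarantees $Z$ meets at least one component of $G-X$, uniqueness now yields exactly one.

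I do not anticipate a serious obstacle here; the only point that requires care is making sure the vertices $v_i$ really do lie outside $X$ (so that the edge $v_i c_i$ witnesses connectedness in $G - X$), which is immediate from the fact that $v_i$ is a vertex of a component of $G - X$. The argument uses the definitions of generous and principal directly, so the lemma is really just an unpacking of these definitions combined with the observation that a component $C$ with $N(C) = Z$ connects everything in $Z$ on one side of $X$.
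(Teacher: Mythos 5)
Your proof is correct and is essentially the paper's argument, just arranged in the contrapositive order: the paper first shows every component in $\CZ$ must meet $X$ (since otherwise it would bridge the two components of $G-X$ met by $Z$) and then contradicts principality plus $\vert\CZ\vert\ge 2$, whereas you use principality and generosity up front to extract a component of $\CZ$ avoiding $X$ and let it bridge the two components directly. Same ingredients, same key idea.
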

\begin{proof}
As $Z$ is not contained in $X$ as a subset, there is a component $C$ of $G-X$ which $Z$ meets.
Assume for a contradiction that there is another component $D$ of $G-X$ meeting $Z$.
Pick vertices $c\in Z\cap C$ and $d\in Z\cap D$.
Now note that every component $K\in\CZ$ must meet $X$, for $K$ plus its $K$--$c$ and $K$--$d$ edges admits a $c$--$d$ path connecting the distinct components $C$ and $D$ of $G-X$.
But since $X$ is \principal\ it meets at most one component of $G-Z$, namely $C_Z(X)$, contradicting that $\vert\CZ\vert\ge 2$.
\end{proof}

By Lemma~\ref{GenerousIsPrincipalForPrincipals} above the separator $Z$ meets precisely one side from every separation in $T$, and then orienting each separation in $T$ towards that side results in a consistent orientation $O$ of $T$ whose part $\Pi$ contains $Z$.
The remainder of this section is dedicated to modifying the torso $H$ of $O$ so that 
\begin{itemize}
    \item $\tau_1$ and $\tau_2$ are `represented' by `proxy' ends $\eta_1$ and $\eta_2$ in $H$; and
    \item applying Carmesin's theorem in $H$ yields a tree set that lifts compatibly with $T$ to a tree set of tame finite-order separations of $G$ that efficiently distinguishes $\tau_1$ and $\tau_2$.
\end{itemize}

\subsection{Modified torsos, proxies of corridors and lifting from modified torsos}

In this subsection we introduce modified torsos and show that there is an elegant way to lift tree sets from modified torsos to the graph $G$ itself.
Proxies of corridors are introduced as a technical tool whose purpose is twofold: first, they are key to the elegant lifting of tree sets.
And second, they will be employed in the next subsection to define proxies for $\aleph_0$-tangles.

\begin{definition}[Modified torso]
Whenever $\Pi$ is non-empty we define the \emph{modified torso} $H$ of $O$, as follows.
Consider the set $\cZ$ of all finite subsets of $\Pi$ that are separators of suprema of corridors of $O$.
Then we obtain $H$ from $G[\Pi]$ by disjointly adding for each $X\in\cZ$ a copy of $K^{\aleph_0}$ that we join completely to $X$.
\end{definition}

We remark that $\Pi$ being non-empty ensures that the empty set is not an element of $\cZ$, so modified torsos are connected.
Since the copies of $K^{\aleph_0}$ are joined to finite cliques of $G[\Pi]$ by Lemma~\ref{supremumhascliqueseparator}, no two ends of $G[\Pi]$ are merged when we move on to the modified torso $H$:

\begin{lemma}\label{endSpaceModifiedTorso}
Every finite-order separation of $G[\Pi]$ extends to a finite-order separation of $H$.
Thus sending each end $\eta$ of $G[\Pi]$ to the end $\iota(\eta)$ of $H\supset G[\Pi]$ with $\eta\subset\iota(\eta)$ defines an injection $\iota\colon\Omega(G[\Pi])\hookrightarrow\Omega(H)$.
Moreover, the ends of $H$ that do not lie in the image of $\iota$ correspond bijectively to the copies of $K^{\aleph_0}$ that were added to $G[\Pi]$ in order to obtain $H$.\qed
\end{lemma}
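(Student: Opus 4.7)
The plan is to proceed in three parts, with a fourth on the main obstacle. First, I would establish the extension property. Given a finite-order separation $\{A,B\}$ of $G[\Pi]$, Remark~\ref{supremumofcorridor} presents every $X\in\cZ$ as $A_{\gamma}\cap\Pi$ for some corridor $\gamma$; Lemma~\ref{supremumhascliqueseparator}, applied to the tree set $T$ whose separators lie in $\cY$ and thus induce cliques by the standing assumption of this section, then makes $X$ a clique in $G[\Pi]$. Because a clique cannot be split by a separation, $X\subseteq A$ or $X\subseteq B$. Assigning to each $X\in\cZ$ the attached copy $K_X$ of $K^{\aleph_0}$ to whichever side contains $X$ (breaking ties in favour of $A$) produces sets $A'\supseteq A$ and $B'\supseteq B$ with $A'\cup B'=V(H)$ and $A'\cap B'=A\cap B$. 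The pair $\{A',B'\}$ is a separation of $H$ because the new edges in $H$ lie either inside some $K_X$ or between $K_X$ and its attached $X$, and in each case the two endpoints were placed on the same side.

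Second, I would deduce the properties of $\iota$. Well-definedness: a ray of $G[\Pi]$ is automatically a ray of $H$, and rays equivalent in $G[\Pi]$ remain equivalent in the supergraph $H$. Injectivity: distinct ends of $G[\Pi]$ are separated by some finite-order separation of $G[\Pi]$, which by the first step extends to a finite-order separation of $H$ that places the two images on different sides.

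Third, for the moreover clause, each copy $K_X$ is an infinite clique of $H$ whose rays are pairwise equivalent, defining a single end $\eta_X$ of $H$ with a tail in the component $K_X$ of $H-X$; since this component is disjoint from $\Pi$, no ray of $G[\Pi]$ can be equivalent to a ray in $K_X$, so $\eta_X\notin\im(\iota)$. Distinct $X\neq X'$ in $\cZ$ give $\eta_X\neq\eta_{X'}$ because $K_X$ and $K_{X'}$ are distinct components of $H-(X\cup X')$. For surjectivity onto the ends outside $\im(\iota)$, I would pick such an end $\eta$, a ray $R\in\eta$, and split into cases. If $R$ meets $\Pi$ only finitely often, then a tail of $R$ lies in $H-\Pi$; the connected components of $H-\Pi$ are precisely the sets $V(K_X)$ (the only edges of $H$ meeting $V(H)\setminus\Pi$ either live inside some $K_X$ or go from $K_X$ to $X\subseteq\Pi$), so this tail is contained in one $K_X$ and $\eta=\eta_X$. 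Otherwise $R$ meets $\Pi$ infinitely often; listing its $\Pi$-vertices in order as $v_0,v_1,\dots$, any non-empty subpath of $R$ strictly between $v_i$ and $v_{i+1}$ lies in a unique $K_X$, which forces $v_i,v_{i+1}\in X$, while an empty subpath yields an edge of $H$ with both endpoints in $\Pi$ and hence in $G[\Pi]$. Since $X$ is a clique in $G[\Pi]$, each $v_iv_{i+1}$ is an edge of $G[\Pi]$, so $R':=v_0v_1v_2\cdots$ is a ray of $G[\Pi]$.

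The main obstacle is then verifying that $R$ and $R'$ are equivalent in $H$. I would rely on the general fact that any two rays in a graph sharing infinitely many vertices are equivalent: for every finite vertex set $S$, all but finitely many shared vertices lie outside $S$ and hence place the tails of both rays in the same component of $H-S$. With this, $R'\in\eta$ contradicts $\eta\notin\im(\iota)$, completing the claimed bijection $\cZ\to\Omega(H)\setminus\im(\iota)$.
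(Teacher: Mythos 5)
Your proposal is correct and follows exactly the route the paper intends: the paper leaves the lemma to the reader, with the preceding remark (the attached $K^{\aleph_0}$'s are joined to finite cliques, by Lemma~\ref{supremumhascliqueseparator} and the standing assumption that the separators in $T$ induce cliques) as the whole argument, and your write-up is the natural fleshing-out of that — cliques cannot be split, so separations of $G[\Pi]$ extend with unchanged separators, whence injectivity, and the remaining ends are exactly those of the added cliques.
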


Now we tend to the lifting of separations from $H$ to $G$.
It is desirable to have the separator of a separation remain unchanged when lifting it.
But $H$ usually will contain many vertices that are not vertices of the original graph $G$.
We solve this as follows.
When we consider finite-order separations of $H$, we are only interested in ones that efficiently distinguish some two ends of $H$.
And these $H$-relevant separations have their separators consist of vertices of the original graph $G$:

\begin{definition}[$H$-relevant]
If a separation of $H$ has finite order and efficiently distinguishes some two ends of $H$, then we call it and its orientations $H$-\emph{relevant}.
\end{definition}

\begin{lemma}\label{HrelevantIsReasonable}
If $\{A,B\}$ is $H$-relevant, then $A\cap B\subset\Pi$.
\end{lemma}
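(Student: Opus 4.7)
The plan is a short proof by contradiction: I will assume that some $v \in A \cap B$ lies outside $\Pi$ and build a separation of $H$ of strictly smaller order that still distinguishes the same two ends, violating the efficiency of $\{A,B\}$.

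Since $V(H) \setminus \Pi$ is precisely the disjoint union of the copies of $K^{\aleph_0}$ added to $G[\Pi]$ in forming $H$, the vertex $v$ lies in some such copy $K_v$ attached to a separator $X \in \cZ$. In $H$ the set $K_v$ induces an infinite clique and is joined completely to $X$. Because $\{A,B\}$ admits no edge between $A \setminus B$ and $B \setminus A$, the clique $K_v$ must lie entirely on one side; say, without loss of generality, $K_v \subseteq B$. Since $A \cap B$ is finite while $K_v$ is infinite, $K_v \cap (B \setminus A) = K_v \setminus (A \cap B)$ is infinite. Now if some $x \in X$ lay in $A \setminus B$, then $x$ would be adjacent to each of these infinitely many vertices in $B \setminus A$, contradicting the separation property. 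Hence $X \subseteq B$ as well, and all $H$-neighbours of $K_v$ lie in $B$.

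I would then set $A' := A \setminus K_v$ and verify that $\{A', B\}$ is a separation of $H$: we have $A' \cup B = V(H)$ since $K_v \subseteq B$; there are no edges between $A' \setminus B = A \setminus B$ and $B \setminus A' = (B \setminus A) \cup K_v$ because the only new pairs to check are $A \setminus B$ against $K_v$, and we just saw that $K_v$'s neighbours lie in $K_v \cup X \subseteq B$; and the order of $\{A',B\}$ is strictly smaller than that of $\{A,B\}$ since $v \in K_v \cap (A \cap B)$ has been removed from the separator. Finally I would show that $\{A', B\}$ still distinguishes $\eta_1$ and $\eta_2$. The vertices re-admitted when passing from the separator $A \cap B$ to $A' \cap B$ are precisely those of $K_v \cap (A \cap B)$, and none of these has any neighbour in $A \setminus B$. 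Consequently, the component of $H - (A' \cap B)$ containing the component of $H - (A \cap B)$ on the $A$-side of $\{A,B\}$ is unchanged and still lies in $A' \setminus B$, while the component on the $B$-side can only absorb vertices of $K_v$ and so remains inside $B \setminus A'$. Thus both $\eta_1$ and $\eta_2$ stay on the sides they were on before, giving the promised smaller-order separation and the desired contradiction. I expect the delicate step to be this side-preservation argument, but it reduces cleanly to the observation that $K_v \cup X \subseteq B$.
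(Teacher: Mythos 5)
Your proof is correct and follows essentially the same route as the paper's: force the infinite clique $K_v\cup X$ onto one side of the finite-order separation, observe that separator vertices in $K_v$ have no neighbours on the other side, and shrink the separator to contradict the efficiency built into $H$-relevance. The only cosmetic difference is that the paper deletes the single vertex $v$ from $B$ while you delete all of $K_v$ from $A$; both yield the same contradiction.
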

\begin{proof}
Assume for a contradiction that $A\cap B$ meets an added copy $K$ of a $K^{\aleph_0}$ in a vertex $v$ and write $X=N_H(K)$.
Notably $H[X]$ is a clique, and hence so is $H[X\cup K]$.
Without loss of generality we may assume that $H[X\cup K]\subset H[A]$, so $K$ meets $A\setminus B$ while $H[X\cup K]$ avoids $B\setminus A$.
Now $v\in A\cap B\cap K$ sends its edges only to $K$ and~$X$, and in particular $v$ sends no edges to $B\setminus A$.
So $\{A,B-v\}$ is again a separation of $H$, but of order $\vert A\cap B\vert-1$, and this separation still distinguishes all the ends of $H$ that were distinguished by $\{A,B\}$, contradicting that $\{A,B\}$ is $H$-relevant.
\end{proof}

Now we are almost ready to define lifts of separations of $H$, all we are missing is 

\begin{definition}[Proxy of a corridor]
Suppose that $\Pi$ is non-empty and $\gamma$ is a corridor of $O$.
The \emph{proxy} of $\gamma$ in the modified torso $H$ is the end $\eta$ of $H$ that is defined as follows.
Consider the separator $X$ of the supremum of $\gamma$.
If $X$ is finite, then $\eta$ is the end of $H$ containing the rays of the $K^{\aleph_0}$ that was added for~$X$.
Otherwise $G[X]\subset H$ is an infinite clique by Lemma~\ref{supremumhascliqueseparator}, and then $\eta$ is the end of $H$ that contains the rays of $G[X]$.
\end{definition}

Finally, we can lift separations from $H$ to $G$:

\begin{definition}[Lift from a modified torso]
Let $(A,B)$ be an $H$-relevant separation of a modified torso~$H$.
By Lemma~\ref{HrelevantIsReasonable} the separator $A\cap B$ is included in $\Pi$ entirely.
The \emph{lift} $(\ell(A),\ell(B))$ of $(A,B)$ is defined as follows.
The set $\ell(A)\subset V(G)$ agrees with $A$ on $\Pi$, and a vertex of $G-\Pi$ is contained in $\ell(A)$ whenever its corridor's proxy in $H$ lives on the $A$-side.
The set $\ell(B)$ is defined analogously.
\end{definition}

We remark that $\{\ell(A),\ell(B)\}$  does not depend on the orientation of $\{A,B\}$.
In order to verify that the lifts work as intended we need the following lemma:

\begin{lemma}\label{liftRespectsAgamma}
If $\{A,B\}$ is $H$-relevant and $\gamma$ is a corridor of $O$ whose proxy lives on the $A$-side, then $A_\gamma\subset\ell(A)$.
\end{lemma}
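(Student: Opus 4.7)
The plan is to partition the argument by whether a vertex $v\in A_\gamma$ lies in $\Pi$ or not, since the definition of $\ell(A)$ treats these cases separately and both reduce to tracking information about the proxy of $\gamma$.

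For $v\in A_\gamma\setminus\Pi$ the argument is essentially a lookup. By Lemma~\ref{equivalenceofvertices} the sets $A_{\gamma'}\setminus\Pi$ partition $V(G)\setminus\Pi$, so $v$'s corridor is necessarily $\gamma$. Since by assumption the proxy of $\gamma$ lives on the $A$-side, the definition of the lift immediately gives $v\in\ell(A)$.

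For $v\in A_\gamma\cap\Pi$ it suffices to show $v\in A$, because $\ell(A)$ agrees with $A$ on $\Pi$. Here I would use Remark~\ref{supremumofcorridor} to identify $A_\gamma\cap\Pi$ with the separator $X$ of the supremum of $\gamma$, and then split on whether $X$ is finite. If $X$ is finite, then $H$ contains a copy $K$ of $K^{\aleph_0}$ joined completely to $X$, and the proxy of $\gamma$ is the unique end of $H$ whose rays live in $K$; if $X$ is infinite, then $G[X]\subset H$ is an infinite clique by Lemma~\ref{supremumhascliqueseparator} (recall we assumed every set in $\cY$ induces a clique), and the proxy is the end whose rays live in $G[X]$. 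In either case $v$ is joined in $H$ to every vertex of an infinite clique whose rays define the proxy; call that clique $Q$ (so $Q=K$ or $Q=X$). Since $\{A,B\}$ is $H$-relevant, the separator $A\cap B$ is finite by Lemma~\ref{HrelevantIsReasonable} (and lies in $\Pi$), so $Q\setminus(A\cap B)$ is infinite. As $Q$ is a clique and no edge of $H$ joins $A\setminus B$ to $B\setminus A$, the set $Q\setminus(A\cap B)$ lies entirely on one side; the proxy-on-the-$A$-side hypothesis forces that side to be $A\setminus B$, so infinitely many vertices of $Q$ lie in $A\setminus B$. Since $v$ is adjacent to all of them, $v\in B\setminus A$ would violate the separation property of $\{A,B\}$, and hence $v\in A$.

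The only genuine obstacle is making the phrase ``proxy lives on the $A$-side'' operational in both subcases, that is, translating it into the concrete statement that the infinite clique $Q$ defining the proxy has cofinitely many vertices in $A\setminus B$. The translation is driven by $H$-relevance (finiteness of $A\cap B$), the clique structure of $Q$, and the fact that an end of $H$ living on the $A$-side is precisely one with a ray-tail in the $A$-side component of $H-(A\cap B)$; after this the rest is just the no-edge-across-the-separator property.
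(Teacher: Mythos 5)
Your proof is correct and follows essentially the same route as the paper: the off-$\Pi$ part of $A_\gamma$ is handled by the definition of the lift (with Lemma~\ref{equivalenceofvertices} implicitly behind it), and the $\Pi$-part is identified with the separator of the supremum of $\gamma$ via Remark~\ref{supremumofcorridor} and then pushed into $A$ by the finite/infinite case split and the clique structure, exactly as in the paper. You merely spell out in more detail the step the paper compresses into ``$\eta$ living on the $A$-side means $K\subset H[A]$ (resp.\ $G[\Xi]\subset H[A]$)''.
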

\begin{proof}
We have $A_\gamma\setminus\Pi\subset\ell(A)$ by the definition of $\ell(A)$.
It remains to show $\Xi\subset A$ for the separator $\Xi=A_\gamma\cap\Pi$ of the supremum of $\gamma$.
If $\Xi$ is infinite, then the proxy $\eta$ of $\gamma$ living on the $A$-side means $G[\Xi]\subset H[A]$.
Otherwise $\Xi$ is finite.
Then $\eta$ stems from a copy $K\subset H$ of $K^{\aleph_0}$ that is joined completely to the clique $G[\Xi]$, and so $\eta$ living on the $A$-side means $K\subset H[A]$.
Consequently, the infinite clique $H[K\cup \Xi]$ is contained in $H[A]$ as well, giving $\Xi\subset A$ as desired.
\end{proof}

Now we can check for ourselves that lifts work:

\begin{lemma}\label{liftisseparation}
The lift of an $H$-relevant separation is a separation of $G$ with the same separator.
\end{lemma}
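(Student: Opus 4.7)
The plan is to verify the three defining properties of a separation of $G$: (a) $\ell(A)\cup\ell(B)=V(G)$; (b) $\ell(A)\cap\ell(B)=A\cap B$ (so in particular the separator is preserved); and (c) no edge of $G$ runs between $\ell(A)\setminus\ell(B)$ and $\ell(B)\setminus\ell(A)$.

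For (a), Lemma~\ref{equivalenceofvertices} partitions $V(G)$ into $\Pi$ and the sets $A_\gamma\setminus\Pi$ indexed by the corridors $\gamma$ of $O$. Vertices in $\Pi$ are covered because $A\cup B=V(H)\supseteq\Pi$; a vertex in $A_\gamma\setminus\Pi$ is placed in $\ell(A)$ or $\ell(B)$ according to the side of $\{A,B\}$ on which the proxy $\eta$ of $\gamma$ lives, and since $\{A,B\}$ has finite order, every end of $H$---in particular $\eta$---lives on exactly one of the two sides. Property (b) drops out of Lemma~\ref{HrelevantIsReasonable}: the inclusion $A\cap B\subseteq\Pi$ gives $\ell(A)\cap\ell(B)\cap\Pi=A\cap B$, while outside $\Pi$ each vertex lies in a single corridor whose proxy lives on a unique side, so no vertex is placed in both $\ell(A)$ and $\ell(B)$.

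Property (c) is the substantive step. Given an edge $uv\in E(G)$ I would split into three cases by whether $u,v$ lie in $\Pi$. If $u,v\in\Pi$, then $uv\in E(G[\Pi])\subseteq E(H)$ and the claim reduces to $\{A,B\}$ being a separation of $H$. If $u,v\notin\Pi$, then $\{u,v\}$ is connected and avoids $\Pi$, so by Lemma~\ref{connectedsetrelated} both vertices lie in a common $A_\gamma$; they then share a proxy and fall on the same side of the lift. The mixed case $u\in\Pi$, $v\notin\Pi$ carries the real content. Let $\gamma$ be the (by Lemma~\ref{equivalenceofvertices} unique) corridor with $v\in A_\gamma\setminus\Pi$. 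Applying Lemma~\ref{separatefiniteset} to $\{v\}$ yields some $(C,D)\in\gamma$ with $v\in C\setminus D$; since $(C,D)$ is a separation of $G$ and $u$ is a neighbour of $v$, we obtain $u\in C\subseteq A_\gamma$. Lemma~\ref{liftRespectsAgamma}, applied to whichever side carries the proxy of $\gamma$, then places both $u$ and $v$ into the same one of $\ell(A),\ell(B)$, precluding them from straddling a strict split.

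The only genuine obstacle is this mixed case: one must see that a $\Pi$-neighbour $u$ of a non-$\Pi$ vertex $v$ is swept into the same corridor as $v$. This is precisely what Lemmas~\ref{separatefiniteset} and~\ref{liftRespectsAgamma} are arranged for---the former secures a corridor-witness with $v$ on its ``outer'' side, and the latter then transports the containment back to $\ell(A)$ or $\ell(B)$ coherently with the proxy. Everything else is routine bookkeeping.
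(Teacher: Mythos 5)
Your proposal is correct and follows essentially the same route as the paper: $A\cap B\subset\Pi$ via Lemma~\ref{HrelevantIsReasonable}, the partition of $V(G)\setminus\Pi$ into the sets $A_\gamma\setminus\Pi$ for the separator claim, and for edges the observation that a neighbour of a vertex in $A_\gamma\setminus\Pi$ again lies in $A_\gamma$ (via Lemma~\ref{separatefiniteset}) combined with Lemma~\ref{liftRespectsAgamma}. The only difference is cosmetic: you split the edge case into ``both outside $\Pi$'' and ``mixed'', whereas the paper treats any edge with an endpoint outside $\Pi$ uniformly.
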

\begin{proof}
Let $\{A,B\}$ be any $H$-relevant separation, and recall that $A\cap B\subset\Pi$ by Lemma~\ref{HrelevantIsReasonable}.
Every vertex of $G-\Pi$ lies in $A_\gamma$ for a unique corridor $\gamma$ of $O$, and hence is contained in precisely one of $\ell(A)$ and $\ell(B)$.
Thus $\ell(A)\cap\ell(B)=A\cap B$.
It remains to verify that $G$ has no edge between $\ell(A)\setminus \ell(B)$ and $\ell(B)\setminus\ell(A)$.
For this, let $e=xy$ be any edge of $G$.
If both $x$ and $y$ are contained in $\Pi$, then $e\subset A$ say, and hence $e\subset\ell(A)$.
Otherwise one of $x$ and $y$ lies outside of $\Pi$, say $x\in\ell(A)\setminus\Pi$.
Let $\gamma$ be the corridor of $O$ with $x\in A_\gamma\setminus\Pi$, so the proxy $\eta$ of $\gamma$ lives on the $A$-side.
From $x\in A_\gamma\setminus\Pi$ we infer $y\in A_\gamma$.
Then $e\subset A_\gamma\subset\ell(A)$ by Lemma~\ref{liftRespectsAgamma}.
\end{proof}

Starting with an intuitive lemma we verify that our lifts are compatible with $T$ and lifts of other modified torsos:

\begin{lemma}\label{liftCommutesWithProxyOfCorridors}
Let $\gamma$ be a corridor of $O$ and let $\eta$ be the proxy of $\gamma$ in $H$.
If $\{A,B\}$ is $H$-relevant with $\eta$ living on the $A$-side, then $\vs\leq (\ell(A),\ell(B))$ for all $\vs\in\gamma$.
In particular, the lift of an $H$-relevant separation is nested with $T$.
\end{lemma}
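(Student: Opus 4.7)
The plan is to unpack both inequalities $(C,D)\leq(\ell(A),\ell(B))$ directly, using the structural information we already have about corridors.

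First, I would tackle $C\subseteq\ell(A)$. This is essentially immediate: since $(C,D)\in\gamma$, we have $C\subseteq A_\gamma$ by the definition of $A_\gamma$. As the proxy $\eta$ of $\gamma$ lives on the $A$-side, Lemma~\ref{liftRespectsAgamma} gives $A_\gamma\subseteq\ell(A)$, so $C\subseteq\ell(A)$ as required.

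The slightly more interesting inclusion is $\ell(B)\subseteq D$, which I would prove by contradiction: suppose some $v\in\ell(B)$ lies outside~$D$. Since $v\in V=C\cup D$, this forces $v\in C\setminus D$. Recall that $(C,D)\in O$ and $\Pi=\bigcap_{(C',D')\in O}D'$, hence $\Pi\subseteq D$, which means $C\setminus D$ is disjoint from~$\Pi$; therefore $v\notin\Pi$. By the definition of $\ell(B)$ applied to a vertex outside $\Pi$, the vertex $v$ lies in $A_{\gamma'}\setminus\Pi$ for the unique corridor $\gamma'$ whose proxy in~$H$ lives on the $B$-side. On the other hand, by Corollary~\ref{twodefsofcorridorequivalent} we have $C\setminus D\subseteq A_\gamma$, giving $v\in A_\gamma\setminus\Pi$ as well. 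Lemma~\ref{equivalenceofvertices} now forces $\gamma=\gamma'$, so the proxy of $\gamma$ lives both on the $A$-side and on the $B$-side, which is absurd since $\{A,B\}$ is a separation. Hence $\ell(B)\subseteq D$ and the first assertion is established.

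For the `in particular' clause, let any $t\in T$ be given. Since $O$ is an orientation of~$T$, exactly one of the orientations of~$t$ lies in~$O$; call it $\vs=(C,D)$, and let $\gamma$ be the corridor of $O$ containing it. The proxy of $\gamma$ in $H$ must live on one side of $\{A,B\}$; applying what has been proved either to $\{A,B\}$ or to its converse $\{B,A\}$ yields that $\vs$ is comparable with an orientation of the lift. Consequently $t$ is nested with $\{\ell(A),\ell(B)\}$.

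I do not foresee a genuine obstacle here: the whole point of introducing corridors, of Corollary~\ref{twodefsofcorridorequivalent}, and of Lemma~\ref{liftRespectsAgamma} is precisely so that corridor membership captures exactly those separations in $O$ that must point toward any lift; the proof merely organises this bookkeeping. The one subtlety to watch is that the above argument for $\ell(B)\subseteq D$ relies on $v\notin\Pi$, which in turn uses $\Pi\subseteq D$ coming from $(C,D)\in O$; this is the only place where consistency of~$O$ is invoked.
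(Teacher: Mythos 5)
Your proposal is correct and takes essentially the paper's route: the inclusion $C\subseteq\ell(A)$ is obtained exactly as in the paper via $C\subseteq A_\gamma$ and Lemma~\ref{liftRespectsAgamma}, and your contradiction argument for $\ell(B)\subseteq D$ (using the partition of $V(G)\setminus\Pi$ into the sets $A_{\gamma'}\setminus\Pi$) is just a rephrasing of the paper's observation that $C\setminus D$ avoids $\Pi\supseteq\ell(A)\cap\ell(B)$ and hence lies in $\ell(A)\setminus\ell(B)$. Your explicit derivation of the `in particular' clause, which the paper leaves implicit, is also sound.
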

\begin{proof}
Consider any $(C,D)\in\gamma$.
We have to show $(C,D)\le (\ell(A),\ell(B))$.
For the inclusion $C\subset\ell(A)$ we start with $C\subset A_\gamma$ and employ Lemma~\ref{liftRespectsAgamma} for $A_{\gamma}\subset\ell(A)$.
Now the inclusion $\ell(B)\subset D$ is tantamount to $C\setminus D\subset \ell(A)\setminus\ell(B)$ which is immediate from $C\subset\ell(A)$ as $C\setminus D$ avoids $\Pi\supset \ell(A)\cap\ell(B)$ (cf.~Lemma~\ref{liftisseparation}).
\end{proof}

\begin{corollary}\label{liftsOfDistinctTorsosAreNested}
If $H'$ is the modified torso of a consistent orientation $O'$ of $T$ other than $O$, then all lifts of $H$-relevant separations are nested with all lifts of $H'$-relevant separations.\qed
\end{corollary}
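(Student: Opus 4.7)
The plan is to exploit the fact that $O$ and $O'$ are distinct consistent orientations of the same tree set $T$, so they must disagree on at least one separation $s_0\in T$. I will then use Lemma~\ref{liftCommutesWithProxyOfCorridors}, applied separately to $O$ (in $H$) and to $O'$ (in $H'$), to show that $s_0$ itself witnesses nestedness of any $H$-relevant lift with any $H'$-relevant lift.

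Concretely, fix $s_0\in T$ with $\vs_0=(C_0,D_0)\in O$ and $\sv_0=(D_0,C_0)\in O'$. Now let $\{A,B\}$ be an $H$-relevant separation of $H$ with lift $\{\ell(A),\ell(B)\}$, and let $\{A',B'\}$ be an $H'$-relevant separation of $H'$ with lift $\{\ell'(A'),\ell'(B')\}$. The separation $\vs_0$ belongs to a unique corridor $\gamma$ of $O$, whose proxy $\eta$ in $H$ lives on exactly one side of $\{A,B\}$; applying Lemma~\ref{liftCommutesWithProxyOfCorridors} to $\gamma$ and $\{A,B\}$ in $H$ gives
\[
\vs_0\le(\ell(A),\ell(B))\quad\text{or}\quad\vs_0\le(\ell(B),\ell(A)).
\]
Analogously, $\sv_0$ belongs to a corridor $\gamma'$ of $O'$ whose proxy $\eta'$ in $H'$ lives on one side of $\{A',B'\}$, so Lemma~\ref{liftCommutesWithProxyOfCorridors} applied to $O'$, $H'$ gives
\[
\sv_0\le(\ell'(A'),\ell'(B'))\quad\text{or}\quad\sv_0\le(\ell'(B'),\ell'(A')).
\]

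After possibly swapping the roles of $A$ with $B$ and of $A'$ with $B'$, we may assume $\vs_0\le(\ell(A),\ell(B))$ and $\sv_0\le(\ell'(A'),\ell'(B'))$. Taking the involution of the first inequality and chaining with the second yields
\[
(\ell(B),\ell(A))\;\le\;\sv_0\;\le\;(\ell'(A'),\ell'(B')),
\]
so the orientations $(\ell(B),\ell(A))$ and $(\ell'(A'),\ell'(B'))$ are comparable; hence the lifts $\{\ell(A),\ell(B)\}$ and $\{\ell'(A'),\ell'(B')\}$ are nested. The only thing one needs to verify beyond this is that $\vs_0$ and $\sv_0$ really do each lie in some corridor of $O$ and $O'$ respectively, which is immediate since corridors partition the respective consistent orientations (Lemma~\ref{corridorsexist}). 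There is no real obstacle here; the whole point is that the single witness $s_0$ of disagreement between $O$ and $O'$ sits below both lifts simultaneously, forcing nestedness.
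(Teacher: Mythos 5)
Your proof is correct and is essentially the argument the paper intends: the corollary is left as an immediate consequence of Lemma~\ref{liftCommutesWithProxyOfCorridors}, namely that every element of $O$ (resp.\ $O'$) lies below some orientation of each lift, so a separation $s_0$ on which $O$ and $O'$ disagree forces comparability of the two lifts exactly as you chain the inequalities. Nothing is missing.
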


\begin{lemma}\label{liftRespectsNested}
If $(A,B)$ and $(C,D)$ are two $H$-relevant with $(A,B)\le (C,D)$, then their lifts satisfy $(\ell(A),\ell(B))\le (\ell(C),\ell(D))$.
In particular, the lifts of two nested $H$-relevant separations are again nested.\qed
\end{lemma}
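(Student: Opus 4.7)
The plan is to establish the monotonicity $(\ell(A),\ell(B))\le(\ell(C),\ell(D))$ directly from the definition of the lift and from consistency of end tangles, splitting along the partition $V(G)=\Pi\sqcup(V(G)\setminus\Pi)$. The $\Pi$-part is immediate: since the lift acts by the identity on $\Pi$, we have $\ell(A)\cap\Pi=A\cap\Pi\subseteq C\cap\Pi=\ell(C)\cap\Pi$ and $\ell(D)\cap\Pi=D\cap\Pi\subseteq B\cap\Pi=\ell(B)\cap\Pi$ using $A\subseteq C$ and $D\subseteq B$.

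For the non-$\Pi$ part I would argue as follows. Take $v\in\ell(A)\setminus\Pi$ and let $\gamma$ be the unique corridor of $O$ with $v\in A_\gamma$ (unique by Lemma~\ref{equivalenceofvertices}). By the definition of the lift, the proxy $\eta$ of $\gamma$ in $H$ lives on the $A$-side of $\{A,B\}$, which is to say that the end tangle $\tau_\eta$ of $\eta$ in $H$ contains $(B,A)$. Now $(A,B)\le(C,D)$ is equivalent to $(D,C)\le(B,A)$, and since $\tau_\eta$ is a consistent orientation (being an $\aleph_0$-tangle), the standard downward-closure property of consistent orientations, namely $\vs\in O$ and $\vr\le\vs$ imply $\vr\in O$, which reads off directly from the paper's definition of consistency, forces $(D,C)\in\tau_\eta$. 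Hence $\eta$ lives on the $C$-side of $\{C,D\}$, so $v\in\ell(C)$. A symmetric argument, beginning with $v\in\ell(D)\setminus\Pi$ so that $(C,D)\in\tau_\eta$ and then using $(A,B)\le(C,D)$ to conclude $(A,B)\in\tau_\eta$, yields $\ell(D)\subseteq\ell(B)$. Together with the $\Pi$-part this establishes $(\ell(A),\ell(B))\le(\ell(C),\ell(D))$.

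For the ``in particular'' clause, two nested $H$-relevant separations have some pair of orientations that is comparable, so after possibly replacing one or both by their inverses (and observing that the lift of a separation does not depend on the choice of orientation) we may assume $(A,B)\le(C,D)$; the main statement then gives $(\ell(A),\ell(B))\le(\ell(C),\ell(D))$, so the lifts are nested.

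I do not expect a genuine obstacle in this proof: the whole argument is bookkeeping once Lemmas~\ref{liftisseparation} and~\ref{liftRespectsAgamma} are in place. The one subtlety worth highlighting is that ``$\eta$ on the $A$-side'' corresponds to $(B,A)\in\tau_\eta$ rather than $(A,B)\in\tau_\eta$, which is precisely why the inequality must be turned around before consistency can be invoked.
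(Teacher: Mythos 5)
Your proof is correct, and it is essentially the argument the paper intends: the lemma is stated with no written proof precisely because it follows by unpacking the definition of the lift on $\Pi$ (where $A\subseteq C$, $D\subseteq B$ transfer directly) and off $\Pi$ (where an end of $H$ living on the $A$-side also lives on the $C$-side, whether one sees this via tangle consistency as you do or via $A\setminus B\subseteq C\setminus D$). Your only imprecision is cosmetic: downward closure is not literally the paper's definition of consistency but follows from it for full orientations such as end tangles, and the small-separation edge case cannot occur since small separations are never $H$-relevant.
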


We close this subsection with the lemma that ensures that when we construct the tree set for our main result, we are able to ensure the `moreover' part stating equivalent $\aleph_0$-tangles orient the tree set the same~way.

\begin{lemma}\label{LiftsAreTame}
Every $H$-relevant separation lifts to a tame separation of $G$.
\end{lemma}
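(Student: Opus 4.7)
The plan is to assume $\{\ell(A), \ell(B)\}$ is not tame and derive a contradiction by showing that almost all of the infinitely many components witnessing non-tameness must actually lie on the same side. Let $\{A, B\}$ be any $H$-relevant separation and write $X = A \cap B \subseteq \Pi$ by Lemma~\ref{HrelevantIsReasonable}; let $\cC \subseteq \cC_X$ be the component collection with $\{\ell(A), \ell(B)\} = \sep{X}{\cC}$. Suppose for contradiction that some $Y \subseteq X$ witnesses non-tameness, so both $\cC_X(Y) \cap \cC$ and $\cC_X(Y) \setminus \cC$ are infinite. Since every component in $\cC_X(Y)$ avoids $X\setminus Y$ and has $G$-neighbourhood $Y$, it is also a component of $G-Y$ with neighbourhood $Y$, so $\cC_X(Y) \subseteq \CC{Y}$ and $Y$ is a critical vertex set of $G$. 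In particular $Y \in \cY$ and $\sigma_Y = \{\trsep{Y}\} \cup \{\lsep{K}{Y} : K \in \ccK(Y)\}$ is a splitting star of $T$.

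A brief consistency argument shows that $\rsep{Y}{K} \in O$ for at most one $K \in \ccK(Y)$: for distinct $K_1, K_2 \in \ccK(Y)$, the disjointness of $K_1, K_2$ as components of $G-Y$ gives $V[K_1] \cup Y \subseteq V \setminus V[K_2]$, hence $\lsep{K_1}{Y} \leq \rsep{Y}{K_2}$, and having both $\rsep{Y}{K_i} \in O$ would violate consistency. Since $\ccK(Y)$ misses only one element of $\CC{Y}$, this leaves all but at most two $C \in \cC_X(Y)$ satisfying $\lsep{C}{Y} \in O$. For each such $C$ we have $V[C] \cap \Pi = \emptyset$, so by Lemma~\ref{connectedsetrelated} and Corollary~\ref{twodefsofcorridorequivalent} the component $C$ lies in a corridor $\gamma_C$ with $\lsep{C}{Y} \in \gamma_C$. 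The supremum of $\gamma_C$ then has small side containing $V[C] \cup Y$, so its separator is a superset of $Y$.

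The crux is to show that all but finitely many of these corridors $\gamma_C$ share a single proxy end of $H$; Lemma~\ref{liftCommutesWithProxyOfCorridors} will then put all but finitely many of the corresponding $C$'s on the same side of $\{\ell(A), \ell(B)\}$, contradicting the existence of infinitely many on each side. Case analyze the orientation of $\sigma_Y$ by $O$: (i) if $\tlsep{Y} \in O$, then $\lsep{C}{Y} \leq \tlsep{Y}$ merges every $\gamma_C$ into the corridor of $\tlsep{Y}$; (ii) if $\rsep{Y}{K_0} \in O$ for some $K_0 \in \ccK(Y)$, then $\lsep{C}{Y} \leq \rsep{Y}{K_0}$ for every $C \neq K_0$ merges all remaining $\gamma_C$ into the corridor of $\rsep{Y}{K_0}$; (iii) otherwise $\sigma_Y \subseteq O$, and one handles two subcases using Theorem~\ref{TheTreeSetAPC}(i) for comparisons between $\sigma_Y$ and other splitting stars $\sigma_Z$ in $T$: if some $\trsep{Z} \in O$ with $Z \in \cY \setminus \{Y\}$ is above infinitely many $\lsep{C}{Y}$'s, the corresponding $\gamma_C$'s merge into the corridor of $\trsep{Z}$; if no such $\trsep{Z}$ (nor any other upper bound in $O$) exists, then each $\gamma_C$ equals the down-closure $\dc{\lsep{C}{Y}}_O$, its supremum is $\lsep{C}{Y}$ itself with separator $Y \in \cZ$, and all such $\gamma_C$'s share the unique $K^{\aleph_0}$ attached to $Y$ in $H$, hence the same proxy end.

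The main obstacle is subcase (iii): one must check that in the `singleton-like' situation no other element of $O$ sneaks into the corridor and inflates the supremum separator past $Y$, and that the various $\gamma_C$'s with supremum separator exactly $Y$ really do share a common $K^{\aleph_0}$-proxy via the single $Y \in \cZ$. Theorem~\ref{TheTreeSetAPC}(i) is the key structural tool here, as it constrains which $\trsep{Z}$ or $\rsep{Z}{K}$ in $O$ can lie above the $\lsep{C}{Y}$'s, and thus controls exactly how corridors at $Y$ can `leak' into larger ones.
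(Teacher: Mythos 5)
Your reduction of tameness to the claim that, for each critical $Y\subset A\cap B$, almost all components in $\cC_{A\cap B}(Y)\subset\CC{Y}$ land on one side of the lift, and your cases (i) and (ii) (some $\rsep{Y}{K_0}\in O$, respectively $\tlsep{Y}\in O$, giving one corridor whose $A_\gamma$ swallows all but at most two of these components) are exactly the paper's argument. The genuine gap is your case (iii), $\sigma_Y\subset O$. You treat it as an open configuration and attempt a subcase split that is neither exhaustive (``some $\trsep{Z}$ lies above infinitely many $\lsep{C}{Y}$'' versus ``no upper bound at all'' omits, e.g., the possibility that each $\lsep{C}{Y}$ has its own upper bounds, finitely many apiece) nor conclusive in its first branch (merging \emph{infinitely many} of the $\gamma_C$ into one corridor does not rule out that infinitely many of the remaining components sit on the other side, which is what the contradiction needs); and you yourself flag the unverified step about the supremum separator not growing past $Y$.

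What closes this case is the splitting-star property you already invoked but did not use: since $\sigma_Y$ is a splitting star of $\vT$ (Theorem~\ref{TheTreeSetAPC}(iii)), a consistent orientation containing it satisfies $O=\dc{\sigma_Y}$ (this is the argument in the proof of Theorem~\ref{TheTreeSetAPC}(iv)). Then every element of $O$ lies below some element of $\sigma_Y$, all corridors have suprema in $\sigma_Y$ with separator $Y$, so $\Pi=Y$, $\cZ=\{Y\}$, and $H$ is the finite clique $G[Y]$ together with a single completely joined $K^{\aleph_0}$. In particular $H$ is one-ended and has no $H$-relevant separations, so case (iii) simply cannot occur once an $H$-relevant $\{A,B\}$ is given; this is how the paper disposes of it in the opening paragraph of its proof (for all $Y\in\cY$ at once), before running your cases (i) and (ii). With that observation inserted in place of your subcase analysis, your proof is complete; without it, the argument for case (iii) does not go through as written.
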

\begin{proof}
If $H$ stems from a consistent orientation $O$ of $T$ that contains some star $\sigma_X$ with $X\in\cY$, then $O=\dc{\sigma_X}$ (with the down-closure taken in $\vT$) by Theorem~\ref{TheTreeSetAPC}. Consequently, $H$ was obtained from the finite clique $G[X]$ by disjointly adding precisely one copy of a $K^{\aleph_0}$ and joining it completely to $X$, so the one-ended $H$ has no $H$-relevant separations.
Therefore, we may assume that $O$ avoids all of the stars $\sigma_X$ with $X\in\cY$.

Let $\{A,B\}$ be an $H$-relevant separation and recall that $A\cap B\subset\Pi$ by Lemma~\ref{HrelevantIsReasonable}.
And let $X\subset A\cap B$ be a critical vertex set of $G$.

If there is a component $K\in\ccK(X)$ with $\rsep{X}{K}\in O$, then the proxy of the corridor of $O$ that contains $\rsep{X}{K}$ ensures that all the components in $\ccK(X)\setminus\{K\}$ are contained in the same side of $\{\ell(A),\ell(B)\}$.

Otherwise, since $O$ avoids the star $\sigma_X$, we have $\tlsep{X}\in O$.
Then the proxy of the corridor of $O$ that contains $\tlsep{X}$ ensures that all the components in $\ccK(X)$ are contained in the same side of $\{\ell(A),\ell(B)\}$.

In either case, all but finitely many of the components in $\CX$ lie on the same side of $\{\ell(A),\ell(B)\}$.
Since $A\cap B\supset X$ meets at most finitely many components in $\CX$, the collection $\cC_{A\cap B}(X)$ forms a cofinite subset of $\CX$, and therefore all but finitely many components in $\cC_{A\cap B}(X)$ lie on the same side of $\{\ell(A),\ell(B)\}$ as desired.
\end{proof}

\subsection{\texorpdfstring{Proxies of $\aleph_0$-tangles}{Proxies of infinite tangles}}

We start this subsection by introducing the technical notion of `walking a corridor' and prove two technical lemmas about ends.
This framework, together with proxies of corridors, then enables us to give a comprehensible definition of proxies of $\aleph_0$-tangles.
We emphasise that this technical layering is highly important to save the key segments of our overall proof from being swamped with terrible amounts of case distinctions.

\begin{definition}[Walking]
We say that an end $\omega$ of $G$ \emph{walks} a corridor $\gamma$ of $O$ if for the supremum $(A,B)$ of $\gamma$ the end $\omega$ has a ray contained in $G[A\setminus B]$.
And we say that an ultrafilter tangle $\tau$ of $G$ \emph{walks} a corridor~$\gamma$ of $O$ if $\tau$ contains the inverse of some separation in $\gamma$.
\end{definition}

\begin{lemma}\label{raytowardsinfinitesupremum}
Suppose that $N$ is a tree set of generous finite-order separations of $G$ all whose separators induce cliques.
Let $\omega$ be an end of $G$, let $\Pi$ be the part of the orientation $O=\omega\cap\vN$ that $\omega$ induces on $N$, and suppose that $\omega$ walks a corridor $\gamma$ of $O$.
If the separator $A_\gamma\cap\Pi$ of the supremum of $\gamma$ is infinite, then $G[\Pi]$ contains a ray from $\omega$.
\end{lemma}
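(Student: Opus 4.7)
The plan is to apply Corollary~\ref{endInClosureOfPartGivesRayInTorso}: since every separator of a separation in $N$ induces a clique of $G$, the torso of $\Pi$ coincides with $G[\Pi]$, and by Lemma~\ref{supremumhascliqueseparator} the set $K := A_\gamma \cap \Pi$ is an infinite clique sitting inside $\Pi$. It will suffice (and be convenient) to prove the stronger statement $\omega \in \overline{K}$: once this is established, any sequence of distinct vertices of the infinite clique $K$ forms a ray in $G[K] \subset G[\Pi]$ whose tails lie in the same component of $G - X$ as $K \setminus X$ for every finite $X$, and this ray is therefore a ray of $\omega$.

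To prove $\omega \in \overline{K}$ I would fix a finite $X \subset V(G)$ and argue by contradiction that $C(X,\omega) \cap K = \emptyset$. A short check from the directedness of $\gamma$ shows that the supremum $(A,B) := \bigvee \gamma$ is a separation of $G$ of possibly infinite order, and by Remark~\ref{supremumofcorridor} we have $A = A_\gamma$, $A \cap B = K$, and hence $A \setminus B = A_\gamma \setminus \Pi$. Since $K$ then separates $A \setminus B$ from $B \setminus A$ in $G$, while $C(X,\omega)$ is connected in $G - X$, contains a tail of $R \subset A \setminus B$, and avoids $K$ by assumption, it must lie entirely inside $A \setminus B = A_\gamma \setminus \Pi$.

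The contradiction will then exploit the walking hypothesis together with the infinitude of $K$. Applying Lemma~\ref{separatefiniteset} simultaneously to growing initial segments $\{r_1,\ldots,r_n\}$ of $R$ and to a fixed subset of $K$ of cardinality $|X|+1$ (and using $K \subset \Pi \subset D$ for every $(C,D) \in O$), and exploiting directedness of $\gamma$, I would produce a sequence $(C_n,D_n) \in \gamma$ with $\{r_1,\ldots,r_n\} \subset C_n \setminus D_n$ and $|C_n \cap D_n \cap K| > |X|$, so that $W_n := (C_n \cap D_n \cap K) \setminus X$ is non-empty. Because $(C_n,D_n) \in O$, the tail of $R$ lies in $D_n \setminus C_n$, so letting $t_n > n$ be the first index with $r_{t_n} \notin C_n \setminus D_n$, the absence of any $G$-edge between $C_n \setminus D_n$ and $D_n \setminus C_n$ forces $r_{t_n} \in C_n \cap D_n$; moreover $t_n \to \infty$.

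Since $R \cap X$ is finite, for all sufficiently large $n$ the vertex $r_{t_n}$ lies outside $X$ and past every $X$-hit of $R$, which puts $r_{t_n}$ in the same $G - X$-component as the full tail of $R$, namely $C(X,\omega)$. As $R$ avoids $\Pi$, we also have $r_{t_n} \in (C_n \cap D_n) \setminus K$, so $r_{t_n}$ is distinct from every $w \in W_n$; but the clique hypothesis on the separator $C_n \cap D_n$ yields the edge $r_{t_n} w$, which lies entirely in $G - X$ and therefore places $w \in C(X,\omega) \cap K$, contradicting the assumption. The one delicate point I anticipate is the bookkeeping that bundles the two requirements on $(C_n,D_n)$ (containing both the initial $R$-segment and the chosen $K$-subset in the correct positions) into one application of Lemma~\ref{separatefiniteset}; everything else is routine.
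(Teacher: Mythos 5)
Your argument is correct, and it reaches the conclusion by a noticeably different mechanism than the paper. The paper recursively constructs infinitely many pairwise disjoint paths from a ray $R\in\omega$ to the infinite clique $K=A_\gamma\cap\Pi$ (a comb with teeth in $K$), and the crucial step of each recursion round routes the new path from a ray-vertex $w\in C\cap D$ to a vertex $v\in K\cap C\cap D$ through a component supplied by the \emph{generosity} of the separation $(C,D)$; only afterwards does the clique property of $K$ (Lemma~\ref{supremumhascliqueseparator}) turn the comb into a ray of $\omega$ inside $G[\Pi]$. You instead prove directly that $C(X,\omega)$ meets $K$ for every finite $X$, by capturing an initial segment of $R$ together with $|X|+1$ vertices of $K$ in a single $(C_n,D_n)\in\gamma$ via Lemma~\ref{separatefiniteset}, locating a ray-vertex $r_{t_n}$ in the separator $C_n\cap D_n$, and then using the \emph{clique} hypothesis on that separator to produce the edge from $r_{t_n}$ to a vertex of $K\setminus X$ in $G-X$; generosity is never used, so your argument in fact establishes the lemma under the clique hypothesis alone, which is a mild strengthening, and it is organised as a closure/contradiction argument rather than an explicit comb construction. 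All the steps check out: the supremum of $\gamma$ is indeed a separation with separator $K$ (Remark~\ref{supremumofcorridor}), $\Pi\subseteq D_n$ gives the chosen $K$-vertices a place in $C_n\cap D_n$, the orientation $(C_n,D_n)\in O$ forces a tail of $R$ into $D_n\setminus C_n$ so that $r_{t_n}\in C_n\cap D_n$ exists with $t_n>n$, and the final ray-extraction from the infinite clique $K$ (or, equivalently, via Corollary~\ref{endInClosureOfPartGivesRayInTorso}, since clique separators make the torso of $\Pi$ equal to $G[\Pi]$) is sound. Two small remarks: the paragraph deducing $C(X,\omega)\subseteq A_\gamma\setminus\Pi$ is correct but never used in the contradiction, and the set of $|X|+1$ vertices of $K$ could simply be replaced by a single vertex of $K\setminus X$, which exists because $K$ is infinite and $X$ is finite.
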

\begin{proof}
Pick $R\in\omega$ arbitrarily.
	By Lemma~\ref{supremumhascliqueseparator} it is sufficient to show that there are infinitely many pairwise disjoint paths from $R$ to $A_{\gamma}\cap \Pi$. 
	We will recursively construct such paths $P_n$ ($n\in\N$) of which only the last vertex $v_n$ is contained in $\Pi$. 
	Assume that $P_0,\ldots ,P_{n-1}$ have already been defined. 
	Then there is a finite non-empty initial segment $R'$ of $R$ such that $R'\cup P_0\mathring{v}_0\cup \cdots \cup P_{n-1}\mathring{v}_{n-1}$ is connected. Let $(A,B)\in O$ be a separation such that all vertices of $R'\cup P_0\mathring{v}_0\cup \cdots \cup P_{n-1}\mathring{v}_{n-1}$ are contained in $A\setminus B$ (such a separation exists by Corollary~\ref{separatefiniteconnectedset}). 
	Then $(A,B)$ is contained in $\gamma$. 
	Every vertex $v_k$ with $k<n$ is a neighbour of a vertex in $A\setminus B$ and thus contained in $A$.
	
	As $A_{\gamma}\cap \Pi$ is infinite, it contains a vertex $v$ which is not contained in $A\cap B$ and thus not contained in~$A$. 
	In particular, $v$ is not contained in any path $P_k$ with $k<n$. 
	Because $v\in A_{\gamma}$, there is a separation $(A',B')$ in $\gamma$ such that $v\in A'$ and thus $v\in A'\cap B'$. 
	Let $(C,D)$ be a separation in $\gamma$ which is bigger than both $(A,B)$ and $(A',B')$. 
	Then all vertices contained in some $P_k$ with $k<n$ are contained in $(C\setminus D)\cup \Pi$. Furthermore $v\in C\cap D$ and $R$ contains a vertex of $C\setminus D$. 
	Because $(C,D)\in O$, some tail of $R$ is contained in $D\setminus C$ and thus $R$ also contains vertices of $D\setminus C$. 
	As $(C,D)$ is a separation and $R$ connected this implies that some vertex $w$ of $R$ is contained in $C\cap D$. 
	Because $w$ is not a vertex of $\Pi$ it is also not a vertex of some $P_k$ with $k<n$.
	
	As $(D,C)$ is generous, there is a component of $G-(C\cap D)$ which is contained in $D\setminus C$ and whose neighbourhood is precisely equal to $C\cap D$. 
	Thus there is a path $P$ from $w$ to $v$ whose inner vertices are contained in $D\setminus C$. 
	We already established that $v$ and $w$ are not vertices of any $P_k$ with $k<n$. 
	Hence $P$ is disjoint from all $P_k$ with $k<n$. 
	Let $v_n$ be the first vertex of $P$ in $\Pi$ and let $P_n:=w P v_n$. 
	By Corollary~\ref{separatefiniteconnectedset} there is a separation $(I,J)\in O$ such that the vertices of $P_n\mathring{v}_n$ are contained in $I\setminus J$. 
	Then $(I,J)\in \gamma$ and $v_n\in I$, so $v_n\in A_{\gamma}$. 
	As also $v_n\in \Pi$ we have $v_n\in A_{\gamma}\cap \Pi$ as required.
\end{proof}

\begin{lemma}\label{endOutsideClosureGivesCorridor}
If an end $\omega$ of $G$ does not lie in the closure of $\Pi$, then $\omega$ walks a unique corridor of $O$.
\end{lemma}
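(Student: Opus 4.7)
My plan is to reduce the statement to the partition of $V(G)\setminus\Pi$ into the pieces $A_\gamma\setminus\Pi$ provided by Lemma~\ref{equivalenceofvertices}, via the reformulation of ``walking'' that follows from Remark~\ref{supremumofcorridor}. Since for the supremum $(A,B)$ of a corridor $\gamma$ we have $A=A_\gamma$ and $A\cap B=A_\gamma\cap\Pi$, and hence $A\setminus B=A_\gamma\setminus\Pi$, saying that $\omega$ walks $\gamma$ is the same as saying that some ray of $\omega$ lies in $G[A_\gamma\setminus\Pi]$.

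For existence, I would pick any $X\in\cX$ witnessing that $\omega$ is outside the closure of $\Pi$, that is, with $C(X,\omega)\cap\Pi=\emptyset$. Writing $C:=C(X,\omega)$, this is a connected subgraph of $G$ disjoint from $\Pi$, so Lemma~\ref{connectedsetrelated} supplies a corridor $\gamma$ with $V(C)\subseteq A_\gamma$, and in fact $V(C)\subseteq A_\gamma\setminus\Pi$. Every ray of $\omega$ has a tail in $C\subseteq A_\gamma\setminus\Pi$, so by the reformulation above $\omega$ walks $\gamma$.

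For uniqueness, suppose $\omega$ also walks some corridor $\gamma'$, witnessed by a ray $R'\in\omega$ contained in $G[A_{\gamma'}\setminus\Pi]$. Then $R'$ has a tail in $C\subseteq A_\gamma$, and since this tail avoids $\Pi$ it lies in $A_\gamma\setminus\Pi$; on the other hand it still lies in $A_{\gamma'}\setminus\Pi$. By Lemma~\ref{equivalenceofvertices} the sets $A_{\gamma''}\setminus\Pi$ are pairwise disjoint, forcing $\gamma'=\gamma$.

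There is essentially no serious obstacle here: once the framework of Section~\ref{sec:graphfromtorso} is in place the argument is a direct application of Lemmas~\ref{equivalenceofvertices} and~\ref{connectedsetrelated}. The only point worth underlining is that the same component $C(X,\omega)$ does double duty, witnessing both that $\omega$ stays away from $\Pi$ (which locates the corridor $\gamma$) and that every ray of $\omega$ eventually enters $A_\gamma\setminus\Pi$ (which pins down $\gamma$ uniquely via the partition).
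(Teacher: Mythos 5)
Your proof is correct, but its uniqueness argument takes a genuinely different (and lighter) route than the paper's. The paper also starts by placing a ray of $\omega$ that avoids $\Pi$ into some $A_\gamma\setminus\Pi$, but for uniqueness it distinguishes two cases according to whether $O\subset\tau_\omega$: if not, the corridor is pinned down as the one containing a separation of $O$ whose inverse lies in $\tau_\omega$; if $O\subset\tau_\omega$, it argues that two distinct walked corridors would force the separators of their suprema to be infinite, and then invokes Lemma~\ref{raytowardsinfinitesupremum} (whose proof uses that the separations in $T$ are generous and that their separators induce cliques, and which requires $O=\omega\cap\vT$ -- hence the case split) to produce a ray of $\omega$ in $G[\Pi]$, contradicting the hypothesis. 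You instead anchor everything to a single witness $X\in\cX$ of non-closure: since every ray of $\omega$ has a tail in $C(X,\omega)$, and $C(X,\omega)$ is a connected set avoiding $\Pi$, Lemma~\ref{connectedsetrelated} places it inside one set $A_\gamma\setminus\Pi$, and the disjointness of these sets for distinct corridors (which is exactly what the proof of Lemma~\ref{equivalenceofvertices} establishes, and what Corollary~\ref{twodefsofcorridorequivalent} reconfirms) forces every walked corridor to equal $\gamma$. This buys you a shorter proof that avoids Lemma~\ref{raytowardsinfinitesupremum} altogether and needs none of the extra standing hypotheses of Section~\ref{sec:MainProof} (generous separations, clique separators), so it already works in the general setting of Section~\ref{sec:graphfromtorso}; the paper's argument, by contrast, leans on the specific structure of $T$ but keeps the reasoning entirely inside the corridor framework without appealing to the component $C(X,\omega)$ chosen by the end.
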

\begin{proof}
Since $\omega$ does not lie in the closure of $\Pi$, we in particular find a ray $R\in\omega$ that avoids $\Pi$.
As $R$ is connected, it defines a corridor $\gamma$ of $O$ with $R\subset A_\gamma\setminus\Pi$.
Then $\omega$ walks the corridor $\gamma$, and so it remains to show that $\gamma$ is unique.

If $O\not\subset\omega$, then $\omega$ contains the inverse $\sv$ of some separation $\vs\in O$, and then $\gamma$ is determined as the corridor of $O$ containing $\vs$.
Otherwise $O\subset\omega$.
Then we assume for a contradiction that there is another ray $R'\in\omega$ that walks a corridor $\gamma'$ of $O$ other than $\gamma$.
Since the suprema of $\gamma$ and $\gamma'$ both separate $R$ and~$R'$, their separators cannot be finite, and so they are infinite.
But then applying Lemma~\ref{raytowardsinfinitesupremum} to either $\gamma$ or $\gamma'$ yields a ray of $\omega$ in $G[\Pi]$, contradicting the assumption that $\omega$ does not lie in the closure of $\Pi$.
\end{proof}

Finally, we are ready for the definition of proxies of $\aleph_0$-tangles.
We split the definition and consider ends and ultrafilter tangles separately.

\begin{definition}[Proxy of an end]
If $\omega$ is an end of $G$, then the \emph{proxy} of $\omega$ in $H$ is the end $\eta$ of $H$ that is defined as follows.
\begin{itemize}
    \item If $\omega$ lies in the closure of $\Pi$, then $\omega$ has a ray in $G[\Pi]$ by Corollary~\ref{endInClosureOfPartGivesRayInTorso}, and $\eta$ is the end of such a ray in $H$ (this is well-defined by Corollary~\ref{equivalentRaysOfPartToTorso}).
    \item Otherwise $\omega$ does not lie in the closure of $\Pi$ and by Lemma~\ref{endOutsideClosureGivesCorridor} walks a unique corridor $\gamma$ of $O$; then $\eta$ is the proxy of $\gamma$ in $H$.
\end{itemize}
\end{definition}

\begin{definition}[Proxy of an ultrafilter tangle]
If $\tau$ is an ultrafilter tangle of $G$ and $O$ avoids the star~$\sigma_{X_\tau}$, then $\tau$ walks a unique corridor $\gamma$ of $O$ and the \emph{proxy} of $\tau$ in $H$ is the end $\eta$ of $H$ that is the proxy of $\gamma$ in~$H$.
\end{definition}

We close this subsection with a lemma on the interaction of lifts with proxies:

\begin{lemma}\label{liftCommutesWithProxyOfTangles}
Let $\tau$ be an $\aleph_0$-tangle of $G$ and suppose that the proxy $\eta$ of $\tau$ in $H$ is defined.
If $\{A,B\}$ is $H$-relevant and $(A,B)\in\eta$, then $(\ell(A),\ell(B))\in\tau$.
\end{lemma}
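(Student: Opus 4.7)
The plan is to split on the three ways in which the proxy $\eta$ of $\tau$ can be defined. To begin, I would note that by Lemmas~\ref{HrelevantIsReasonable} and~\ref{liftisseparation} the lift $(\ell(A),\ell(B))$ is a finite-order separation of $G$ with separator $A\cap B\subseteq\Pi$, so $\tau$ does orient it; the task is to show that the orientation picked is $(\ell(A),\ell(B))$ and not $(\ell(B),\ell(A))$.

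In the first case $\tau=\omega$ is an end lying in the closure of $\Pi$. Since all separators of $T$ induce cliques in $G$, the torso of $\Pi$ coincides with $G[\Pi]$, and Corollary~\ref{endInClosureOfPartGivesRayInTorso} supplies a ray $R\in\omega$ inside $G[\Pi]\subseteq H$ whose end in $H$ is $\eta$ by the definition of the proxy together with Lemma~\ref{endSpaceModifiedTorso}. The hypothesis $(A,B)\in\eta$ then gives a tail $R'$ of $R$ in $H[B]$; truncating past the finite $A\cap B$ puts $R'$ inside $\Pi\cap(B\setminus A)$. Since $\ell(B)\cap\Pi=B\cap\Pi$ and $\ell(A)\cap\Pi=A\cap\Pi$, this tail lies in $\ell(B)\setminus\ell(A)$, so $\omega$ has a tail on the $\ell(B)$-side of $(\ell(A),\ell(B))$ in $G$, giving $(\ell(A),\ell(B))\in\tau_\omega$.

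In the second case $\tau=\omega$ is an end not lying in the closure of $\Pi$. By Lemma~\ref{endOutsideClosureGivesCorridor} $\omega$ walks a unique corridor $\gamma$ of $O$, so $\eta$ is the proxy of $\gamma$ and $\omega$ has a ray $R\subseteq G[A_\gamma\setminus\Pi]$. The hypothesis $(A,B)\in\eta$ places $\gamma$'s proxy on the $B$-side, so by the definition of the lift each vertex of $A_\gamma\setminus\Pi$ belongs to $\ell(B)\setminus\ell(A)$; hence $R\subseteq\ell(B)\setminus\ell(A)$, and as before $(\ell(A),\ell(B))\in\tau_\omega$. In the third case $\tau$ is an ultrafilter tangle. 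For $\eta$ to be defined, $O$ must avoid $\sigma_{X_\tau}$, and then $\tau$ walks a unique corridor $\gamma$ with proxy $\eta$, so by the definition of walking some $\vs\in\gamma$ satisfies $\sv\in\tau$. Since $(A,B)\in\eta$ places $\eta$ on the $B$-side, an application of Lemma~\ref{liftCommutesWithProxyOfCorridors} with the roles of $A$ and $B$ interchanged yields $\vs\le(\ell(B),\ell(A))$, equivalently $(\ell(A),\ell(B))\le\sv$; consistency of $\tau$ together with $\sv\in\tau$ then forces $(\ell(A),\ell(B))\in\tau$, as desired.

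The main point I expect to require care will be the bookkeeping of side conventions: in Lemma~\ref{liftCommutesWithProxyOfCorridors} the phrase ``$\eta$ lives on the $A$-side'' means that $A$ is $\eta$'s big side (as is visible from its proof via Lemma~\ref{liftRespectsAgamma}), whereas in our hypothesis $(A,B)\in\eta$ the big side for $\eta$ is $B$. Once this translation is pinned down, all three cases reduce to straightforward applications of the lifting machinery of Section~\ref{sec:graphfromtorso} and the consistency of $\aleph_0$-tangles, together with a direct ray argument in the two end-tangle cases.
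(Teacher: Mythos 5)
Your proposal is correct and follows essentially the same route as the paper's proof: the same three-case split (end in the closure of $\Pi$ via a shared ray of $G[\Pi]$; end walking a corridor via a ray in $A_\gamma\setminus\Pi\subseteq\ell(B)$; ultrafilter tangle via Lemma~\ref{liftCommutesWithProxyOfCorridors} together with the consistency of $\tau$), only spelled out in more detail, and your translation between ``$\eta$ lives on the $A$-side'' and ``$(A,B)\in\eta$'' is the right one.
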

\begin{proof}
If $\tau$ is an end of $G$ that lies in the closure of $\Pi$, then this follows from the fact that some ray of $G[\Pi]$ is contained in both $\tau$ and $\eta$.
Otherwise $\tau$ is an $\aleph_0$-tangle of $G$ that walks a unique corridor $\gamma$ of $O$.
If~additionally $\tau$ is an end, then every ray in $\tau$ that avoids $\Pi$ is contained in $\ell(B)$, ensuring $(\ell(A),\ell(B))\in\tau$.
So we may assume that $\tau$ is an ultrafilter tangle. 
As the proxy $\eta$ of $\tau$ is defined, we know that $O$ avoids the star $\sigma_{X_\tau}$ so that $\tau$ walks a unique corridor $\gamma$ of $O$.
By definition, this means that $\tau$ contains the inverse of some oriented separation from $\gamma$. 
Then $(\ell(A),\ell(B))\in\tau$ follows from Lemma~\ref{liftCommutesWithProxyOfCorridors} and the consistency of the tangle~$\tau$.
\end{proof}

\subsection{Efficiently distinguishing the proxies}

In this subsection we provide the final key segments of our overall proof.
We start with an overview of the situation that is of interest.

\begin{figure}
    \centering
    \begin{tikzpicture}[decoration=snake,thick,dot/.style={colourdot,circle,fill,inner sep=0.04cm,minimum size=0.1cm},labelgiven/.style={label distance=0cm,inner sep=0.08cm}]
\pgfdecorationsegmentamplitude=1.5pt
\pgfdecorationsegmentlength=0.2cm
\newcommand{\Zwidth}{0.4cm}
\newcommand{\Hwidth}{3cm}
\newcommand{\graphheight}{1cm}
\newcommand{\outerxshift}{2.5cm}
\newcommand{\outeryshift}{0.3cm}
\newcommand{\bendindex}{.3*\outerxshift}
\newcommand{\outeryradius}{2.5cm}
\newcommand{\labelheight}{0.2cm}
\newcommand{\Zangle}{40}
\newcommand{\Gangle}{70}
\draw 
	(0,-\graphheight) .. controls +(\bendindex,0) and +(-\bendindex,0).. ++(\outerxshift,-\outeryshift) arc [start angle=-90, end angle=90,x radius=\outeryradius+\outeryshift, y radius=\graphheight+\outeryshift] node [pos=0.7,above] {$G$} .. controls +(-\bendindex,0) and +(\bendindex,0).. ++(-\outerxshift,-\outeryshift) .. controls +(-\bendindex,0) and +(\bendindex,0) .. ++(-\outerxshift,\outeryshift) arc [start angle=90, end angle=270,x radius=\outeryradius + \outeryshift,y radius=\graphheight+\outeryshift] .. controls +(\bendindex,0) and +(-\bendindex,0) .. ++(\outerxshift,\outeryshift);
\draw[pictureorange] 
	(0,0) ellipse [x radius=\Hwidth,y radius=\graphheight];
\begin{onlayer}{filllayer}
\fill [pictureorange!10]
	(0,0) ellipse [x radius=\Hwidth,y radius=\graphheight];
\end{onlayer}
\draw[red] 
(0,0) ellipse [x radius=\Zwidth,y radius=\graphheight];
\begin{onlayer}{filllayer}
\fill [red!10]
(0,0) ellipse [x radius=\Zwidth,y radius=\graphheight];
\end{onlayer}
\draw [coloura] 
	(0,\graphheight) .. controls +(-\bendindex,0) and +(\bendindex,0) .. ++(-\outerxshift,\outeryshift) arc [start angle=90, end angle=90+\Gangle,x radius=\outeryradius+\outeryshift, y radius=\graphheight+\outeryshift]  ..controls +(240:0.7) and +(270:0.5) .. (canvas polar cs:angle=\Zangle,x radius=\Zwidth, y radius=\graphheight) node [pos=0.32,above, labelgiven] {$A_{\gamma_1}$} node [colourxi,pos=0.56,labelgiven,above] {$\Xi_1$} arc [start angle= \Zangle, end angle=90, x radius=\Zwidth, y radius=\graphheight];
\begin{onlayer}{filllayer}
\fill [coloura!10]
	(-\outerxshift,\graphheight+\outeryshift) arc [start angle=90, end angle=90+\Gangle,x radius=\outeryradius+\outeryshift, y radius=\graphheight+\outeryshift]  ..controls +(240:0.7) and +(270:0.5) .. (canvas polar cs:angle=\Zangle,x radius=\Zwidth, y radius=\graphheight) arc [start angle= \Zangle, end angle=90, x radius=\Zwidth, y radius=\graphheight];
\end{onlayer}
\path
	(canvas polar cs:angle=\Zangle,x radius=\Zwidth, y radius=\graphheight) ++(-0.02cm,0.02cm)
	coordinate (11);
\begin{scope} [colourxi]
	\path [clip]
		(0.5cm, \graphheight+0.05cm) rectangle ++(-3.47cm,-0.93cm);
	\draw [dashed]
		(-\outerxshift,\graphheight+\outeryshift) arc [start angle=90, end angle=90+\Gangle,x radius=\outeryradius+\outeryshift, y radius=\graphheight+\outeryshift] ++(0cm,0.02cm) ..controls +(240:0.7) and +(270:0.5) .. (11);
	\draw [dashed]
		(canvas polar cs:angle=\Zangle,x radius=\Zwidth-0.02cm,y radius=\graphheight - 0.025 cm) arc [start angle=\Zangle, end angle=90,x radius=\Zwidth-0.02 cm,y radius=\graphheight - 0.025cm] arc [start angle=90,end angle=270,x radius=\Hwidth-0.02cm,y radius=\graphheight-0.02cm];
\end{scope}
\draw [coloura] 
(0,-\graphheight) .. controls +(\bendindex,0) and +(-\bendindex,0).. ++(\outerxshift,-\outeryshift) arc [start angle=-90, end angle=-90+\Gangle,x radius=\outeryradius+\outeryshift, y radius=\graphheight+\outeryshift]  ..controls +(60:0.7) and +(90:0.5) .. (canvas polar cs:angle=\Zangle+180,x radius=\Zwidth, y radius=\graphheight) node [pos=0.32,below, labelgiven] {$A_{\gamma_2}$} node [colourxi,pos=0.56,label distance=0cm,inner sep=0.1cm,below] {$\Xi_2$} arc [start angle= \Zangle+180, end angle=270, x radius=\Zwidth, y radius=\graphheight];
\begin{onlayer}{filllayer}
\fill [coloura!10]
(\outerxshift,-\graphheight-\outeryshift) arc [start angle=-90, end angle=-90+\Gangle,x radius=\outeryradius+\outeryshift, y radius=\graphheight+\outeryshift]  ..controls +(60:0.7) and +(90:0.5) .. (canvas polar cs:angle=\Zangle+180,x radius=\Zwidth, y radius=\graphheight)  arc [start angle= \Zangle+180, end angle=270, x radius=\Zwidth, y radius=\graphheight];
\end{onlayer}
\path
(canvas polar cs:angle=\Zangle+180,x radius=\Zwidth, y radius=\graphheight) ++(0.02cm,-0.02cm)
coordinate (22);
\begin{scope} [colourxi]
\path [clip]
(-0.5cm,-\graphheight-0.05cm) rectangle ++(3.47cm,0.93cm) coordinate (corner two);
\draw [dashed]
(\outerxshift,-\graphheight-\outeryshift) arc [start angle=-90, end angle=-90+\Gangle,x radius=\outeryradius+\outeryshift, y radius=\graphheight+\outeryshift] ++(0cm,-0.02cm) ..controls +(60:0.7) and +(90:0.5) .. (22);
\draw [dashed]
(canvas polar cs:angle=\Zangle+180,x radius=\Zwidth-0.02cm,y radius=\graphheight - 0.025 cm) arc [start angle=\Zangle+180, end angle=270,x radius=\Zwidth-0.02 cm,y radius=\graphheight - 0.025cm] arc [start angle=-90,end angle=90,x radius=\Hwidth-0.02cm,y radius=\graphheight-0.02cm];
\end{scope}
\draw[colourxi,dashed] 
	(canvas polar cs:angle=180+\Zangle, x radius=\Zwidth, y radius=\graphheight) ++(0.02cm,0.03cm)-- (-0.2cm,-\graphheight-2*\outeryshift-0.2cm);
\draw [colourxi,dashed]
	(corner two) -- (0.6*\Hwidth + 0.2cm,-\graphheight-2*\outeryshift-0.2cm) node [anchor=west,labelgiven] {$K^{\aleph_0}\subseteq H\setminus G$};
\draw[colourxi] 
	(0.3*\Hwidth,-\graphheight-2*\outeryshift-0.2cm) ellipse [x radius=0.3*\Hwidth+0.2cm, y radius=0.2cm];
\draw[colourray,decorate,->]
	(0,0.8*\graphheight) ++(0cm,0cm) -- (-0.4*\Hwidth,0.6*\graphheight) node [labelgiven,anchor=15] {$\eta_1$};
\draw[colourray,decorate,->]
	(0.2cm,-\graphheight-2*\outeryshift-0.2cm) -- ++(0.6*\Hwidth-0.4cm,0);
\path
	(\Zwidth,0cm) node [red,anchor=east,labelgiven] {$Z$}
	(canvas polar cs:angle=45,x radius=\Hwidth, y radius=\graphheight) node [pictureorange,labelgiven,anchor=north east] {$H$}
	(0.2cm,-\graphheight-0.25cm) node [colourray,name=eta two,labelgiven] {$\eta_2$}
	(3*\Zwidth,-0.5*\graphheight) node [dot,name=point in set] {}
	(-1.5*\Zwidth,0.7*\graphheight) ++ (0.06cm,0.06cm) node [name=point on ray,dot] {}
	(-\Zwidth,\graphheight+\outeryshift+0.1cm) node [colourdot,name=exists one,labelgiven] {$\exists$}
	(2*\Zwidth,-\graphheight-0.35cm) node [colourdot,name=exists two,labelgiven] {$\exists$};
\draw [colourdot,thin]
	(point on ray) -- (exists one);
\draw [colourdot,thin]
	(point in set) -- (exists two);
\draw [colourray,thin]
	(0.5cm,-\graphheight-2*\outeryshift-0.2cm) ++(-0.12cm,0.07cm) -- (eta two);
\end{tikzpicture}
    \caption{A graph $G$, a vertex set $Z$ efficiently separating two inequivalent $\aleph_0$-tangles $\tau_1$ and $\tau_2$ of $G$, and a modified torso $H(Z)$ which contains $Z$. The $\aleph_0$-tangles $\tau_i$ walk corridors $\gamma_i$ and $H(Z)$ has proxies $\eta_1$ and $\eta_2$ for $\tau_1$ and $\tau_2$.}
    \label{fig:detailedReflection}
\end{figure}
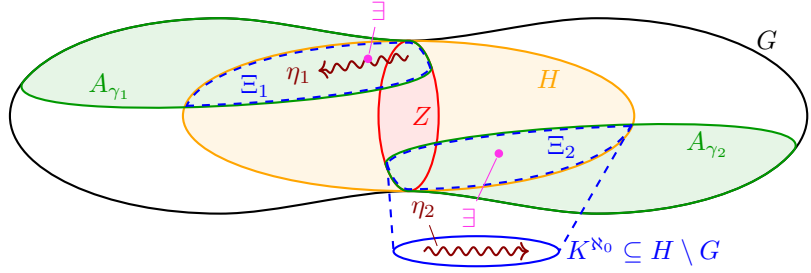

\textbf{Throughout this subsection we fix the following notation in addition to the notation fixed throughout the ambient section.}
(See also Figure~\ref{fig:detailedReflection}.)
We are given two inequivalent $\aleph_0$-tangles $\tau_1$ and $\tau_2$ of $G$ that are efficiently distinguished by a finite-order separation $\{A_1,A_2\}$ of $G$ with separator $Z=A_1\cap A_2$.
The separator $Z$ is not contained in a critical vertex set of $G$.
Hence, by Lemma~\ref{GenerousIsPrincipalForPrincipals} the separator $Z$ meets precisely one side from every separation in $T$, and then orienting each separation in~$T$ towards that side results in a consistent orientation $O$ of $T$ whose part $\Pi$ contains $Z$.
For this special orientation we write $O(Z)$, and we write $\Pi(Z)$ and $H(Z)$ for its part and modified torso.
Moreover, $\eta_1$~and~$\eta_2$ are the proxies of $\tau_1$ and $\tau_2$ in $H(Z)$ (note that these are defined as $O(Z)$ avoids all stars $\sigma_X$ with $X\in\cY$).
Whenever we write $i$ we mean an arbitrary $i\in \{1,2\}$, and we write $j=3-i$.
If $\tau_i$ happens to be an ultrafilter tangle, then we write $X_i$ instead of $X_{\tau_i}$.
This completes the list of fixed notation for this subsection.

The final key segments are Lemma~\ref{proxyLifting} and Proposition~\ref{distinctCorridorsDistinctProxies} below. We start with the lemma:

\begin{lemma}\label{proxyLifting}
Every relevant finite-order separation of $H$ that distinguishes $\eta_1$ and $\eta_2$ does lift to a separation of $G$ that distinguishes $\tau_1$ and $\tau_2$.
\end{lemma}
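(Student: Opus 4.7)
The plan is to apply Lemma~\ref{liftCommutesWithProxyOfTangles} separately to each of the pairs $(\tau_1,\eta_1)$ and $(\tau_2,\eta_2)$ and read off the conclusion. Concretely, let $\{A,B\}$ be an $H(Z)$-relevant finite-order separation distinguishing $\eta_1$ from $\eta_2$, and orient it so that $(A,B)\in\eta_1$ and $(B,A)\in\eta_2$. By Lemma~\ref{liftisseparation} the lift $\{\ell(A),\ell(B)\}$ is a genuine separation of $G$ whose separator coincides with $A\cap B$, hence it has the same finite order as $\{A,B\}$. Applying Lemma~\ref{liftCommutesWithProxyOfTangles} to the pair $(\tau_1,\eta_1)$ with the oriented separation $(A,B)\in\eta_1$ yields $(\ell(A),\ell(B))\in\tau_1$, and symmetrically, applying it to $(\tau_2,\eta_2)$ with $(B,A)\in\eta_2$ yields $(\ell(B),\ell(A))\in\tau_2$. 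Consequently $\{\ell(A),\ell(B)\}$ orients $\tau_1$ and $\tau_2$ differently, as required.

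The only precondition that needs a brief check is that the proxies $\eta_1,\eta_2$ are actually defined, for the lift-commutes-with-proxy lemma presupposes this. For an end tangle this is automatic, and for an ultrafilter tangle $\tau_i$ it requires $O(Z)$ to avoid the star $\sigma_{X_{\tau_i}}$. This is exactly the point flagged in the standing setup of the subsection: were $O(Z)$ to contain some $\sigma_X$ with $X\in\cY$, then $Z$ would have to lie inside $X$ (being on the big side of every separation in $\sigma_X$), hence inside a critical vertex set, contradicting our standing assumption that $Z$ is not so contained. With the proxies in hand, the argument reduces to invoking Lemma~\ref{liftCommutesWithProxyOfTangles} twice; I do not foresee any substantive obstacle, since the heavy technical machinery — the corridor equivalence relation, the modified torso, and the commutativity of $\ell$ with proxies — has already been set up in the preceding subsections precisely so that this reflection step becomes a short bookkeeping argument.
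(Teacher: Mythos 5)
Your proof is correct and follows essentially the same route as the paper: orient the $H$-relevant separation according to the proxies and apply Lemma~\ref{liftCommutesWithProxyOfTangles} once for each pair $(\tau_i,\eta_i)$ to see that the lift orients $\tau_1$ and $\tau_2$ differently. The extra check that the proxies are defined is already covered by the subsection's standing setup (where it is noted that $O(Z)$ avoids all stars $\sigma_X$ with $X\in\cY$), so it is harmless but not needed.
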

\begin{proof}
Let $\{A,B\}$ be any relevant finite-order separation of $H$ that distinguishes $\eta_1$ and $\eta_2$, say with $(B,A)\in\eta_1$ and $(A,B)\in\eta_2$.
Then Lemma~\ref{liftCommutesWithProxyOfTangles} gives both $(\ell(B),\ell(A))\in\tau_1$ and $(\ell(A),\ell(B))\in\tau_2$, so $\{\ell(A),\ell(B)\}$ distinguishes $\tau_1$ and $\tau_2$.
\end{proof}

For the key proposition, we need the following proposition whose proof we postpone to after the proof of the key proposition.

\begin{proposition}\label{MagicLemma}
If $\tau_i$ walks a corridor $\gamma_i$ of $O(Z)$ where $\Xi_i$ denotes the separator of the supremum of~$\gamma_i$, then $G[\Xi_i\setminus Z]$ is a non-empty clique that is entirely contained in $G[A_i\setminus A_j]$.
\end{proposition}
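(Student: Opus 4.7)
The plan is to prove the three sub-claims --- cliqueness, inclusion in $A_i\setminus A_j$, and non-emptiness --- in turn. Cliqueness is the quickest: by our standing application of Lemma~\ref{principalCliques} we may assume that every $X\in\cY$ induces a clique of $G$, so every separator of a separation in $T$ is a clique, and Lemma~\ref{supremumhascliqueseparator} then shows that $\Xi_i=A_{\gamma_i}\cap\Pi(Z)$ is a clique of $G$. Restricting to $\Xi_i\setminus Z$ preserves this.

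For the inclusion I would first argue that $\tau_i$ contains the orientation $(B_{\gamma_i},A_{\gamma_i})$ of the supremum of $\gamma_i$. In the ultrafilter-tangle case this follows from the definition of walking together with consistency: any witnessing $\vs\in\gamma_i$ with $\sv\in\tau_i$ has cofinally many larger separations in $\gamma_i$, and consistency of $\tau_i$ propagates $\sv\in\tau_i$ to each of their inverses, whence $(B_{\gamma_i},A_{\gamma_i})\in\tau_i$. In the end case the walking ray sits in $A_{\gamma_i}\setminus B_{\gamma_i}$, and a finite-order sub-separation of the supremum extracted via Lemma~\ref{separatefiniteset} is oriented by $\tau_i$ in the same direction. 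Combined with $(A_j,A_i)\in\tau_i$, consistency forces the join $(B_{\gamma_i}\cup A_j,\,A_{\gamma_i}\cap A_i)$ into $\tau_i$. The symmetric argument on the $\tau_j$-side --- exploiting that $\tau_j$ does \emph{not} walk $\gamma_i$, since otherwise the desired conclusion applied to both indices would force $\Xi_i\setminus Z\subseteq (A_i\setminus A_j)\cap(A_j\setminus A_i)=\emptyset$, contradicting the non-emptiness claim --- places the inverse separation in $\tau_j$. The resulting separation $\{A_{\gamma_i}\cap A_i,\,B_{\gamma_i}\cup A_j\}$ of $G$ distinguishes $\tau_1$ and $\tau_2$, and its separator $\Xi_i\cap A_i$ therefore has cardinality at least $|Z|$ by efficiency. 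To conclude the inclusion, I would assume toward contradiction that some $v\in\Xi_i\setminus Z$ lies in $A_j\setminus A_i$ and invoke Lemma~\ref{FiniteEditDistance}: sweeping $v$ across the separator (with cliqueness of $\Xi_i$ ensuring that the result remains a separation of $G$) yields a distinguishing separation of strictly smaller order than $Z$, contradicting efficiency.

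For non-emptiness I would argue by contradiction, assuming $\Xi_i\subseteq Z$. By the previous paragraph the supremum of $\gamma_i$ distinguishes $\tau_1$ and $\tau_2$ with separator inside $Z$, so efficiency forces $\Xi_i=Z$; cofinality in $\gamma_i$ of separations in $T$ whose separators lie in $\cY$ (generous subsets of critical vertex sets) then forces $Z$ itself to sit inside a critical vertex set, contradicting the standing assumption on $Z$. The main obstacle will be the sweeping step in the second paragraph: verifying that the modification remains a separation of $G$ requires ruling out edges from $v$ into $A_j\setminus A_i$ outside $Z$, and this is precisely where the combination of $\tau_i$ walking $\gamma_i$ (locating $\tau_i$'s mass beyond $\Xi_i$ on the $A_i$-side) and the cliqueness of $\Xi_i$ (controlling $v$'s neighbours inside the separator) has to be used together.
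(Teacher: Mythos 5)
Your cliqueness step is fine and agrees with the paper, but the other two parts rest on steps that do not hold up. The most serious problem is circularity: you justify ``$\tau_j$ does not walk $\gamma_i$'' by applying the very statement of Proposition~\ref{MagicLemma} to both indices, and this claim is load-bearing both for placing the inverse separation in $\tau_j$ (inclusion) and, via ``the supremum distinguishes $\tau_1$ and $\tau_2$'', for the non-emptiness argument. A second problem is that the separator $\Xi_i=A_{\gamma_i}\cap\Pi(Z)$ may be infinite, so the supremum of $\gamma_i$ and your corner separation $(B_{\gamma_i}\cup A_j,\,A_{\gamma_i}\cap A_i)$ need not be finite-order separations at all; $\aleph_0$-tangles orient only $\Sinf$, so assertions like ``$(B_{\gamma_i},A_{\gamma_i})\in\tau_i$'', ``distinguishes $\tau_1$ and $\tau_2$'', and the appeal to efficiency are simply not available there. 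Even at finite order, consistency alone does not put the supremum of two elements of a tangle into the tangle (that needs the profile/tangle property), nor does $\gamma_i\subset\tau_j$ force $\tau_j$ to orient the supremum towards $B_{\gamma_i}$. Finally, the ``sweeping'' step you flag as the main obstacle is indeed not an argument: the vertex $v\in\Xi_i\setminus Z$ does not lie in $Z$, the separation you would edit already has order at least $\vert Z\vert$ (possibly infinite), and Lemma~\ref{FiniteEditDistance} only says that tangle orientations persist under finite edits --- it gives no way to produce a distinguishing separation of order strictly less than $\vert Z\vert$.

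For comparison, the paper's proof avoids every one of these issues by never orienting the (possibly infinite-order) supremum. It takes a pointer $K_i$ of $\tau_i$ (Lemma~\ref{PointersExist}), which is a connected subgraph of $G[A_{\gamma_i}\setminus\Pi(Z)]\cap G[A_i\setminus A_j]$, and considers the component $C_i$ of $G-\Pi(Z)$ containing it, so that $N(C_i)\subset\Xi_i$. If $C_i$ had no neighbour in $\Xi_i\setminus Z$, then $N(C_i)$ would be a \emph{finite} set, Lemma~\ref{PointersAndDistinguishers} together with the efficiency of $Z$ would force $N(C_i)=Z$, and then a separation of $\gamma_i$ whose separator contains $Z$ (obtained as in Lemma~\ref{separatefiniteset}) would put $Z$ inside a critical vertex set, contradicting the standing assumption on $Z$; this yields non-emptiness first. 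The inclusion then follows without any claim about $\tau_j$ and $\gamma_i$: since $G[\Xi_i]$ is a clique, $G[\Xi_i\setminus Z]$ lies in a single component $D_i$ of $G-Z$, the component $C_i$ (and hence the pointer $K_i\subset G[A_i\setminus A_j]$) lies in $D_i$, and a component of $G-Z$ lies entirely on one side of $\{A_1,A_2\}$. If you want to salvage your route, you would at minimum have to replace the supremum-based efficiency argument by one that only ever invokes efficiency for genuinely finite separators, which is exactly what the pointer machinery accomplishes.
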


The final key segment is

\begin{proposition}\label{distinctCorridorsDistinctProxies}
The proxies $\eta_1$ and $\eta_2$ are efficiently distinguished by $Z$.
\end{proposition}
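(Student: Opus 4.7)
My plan is to split the proposition into two halves: first, construct an explicit separation $\{B_1,B_2\}$ of $H(Z)$ with separator $Z$ that distinguishes $\eta_1$ and $\eta_2$; second, use the lifting machinery (Lemmas~\ref{proxyLifting} and~\ref{HrelevantIsReasonable}) to convert any distinguishing separation of smaller order in $H(Z)$ into a distinguishing separation of the same smaller order in $G$, contradicting the efficiency of $Z$ there.

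For the construction, set $B_i\cap\Pi(Z):=A_i\cap\Pi(Z)$ and assign each added copy of $K^{\aleph_0}$ in $H(Z)$ (corresponding to a corridor $\gamma$ with finite separator $\Xi_\gamma=A_\gamma\cap\Pi(Z)$) entirely to $B_i$ if $\Xi_\gamma\setminus Z\subseteq A_i\setminus A_j$, and arbitrarily if $\Xi_\gamma\subseteq Z$. This is well-defined because $G[\Xi_\gamma]$ is a clique by Lemma~\ref{supremumhascliqueseparator} while $\{A_1,A_2\}$ is a separation of $G$, so $\Xi_\gamma\setminus Z$ cannot meet both $A_1\setminus A_2$ and $A_2\setminus A_1$. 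A routine check then gives $B_1\cap B_2=Z$ and that $\{B_1,B_2\}$ is a separation of $H(Z)$: edges inside $G[\Pi(Z)]$ respect $\{A_1,A_2\}$, and every added $K^{\aleph_0}$ assigned to side $B_i$ has its full neighbourhood $\Xi_\gamma\subseteq Z\cup(A_i\setminus A_j)\subseteq A_i$ inside $B_i$. That $\{B_1,B_2\}$ distinguishes $\eta_1$ and $\eta_2$ follows by cases on the definition of the proxy: if $\tau_i$ lies in the closure of $\Pi(Z)$, any ray of $G[\Pi(Z)]\cap\tau_i$ has a tail in $C(Z,\tau_i)\subseteq A_i\setminus A_j$; if instead $\tau_i$ walks a corridor $\gamma_i$, then Proposition~\ref{MagicLemma} places the rays realising the proxy (lying in the added $K^{\aleph_0}$ when $\Xi_i$ is finite, in the infinite clique $G[\Xi_i]$ when $\Xi_i$ is infinite) inside $A_i\setminus A_j=B_i\setminus B_j$, so in either case $\eta_i$ lives on the $B_i$-side.

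For the efficiency half, let $\{A,B\}$ be a separation of $H(Z)$ of minimum finite order among those distinguishing $\eta_1$ and $\eta_2$. By minimality it efficiently distinguishes $\eta_1$ and $\eta_2$, hence is $H(Z)$-relevant, so Lemma~\ref{HrelevantIsReasonable} yields $A\cap B\subseteq\Pi(Z)\subseteq V(G)$. Lemma~\ref{proxyLifting} then provides a lift $\{\ell(A),\ell(B)\}$ that distinguishes $\tau_1$ and $\tau_2$ in $G$, and by Lemma~\ref{liftisseparation} this lift has separator exactly $A\cap B$. The efficiency of $Z$ in $G$ now forces $|A\cap B|\ge|Z|$, completing the proof. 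The main delicacy is in the construction step: the unambiguous assignment of each added $K^{\aleph_0}$-copy to one side of $\{B_1,B_2\}$ depends on both the clique property of corridor separators (Lemma~\ref{supremumhascliqueseparator}) and Proposition~\ref{MagicLemma}, which together guarantee that no added clique straddles $\{A_1,A_2\}$ and that the proxies land on the expected sides.
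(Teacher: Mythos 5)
Your proof is correct and follows essentially the same route as the paper: the efficiency half is exactly the paper's lifting argument (a minimum-order distinguishing separation of $H(Z)$ is $H(Z)$-relevant, so Lemmas~\ref{HrelevantIsReasonable}, \ref{liftisseparation} and~\ref{proxyLifting} turn it into a distinguishing separation of $G$ of the same order, against the efficiency of $Z$), and the distinguishing half rests on the same ingredients, namely Proposition~\ref{MagicLemma} and the clique property of corridor separators (Lemma~\ref{supremumhascliqueseparator}). The only deviation is presentational: where the paper splits into three cases (both, neither, or exactly one of the $\tau_i$ in the closure of $\Pi(Z)$) and invokes Lemma~\ref{endSpaceModifiedTorso}, you assemble one explicit separation $\{B_1,B_2\}$ of $H(Z)$ with separator $Z$ and check that each proxy $\eta_i$ orients it towards $B_i$ --- an equally valid, slightly more explicit packaging of the same argument.
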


\begin{proof}
If $Z$ distinguishes the proxies $\eta_1$ and $\eta_2$ in $H(Z)$, then it does so efficiently, for otherwise the separation of order $<\vert Z\vert$ doing so lifts to one distinguishing $\tau_1$ and $\tau_2$ in $G$ by Lemma~\ref{proxyLifting}, contradicting the efficiency of $Z$.
Therefore, it remains to show that $\eta_1$ and $\eta_2$ are distinguished by $Z$.
For this, we check three cases.

In the first case, both $\tau_1$ and $\tau_2$ lie in the closure of $\Pi(Z)$.
Then $\tau_1$ and $\tau_2$ are distinct ends of $G$ that lie in the closure of $\Pi(Z)$, and so their proxies stem from rays of $\tau_1$ and $\tau_2$ respectively.
Now $Z$ witnesses that these rays are inequivalent in $G$ and, in particular, that they are inequivalent in $G[\Pi]$.
Thus $Z$ distinguishes $\eta_1$ and $\eta_2$ in $H(Z)$ by Lemma~\ref{endSpaceModifiedTorso}.

In the second case, neither $\tau_1$ nor $\tau_2$ lies in the closure of $\Pi(Z)$, and both walk corridors $\gamma_1$ and $\gamma_2$ of~$O(Z)$.
We let $\Xi_1$ and $\Xi_2$ be the separators of the suprema of $\gamma_1$ and $\gamma_2$.
Then, by Proposition~\ref{MagicLemma}, for both $i=1,2$ the induced subgraph $G[\Xi_i\setminus Z]$ is a non-empty clique that is entirely contained in $G[A_i\setminus A_j]$.
Consequently, $Z$ distinguishes $\eta_1$ and $\eta_2$ in $H(Z)$ by Lemma~\ref{endSpaceModifiedTorso}.

In the third case, $\tau_1$ does not lie in the closure of $\Pi(Z)$ and walks a corridor $\gamma_1$ of $O(Z)$ while $\tau_2$ lies in the closure of $\Pi(Z)$.
Then $\tau_2$ must be an end of $G$.
We let $\Xi_1$ be the separator of the supremum of $\gamma_1$.

By Proposition~\ref{MagicLemma} the induced subgraph $G[\Xi_1\setminus Z]$ is a non-empty clique that is entirely contained in $G[A_1\setminus A_2]$.
Since $\eta_1$ stems from the copy of $K^{\aleph_0}$ that is attached to the clique $G[\Xi_1]\subset G[\Pi]$ while $\eta_2$ stems from a ray of $G[\Pi]$ in $\tau_2$, we deduce that $Z$ distinguishes $\eta_1$ and $\eta_2$ in $H(Z)$ by Lemma~\ref{endSpaceModifiedTorso}.
\end{proof}

In the remainder of this subsection we prove Proposition~\ref{MagicLemma}.
For this, we introduce the concept of a pointer.
Basically, the idea is to have a connected subgraph of $G$ that can be employed as an oracle---like we employ rays as oracles for their ends.

\begin{definition}(Pointer)
If $\tau_i$ walks a corridor $\gamma_i$ of $O(Z)$, then a \emph{pointer} of $\tau_i$ is a connected subgraph~$K_i$ of $G[A_{\gamma_i}\setminus\Pi(Z)]\cap G[A_i\setminus A_j]$ that is of the following form.
If $\tau_i$ is an end of $G$, then $K_i$ is a ray in $\tau_i$.
Otherwise $\tau_i$ is an ultrafilter tangle of $G$, and then $K_i$ is a component in $\CC{X_i}$.
\end{definition}

\begin{lemma}\label{PointersExist}
If $\tau_i$ walks a corridor of $O(Z)$, then $\tau_i$ has a pointer.
\end{lemma}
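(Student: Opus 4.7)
The plan is to split into two cases according to whether $\tau_i$ is an end or an ultrafilter tangle, since the walking hypothesis gives qualitatively different information in each case.

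If $\tau_i$ is an end, then by the definition of walking there is a ray $R\in\tau_i$ contained in $G[A_{\gamma_i}\setminus\Pi(Z)]$. The key observation I would then exploit is that $Z\subseteq\Pi(Z)$, so $R$ is automatically disjoint from $Z$. Being a connected subgraph disjoint from the separator $Z$, the ray $R$ must lie entirely in a single component $C$ of $G-Z$; and since $(A_j,A_i)\in\tau_i$, the end $\tau_i$ chooses $C$ to be a subset of $A_i\setminus A_j$. Hence the ray $R$ itself is already a pointer, and no further refinement is needed.

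If $\tau_i$ is an ultrafilter tangle with blueprint $(X_i,U)$, my plan is to apply Theorem~\ref{UFtangleDeterminedByFreeUF} twice. Once to the separation $(A_j,A_i)\in\tau_i$, yielding a collection $\cC_1\in U(\tau_i,X_i)$ with $V[\cC_1]\subseteq A_i\setminus A_j$; and once to the inverse $(D_0,C_0)\in\tau_i$ of an arbitrarily chosen $(C_0,D_0)\in\gamma_i$, yielding a collection $\cC_2\in U(\tau_i,X_i)$ with $V[\cC_2]\subseteq C_0\setminus D_0$. The inclusion $C_0\setminus D_0\subseteq A_{\gamma_i}\setminus\Pi(Z)$ is then immediate from $C_0\subseteq A_{\gamma_i}$ together with $\Pi(Z)\subseteq D_0$. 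Since the structure theorem for ultrafilter tangles (stated just before Lemma~\ref{UFtanglePrincipalGenerator}) gives $\rsep{X_i}{\CC{X_i}}\in\tau_i$, we also have $\CC{X_i}\in U(\tau_i,X_i)$. Therefore $\cC_1\cap\cC_2\cap\CC{X_i}$ is a non-empty element of the ultrafilter, and any component $K_i$ in this intersection is a pointer.

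The main obstacle I anticipate is the ultrafilter tangle case: one must exhibit a component that is simultaneously in $\CC{X_i}$ (rather than merely in $\cC_{X_i}$), in $A_i\setminus A_j$, and in $A_{\gamma_i}\setminus\Pi(Z)$. The ultrafilter-large cofinal description provided by Theorem~\ref{UFtangleDeterminedByFreeUF}, combined with the observation that $\CC{X_i}$ itself belongs to $U(\tau_i,X_i)$, resolves all three constraints in one stroke via a single intersection inside the ultrafilter.
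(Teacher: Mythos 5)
Your proposal is correct and follows essentially the same route as the paper: the end case is handled by noting that a ray witnessing the walking avoids $\Pi(Z)\supseteq Z$ and hence lies in the component of $G-Z$ on the $A_i$-side, and the ultrafilter-tangle case applies Theorem~\ref{UFtangleDeterminedByFreeUF} once to $(A_j,A_i)\in\tau_i$ and once to the inverse of a separation in $\gamma_i$, then intersects with $\CC{X_i}$ inside the ultrafilter $U(\tau_i,X_i)$. This matches the paper's proof step for step, with only cosmetic differences in how the membership $\CC{X_i}\in U(\tau_i,X_i)$ is justified.
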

\begin{proof}
If $\tau_i$ is an end, then $\tau_i$ has a ray avoiding $\Pi(Z)$ for $\tau_i$ walks a corridor of $O(Z)$, and as $Z\subset \Pi(Z)$ every such ray is a pointer of $\tau_i$.
Otherwise $\tau_i$ is an ultrafilter tangle.
Then we let $\gamma_i$ be the corridor of $O(Z)$ walked by $\tau_i$.
Let $(C,D)\in\tau_i$ witness that $\tau_i$ walks $\gamma_i$, so $D\setminus C\subset A_{\gamma_i}\setminus\Pi(Z)$.
Using Theorem~\ref{UFtangleDeterminedByFreeUF} we pick $\cC\in U(\tau_i,X_i)$ with $V[\cC]\subset A_i\setminus A_j$ and $\cC'\in U(\tau_i,X_i)$ with $V[\cC']\subset D\setminus C$.
As $U(\tau_i,X_i)$ is a free ultrafilter, the intersection $\cC\cap\cC'\cap\CC{X_i}\in U(\tau_i,X_i)$ is infinite, and every component in this intersection is a pointer of $\tau_i$.
\end{proof}

\begin{lemma}\label{PointersAndDistinguishers}
If the neighbourhood $N(C_i)$ of the component $C_i$ of $G-\Pi(Z)$ containing a pointer $K_i$ of $\tau_i$ is finite, then $N(C_i)$ is the separator of some finite-order separation of $G$ that distinguishes $\tau_1$ and $\tau_2$.
\end{lemma}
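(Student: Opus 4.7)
Set $X := N(C_i)$; this is finite by hypothesis. My natural candidate for a distinguishing separation is $\sep{X}{C_i}$, so I first check that $C_i$ really is a component of $G-X$: it is connected in $G-\Pi(Z) \subseteq G-X$, and any vertex of $G-X$ adjacent to $C_i$ would lie in $N(C_i)=X$. Next, because $K_i \subseteq C_i$ sits in $A_i\setminus A_j$, the connectedness of $C_i$ in $G-\Pi(Z) \subseteq G-Z$ together with the fact that $Z$ separates the two sides of $\{A_1,A_2\}$ forces $C_i \subseteq A_i\setminus A_j$; and since every vertex of $X=N(C_i)$ has a neighbour in $C_i$ while no edge crosses the separator $Z$, also $X \subseteq A_i$. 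Combining these, $X\cup V(C_i) \subseteq A_i$ and $V\setminus V(C_i) \supseteq A_j$, so $\lsep{C_i}{X} \le (A_i,A_j) \in \tau_j$, and consistency of $\tau_j$ yields $\lsep{C_i}{X} \in \tau_j$. It remains to exhibit a finite-order separation with separator $X$ that $\tau_i$ orients oppositely.

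If $\tau_i$ is an end, then $K_i$ is a ray of $\tau_i$ lying inside the $G-X$-component $C_i$, so $C(X,\tau_i) = C_i$ and hence $\rsep{X}{C_i} \in \tau_i$. If $\tau_i$ is an ultrafilter tangle with $X_i \not\subseteq X$, then $X \in \cX \setminus \cX_{\tau_i}$, and Lemma~\ref{UFtanglePrincipalGenerator} tells us that $U(\tau_i,X)$ is generated by $\{C_X(X_i)\}$. Every $v \in X_i\setminus X$ is adjacent in $G-X$ to a vertex of $K_i \subseteq C_i$ (since $N(K_i)=X_i$), so $C_X(X_i) = C_i$; thus $\{C_i\} \in U(\tau_i,X)$ and again $\rsep{X}{C_i} \in \tau_i$. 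In both of these sub-cases the separation $\sep{X}{C_i}$ distinguishes $\tau_1$ from $\tau_2$.

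The remaining case, where $\tau_i$ is an ultrafilter tangle with $X_i \subseteq X$, is the main obstacle: here $U(\tau_i,X)$ is a free ultrafilter on an infinite set, so it cannot contain the singleton $\{C_i\}$, and $\sep{X}{C_i}$ fails to distinguish $\tau_i$ from $\tau_j$. I would resolve this by showing that this case in fact forces $K_i = C_i$ and $X = X_i$: any $v \in C_i \setminus K_i$ would be joined to $K_i$ by a path in $G-X \subseteq G-X_i$, contradicting that $K_i$ is a component of $G-X_i$; hence $K_i = C_i$ and $X = N(K_i) = X_i$. Now $X$ is a critical vertex set and therefore lies in $\cY$, so the star $\sigma_{X_i}$ consists of separations of $T$, each with separator exactly $X_i = X$. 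Since $\tau_1$ and $\tau_2$ are inequivalent, Theorem~\ref{TheTreeSetAPC}(v) supplies a separation in $\sigma_{X_i}$ that distinguishes them, finishing the proof.
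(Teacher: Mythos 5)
Your proof is correct. Its skeleton matches the paper's: you establish that $C_i$ is a component of $G-N(C_i)$ with $C_i\subset A_i\setminus A_j$ and $N(C_i)\subset A_i$, so that $\lsep{C_i}{N(C_i)}\in\tau_j$ by consistency, and your first two cases are the paper's cases verbatim — for an end, $\rsep{N(C_i)}{C_i}\in\tau_i$; for an ultrafilter tangle with $X_i\not\subset N(C_i)$ (equivalently the paper's case $C_i\supsetneq K_i$), Lemma~\ref{UFtanglePrincipalGenerator} gives $C_i=C_{N(C_i)}(X_i)$ and hence $\rsep{N(C_i)}{C_i}\in\tau_i$. You genuinely diverge only in the remaining case $X_i\subset N(C_i)$, which, as you correctly show, forces $C_i=K_i$ and $N(C_i)=X_i$: the paper stays local to the two tangles and uses Theorem~\ref{UFtangleDeterminedByFreeUF} to pick $\cC\in U(\tau_i,X_i)$ with $V[\cC]\subset A_i\setminus A_j$, so that $\sep{X_i}{\cC}$ distinguishes $\tau_1$ and $\tau_2$ (and in fact is oriented below $(A_i,A_j)$ by $\tau_j$), whereas you invoke Theorem~\ref{TheTreeSetAPC}(v) for the fixed starting tree set $T$ — whose hypotheses hold by the section's setup, since $\crit(G)\subset\cY$ and $\vert\CX\setminus\ccK(X)\vert=1$ — to obtain a distinguishing separation inside $\sigma_{X_i}$, all of whose members have separator $X_i=N(C_i)$. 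Both routes are valid: the lemma (and its only application, in Proposition~\ref{MagicLemma}, where just the order comparison with $Z$ is used) asks merely for some finite-order separation with separator $N(C_i)$ that distinguishes $\tau_1$ and $\tau_2$, so your appeal to the global theorem suffices; the paper's construction is more self-contained and produces a separation nested below $\{A_1,A_2\}$, a slightly stronger conclusion that is, however, not needed.
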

\begin{proof}
By the consistency of $\tau_j$ it suffices to find a separation $(A,B)\in\tau_i$ with $(B,A)\le (A_i,A_j)$ and $A\cap B=Y$ where we write $Y=N(C_i)$.
As $K_i$ is a pointer we have $K_i\subset G[A_i\setminus A_j]$.
Since the separator $Z=A_1\cap A_2$ is included in $\Pi(Z)$ we have $C_i\subset G[A_i\setminus A_j]$ as well.
If $\tau_i$ is an end then $\rsep{Y}{C_i}\in\tau_i$ is as desired.
Otherwise $\tau_i$ is an ultrafilter tangle.
If $C_i=K_i$ then $N(C_i)=N(K_i)=X_i$, and employing Theorem~\ref{UFtangleDeterminedByFreeUF} we may pick $\cC\in U(\tau_i,X_i)$ with $V[\cC]\subset A_i$, so $\rsep{X_i}{\cC}\in\tau_i$ is as desired.
Hence we may assume that $C_i\supsetneq K_i$ must meet $X_i$.
Then $C_i=C_Y(X_i)$, and by Lemma~\ref{UFtanglePrincipalGenerator} we have $\rsep{Y}{C_Y(X_i)}\in\tau_i$ as desired.
\end{proof}

\begin{proof}[{Proof of Proposition~\ref{MagicLemma}}]
By Lemma~\ref{PointersExist} we find a pointer $K_i$ of $\tau_i$, and we let $C_i$ be the component of $G-\Pi(Z)$ containing $K_i$.
Then $K_i\subset G[A_{\gamma_i}\setminus\Pi(Z)]$ implies $C_i\subset G[A_{\gamma_i}\setminus\Pi(Z)]$, so we have $N(C_i)\subset\Xi_i$.
First, we show that $C_i$ has a neighbour in $\Xi_i\setminus Z$.
Otherwise $N(C_i)\subset\Xi_i\cap Z$, and then $N(C_i)=Z$ by Lemma~\ref{PointersAndDistinguishers} and the efficiency of $Z$.
Now $Z\subset\Xi_i$ with $Z$ being finite allows us to find a separation $\rsep{X}{\cC}\in\gamma_i$ with $Z\subset X$ contradicting this subsection's assumption on $Z$.
Therefore, $C_i$ has a neighbour in $\Xi_i\setminus Z$.
Next, since $G[\Xi_i]$ is a clique, there is a unique component $D_i$ of $G-Z$ containing $G[\Xi_i\setminus Z]$.
Then $C_i\subset D_i$ as $C_i$ has a neighbour in $\Xi_i\setminus Z$, and so $G[\Xi_i\setminus Z]\subset G[A_i\setminus A_j]$ follows from the pointer $K_i$ being included in $G[A_i\setminus A_j]$.
\end{proof}

\subsection{Proof of the main result}

At last, we prove our main result:

\begin{customthm}{\ref{TreeSetForInfTangles}}
Every connected graph $G$ has a tree set of tame finite-order separations that efficiently distinguishes all its inequivalent $\aleph_0$-tangles.
In particular, equivalent $\aleph_0$-tangles induce the same orientations on the tree set.
\end{customthm}

\begin{proof}
For every modified torso $H$ of $T$ we employ Carmesin's Theorem~\ref{CarmesinEndsTreeSet} to obtain a tree set $T_H$ of $H$-relevant separations that efficiently distinguishes all the ends of $H$.
Then we lift all the separations in all the tree sets $T_H$ and add these to $T$ to obtain an extension $T'$ of $T$.
Then $T'$ is again a tree set by Lemma~\ref{liftCommutesWithProxyOfCorridors}, Corollary~\ref{liftsOfDistinctTorsosAreNested} and Lemma~\ref{liftRespectsNested}.

First, we show that $T'$ efficiently distinguishes every two inequivalent $\aleph_0$-tangles of $G$.
For this, let $\tau_1$ and $\tau_2$ be two inequivalent $\aleph_0$-tangles of $G$.
We have to find a separation in $T'$ that efficiently distinguishes $\tau_1$ and $\tau_2$.
Pick some finite-order separation of $G$ with separator $Z$ say that efficiently distinguishes $\tau_1$ and $\tau_2$.
If $Z$ is contained in some critical vertex set of $G$, then by Lemma~\ref{CaseOne} we find a separation in $T\subset T'$ that efficiently distinguishes $\tau_1$ and~$\tau_2$.
Otherwise $Z$ is not contained in any critical vertex set of~$G$.
However, $Z$ is generous by Lemma~\ref{RelevantImpliesGenerous}, and so by Lemma~\ref{GenerousIsPrincipalForPrincipals} induces a consistent orientation of $T$ whose part contains~$Z$.
Then by Proposition~\ref{distinctCorridorsDistinctProxies} the $\aleph_0$-tangles $\tau_1$ and $\tau_2$ have distinct proxies $\eta_1$ and~$\eta_2$ in the modified torso $H$ of that orientation, and $Z$ efficiently distinguishes $\eta_1$ and $\eta_2$ in~$H$.
Thus there is a separation in $T_H$ of order $\vert Z\vert$ that distinguishes $\eta_1$ and~$\eta_2$.
By Lemma~\ref{proxyLifting} this separation lifts to a separation of $G$ that distinguishes $\tau_1$ and~$\tau_2$.
This lift still has order $\vert Z\vert$ and lies in $T'$.

Second we show that all separations in $T'$ are tame.
Every separation in $T$ is tame.
And by Lemma~\ref{LiftsAreTame} the lifts of all $T_H$ are tame as well.
\end{proof}

\section{Appendix}\label{sec:appendix}

\subsection{Compactifications}

A \emph{compactification} of a topological space $X$ is an ordered pair $(K,h)$ where $K$ is a compact topological space and $h\colon X\hookrightarrow K$ is an embedding of $X$ as a dense subset of $K$.
Sometimes we also refer to $K$ as a \comp\ of $X$ if the embedding $h$ is clearly understood (e.g. if $h$ is the identity on $X$).
The space $K\setminus h[X]$ is called the \emph{remainder} of the \comp .

Suppose now that $X$ is a discrete topological space.
Since $X$ is locally compact\footnote{A topological space $X$ is \emph{locally compact} if for each of its points there is some compact subspace of $X$ which includes an open neighbourhood of that point.}, $X$ is open in all of its \HDcomp s 
(cf.~\cite[Theorem~3.6.6]{EngelkingBook}).
If we pair the space $\beta X$ of all ultrafilters on $X$ carrying the topology whose basic open sets are of the form $\{U\in\beta X\mid A\in U\}$, one for each $A\subseteq X$, with the embedding that sends every $x\in X$ to the principal ultrafilter on $X$ generated by $\{x\}$, then this yields the Stone-Čech \comp\ of~$X$.
By the \SC\ property, every continuous function $f\colon X\to T$ into a compact \HD\ space $T$ has a unique continuous extension $\beta f\colon\beta X\to T$ with $\beta f\rest X=f$ (cf.~\cite[Theorem~3.5.1]{EngelkingBook}).
\begin{theorem}[{\cite[Corollary 7.4]{TheoryOfUltrafilters}}]
\label{Top:compactification:StoneCechCard}
If $X$ is an infinite set, then $\vert\beta X\vert = 2^{2^{\vert X\vert}}$.
\end{theorem}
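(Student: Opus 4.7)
The plan is to prove the equality by establishing matching upper and lower bounds.

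For the upper bound, note that every ultrafilter on $X$ is in particular a subset of $2^{X}$, so $\beta X$ injects into $2^{2^{X}}$; hence $\vert\beta X\vert \le 2^{2^{\vert X\vert}}$.

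The lower bound carries the real work and the standard route runs through Hausdorff's theorem on independent families. Call a family $\mathcal{I}\subseteq 2^{X}$ \emph{independent} if for every two disjoint finite subfamilies $\mathcal{A},\mathcal{B}\subseteq\mathcal{I}$ the set $\bigcap\mathcal{A}\cap\bigcap_{B\in\mathcal{B}}(X\setminus B)$ is non-empty. First I would establish Hausdorff's theorem: on every infinite set $X$ there is an independent family of size $2^{\vert X\vert}$. For this one re-indexes $X$ with the auxiliary set $Y:=\{\,(F,\mathcal{F})\mid F\subseteq X\text{ finite},\ \mathcal{F}\subseteq 2^{F}\,\}$ (which has the same cardinality as $X$), and assigns to each $A\subseteq X$ the set $I_{A}:=\{\,(F,\mathcal{F})\in Y\mid A\cap F\in\mathcal{F}\,\}$. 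A direct combinatorial check then shows that $\{\,I_{A}\mid A\subseteq X\,\}$ is independent and has cardinality $2^{\vert X\vert}$.

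Given such an $\mathcal{I}$, I would then assign to each $S\subseteq\mathcal{I}$ the collection $\mathcal{F}_{S}:=S\cup\{\,X\setminus I\mid I\in\mathcal{I}\setminus S\,\}$, which by independence has the finite intersection property; by Zorn's lemma it extends to an ultrafilter $U_{S}$ on $X$. Distinct subsets $S\neq S'$ yield distinct ultrafilters: choosing $I\in S\mathbin{\triangle}S'$ one sees that $U_{S}$ and $U_{S'}$ disagree on whether $I$ belongs to them. This produces $2^{\vert\mathcal{I}\vert}=2^{2^{\vert X\vert}}$ distinct ultrafilters, giving the lower bound.

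The main obstacle is unquestionably the construction of the independent family of full cardinality $2^{\vert X\vert}$; once this combinatorial lemma is in hand, the rest is a routine application of Zorn's lemma plus the observation that independence is exactly what is needed to generate many incompatible ultrafilters simultaneously.
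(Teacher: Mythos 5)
Your proof is correct: the upper bound via $\beta X\subseteq 2^{2^{X}}$ and the lower bound via a Hausdorff--Fichtenholz--Kantorovich independent family of size $2^{\vert X\vert}$, extended by Zorn's lemma to $2^{2^{\vert X\vert}}$ pairwise distinct ultrafilters, is the standard Pospíšil argument. The paper itself gives no proof but cites Comfort and Negrepontis, whose proof follows essentially this same route, so your approach matches the intended one.
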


\subsection{Inverse limits of inverse systems}

A partially ordered set $(I,\le)$ is said to be \emph{directed} if for every two $i,j\in I$ there is some $k\in I$ with $k\ge i,j$.
Let $(\,X_i\mid i\in I\,)$ be a family of topological spaces indexed by some directed poset $(I,\le)$.
Furthermore, suppose that we have a family $(\,\varphi_{ji}\colon X_j\to X_i\,)_{i\le j\in I}$ of continuous mappings which are the identity on $X_i$ in case of $i=j$ and which are \emph{compatible} in that $\varphi_{ki}=\varphi_{ji}\circ\varphi_{kj}$ for all $i\le j\le k$. Then both families together are said to form an \emph{inverse system}, and the maps $\varphi_{ji}$ are called its \emph{bonding maps}. 
We denote such an inverse system by $\{X_i,\varphi_{ji},I\}$ or $\{X_i,\varphi_{ji}\}$ for short if $I$ is clear from context.
Its \emph{inverse limit} $\invLim{} X_i=\invLim{}(\,X_i\mid i\in I\,)$ is the topological space
\begin{align*}
\invLim{} X_i=\{\,(x_i)_{i\in I}\mid \varphi_{ji}(x_j)=x_i\text{ for all }i\le j\,\}\subseteq \prod_{i\in I}X_i.
\end{align*}
Whenever we define an inverse system without specifying a topology for the spaces $X_i$ first, we tacitly assume them to carry the discrete topology.
If each $X_i$ is (non-empty) compact \HD , then so is the inverse limit~$\invLim{}X_i$.

\bibliographystyle{amsplain}
\bibliography{BIB_11_18,localbib}

\end{document}